\newtheoremstyle{plain}{}{}{\itshape}{0pt}{\bfseries}{.}{5pt plus 1pt minus 1pt}{}
\theoremstyle{plain}
\newtheorem{theorem}{\newline Theorem}[section]
\newtheorem{lemma}[theorem]{\newline Lemma}
\newtheorem{proposition}[theorem]{\newline Proposition}
\newtheorem{corollary}[theorem]{\newline Corollary}
\theoremstyle{definition}
\newtheorem{definition}[theorem]{Definition}
\newtheorem{remark}[theorem]{Remark}
\newcommand{\pr}{\mathbf{Pr}}
\newcommand{\ex}{\mathbf{E}}
\newcommand{\AND}{\quad \text{and} \quad}
\newcommand{\ba}{{\bf a}}
\newcommand{\bb}{{\bf b}}
\newcommand{\mbf}[1] {\text{\boldmath$#1$}}
\newcommand{\bah}{{\bf \hat{a}}}
\newcommand{\bbh}{{\bf \hat{b}}}
\newcommand{\chio}{{\chi_o}}
\DeclareMathOperator{\poly}{poly}
\DeclareMathOperator{\Ker}{Ker}
\DeclareMathOperator{\rank}{rank}
\DeclareMathOperator{\Span}{span}
\newcommand*{\vcenteredhbox}[1]{\begingroup\setbox0=\hbox{#1}\parbox{\wd0}{\box0}\endgroup}
\begin{document}

\title{The oriented chromatic number of random graphs of bounded degree}
\date{\today}
\author{Karen Gunderson\\
\small{University of Manitoba}\\
\small{\texttt{karen.gunderson@umanitoba.ca}}
 \and
JD Nir\\
\small{Oakland University}\\
\small{\texttt{jdnir@oakland.edu}}}
\maketitle

\begin{abstract}
The chromatic number of the random graph $\mathcal{G}(n,p)$ has long been studied and has inspired several landmark results. In the case where $p = d/n$, Achlioptas and Naor showed the chromatic number is asymptotically two-point concentrated. Kemkes et al.~later proved the same result holds for $\mathcal{G}(n,d)$, the random $d$-regular graph.  

We consider the oriented chromatic number of the directed models $\vec{\mathcal{G}}(n,p)$ and $\vec{\mathcal{G}}(n,d)$. Previous extremal results can be used to bound the oriented chromatic number of a random $d$-regular digraph between $\Omega(\sqrt{2}^d)$ and $O(d^2 2^d)$. Using colourings by doubly regular tournaments, we improve the upper bound to $O(\sqrt{e}^d)$. As part of our proof, we extend an optimization result of Achlioptas and Naor for functions over doubly stochastic matrices which may be of independent interest. 
\end{abstract}

{\small KEYWORDS: random graph, bounded degree, oriented colouring}

\section{Introduction}

The oriented chromatic number of an oriented graph is a natural generalization of the usual chromatic number.  The chromatic number of an undirected graph $G=(V,E)$, denoted $\chi(G)$, is the smallest $k$ for which there is a function $c:V \to [k]$ such that for every $uv \in E,\ c(u) \ne c(v)$.  The chromatic number of $G$ can equivalently be defined as the smallest $k$ for which there is a graph homomorphism from $G$ to a complete graph $K_k$.  The oriented chromatic number of an oriented graph $\vec{G}$, denoted $\chio(\vec{G})$, is the smallest $k$ for which there is a homomorphism (of oriented graphs) from $\vec{G}$ to some tournament of order $k$.  An equivalent formulation is that $\chio(\vec{G})$ is the least $k$ for which there is a partition of $V$ into $k$ sets, called colour classes, so that each colour class contains no arcs and so that between every pair of colour classes, all arcs have the same direction.  The notion of such colourings of oriented graphs was first introduced by Courcelle~\cite{Co94} in a series of papers studying graph properties described in monadic second order logic.

While $\chio(\vec{G})$ is always bounded from below by the usual chromatic number of the underlying graph, these parameters can, in general, be far.  It was noted by Kostochka, Sopena, and Zhu~\cite{KSZ97} that there is an orientation of the complete bipartite graph $K_{n, n}$ with oriented chromatic number $2n$.  Oriented graphs, $\vec{G}$, that have $\chio(\vec{G})$ equal to the number of vertices in $\vec{G}$ are sometimes called \emph{oriented cliques} and are precisely the oriented graphs with diameter at most $2$.  The problem of determining the minimum number of arcs in a directed graph with diameter at most $2$ was proposed by Erd\H{o}s, S\'{o}s, and R\'{e}nyi~\cite{ERS66} (see also~\cite{Zn70, DM87}) and F\"{u}redi, Horak, Pareek, and Zhu~\cite{FHPZ98} showed that this minimum is $(1+o(1))n \log n$ (see also~\cite{KLSS99}).  The maximum order of outerplanar and planar oriented cliques was studied by Klostermeyer and MacGillivray~\cite{kM04b} and Sen~\cite{Se12}.

Another closely-related concept is the \emph{acyclic chromatic number} of an oriented graph, denoted $\chi_a(\vec{G})$, which is the smallest number of colours needs to properly colour the vertices of an oriented graph so that every pair of colour classes induce an acyclic graph.  Raspaud and Sopena~\cite{RS94} showed that $\chio(\vec{G})$ can be bounded in terms of $\chi_a(\vec{G})$ and Kostochka, Sopena and Zhu~\cite{KSZ97} gave bounds on $\chi_a(\vec{G})$ in terms of $\chio(\vec{G})$.

There has been significant prior investigation into $\chio(\vec{G})$ focusing on extremal results for graphs with bounded degree properties. In a recent series of papers~\cite{Du20, DMS19}, Duffy, with others, prove that the oriented chromatic number of connected cubic graphs is at most eight.  Oriented graphs with maximum degree $4$ have an oriented chromatic number at most $69$ (see~\cite{DMS19}) although it is believed that this upper bound can be improved. In~\cite{KSZ97}, Kostochka, Sopena, and Zhu consider deterministic bounds on $\chio(\vec{G})$ in terms of its maximum degree. They show that for any oriented $\vec{G}$ with maximum degree $d$, $\chio(\vec{G}) \le 2d^22^d$. They also prove that when $n$ is sufficiently large, for every $d$-regular graph $G$ on $n$ vertices, there is an orientation $\vec{G}$ with  $\chio(\vec{G}) \ge \lceil 2^{d/2} \rceil$. 

In this paper, we consider the question of bounding the oriented chromatic number of a random directed graph, extending results on the chromatic number of undirected random graphs.
The random graph $\mathcal{G}(n,p)$ was introduced in 1960 by Erd\H{o}s and R\'enyi~\cite{ER60} and independently by Gilbert~\cite{Gi59}.
In this model, each possible edge of a graph on $n$ vertices is chosen independently with probability $p$. This model marks the beginning of the field of random graph theory, the study of random graph models and their properties. We direct the reader to the books~\cite{AS16, Bo01, JLR00} for results on random graphs and their connections to other fields. In addition to $\mathcal{G}(n,p)$, we consider the model $\mathcal{G}(n,d)$ in which a $d$-regular graph on $n$ vertices is selected uniformly at random.

The two models of random directed graphs that we consider in this paper are $\vec{\mathcal{G}}(n,p)$ and $\vec{\mathcal{G}}(n,d)$.  In  each, after selecting an appropriate undirected graph according to $\mathcal{G}(n, p)$ or $\mathcal{G}(n, d)$, respectively, each edge is given an orientation uniformly at random. In particular, $\vec{\mathcal{G}}(n,p)$ and $\vec{\mathcal{G}}(n,d)$ contain no directed loops or two-cycles as they are orientations of simple graphs.

The question of the chromatic number of a random graph was mentioned by Erd\H{o}s and R\'enyi~\cite{ER60} and investigated as early as 1974 by Grimmet and McDiarmid~\cite{GM75}, but the topic has received considerable attention since 1987 following the breakthrough result of Shamir and Spencer~\cite{SS87} in which martingales were used to show $\chi(\mathcal{G}(n,p))$ is concentrated in a small set of values with probability tending to $1$ as $n$ tends to infinity (which we refer to as \emph{asymptotically almost surely} or a.a.s.). Bollob\'as~\cite{Bo88} and later \L uczak~\cite{Lu91a} prove that for $pn \to \infty$, a.a.s. $\chi(\mathcal{G}(n,p))$ is $(\frac{1}{2}+o(1))\log(1/(1-p))\frac{n}{\log n}$. \L uczak also proved~\cite{Lu91b} that when $p = d/n$, so that on average each vertex has degree $d$, there is a value $k_d$ such that the chromatic number of $G(n,p)$ is a.a.s. $k_d$ or $k_d+1$. In 2005, Achlioptas and Naor~\cite{AN05} determined $k_d$ to be the smallest integer such that $d < 2k_d\log k_d$. Building on that work, Achloptas and Moore~\cite{AM04} showed that the $\chi(\mathcal{G}(n,d))$ is a.a.s. $k_d, k_d+1$, or $k_d+2$ for the same $k_d$. Kemkes et al.~\cite{KPW10} improved that result to $k_d$ or $k_d+1$, and Coja-Oghlan et al.~\cite{CEH13} proved that the chromatic number is concentrated at $k_d$ for sufficiently large $d$.

Directed random graphs have received significantly less attention than undirected models. The aforementioned Kostochka, Sopena, and Zhu bound, $\chio(\vec{G}) \le 2d^22^d$ for oriented graphs with maximum degree $d$, also serves as an upper bound for $\chio(\vec{\mathcal{G}}(n,d))$. Chebolu and Frieze considered Hamilton cycles in random lifts of directed graphs~\cite{CF08}. Bensmail, Duffy, and Sen~\cite{BDS17} noted that a.a.s. $\chio(\vec{\mathcal{G}}(n,p=\frac{1}{2})) = n$. (We expand on this observation in Proposition~\ref{prop:gnp_large_p}.) For $d=1$ or $2$, $\vec{\mathcal{G}}(n,d)$ can be readily calculated exactly, and we do so in Section~\ref{ssec:vsparse_vdense}. Unlike in any known undirected case, $\chio(\vec{\mathcal{G}}(n,2))$ is not concentrated in a single value; instead, a.a.s. $\chio(\vec{\mathcal{G}}(n,2)) \in \{4,5\}$ and each value occurs with positive probability. We believe this difference suggests the oriented chromatic number of directed graphs deserves further study.

\textbf{Our contribution.} We extend the literature on undirected graph models to the directed case by analyzing the chromatic number of the random graph models $\vec{\mathcal{G}}(n,p=\frac{d}{n})$ and $\vec{\mathcal{G}}(n,d)$.

A `doubly regular' tournament $T$ is a tournament in which every vertex has
out-degree $\frac{n-1}{2}$ and for every pair of vertices $v,w$ there are
$\frac{n-3}{4}$ vertices $u$ such that $v \to u$ and $w \to u$.

Now set
\begin{equation}\label{eq:uk}
u_k = \frac{2\log k}{\log k - \log(\frac{k-1}{2})} < \frac{2}{\log 2}\log k
\end{equation}
and
\begin{equation}\label{eq:lk}
\ell_k = \frac{2(k-1)^3}{k(k+1)(k-2)}\log(k-1) \ge 2\left(1-\frac{2}{k+1}\right)\log(k-1).
\end{equation}

The main results of this paper are
\begin{theorem}\label{thm:gnp_main}
Let $d > 0$ be a real number and let $k_1, k_2 \in \mathbb{Z}$ such that there exists a doubly regular tournament of order $k_2$ and $d \in (u_{k_1-1}, \ell_{k_2}]$. Then a.a.s. $\chio(\vec{\mathcal{G}}(n,p=\frac{d}{n})) \in [k_1,2k_2+11]$.
\end{theorem}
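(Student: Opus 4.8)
The plan is to prove the two bounds separately and conjunctively: a.a.s.\ $\chio(\vec{\mathcal{G}}(n,d/n)) \ge k_1$, and a.a.s.\ $\chio(\vec{\mathcal{G}}(n,d/n)) \le 3k_2+11$.

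\emph{Lower bound.} Since $\chio(\vec G) \le k$ holds exactly when $\vec G$ admits a homomorphism to one of the finitely many tournaments on $k := k_1-1$ vertices, it is enough to fix such a tournament $T$ and show that the expected number of homomorphisms $\vec{\mathcal{G}}(n,d/n)\to T$ is $o(1)$; a union bound over $T$ and Markov's inequality then give $\chio \ge k_1$. A direct first moment in $\vec{\mathcal{G}}(n,d/n)$ is dominated by atypically sparse instances and only rules out homomorphisms for $d$ above roughly $4\log k$, which is too weak. Instead I would first reveal the number of edges $m := e(\vec{\mathcal{G}}(n,d/n))$, which is concentrated at $(1+o(1))\tfrac{dn}{2}$, and argue conditionally: given $m$ edges, the graph is a uniformly random $m$-edge graph carrying a uniform random orientation. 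For a fixed colour map $c\colon V \to V(T)$ with class-size profile $\mathbf{x}$, the probability that $c$ is a homomorphism is at most $(1-\|\mathbf{x}\|_2^2)^m\,2^{-m}$ (every edge must land between two classes, and then be oriented the one way that agrees with $T$). Multiplying by the $e^{(1+o(1))nH(\mathbf{x})}$ maps of a given profile, where $H$ is the entropy, and optimising, the relevant exponent $H(\mathbf{x}) + \tfrac{d}{2}\log\tfrac{1-\|\mathbf{x}\|_2^2}{2}$ is concave in $\mathbf{x}$ and hence maximised at the uniform profile, where it equals $\log k - \tfrac{d}{2}\log\tfrac{2k}{k-1}$. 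This is negative precisely when $d > \tfrac{2\log k}{\log(2k/(k-1))} = u_k$, i.e.\ exactly under the hypothesis $d > u_{k_1-1}$, so the conditional first moment tends to $0$ for every $m$ in the concentration window, completing the lower bound.

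\emph{Upper bound.} Here I would exploit the sparse structure of $\vec{\mathcal{G}}(n,d/n)$. A.a.s.\ the graph consists of a $2$-core of $\Theta(n)$ vertices of bounded degeneracy together with forests hanging off it, and all but boundedly many of its ``dangerous'' local configurations — short cycles lying close together, vertices of degree far exceeding $d$ — can be deleted; the deleted part spans $o(n)$ vertices, has bounded complexity, and can be absorbed into a bounded number of extra colour classes, which accounts for the additive $11$. The rest I would map into a tournament built out of a doubly-regular tournament $T_0$ of order $k_2$: doubly-regularity forces that, for any two already-coloured neighbours of a vertex, the set of permissible colours has size $(1\pm o(1))k_2/4$ and is in particular nonempty, which lets one extend a partial homomorphism along a carefully chosen vertex ordering as long as the local densities encountered stay below a threshold; gluing in the forest periphery and performing the blow-up needed to combine the pieces into a single tournament costs the factor $3$. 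The precise density regime in which the extension never gets stuck is $d \le \ell_{k_2}$.

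The upper bound is, I expect, the main obstacle. Two points are delicate: (i) showing that the part of $\vec{\mathcal{G}}(n,d/n)$ which the doubly-regular tournament cannot absorb is genuinely of bounded complexity — in particular that the vertices of degree $\omega(1)$ (which do occur, up to degree $\Theta(\log n/\log\log n)$) do not force unboundedly many colours, which means they must be coloured \emph{before} most of their neighbours rather than after; and (ii) controlling the extension (or second-moment) computation finely enough that the threshold emerges as $\ell_{k_2}$ rather than something weaker, since the random orientation injects extra variance with no analogue in the classical chromatic-number arguments for $\mathcal{G}(n,d/n)$. Reconciling this with the clean combinatorial target $3k_2+11$ is the crux of the proof.
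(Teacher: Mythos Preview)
Your lower bound is essentially correct and matches the paper: condition on the edge count (concentrated near $dn/2$), then a first moment over colourings and target tournaments gives exponent $\log k - \tfrac{d}{2}\log\tfrac{2k}{k-1}$, negative exactly when $d > u_k = u_{k_1-1}$. The paper routes this through the multigraph model $\vec{\mathcal{M}}(n,m)$ and a simplicity transfer, but the content is identical.

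Your upper bound, however, misidentifies the mechanism. The paper does \emph{not} colour greedily using extension properties of doubly-regular tournaments; doubly regularity enters only because the Kronecker square $T_k^{\otimes 2}$ has second eigenvalue exactly $\tfrac{k+1}{2}$, which is what makes the second-moment optimisation (Proposition~\ref{prop:generalized_AN}, applied via Corollary~\ref{cor:opt_doubly_reg_tour}) succeed for $d \le \ell_{k_2}$. The actual structure is: (i) a second-moment calculation shows that $\vec{\mathcal{M}}(n,cn)$ admits an equitable $T_{k_2}$-colouring \emph{with positive probability} (not a.a.s.) when $2c < \ell_{k_2}$; (ii) a {\L}uczak-style concentration argument upgrades ``positive probability of $\chio \le k$'' to ``a.a.s.\ $\chio \le 3k+11$''. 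The $3$ and the $11$ both come from step~(ii): a.a.s.\ all but $O(\sqrt{n}\log n)$ vertices are $k$-colourable; the exceptional set is grown to a set $U_t$ whose external neighbourhood is independent and splits into pure in-neighbours and pure out-neighbours (costing two fresh copies of the $k$ colours, hence $3k$), while $U_t$ itself has every subgraph of average degree below $3$ and so is $11$-oriented-colourable by a theorem of Borodin, Kostochka, Ne\v{s}et\v{r}il, Raspaud and Sopena.

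Your greedy sketch could not recover the threshold $\ell_{k_2}$ even in principle: a doubly-regular tournament guarantees roughly $k/4$ common out-neighbours for any \emph{pair} of vertices, but a vertex with $d$ already-coloured neighbours needs a colour compatible with all of them, and iterating the halving only gives $k \gtrsim 2^d$ --- precisely the Kostochka--Sopena--Zhu bound the paper is improving upon. Getting down to $k \approx e^{d/2}$ genuinely requires the second moment, and converting ``positive probability'' into ``a.a.s.'' genuinely requires the {\L}uczak wrapper; neither is available from a direct structural decomposition of $\vec{\mathcal{G}}(n,d/n)$.
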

and
\begin{theorem}\label{thm:gnd_main}
Let $d\ge 2$ be an integer and let $k_1, k_2 \in \mathbb{Z}$ such that there exists a doubly regular tournament of order $k_2$ and $d \in (u_{k_1-1}, \ell_{k_2}]$. Then a.a.s. $\chio(\vec{\mathcal{G}}(n,d)) \in [k_1,2k_2+11]$.
\end{theorem}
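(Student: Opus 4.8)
The plan is to transfer Theorem~\ref{thm:gnp_main} to the regular model, following the by-now-standard strategy of comparing $\vec{\mathcal{G}}(n,d)$ with $\vec{\mathcal{G}}(n,p)$ at the threshold $p = d/n$ via a contiguity/sandwiching argument. Concretely, I would split the claim into the lower bound $\chio \ge k_1$ and the upper bound $\chio \le 3k_2 + 11$, and handle each by relating the $d$-regular model to an appropriate Erd\H{o}s--R\'enyi model. For the lower bound, the key observation is monotonicity: $\chio(\vec{G})$ cannot decrease when arcs are added, so it suffices to show $\vec{\mathcal{G}}(n,d)$ a.a.s.\ \emph{contains} (a subgraph distributed as) an orientation of $\mathcal{G}(n,p')$ for some $p' = d'/n$ with $d' \in (u_{k_1-1}, \ell_{k_2}]$ still forcing $\chio \ge k_1$; here one invokes the classical sandwich theorem of Kim--Vu (or the cruder containment $\mathcal{G}(n,d) \supseteq \mathcal{G}(n,p')$ for $p'$ slightly below $d/n$) together with the fact — which must be extracted from the proof of Theorem~\ref{thm:gnp_main} — that the lower bound $k_1$ is robust to shrinking $d$ slightly within the stated interval.

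For the upper bound I would argue in the reverse direction: a $d$-regular graph has exactly $dn/2$ edges, the same as the expected number in $\mathcal{G}(n,d/n)$, and conditioning $\mathcal{G}(n,m)$ with $m = dn/2$ on being $d$-regular changes probabilities by at most a polynomial factor (this is where one cites the standard small-subgraph-conditioning or configuration-model estimates, e.g.\ that $\pr[\mathcal{G}(n,m)\text{ is }d\text{-regular}] = n^{-O(1)}$, or more precisely $e^{-O(d^2)}$-type bounds). Since the event $\{\chio(\vec{\mathcal{G}}) \le 3k_2+11\}$ holds with probability $1 - o(n^{-c})$ for every constant $c$ in the $\vec{\mathcal{G}}(n,p)$ model — one needs the failure probability in Theorem~\ref{thm:gnp_main} to be not merely $o(1)$ but superpolynomially small, which should follow from the martingale/second-moment concentration used there, possibly after tightening — the same event survives the polynomial conditioning loss and holds a.a.s.\ in $\vec{\mathcal{G}}(n,d)$. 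The random orientation is handled uniformly throughout: in both models each edge is oriented by an independent fair coin, so the orientation step commutes with all of these graph-model comparisons.

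The main obstacle, and the step I expect to require the most care, is the \emph{quality of concentration} in Theorem~\ref{thm:gnp_main}: the sandwiching arguments above consume a polynomial (in the conditioning direction) or constant-factor-in-$d$ (in the Kim--Vu direction) slack, so I would need the proof of Theorem~\ref{thm:gnp_main} to deliver exponentially small failure probabilities for the upper bound and to be stable under perturbing $d$ by a $1+o(1)$ factor for the lower bound. If the $\vec{\mathcal{G}}(n,p)$ proof only yields $o(1)$ failure probability, then one instead uses the full Kim--Vu sandwich $\mathcal{G}(n,p_1) \subseteq \mathcal{G}(n,d) \subseteq \mathcal{G}(n,p_2)$ with $p_i = (1\pm o(1))d/n$ and argues that both $\chio$ bounds are insensitive to the $o(1)$ change in $d$ — which is exactly why the hypotheses are phrased with the open interval $(u_{k_1-1}, \ell_{k_2}]$ rather than at the endpoints, giving room to absorb the perturbation. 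A secondary technical point is verifying that the specific structural tools from the $\vec{\mathcal{G}}(n,p)$ proof (expansion of small sets, the doubly-regular tournament target, absence of dense spots) all have analogues that hold a.a.s.\ in $\vec{\mathcal{G}}(n,d)$, but these are routine for the regular model and I would dispatch them by citing the standard configuration-model computations rather than re-deriving them.
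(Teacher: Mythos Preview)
Your approach has a genuine gap. Both transfer mechanisms you propose fail for fixed constant $d$, which is the regime of this theorem.

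The Kim--Vu sandwich $\mathcal{G}(n,p_1) \subseteq \mathcal{G}(n,d) \subseteq \mathcal{G}(n,p_2)$ with $p_i = (1\pm o(1))d/n$ requires $d \to \infty$ (in fact $d \gg \log n$ in the original result). For fixed $d$ the containment $\mathcal{G}(n,p_1) \subseteq \mathcal{G}(n,d)$ is impossible a.a.s.: the maximum degree of $\mathcal{G}(n,d'/n)$ for any fixed $d'>0$ is $\Theta(\log n/\log\log n)$, so it cannot sit inside a $d$-regular graph. Your alternative, conditioning $\mathcal{G}(n,m=dn/2)$ on being $d$-regular, is not a polynomial loss but an exponential one: for fixed $d$ the degrees in $\mathcal{G}(n,m)$ are approximately i.i.d.\ Poisson$(d)$, so the probability that all $n$ vertices have degree exactly $d$ is $e^{-\Theta(n)}$. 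Since the upper-bound side of Theorem~\ref{thm:gnp_main} ultimately comes from a second-moment/Paley--Zygmund argument followed by the \L{}uczak window, the failure probability is only $o(1)$ (at best $e^{-n^{o(1)}}$ from the martingale step), nowhere near enough to survive an $e^{-\Theta(n)}$ conditioning.

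The paper does not transfer from $\vec{\mathcal{G}}(n,p)$ at all. It proves Theorem~\ref{thm:gnd_main} by working directly in the oriented configuration model $\vec{\mathcal{C}}(n,d)$: a first-moment argument for the lower bound (Proposition~\ref{prop:gnd_lb}), a separate second-moment argument for the upper bound giving positive probability of a $k_2$-colouring (Proposition~\ref{prop:gnd_ub}), and then a \L{}uczak-style window argument (Proposition~\ref{prop:gnd_window}) to turn positive probability into a.a.s.\ at the cost of the factor $3k_2+11$. The final step is the standard observation that $\vec{\mathcal{C}}(n,d)$ is simple with probability bounded away from zero (namely $\exp(-\frac{1}{2}(d-1)-\frac{1}{4}(d-1)^2)$), so any a.a.s.\ event in $\vec{\mathcal{C}}(n,d)$ is a.a.s.\ in $\vec{\mathcal{G}}(n,d)$. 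This configuration-model route is the correct substitute for Kim--Vu when $d$ is constant.
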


We also use Theorems~\ref{thm:gnp_main} and \ref{thm:gnd_main} to prove the following corollary:
\begin{corollary}\label{cor:main_cor}
Let $d > 1$ be given. Then a.a.s.
\[ \chio(\vec{\mathcal{G}}(n,p=\tfrac{d}{n})) \in (2^{d/2}, 4e^{d/2}+4d+15]. \]
Furthermore, if $d$ is an integer, a.a.s.
\[ \chio(\vec{\mathcal{G}}(n,d)) \in (2^{d/2}, 4e^{d/2}+4d+15] \]
as well.
\end{corollary}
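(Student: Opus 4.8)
The plan is to obtain both bounds by instantiating Theorems~\ref{thm:gnp_main} and \ref{thm:gnd_main} with $k_1$ and $k_2$ chosen as functions of $d$: we need $d\in(u_{k_1-1},\ell_{k_2}]$ and a doubly-regular tournament of order $k_2$, and we need the resulting interval $[k_1,3k_2+11]$ to lie inside $(2^{d/2},\,6e^{d/2}+6d+17]$. Theorem~\ref{thm:gnp_main} handles $\vec{\mathcal G}(n,p=d/n)$ for real $d>1$ and Theorem~\ref{thm:gnd_main} handles $\vec{\mathcal G}(n,d)$ for integer $d\ge 2$; this is the only probabilistic input, and the remainder of the argument is deterministic and identical in the two models.

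For the lower bound, take $k_1=\lfloor 2^{d/2}\rfloor+1$, so that $k_1>2^{d/2}$. Since $k_1-1\le 2^{d/2}$, the bound $u_k<\tfrac{2}{\log 2}\log k$ from \eqref{eq:uk} gives $u_{k_1-1}<\tfrac{2}{\log 2}\log(k_1-1)\le\tfrac{2}{\log 2}\cdot\tfrac d2\log 2=d$. Hence $d>u_{k_1-1}$, and once a valid $k_2$ is fixed below, Theorems~\ref{thm:gnp_main}/\ref{thm:gnd_main} give $\chio\ge k_1>2^{d/2}$ a.a.s. The small range $1<d<2$, where $k_1=2$ lies outside the scope of \eqref{eq:uk}, is handled directly: a.a.s.\ the graph has an edge, so $\chio\ge 2>2^{d/2}$.

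For the upper bound, let $k_2$ be the least order of a doubly-regular tournament with $2\bigl(1-\tfrac2{k_2+1}\bigr)\log(k_2-1)\ge d$; by \eqref{eq:lk} this forces $\ell_{k_2}\ge d$, and then the target inequality $3k_2+11\le 6e^{d/2}+6d+17$ is exactly $k_2\le 2e^{d/2}+2d+2$. First, since $\ell_k$ is increasing (its coefficient $\tfrac{(k-1)^3}{k(k+1)(k-2)}$ increases to $1$ and $\log(k-1)$ increases) and $\ell_k\ge 2\bigl(1-\tfrac2{k+1}\bigr)\log(k-1)$, a short calculation yields an explicit threshold $K(d)$ with $K(d)=(1+o(1))e^{d/2}$ such that every $k\ge K(d)$ satisfies the displayed inequality — concretely one may take $K(d)=e^{d/2}+d+3$ once $d$ exceeds a small constant. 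It then remains to exhibit a doubly-regular tournament whose order lies in $[\,K(d),\,2e^{d/2}+2d+2\,]$, an interval of length comparable to $e^{d/2}$. For $d$ above a constant this follows from the existence of Paley tournaments of every prime order $q\equiv 3\pmod 4$ together with an effective Bertrand-type statement for the progression $3\bmod 4$ (a prime of this form in $(x,2x)$ for $x$ large), which places $k_2$ at $(1+o(1))e^{d/2}$, comfortably below $2e^{d/2}+2d+2$. The finitely many remaining small values of $d$ are checked by hand against the small doubly-regular orders $3,7,11,15,19,23,\dots$ (the $2^a-1$ family of skew-Hadamard origin supplementing the primes) and the values $\ell_3,\ell_7,\dots$ from \eqref{eq:lk}; here the additive slack $6d+17$ is precisely what is needed, the tightest point being $d$ near $1$, where $k_2=7$ already gives $3k_2+11=32\le 6\sqrt{e}+23\approx 32.9$.

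The step I expect to be the main obstacle is this last one in the upper bound: making the threshold $K(d)$ explicit and then certifying, for every $d>1$ and in particular in the delicate small-$d$ regime near the lower endpoint, that some admissible order $k_2$ (a prime $\equiv 3\pmod 4$, or a value $2^a-1$) really does land at or below $2e^{d/2}+2d+2$. All of this is elementary given a sufficiently dense family of doubly-regular tournaments, and no ingredient beyond Theorems~\ref{thm:gnp_main} and \ref{thm:gnd_main} is required.
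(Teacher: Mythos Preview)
Your approach is correct and essentially the same as the paper's: choose $k_1-1\approx 2^{d/2}$ to trigger the lower bound via \eqref{eq:uk}, choose $k_2$ just above a threshold $\approx e^{d/2}$ so that \eqref{eq:lk} gives $d\le\ell_{k_2}$, and invoke a Bertrand-type result for primes $\equiv 3\pmod 4$ (this is Lemma~\ref{lem:bertrand_postulate_dr_tourn} in the paper) to land on an admissible doubly-regular order. The paper streamlines your execution in one respect: rather than splitting into large $d$ plus a hand check for small $d$, it verifies directly that the single choice $k'=e^{d/2}+d+1$ satisfies $d<2\bigl(1-\tfrac{2}{k'+1}\bigr)\log(k'-1)$ for \emph{all} $d>1$ (this reduces to the elementary inequality $\log(1+x)>x/(1+x)$ with $x=de^{-d/2}$), after which Lemma~\ref{lem:bertrand_postulate_dr_tourn} gives $k_2\le 2k'=2e^{d/2}+2d+2$ on the nose --- exactly the bound you need, whereas your tentative $K(d)=e^{d/2}+d+3$ overshoots by a few and forces the case analysis you flagged.
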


Corollary~\ref{cor:main_cor} allows us to contrast our results with the undirected case. For all $d$, we know $\mathcal{G}(n,p=\frac{d}{n})$ and $\mathcal{G}(n,d)$ are concentrated in at most two values. The gap in our directed result is exponential in $d$. While the bounds of Corollary~\ref{cor:main_cor} may be improvable, the case $d=2$ suggests the oriented chromatic number may not be concentrated in as small a set of values as the unoriented cases.

The lower bound on $\chio$ given by Kostochka, Sopena and Zhu~\cite{KSZ97} in fact showed that for $d$ fixed and $n$ large, every graph $G$ with $n$ vertices and average degree $d$ has some orientation with $\chio(\vec{G}) > 2^{d/2} -1$, roughly corresponding to both lower bounds on $\chio$ in Corollary~\ref{cor:main_cor}.  Neither result follows directly from the other, but the underlying proofs are similar as the lower bound for a worst-case orientation of a fixed graph roughly comes from counting proper oriented colourings.

As part of our argument, we maximize a specific function over doubly stochastic matrices. This is common in investigations of the chromatic number of regular graphs; see~\cite{AM04, KPW10, NP21}, each of which rely on a result of Achlioptas and Naor~\cite{AN05}. However, unlike previous investigations, we need to optimize a function not covered by the original theorem of Achlioptas and Naor. We therefore extended their result in the following proposition which we believe may be of independent interest.

\begin{restatable}{proposition}{optimization} \label{prop:generalized_AN}
Let $k \ge 3$, let $G = (V, E)$ be a connected regular graph on vertex set $[k]^2$, and let $\lambda \in \mathbb{R}$ be the second-largest eigenvalue of the adjacency matrix for $G$. Then, for every $d < \frac{4|E|}{k^3 \lambda} \cdot \frac{(k-1)}{(k-2)} \log(k-1)$, the function $\gamma_d$ defined on doubly stochastic matrices by
\[
\gamma_d(\mbf A) = -\frac{1}{k} \sum_{v \in V(G)} a_v \log a_v + \frac{d}{2}\log\left(\frac{2\sum_{uv \in E(G)} a_u a_v}{k^2}\right)
\]
has a unique maximum value at $\mbf A = \frac{1}{k} J_k$ of $\gamma_d(\frac{1}{k}J_k) = \log k + \frac{d}{2}\log\left(\frac{2|E|}{k^4}\right)$.
\end{restatable}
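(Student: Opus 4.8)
The plan is to reduce the optimization over doubly-stochastic matrices to the situation covered by Achlioptas and Naor's original theorem by exploiting the regularity of $G$ and the spectral gap. First I would observe that $\gamma_d$ is the sum of an entropy term, $H(\mbf A) := -\frac{1}{k}\sum_v a_v \log a_v$, and an ``energy'' term, $\frac{d}{2}\log\bigl(\frac{2}{k^2}\sum_{uv\in E} a_u a_v\bigr)$, and that the claimed maximizer $\frac{1}{k}J_k$ has all entries equal to $1/k$. The entropy term is maximized (over all matrices with a fixed sum of entries, in particular over doubly-stochastic matrices, whose entries sum to $k$) precisely at the uniform matrix, by strict concavity of $x\mapsto -x\log x$ and Jensen. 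So the only obstacle to concluding immediately is that the energy term is \emph{not} maximized at the uniform matrix in general; the competition between the two terms is exactly what the hypothesis on $d$ controls.

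The key idea is to bound the energy term from above using the spectral information about $G$. Writing $\mbf a$ for the vector of entries $(a_v)_{v\in V}$ indexed by $[k]^2$, the energy involves the quadratic form $\mbf a^\top M \mbf a$ where $M$ is the adjacency matrix of $G$. Since $G$ is $r$-regular with $r = 2|E|/k^2$, the all-ones vector $\mbf 1$ is the top eigenvector with eigenvalue $r$, and on the orthogonal complement the form is bounded by $\lambda$ times the squared norm. Decomposing $\mbf a = \frac{\langle \mbf a,\mbf 1\rangle}{k^2}\mbf 1 + \mbf a^{\perp}$ and using that $\langle \mbf a, \mbf 1\rangle = k$ for doubly-stochastic $\mbf A$, I get
\[
\sum_{uv\in E} a_u a_v = \tfrac12\,\mbf a^\top M\mbf a \le \tfrac12\Bigl(r\cdot\tfrac{k^2}{k^4}\cdot k^2 + \lambda\|\mbf a^{\perp}\|^2\Bigr) = |E|\cdot\tfrac{1}{k^2} + \tfrac{\lambda}{2}\|\mbf a^{\perp}\|^2,
\]
so the energy term exceeds its value at $\frac1kJ_k$ by at most a quantity controlled by $\lambda \|\mbf a^{\perp}\|^2$. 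On the other hand, the entropy deficit $H(\frac1kJ_k) - H(\mbf A) = \log k - H(\mbf A)$ can be bounded \emph{below} by a multiple of $\|\mbf a^{\perp}\|^2$ — this is where the $\log(k-1)$ and the $(k-1)/(k-2)$ factors enter, presumably via a second-order/Pinsker-type estimate comparing the entropy to the $\ell_2$ distance from uniform, valid because each $a_v \in [0,1]$. Combining the two bounds, $\gamma_d(\mbf A) - \gamma_d(\frac1kJ_k) \le \bigl(\tfrac{d\lambda}{2}\cdot c_1 - c_2\bigr)\|\mbf a^{\perp}\|^2$ for explicit constants depending only on $k$ and $|E|$, and the hypothesis $d < \frac{4|E|}{k^3\lambda}\cdot\frac{k-1}{k-2}\log(k-1)$ is exactly the condition that makes the bracket negative. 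Uniqueness follows since equality forces $\|\mbf a^{\perp}\|^2 = 0$, i.e. $\mbf A = \frac1k J_k$.

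The main obstacle I anticipate is making the entropy lower bound tight enough: a crude Pinsker bound $H(\frac1kJ) - H(\mbf A) \ge \tfrac12\|\mbf a - \tfrac1k\mbf 1\|_1^2$ is too weak once converted to $\ell_2$, so I expect one needs the sharper inequality $-x\log x \le -\tfrac1k\log\tfrac1k - (\log\tfrac1k + 1)(x-\tfrac1k) - \tfrac{1}{2}\cdot\frac{(x - 1/k)^2}{\max(x,1/k)}$ or a comparable estimate that keeps the constant $\log(k-1)$ rather than $\log k$, and to check that the worst case over the range $x\in[0,1]$ produces precisely the $(k-1)/(k-2)$ correction. The connectivity hypothesis on $G$ is used to guarantee $\lambda < r$ strictly (so the spectral gap is genuine) and hence that $\mbf a^\perp = 0$ is the only way to saturate the energy bound. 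Once these elementary inequalities are pinned down, assembling them into the stated strict inequality and uniqueness is routine.
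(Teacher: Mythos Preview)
Your reduction via the spectral decomposition is exactly what the paper does: bound $2\sum_{uv\in E}a_ua_v = \mbf a^\top M\mbf a \le \frac{2|E|}{k^2} + \lambda(\|\mbf A\|_2^2 - 1)$ using Rayleigh--Ritz on the orthogonal complement of $\mathbf 1$, thereby reducing $\gamma_d$ to a function of the entropy and $\rho := \|\mbf A\|_2^2$ alone. So the first half of your plan is correct and matches the paper.

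The gap is in the second half. You propose to lower-bound the entropy deficit $\log k - H(\mbf A)$ by a multiple of $\rho - 1$ via a pointwise/Pinsker-type inequality of the form $-x\log x \le -\tfrac1k\log\tfrac1k + (\text{linear}) - c(x-\tfrac1k)^2$. No such pointwise inequality can recover the sharp constant $\tfrac{1}{k}\cdot\tfrac{k-1}{k-2}\log(k-1)$: testing at $x=1$ (which does occur, e.g.\ for permutation matrices) forces $c \le \frac{\log k - 1 + 1/k}{(k-1)^2/k}$, which for large $k$ behaves like $\tfrac{\log k - 1}{k}$ rather than the required $\tfrac{\log(k-1)}{k}$. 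The loss is precisely the ``$-1$'' that separates a crude convexity bound from the truth.

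What the paper actually uses here is the nontrivial Achlioptas--Naor machinery: their Theorem~9 gives the \emph{sharp} upper bound on $-\sum_v a_v\log a_v$ among row-stochastic matrices with fixed $\|\mbf A\|_2^2 = \rho$, attained by matrices whose rows are either uniform or take only two values $x(r), y(r)$; and their computation of the minimum of the function $\eta(x) = \bigl(f(\tfrac1k) - f(\tfrac1k + x)\bigr)/x$ (which equals $\tfrac{k-1}{k-2}\log(k-1)$, attained at $x = \tfrac{(k-2)^2}{k(k-1)}$) is what produces the exact constant in the hypothesis on $d$. This step genuinely exploits the row-stochasticity constraint, not just the global sum, and cannot be replaced by a second-order Taylor or Pinsker estimate.
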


\textbf{Outline of the argument.} It will be convenient to work with slightly different random graph models. Let $\vec{\mathcal{M}}(n,m)$ be a random multigraph model on graphs of order $n$ generated by selecting one of the $n^2$ ordered pairs of vertices (chosen with replacement) and adding a directed edge (or loop) from the first vertex to the second and then repeating that process to add a total of $m$ arcs. The configuration model $\mathcal{C}(n,d)$, introduced by Bollob\'as~\cite{Bo80}, is a random $d$-regular multigraph model in which a random perfect matching is chosen for a set of $nd$ vertices partitioned into $n$ groups of $d$ points uniformly among all $\frac{(dn)!}{(dn/2)!2^{dn/2}}$ perfect matchings. By contracting each part of the partition to a vertex and adding edges between two vertices if their corresponding parts held adjacent vertices (or a loop if two vertices of the same part were adjacent), one generates a $d$-regular multigraph with loops on $n$ vertices. By orienting each edge before contraction we get the directed configuration model $\vec{\mathcal{C}}(n,d)$.

Note that $\vec{\mathcal{M}}(n,m)$ and $\vec{\mathcal{C}}(n,d)$ are models of \emph{directed} graphs, not \emph{oriented} graphs, as they may contain loops or directed two-cycles. As such, we do not define the oriented chromatic number of these models. Instead, we use random variables to count the number of proper oriented $k$-colourings of $\vec{\mathcal{M}}(n,m)$ and $\vec{\mathcal{C}}(n,d)$. If $\vec{M}\sim \vec{\mathcal{M}}(n,m)$ or $\vec{C} \sim \vec{\mathcal{C}}(n,d)$ contains a loop or directed two-cycle, then these variables equal zero for all $k$. We call such directed graphs \emph{uncolourable}. If a directed graph is not uncolourable, we say it is \emph{colourable}. Note that a colourable directed graph may contain multiple edges as long as those edges are oriented in the same direction. When appropriate, we distinguish those directed graphs which are orientations of simple graphs, which are colourable, from colourable directed graphs in general. In Section~\ref{sec:main}, we relate $\vec{\mathcal{G}}(n,p)$ to $\vec{\mathcal{M}}(n,m)$ and $\vec{\mathcal{G}}(n,d)$ to $\vec{\mathcal{C}}(n,d)$ by conditioning on the event that $\vec{M} \sim \vec{\mathcal{M}}(n,m)$ or $\vec{C} \sim \vec{\mathcal{C}}(n,d)$ is an oritentation of a simple graph and use the results regarding the number of proper oriented $k$-colourings of $\vec{\mathcal{M}}(n,m)$ and $\vec{\mathcal{C}}(n,d)$ to bound $\chio(\vec{\mathcal{G}}(n,p))$ and $\chio(\vec{\mathcal{G}}(n,d))$. 

In Section~\ref{sec:optimization} we prove
Proposition~\ref{prop:generalized_AN}. We develop some results regarding
doubly regular tournaments in Section~\ref{sec:tour_prods}. In Section~\ref{sec:lb} we use first moment arguments to find values of $k$ for which there are a.a.s.~zero proper oriented $k$-colourings of $\vec{\mathcal{M}}(n,m)$ and $\vec{\mathcal{C}}(n,d)$. Determining the smallest values for which such colourings exist with positive probability relies on more delicate second moment arguments. First, we narrow our
consideration to certain nice colourings, simplifying the second
moment calculations. We conduct the second moment calculations in
Section~\ref{sec:ub} which give a value $k$ for which there exist
$k$-oriented-colourings of $\vec{\mathcal{M}}(n,m)$ and $\vec{\mathcal{C}}(n,d)$
with positive probability. In Section~\ref{sec:window} we prove that, conditioned on $\vec{C} \sim \vec{\mathcal{C}}(n,d)$ being an orientation of a simple graph, a.a.s.~there is $k=k(d)$ such that $\chio(\vec{C})$ is contained in an interval that is linear in $k$, as well as an analgous result for $\vec{\mathcal{M}}(n,m)$. We prove
Theorems~\ref{thm:gnp_main} and \ref{thm:gnd_main} in Section~\ref{sec:main} by
relating $\vec{\mathcal{G}}(n,p)$ to $\vec{\mathcal{M}}(n,m)$ and
$\vec{\mathcal{G}}(n,d)$ to $\vec{\mathcal{C}}(n,d)$ before concluding with
additional thoughts in Section~\ref{sec:further_disc}.

\textbf{Notation and conventions.} As previously mentioned, we say that a family of events dependent on $n$ happens asymptotically almost surely, abbreviated a.a.s., if the probability of that event approaches 1 as $n$ approaches infinity. For any integers $m, n$, $I_n$ is an $n \times n$ square identity matrix with entries $1$ along the main diagonal and zero elsewhere, $J_{m,n}$ is an $m \times n$ matrix with every entry $1$, and $0_{m,n}$ is an $m \times n$ matrix with every entry 0. If $m=n$, we write $J_n$ or $0_n$. We take $0^0 = 1$ and $0 \log 0 = 0$. There are a few occasions where a $k \times k$ matrix is treated as interchangeable with a vector of length $k^2$ in a tensor product space. Such instances will be highlighted when it may cause confusion. Notation for tensor products of vector spaces and matrices follows, for example, that used in~\cite{sR08}.  Use of the tensor product (sometimes called Kronecker product of matrices in the context of graph products) follows usage in~\cite{GR01}. We frequently use Stirling's formula $n! = \xi(n)(n/e)^n$ where $\xi(n)$ is a function satisfying $1 \le \xi(n)$ and $\lim_{n \to \infty} \xi(n) = \sqrt{2\pi n}$.

\section{Optimization Over Doubly Stochastic Matrices} \label{sec:optimization}

In this section we prove Proposition~\ref{prop:generalized_AN} which describes how to maximize certain functions over doubly stochastic matrices. This optimization result generalizes that given by Achlioptas and Naor~\cite{AN05} in their proof of the concentration of the usual chromatic number in random graphs.

For every $k \ge 3$ and constant $c$, Achlioptas and Naor define a function $g_c$ on $k \times k$ doubly stochastic matrices $\mbf A = (a_{ij})$ by
\begin{equation}\label{eq:AN-function}
g_c(\mbf A) = - \frac{1}{k} \sum_{i = 1}^k \sum_{j = 1}^k a_{ij}\log a_{ij} + c \log\left(1 - \frac{2}{k} + \frac{1}{k^2}\sum_{i = 1}^k\sum_{j = 1}^k a_{ij}^2\right)
\end{equation}
and show that for $c \le \frac{(k-1)^3}{k(k-2)}\log(k-1)$, $g_c$ is maximized in its domain by the matrix $\frac{1}{k} J_k$ that has all entries equal to $\frac{1}{k}$.  The first sum in the expression for $g_c$ is viewed as the `entropy part' of the function, while the second term is viewed as a function of the 2-norm of $\mbf A$: $||\mbf A||_2^2 = \sum_{i = 1}^k \sum_{j = 1}^k a_{ij}^2$. In their paper, Achlioptas and Naor, in fact, give a much more general result, replacing the first sum with one of the form $\frac{1}{k} \sum_{i = 1}^k \sum_{j = 1}^k h(a_{ij})$ where $h$ satisfies a number of conditions involving its derivatives.

We use their techniques to show a generalization involving the second term,
replacing the 2-norm with an expression involving the `Lagrangian' of a regular
graph, a function over its edges. The proof uses the key tools developed by Achlioptas and Naor in their
optimization result, which are stated here for reference with some of their
notation.

For any $k > 1$, define a function $f: [1/k, 1] \to \mathbb{R}$ as follows. For any $r \in [1/k, 1]$, set 
\[
x(r) = \frac{1 + \sqrt{(k-1)(kr-1)}}{k}, \qquad \text{and} \qquad y(r) = \frac{1-x(r)}{k-1} =  \frac{k-1 -\sqrt{(k-1)(kr-1)}}{k(k-1)}
\]
defined to be the unique values $x, y \ge 0$ satisfying $x + (k-1)y = 1$ and $x^2 + (k-1)y^2 = r$.  Set
\begin{equation}\label{eq:single-row-opt-fn}
f(r) = -x(r)\log x(r) - (k-1) y(r)\log y(r).
\end{equation}
\begin{theorem}[see {\cite[Theorem 9]{AN05}}]\label{thm:AN-entropy-ub}
Let $k > 1$ and $1 \le \rho  \le k$.  For every row stochastic matrix $\mbf A$ satisfying $||\mbf A||_2^2 = \rho$,
\begin{equation}
-\sum_{i = 1}^k \sum_{j = 1}^k a_{ij} \log a_{ij}
\le \max\left\{ m \log k + (k-m) f\left(\frac{k\rho-m}{k(k-m)}\right) :\ 0 \le m \le \frac{k(k-\rho)}{k-1}  \right\}.
\end{equation}
\end{theorem}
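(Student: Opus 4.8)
\textbf{Proof proposal for Proposition~\ref{prop:generalized_AN}.}

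The plan is to reduce the bound to a one-dimensional optimization by exploiting that both sides decompose additively over the rows of $\mbf A$. Since $\mbf A$ is row stochastic, each row $\mathbf a_i$ is a probability vector on $[k]$; write $r_i = \sum_j a_{ij}^2$ for its squared $2$-norm, so $r_i \in [1/k, 1]$ and $\sum_i r_i = \rho$, while $-\sum_{i,j} a_{ij}\log a_{ij} = \sum_i H(\mathbf a_i)$ with $H(\mathbf p) = -\sum_j p_j \log p_j$ the Shannon entropy. Then the theorem follows from two claims: (i) a single row satisfies $H(\mathbf a_i) \le f(r_i)$; and (ii) for all $r_1,\dots,r_k \in [1/k,1]$ with $\sum_i r_i = \rho$,
\[
\sum_{i=1}^k f(r_i) \ \le\ \max\left\{ m\log k + (k-m)\, f\!\left(\tfrac{k\rho - m}{k(k-m)}\right) :\ 0 \le m \le \tfrac{k(k-\rho)}{k-1}\right\}.
\]

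For (i), I would solve $\max\{H(\mathbf p) : \mathbf p \ge 0,\ \sum_j p_j = 1,\ \sum_j p_j^2 = r\}$ by Lagrange multipliers. Stationarity forces $-\log p_j - 1 = \mu + 2\nu p_j$ on the support of $\mathbf p$, so $t \mapsto t\,e^{2\nu t}$ takes the same value at every positive coordinate; since this map is injective for $\nu \ge 0$ and unimodal for $\nu < 0$, the positive entries of any critical $\mathbf p$ assume at most two distinct values, and a short perturbation argument rules out zero entries when $r < 1$. Among the resulting candidates — "$a$ coordinates equal to $u$, the other $k-a$ equal to $w$", with $au + (k-a)w = 1$ and $au^2 + (k-a)w^2 = r$ — a direct comparison shows the entropy is largest when $a = 1$, i.e.\ for the single-outlier distribution $(x(r), y(r), \dots, y(r))$ of \eqref{eq:single-row-opt-fn}; this identifies the maximum with $f(r)$ and fixes the branch $x(r) \ge 1/k$.

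For (ii), I would study $f$ directly on $[1/k,1]$: one checks $f(1/k) = \log k$, $f(1) = 0$, that $f$ is strictly decreasing, and — the key structural fact — that $f''$ changes sign exactly once, so $f$ is convex on an initial interval $[1/k, t^\ast]$ and concave on $[t^\ast, 1]$. Given this shape, maximizing $\sum_i f(r_i)$ subject to $\sum_i r_i = \rho$ and $r_i \in [1/k,1]$ becomes a bang-bang problem: in an optimal configuration the $r_i$ cannot take two distinct values strictly inside $(1/k,1)$ and no coordinate equals $1$, so some integer number $m$ of rows sit at the endpoint $1/k$ (contributing $\log k$ apiece) while the remaining $k-m$ share a common value, which $\sum_i r_i = \rho$ forces to be $s_m = \frac{k\rho - m}{k(k-m)}$; the feasibility requirement $s_m \le 1$ is precisely $m \le \frac{k(k-\rho)}{k-1}$. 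Because $m$ must be an integer one cannot simply pass to the concave envelope of $f$ and conclude all $r_i$ are equal; instead the supremum of $\sum_i f(r_i)$ over feasible configurations is exactly the displayed discrete maximum over $m$, which establishes (ii).

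The main obstacle is the analysis underpinning (ii): computing $f'$ and $f''$ from the explicit formulas for $x(r)$ and $y(r)$, proving that $f$ has exactly one inflection point, and then using this convex-then-concave profile to show rigorously that an optimal row configuration has the "$m$ uniform rows, $k-m$ equal rows" form — in particular ruling out point-mass rows and mixed interior values via a second-order exchange argument. The comparison of two-valued shapes in (i) is a secondary technical point. These are exactly the ingredients assembled in the proof of Theorem~9 of~\cite{AN05}, which may alternatively be invoked verbatim.
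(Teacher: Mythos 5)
The paper offers no proof of Theorem~\ref{thm:AN-entropy-ub}: it is imported directly from Achlioptas and Naor~\cite[Theorem 9]{AN05} (the paper only adds the remark that the maximizer has the stated form but the optimal $m$ is not identified), so your closing option of invoking that theorem verbatim is exactly what the paper does. Your from-scratch outline — bounding each row's entropy by $f$ of its squared $2$-norm and then maximizing $\sum_i f(r_i)$ using the convex-then-concave shape of $f$ and an exchange argument — is a reasonable reconstruction of the argument in~\cite{AN05}, but be aware that its load-bearing steps (the single-inflection property of $f$, ruling out rows with zero entries and mixed interior values) are asserted rather than carried out, which is acceptable only because, like the paper, you ultimately fall back on citing~\cite{AN05}; also note your header mislabels the target as Proposition~\ref{prop:generalized_AN} when the statement being proved is Theorem~\ref{thm:AN-entropy-ub}.
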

Note that, in their proof, Achlioptas and Noar~\cite{AN05} show that the function $-\sum_{i = 1}^k\sum_{j = 1}^k a_{ij}\log a_{ij}$, subject to the condition $||\mbf A||_2^2 = \rho$, achieves a unique maximum that is of the form given in Theorem~\ref{thm:AN-entropy-ub}, but the corresponding value of $m$, determined by $k$ and $\rho$, is not determined.
\begin{lemma}[see {\cite[Proof of Theorem 7]{AN05}}]\label{lem:eta-min}
For any $k \ge 3$, define a function $\eta: [0, 1-1/k] \to \mathbb{R}$ by 
\[
\eta(x) = \begin{cases}
	k/2	&\text{if $x = 0$}\\
	\frac{f\left(\frac{1}{k}\right) - f\left(\frac{1}{k} + x\right)}{x}	&\text{if $x > 0$}
\end{cases}.
\]
Then, $\eta$ achieves its unique global minimum at $x = \frac{(k-2)^2}{k(k-1)}$ with a value of $\eta\left( \frac{(k-2)^2}{k(k-1)}\right) = \frac{k-1}{k-2} \log(k-1)$.
\end{lemma}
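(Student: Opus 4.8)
\textbf{Proof proposal for Lemma \ref{lem:eta-min}.}

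The plan is to analyze $\eta(x) = \bigl(f(1/k) - f(1/k+x)\bigr)/x$ directly by differentiation, treating it as a single-variable calculus problem on the interval $(0, 1-1/k]$, and to separately check the boundary behaviour at $x = 0$. First I would record explicit formulas for $f$ and its derivative. From the definitions, if we write $r = 1/k + x$, then $kr - 1 = kx$, so $x(r) = \frac{1 + \sqrt{(k-1)kx}}{k}$ and $y(r) = \frac{1}{k} - \frac{\sqrt{(k-1)kx}/(k-1)}{k} = \frac{k-1 - \sqrt{(k-1)kx}}{k(k-1)}$. Differentiating $f(r) = -x(r)\log x(r) - (k-1)y(r)\log y(r)$ and using $x'(r) + (k-1)y'(r) = 0$ (which follows from $x(r)+(k-1)y(r)=1$), one gets the clean identity $f'(r) = -x'(r)\bigl(\log x(r) - \log y(r)\bigr) = -x'(r)\log\frac{x(r)}{y(r)}$. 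Since $x'(r) = \frac{1}{2}\sqrt{\frac{k-1}{kr-1}}$, this gives a workable closed form for $f'$.

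Next, I would locate the critical points of $\eta$. Setting $\eta'(x) = 0$ for $x > 0$ is equivalent (after clearing the positive factor $x^2$) to the equation
\[
-x\, f'(1/k + x) = f(1/k) - f(1/k + x),
\]
i.e. the tangent line to the graph of $r \mapsto f(r)$ at $r = 1/k+x$, extended back to $r = 1/k$, must pass through the point $(1/k, f(1/k))$. Geometrically $\eta(x)$ is the slope of the secant line of $f$ from $1/k$ to $1/k+x$ (up to sign), so its critical points are exactly the points where this secant is tangent to $f$; concavity/convexity of $f$ controls how many such points there are. I would show $f$ has the right single-inflection shape on $[1/k, 1]$ so that there is exactly one interior critical point, then verify by direct substitution that $x^* = \frac{(k-2)^2}{k(k-1)}$ solves the tangency equation. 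At $r^* = 1/k + x^* = \frac{(k-2)^2 + (k-1)}{k(k-1)} = \frac{k^2 - 3k + 3}{k(k-1)}$ one computes $kr^* - 1 = \frac{(k-2)^2}{k-1}$, so $\sqrt{(k-1)(kr^*-1)} = k-2$, giving $x(r^*) = \frac{k-1}{k}$ and $y(r^*) = \frac{1}{k(k-1)}$; hence $f(r^*) = -\frac{k-1}{k}\log\frac{k-1}{k} - (k-1)\cdot\frac{1}{k(k-1)}\log\frac{1}{k(k-1)}$, and with $f(1/k) = \log k$ (the value at the uniform point $x(1/k)=y(1/k)=1/k$) a short simplification yields $\eta(x^*) = \frac{f(1/k) - f(r^*)}{x^*} = \frac{k-1}{k-2}\log(k-1)$, as claimed.

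Finally I would handle the endpoints and the definition at $0$. As $x \to 0^+$, $\eta(x) \to -f'(1/k)$ by L'H\^opital; since $x(1/k) = y(1/k) = 1/k$ we have $\log\frac{x(r)}{y(r)} = 0$ at $r = 1/k$ but $x'(r) \to \infty$, so one must expand $\log\frac{x(r)}{y(r)}$ to first order in $\sqrt{x}$ — it vanishes linearly in $\sqrt{x}$ while $x'(r)$ blows up like $1/\sqrt{x}$ — and the product tends to the finite value $-k/2$, matching the stated definition $\eta(0) = k/2$ (the sign working out because $f$ decreases from $1/k$). Comparing $\eta(0) = k/2$, the endpoint value $\eta(1-1/k)$, and the interior critical value $\frac{k-1}{k-2}\log(k-1)$, and using that the interior critical point is the unique one, gives that $x^*$ is the global minimiser; the strict inequality $\frac{k-1}{k-2}\log(k-1) < k/2$ for $k \geq 3$ can be checked by an elementary estimate. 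The main obstacle will be the shape analysis of $f$ — establishing that $f$ is concave-then-convex (or whatever precise convexity pattern holds) on $[1/k,1]$ so that the tangency equation has a unique interior solution — since the naive second derivative of $f$ is messy; I expect the cleanest route is to substitute $t = \sqrt{(k-1)(kr-1)}$ as a new variable, under which $x, y$ become linear in $t$ and $f'$ becomes a manageable function of $t$, reducing the uniqueness of the critical point to monotonicity of an explicit one-variable function.
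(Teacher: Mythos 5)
The paper itself does not prove this lemma: it is imported verbatim from the proof of Theorem~7 in~\cite{AN05} (the surrounding text even remarks that the uniqueness of the minimum "is not stated, but is shown in the proof" there), so there is no in-paper argument to compare against; your proposal has to stand on its own. The parts you actually carry out are correct: at $r^* = \frac{1}{k}+x^*$ one indeed gets $x(r^*)=\frac{k-1}{k}$, $y(r^*)=\frac{1}{k(k-1)}$, $f(1/k)=\log k$, $f(1/k)-f(r^*)=\frac{k-2}{k}\log(k-1)$, hence $\eta(x^*)=\frac{k-1}{k-2}\log(k-1)$; the identity $f'(r)=-x'(r)\log\frac{x(r)}{y(r)}$ with $x'(r)=\frac12\sqrt{\frac{k-1}{kr-1}}$ is right, the tangency condition $\eta(x^*)=-f'(r^*)$ checks out since $-f'(r^*)=\frac{k-1}{2(k-2)}\log\left((k-1)^2\right)$, and the boundary limit $\eta(x)\to k/2$ as $x\to 0^+$ is correct.

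The genuine gap is that the decisive step — that the tangency equation has a unique solution in the interior and that this solution is the global minimiser — is only asserted, hedged as "concave-then-convex (or whatever precise convexity pattern holds)" and flagged as "the main obstacle". That step is the entire content of the lemma; a picture of secants touching a curve with one inflection is not yet a proof, and you would still need the small argument converting a single inflection of $f$ into a single zero of the relevant derivative. For what it is worth, the substitution you propose does close the gap cleanly: with $t=\sqrt{(k-1)(kr-1)}\in(0,k-1]$ one has $x=\frac{t^2}{k(k-1)}$ and $\log k - f = F(t) := \frac{1+t}{k}\log(1+t)+\frac{k-1-t}{k}\log\frac{k-1-t}{k-1}$, so $\eta = k(k-1)F(t)/t^2$ and critical points are zeros of $H(t):=tF'(t)-2F(t)$. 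Since $H(0)=H'(0)=0$ and $H''(t)=tF'''(t)$ with $F'''(t)=\frac1k\bigl(\frac{1}{(k-1-t)^2}-\frac{1}{(1+t)^2}\bigr)$ changing sign exactly once (at $t=\frac{k-2}{2}$), $H'$ changes sign once from negative to positive, so $H$ has exactly one positive zero; one checks $H(k-2)=0$, and the sign of $H$ shows $\eta$ is strictly decreasing then increasing, which gives both global minimality (including against $\eta(0)=k/2$, with no separate endpoint comparison needed) and uniqueness. Until that analysis (or an equivalent) is written out, the proposal is a correct plan with the hard part missing rather than a proof.
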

In~\cite{AN05}, the uniqueness of the global minimum in Lemma~\ref{lem:eta-min} is not stated, but is shown in the proof.

\begin{definition}
For any graph $G = (V, E)$, define the \emph{Lagrangian of $G$} to be a function $\mathcal{L}_G$ defined on stochastic vectors in $[0, 1]^V$ by
\[
\mathcal{L}_G(\mbf a) = \sum_{uv \in E(G)} a_u a_v.
\]
\end{definition}
Lagrangians of graphs were studied by Motzkin and Straus~\cite{MS65} who showed that $\mathcal{L}_G$ is maximized by vectors whose support is a maximum clique in $G$, with equal entries on those coordinates.

We are now ready to prove Proposition~\ref{prop:generalized_AN}, which we restate here:

\optimization*

\begin{proof}
Let $\mbf A$ be a doubly stochastic matrix.  In this proof, $\mbf A$ is treated as a vector in the tensor product space $\mathbb{R}^k \otimes \mathbb{R}^k$.  Similarly, the constant $1$ matrix $J_k$ is treated as interchangeable with the vector $1_k \otimes 1_k$.  
Let $M$ be the adjacency matrix for $G$.  Then,
\begin{align}
2 \mathcal{L}_G(\mbf A)
	&=2 \sum_{uv \in E(G)} a_u \cdot a_v  \notag\\
	&=\sum_{u \in V(G)} \sum_{v \in N(u)} a_u \cdot a_v  \notag\\ 
	&= {\mbf A}^T M {\mbf A}.
\end{align}
Let $r$ be the degree of vertices in $G$.  Since $G$ is connected and $r$-regular, the largest eigenvalue for $M$ is $r$, of multiplicity 1, spanned by the vector $J_k$.  The second-largest eigenvalue is $\lambda < r$.  Since $\mbf A$ is doubly stochastic, $J_k^T \bullet \mbf A = k$ and so $J_k^T M \mbf A = r J_k^T \mbf A = r k$.  Thus,
\begin{align}
\left({\mbf A} - \frac{1}{k}J_k\right)^T M \left({\mbf A} - \frac{1}{k} J_k\right)
	&= {\mbf A}^T M \mbf A - \left(\frac{1}{k}J_k\right)^T M \mbf A - {\mbf A}^T M \left(\frac{1}{k}J_k\right) + \frac{1}{k^2} J_k^T M J_k \notag \\
	&={\mbf A}^T M \mbf A  - 2r + \frac{1}{k^2} \cdot 2|E| \notag\\
	&={\mbf A}^T M \mbf A  - 2\cdot \frac{2|E|}{k^2} + \frac{2|E|}{k^2} \notag\\
	&={\mbf A}^T M \mbf A - \frac{2|E|}{k^2}. \label{eq:lagrangian-AminusJ}
\end{align}
Note that since $\mbf A$ is double-stochastic, $(\mbf A - \frac{1}{k}J_k) \bullet J_k = 0$ and so
\begin{equation}\label{eq:norm}
|| \mbf A - \tfrac{1}{k} J_k ||_2^2 = ||\mbf A||_2^2 - 2 \mbf A \bullet \left(\frac{1}{k} J_k \right) +  \left(\frac{1}{k} J_k \right) \bullet  \left(\frac{1}{k} J_k \right) = ||\mbf A||_2^2 - \mbf A \bullet \left(\frac{1}{k} J_k\right) = ||\mbf A||_2^2 - 1.
\end{equation}
By the Rayleigh-Ritz theorem from linear algebra, since the vector $\mbf A - \frac{1}{k} J_k$ is orthogonal to the eigenspace of the largest eigenvector for $M$, 
\begin{equation}\label{eq:RR-2nd-eval}
\left({\mbf A} - \frac{1}{k}J_k\right)^T M \left({\mbf A} - \frac{1}{k} J_k \right) \le \lambda ||\mbf A - \tfrac{1}{k}J_k||_2^2.
\end{equation}
Thus,
\begin{align}
2 \mathcal{L}_G({\mbf A}) &= {\mbf A}^T M \mbf A \notag\\
	&=\left({\mbf A} - \tfrac{1}{k}J_k\right)^T M ({\mbf A} - \tfrac{1}{k} J_k) + \frac{2|E|}{k^2}	&&\text{(by eqn.~\eqref{eq:lagrangian-AminusJ})} \notag\\
	&\le \lambda ||\mbf A - \frac{1}{k}J_k||_2^2 + \frac{2|E|}{k^2} &&\text{(by eqn.~\eqref{eq:RR-2nd-eval})} \notag\\
	&= \lambda\left(||\mbf A||_2^2 - 1\right) + \frac{2|E|}{k^2}	&&\text{(by eqn.~\eqref{eq:norm})} \label{eq:ub-lagrangian}
\end{align}
Furthermore, 
\[
2\mathcal{L}_G\left(\tfrac{1}{k}J_k\right) = \frac{2|E|}{k^2} = \lambda\left(||\tfrac{1}{k}J_k||_2^2-1\right) + \frac{2|E|}{k^2}.
\]
Thus, if the function $\nu_d$ defined by on doubly stochastic $k \times k$ matrices by 
\begin{equation}\label{eq:new-fn-optimization}
\nu_d(\mbf A) = -\frac{1}{k} \sum_{v \in V(G)} a_v \log a_v + \frac{d}{2}\log\left(\frac{\lambda}{k^2} \left(||\mbf A||_2^2 - 1 \right) + \frac{2|E|}{k^4}\right)
\end{equation}
is maximized at $\mbf A = \frac{1}{k} J_k$ for a given value of $d$, then the same is true for the function $\gamma_d$.  From here, the optimization tools developed in~\cite{AN05} are used to optimize the sum in the function $\nu_d$, subject to a fixed value of $||\mbf A||_2^2$.
In order to show that for all doubly stochastic $\mbf A$, $\nu_d(\mbf A) \le \nu_d \left(\frac{1}{k} J_k \right)$, fix the value of $||\mbf A||_2^2 = \rho \in [1, k]$.   By Theorem~\ref{thm:AN-entropy-ub}, it suffices to show that for every choice of $\rho \in [1, k]$ and $m \in \left[0, \frac{k(k - \rho)}{k-1}\right]$,
\begin{equation*}
\frac{m \log k}{k} + \frac{(k-m)}{k} f\left(\frac{k\rho-m}{k(k-m)}\right) + \frac{d}{2}\log\left(\frac{\lambda (\rho-1)}{k^2} + \frac{2|E|}{k^4}\right)
	\le \nu_d\left(\tfrac{1}{k} J_k \right)
	=\log k + \frac{d}{2}\log\left(\frac{2|E|}{k^4}\right).
\end{equation*}
Rearranging gives the condition
\begin{align*}
\frac{d}{2} \log\left(1 + \frac{\lambda(\rho - 1)}{2|E|/k^2}\right) &\le \left(1 - \frac{m}{k}\right)\left[\log k - f\left(\frac{k\rho-m}{k(k-m)}\right) \right]\\
	&=\left(1 - \frac{m}{k}\right)\left[f\left(\frac{1}{k}\right) - f\left(\frac{k\rho-m}{k(k-m)}\right) \right].
\end{align*}
Using the inequality $\log(1+x) \le x$, it thus suffices to show that for every $\rho \in [1, k]$ and $m \in \left[0, \frac{k(k - \rho)}{k-1}\right]$,
\[
d \le \frac{2 \left(1 - \frac{m}{k}\right)}{\frac{\lambda(\rho - 1)}{2|E|/k^2}}\left[f\left(\frac{1}{k}\right) - f\left(\frac{k\rho-m}{k(k-m)}\right) \right]
\]
Consider now the right hand side of the above equation.
\begin{align}
\frac{2 \left(1 - \frac{m}{k}\right)}{\frac{\lambda(\rho - 1)}{2|E|/k^2}}\left[f\left(\frac{1}{k}\right) - f\left(\frac{k\rho-m}{k(k-m)}\right) \right]
	&=\frac{2 \left(1 - \frac{m}{k}\right)}{\frac{\lambda(\rho - 1)}{2|E|/k^2}}\left[f\left(\frac{1}{k}\right) - f\left(\frac{1}{k} + \frac{k\rho-k}{k(k-m)}\right) \right] \notag\\
	&=\frac{4|E| \left(1 - \frac{m}{k}\right)}{k^2\lambda(\rho - 1)}\left[f\left(\frac{1}{k}\right) - f\left(\frac{1}{k} + \frac{\rho-1}{k(1-m/k)}\right) \right] \notag\\
	&= \frac{4|E|}{k^3\lambda} \left[\frac{f\left(\frac{1}{k}\right) - f\left(\frac{1}{k} + \frac{\rho-1}{k(1-m/k)}\right) }{\frac{\rho-1}{k(1-m/k)}}\right] \notag\\
	&=\frac{4|E|}{k^3\lambda} \eta\left(\frac{\rho-1}{k(1-m/k)}\right) \notag\\
	&\ge \frac{4|E|}{k^3\lambda} \cdot \frac{k-1}{k-2} \log(k-1). \label{eq:dbound-general}
\end{align}
The final inequality in Equation~\eqref{eq:dbound-general} follows from Lemma~\ref{lem:eta-min} since $0 \le \frac{\rho-1}{1-m/k} \le \frac{\rho - 1}{(\rho-1)/(k-1)} = k-1$ for $0 \le m \le \frac{k(k-\rho)}{k-1}$ and hence $0 \le \frac{\rho-1}{k(1-m/k)} \le 1-\frac{1}{k}$ is in the domain of $\eta$.
Thus, $d \le \frac{4|E|}{k^3\lambda} \cdot \frac{k-1}{k-2} \log(k-1)$ suffices to guarantee that $\nu_d$ is maximized at $\mbf A = \frac{1}{k} J_k$ and hence that the same holds for the function $\gamma_d$.

Since the maximum in Theorem~\ref{thm:AN-entropy-ub} and the minimum in Lemma~\ref{lem:eta-min} are unique, then $d >  \frac{4|E|}{k^3\lambda} \cdot \frac{k-1}{k-2} \log(k-1)$, the function $\nu_d$ attains a unique maximum at $\frac{1}{k}J_k$, and hence the same is true for the function $\gamma_d$.
\end{proof}
To see that Proposition~\ref{prop:generalized_AN} generalizes the optimization result for the function in Equation~\eqref{eq:AN-function}, consider the graph $G = K_k \otimes K_k$, the graph on vertex set $[k]^2$ with $(u,v) \sim (a, b)$ if{f} $u \ne a$ and $v \ne b$.  The graph $G$ is a connected $(k-1)^2$ regular graph.  Since the adjacency matrix of $G$ is $(J_k - I_k) \otimes (J_k - I_k)$ it has as eigenvalues $(k-1)^2$, $1$, and $-(k-1)$.  The eigenspace for $-(k-1)$ is the set of all vectors of the form $1_k \otimes \mbf v$ or $\mbf v \otimes 1_k$, where $\mbf v \bullet 1_k = 0$.  For any such eigenvector and any doubly stochastic matrix $\mbf A$, $(1_k \otimes \mbf v) \bullet \mbf A = (\mbf v \otimes 1_k) \bullet A = 0$ and so, decomposing $\mbf A$ into the basis of eigenvectors for the adjacency matrix of $G$ gives
\[
2 \mathcal{L}_G(\mbf A) = (k-1)^2\cdot ||\tfrac{1}{k} J_k||_2^2 + 1 \cdot ||\mbf A - \tfrac{1}{k}J_k||_2^2 = (k-1)^2 + ||\mbf A||_2^2 - 1 = k^2 - 2k + ||\mbf A||_2^2.
\]
Since $G$ has $\frac{k^2(k-1)^2}{2}$ edges, Proposition~\ref{prop:generalized_AN} says that for $k \ge 3$, if
\[
c = \frac{d}{2} \le \frac{2|E|}{k^3 \cdot 1} \cdot \frac{(k-1)}{(k-2)} \log(k-1) = \frac{(k-1)^3}{k(k-2)} \log(k-1),
\]
then the function
\[
\gamma_d(\mbf A) =  -\frac{1}{k} \sum_{v \in V(G)} a_v \log a_v + \frac{d}{2}\log\left(\frac{2\mathcal{L}_G(\mbf A)}{k^2}\right) =  -\frac{1}{k} \sum_{v \in V(G)} a_v \log a_v + \frac{d}{2}\log\left(\frac{k^2 - 2k + ||\mbf A||_2^2}{k^2}\right)
\]
is maximized at $\mbf A = \frac{1}{k}J_k$.
For the result here, we shall take $G$ to be the Kronecker product of a doubly regular tournament with itself, as defined in Section~\ref{sec:tour_prods}.

\section{Properties of Doubly Regular Tournaments} \label{sec:tour_prods}

In this section we develop several results regarding Kronecker products of tournaments with nice properties. 

Doubly regular tournaments were introduced by Brown and Reid~\cite{RB72}.
\begin{definition}
A tournament $T$ is said to be \emph{doubly regular} if{f} there are integers $k, \ell$ so that for every $v \in V(T)$, $\deg^+(v) = k$ and for every pair of vertices $v, w$, $|N^+(v) \cap N^+(w)| = \ell$.
\end{definition}
Brown and Reid~\cite{RB72} note that if $T$ is a doubly regular tournament of order $n$, then $n \equiv 3 \pmod{4}$, $k = \frac{n-1}{2}$ and $\ell = \frac{n-3}{4}$.  They further relate the existence of a 
doubly regular tournament 
of order $n$ to the existence of a `skew Hadamard matrix' of order $n+1$. A recent survey on the existence of skew-Hadamard matrices was given by Koukouvinos and Stylianou~\cite{KS08}.
A special case of a doubly regular tournament are the `Paley tournaments' (see e.g.~\cite{Ha86}). For each prime $q \equiv 3 \mod 4$, the \emph{Paley} tournament $T_q$ is defined on vertex set $\mathbb{F}_q$ by directing the edge $\{u,v\}$ from $u$ to $v$ if{f} $(v-u)$ is a square modulo $q$ and otherwise directing it from $v$ to $u$. Standard number theoretic arguments give that exactly one of $v-u, u-v \in (\mathbb{F}_q^\times)^2$, so the direction is well-defined, and as
\[ |(\mathbb{F}_q^\times)^2| = \frac{q-1}{2} = \frac{1}{2}|\mathbb{F}_q^\times| \]
we see that $T_q$ satisfies $d^+(v) = d^-(v) = \frac{q-1}{2}$ for every $v \in V(T_q)$.
As shown in~\cite{Ha86}, for each pair of distinct vertices $u,v \in V(T_q)$, there are exactly 
$\frac{q-3}{4}$ 
vertices $w$ that satisfy $u \to w$ and $v \to w$.

\begin{lemma} \label{lem:T_props}
Let $T$ be doubly regular tournament with $k$ vertices.
Then if $u \to v$, the number of oriented triangles containing the edge $u$--$v$
paths of length two in each possible orientation is given in the table below:
\begin{center}
$\begin{array}{c|c|c|c|c}
\text{adjacency} &
\vcenteredhbox{\begin{tikzpicture}
\node[circle,draw,label=u,minimum size=2mm,inner sep=0] (u) at (-0.5,0.866) {};
\node[circle,draw,label=v,minimum size=2mm,inner sep=0] (v) at (0.5,0.866) {};
\node[circle,draw,label=below:w,anchor=south,minimum size=2mm,inner sep=0] (w) at (0,0) {};
\draw[->] (u) -> (v);
\draw[->] (u) -> (w);
\draw[->] (v) -> (w);
\end{tikzpicture}} &
\vcenteredhbox{\begin{tikzpicture}
\node[circle,draw,label=u,minimum size=2mm,inner sep=0] (u) at (-0.5,0.866) {};
\node[circle,draw,label=v,minimum size=2mm,inner sep=0] (v) at (0.5,0.866) {};
\node[circle,draw,label=below:w,anchor=south,minimum size=2mm,inner sep=0] (w) at (0,0) {};
\draw[->] (u) -> (v);
\draw[->] (u) -> (w);
\draw[->] (w) -> (v);
\end{tikzpicture}} &
\vcenteredhbox{\begin{tikzpicture}
\node[circle,draw,label=u,minimum size=2mm,inner sep=0] (u) at (-0.5,0.866) {};
\node[circle,draw,label=v,minimum size=2mm,inner sep=0] (v) at (0.5,0.866) {};
\node[circle,draw,label=below:w,anchor=south,minimum size=2mm,inner sep=0] (w) at (0,0) {};
\draw[->] (u) -> (v);
\draw[->] (w) -> (u);
\draw[->] (v) -> (w);
\end{tikzpicture}} &
\vcenteredhbox{\begin{tikzpicture}
\node[circle,draw,label=u,minimum size=2mm,inner sep=0] (u) at (-0.5,0.866) {};
\node[circle,draw,label=v,minimum size=2mm,inner sep=0] (v) at (0.5,0.866) {};
\node[circle,draw,label=below:w,anchor=south,minimum size=2mm,inner sep=0] (w) at (0,0) {};
\draw[->] (u) -> (v);
\draw[->] (w) -> (u);
\draw[->] (w) -> (v);
\end{tikzpicture}}\\
\hline
&&&&\\
\# w & \displaystyle\frac{k-3}{4} & \displaystyle\frac{k-3}{4} & \displaystyle\frac{k+1}{4} & \displaystyle\frac{k-3}{4} 
\end{array}$
\end{center}
\end{lemma}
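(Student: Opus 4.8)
The plan is to prove the table column by column by elementary counting, using only the three features of a doubly-regular tournament of order $k$ recorded by Brown and Reid: every vertex has out-degree and in-degree $\frac{k-1}{2}$; every pair of distinct vertices $x,y$ satisfies $|N^+(x)\cap N^+(y)| = \frac{k-3}{4}$; and consequently $k\equiv 3\pmod 4$, so all the displayed fractions are integers. Fix the arc $u\to v$. Each of the remaining $k-2$ vertices $w$ falls into exactly one of the four columns according to the orientations of the arcs between $w$ and $\{u,v\}$, so the four counts sum to $k-2$; thus it suffices to pin down three of them, and I would in fact read off each directly.

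For the first column I want $w$ with $u\to w$ and $v\to w$, i.e.\ $w\in N^+(u)\cap N^+(v)$; this set contains neither $u$ nor $v$ (no loops, and $u\to v$ forbids $v\to u$), so the doubly-regular hypothesis gives exactly $\frac{k-3}{4}$ such $w$. For the second column, $N^+(u)$ has $\frac{k-1}{2}$ elements, one of which is $v$; the other $\frac{k-3}{2}$ are precisely the admissible third vertices $w$ with $u\to w$, and each such $w$ satisfies either $v\to w$ (column one) or $w\to v$ (column two), so column two contributes $\frac{k-3}{2}-\frac{k-3}{4}=\frac{k-3}{4}$. For the third column, $N^+(v)$ has $\frac{k-1}{2}$ elements, none equal to $u$ (since $u\to v$), and each lies in $N^+(u)$ (column one) or $N^-(u)$ (column three), giving $\frac{k-1}{2}-\frac{k-3}{4}=\frac{k+1}{4}$. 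The fourth column is then $(k-2)-\frac{k-3}{4}-\frac{k-3}{4}-\frac{k+1}{4}=\frac{k-3}{4}$; alternatively, $N^-(v)$ has $\frac{k-1}{2}$ elements, one of which is $u$, and the other $\frac{k-3}{2}$ split between $N^+(u)$ (column two) and $N^-(u)$ (column four), again yielding $\frac{k-3}{4}$.

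There is no real obstacle here beyond bookkeeping; the only points needing a moment's care are verifying that each relevant intersection of neighbourhoods automatically excludes $u$ and $v$ (so the doubly-regular count and the degree counts apply verbatim), and keeping straight which of $u,v$ lies in the out- versus in-neighbourhood of the other, given that the fixed arc is $u\to v$ — this asymmetry is exactly what makes column three larger than the rest. If a more symmetric presentation is preferred, one can first note that reversing all arcs of a doubly-regular tournament produces another doubly-regular tournament, hence $|N^-(x)\cap N^-(y)|=\frac{k-3}{4}$ as well, which makes the fourth column immediate; I would mention this but not rely on it.
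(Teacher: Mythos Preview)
Your argument is correct and matches the paper's approach: the paper cites Hall for the first column (which you get directly from the doubly-regular hypothesis $|N^+(u)\cap N^+(v)|=\frac{k-3}{4}$), then obtains the second by subtracting from $d^+(u)-1=\frac{k-3}{2}$ exactly as you do, and says the remaining two ``follow from similar arguments.'' Your write-up is a bit more explicit (spelling out columns three and four and the $k-2$ consistency check), but the method is identical.
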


\begin{proof}
As noted above, the first value is shown in~\cite{Ha86}. For the second count, there are $d^{+}(u) - 1 = \frac{k-3}{2}$ out-neighbours of $u$ other than $v$. Of those, $v$ is directed towards $\frac{k-3}{4}$ of them, leaving $\frac{k-3}{2}-\frac{k-3}{4} = \frac{k-3}{4}$ of them satisfying $u \to w \to v$. The third and fourth counts follow from similar arguments.
\end{proof}

Recall that the signed adjacency matrix of an oriented graph $D$ on $n$ vertices, $\{v_1, v_2, \ldots, v_n\}$, is an $n \times n$ matrix $A$ with
\[
A_{i,j} = \begin{cases}
	$1$ &\text{if  } v_i \to v_j \in E(G)\\
	$-1$ &\text{if } v_j \to v_i \in E(G),  \text{ and}\\
	$0$	&\text{otherwise}.
\end{cases}
\]

\begin{lemma}\label{lem:eval-dr-tourn}
If $T$ is a doubly regular tournament on $k$ vertices with signed adjacency matrix $M$, then $M$ is diagonalizable over $\mathbb{C}$ and has eigenvalues:
\begin{itemize}
	\item $0$, with the eigenspace equal to the span of $\mathbf{1}$.
	\item $i\sqrt{k}$ with multiplicity $\frac{k-1}{2}$, and
	\item $-i\sqrt{k}$ with multiplicity $\frac{k-1}{2}$
\end{itemize}
\end{lemma}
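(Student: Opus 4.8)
The plan is to compute $M^2$ explicitly and then read off both diagonalizability and the spectrum from that single identity. Since $M$ is the signed adjacency matrix of a tournament, it is real and skew-symmetric, $M^T = -M$, so we already know every eigenvalue is purely imaginary; the work is to pin down which ones occur and with what multiplicity.

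First I would compute the entries of $M^2$. On the diagonal, $(M^2)_{ii} = \sum_\ell M_{i\ell}M_{\ell i} = -\sum_\ell M_{i\ell}^2 = -(k-1)$, since row $i$ has exactly $k-1$ nonzero entries, each equal to $\pm 1$. Off the diagonal, take a pair with $i \to j$; then the term $M_{i\ell}M_{\ell j}$ equals $+1$ exactly when the length-two walk $i,\ell,j$ is \emph{directed consistently} (either $i\to\ell\to j$ or $j\to\ell\to i$), and $-1$ when $\ell$ dominates both of $i,j$ or is dominated by both. Summing over $\ell \neq i,j$ and substituting the four oriented-triangle counts $\tfrac{k-3}{4},\ \tfrac{k-3}{4},\ \tfrac{k+1}{4},\ \tfrac{k-3}{4}$ from Lemma~\ref{lem:T_props} gives $(M^2)_{ij} = \tfrac{k-3}{4} + \tfrac{k+1}{4} - \tfrac{k-3}{4} - \tfrac{k-3}{4} = 1$. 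The case $j\to i$ follows because $M^2$ is symmetric (as $M$ is skew-symmetric), so in all cases $(M^2)_{ij}=1$ off the diagonal; hence $M^2 = J_k - kI_k$.

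With this identity the rest is linear algebra. Because $T$ is regular, every row sum of $M$ equals $d^+ - d^- = 0$, so $M\mathbf 1 = 0$ and therefore $MJ_k = 0$; consequently $M^3 = M(J_k - kI_k) = -kM$, i.e. $M$ is annihilated by $x(x^2+k) = x(x-i\sqrt k)(x+i\sqrt k)$. This polynomial has distinct roots, so $M$ is diagonalizable over $\mathbb C$ and every eigenvalue lies in $\{0, i\sqrt k, -i\sqrt k\}$. For the eigenvalue $0$: any $v$ with $Mv = 0$ satisfies $(J_k - kI_k)v = M^2 v = 0$, i.e. $J_k v = kv$, which forces $v \in \Span(\mathbf 1)$; together with $M\mathbf 1 = 0$ this shows the $0$-eigenspace is exactly $\Span(\mathbf 1)$, of dimension $1$. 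The remaining $k-1$ eigenvalues are $\pm i\sqrt k$, and since $M$ is a real matrix its non-real eigenvalues occur in complex-conjugate pairs of equal multiplicity, so $i\sqrt k$ and $-i\sqrt k$ each occur with multiplicity $\tfrac{k-1}{2}$ (note $k-1$ is even since $k \equiv 3 \pmod 4$).

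The only delicate point is the sign bookkeeping in the off-diagonal computation of $M^2$: one must correctly match each of the four oriented-triangle configurations tabulated in Lemma~\ref{lem:T_props} to whether $M_{i\ell}M_{\ell j}$ contributes $+1$ or $-1$, and check that the $i\to j$ and $j\to i$ computations agree — which they must, since $M^2$ is symmetric, giving a convenient sanity check. Everything else is routine.
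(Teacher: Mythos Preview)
Your proof is correct and follows essentially the same approach as the paper: compute $M^2 = J_k - kI_k$ via the triangle counts in Lemma~\ref{lem:T_props} and read off the spectrum. The only cosmetic differences are that the paper invokes diagonalizability of real skew-symmetric matrices directly (rather than via the minimal polynomial) and uses $\operatorname{tr}(M)=0$ (rather than conjugate-pairing) to get the equal multiplicities; your argument is slightly more self-contained in pinning down the $0$-eigenspace.
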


\begin{proof}
Since the signed adjacency matrix satisfies $M^T = -M$, it is skew symmetric and real skew symmetric matrices are diagonalizable over $\mathbb{C}$.

For the eigenvalues of $M$, consider the product, $M^2$.  If $M = (m_{ij})_{k \times k}$, the $(i,j)$ entry of $M^2$ is $\sum_{\ell = 1}^k m_{i \ell} m_{\ell j}$.  If $i= j$, then $\sum_{\ell = 1}^k m_{i \ell} m_{\ell i} = \sum_{\ell = 1}^k -m_{i \ell}^2 = -(k-1)$.  If $i \ne j$, then for any $\ell \notin \{i, j\}$, $m_{i \ell} m_{\ell j}$ is equal to $1$ if $(i, \ell, j)$ is a directed path (in either direction) and $-1$ otherwise.  By Lemma~\ref{lem:T_props}, this is 
\[
\sum_{\ell = 1}^k m_{i \ell} m_{\ell j} = -\frac{(k-3)}{4} + \frac{(k-3)}{4} + \frac{(k+1)}{4} - \frac{(k-3)}{4} = 1.
\]
Thus $M^2 = J_k  - kI_k$, which has eigenvalues $0$ with multiplicity $1$ and eigenvector $\mathbf{1}$ and $-k$ with multiplicity $k-1$.  Thus, the eigenvalues for $M$ are $0$ with multiplicity $1$ and eigenvector $\mathbf{1}$ and complex eigenvalues $i\sqrt{k}$ and $-i\sqrt{k}$.  Since $tr(M) = 0$, the eigenvalues $i\sqrt{k}$ and $-i\sqrt{k}$ occur with equal multiplicity.
\end{proof}

\begin{definition}\label{def:kroenecker_tournaments}
For any tournament $T$, define the Kronecker product of $T$ with itself, $T^{\otimes 2}$, as the graph with vertex set $V(T)^2$ and edges $(u,v) \sim (x,y)$ if{f} either $u \to x$ and $v \to y$ or $x \to u$ and $y \to v$. Note that while $T$ is a directed graph, we have defined $T^{\otimes 2}$ to be undirected. This is more convenient for our calculations, but it should be noted there is a natural orientation of $T^{\otimes 2}$ where $(u,v) \to (x,y)$ if $u \to x$ and $v \to y$.
\end{definition}	

\begin{proposition}\label{prop:reg_prod_connected}
Let $T$ be a regular tournament of order $k \ge 5$.  Then the Kroenecker product graph $T^{\otimes 2}$ is connected.
\end{proposition}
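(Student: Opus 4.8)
The plan is to show that any two vertices of $T^{\otimes 2}$ lie in a common connected component by exhibiting an explicit walk between them, built from a pair of \emph{equal-length} directed walks in $T$. The key fact I would establish is that the adjacency matrix $A$ of $T$ (viewed as a directed graph) is primitive: there is an $N$ with $A^\ell$ entrywise positive for all $\ell \ge N$, equivalently, for every ordered pair of vertices $a,b$ of $T$ and every $\ell \ge N$ there is a directed walk of length exactly $\ell$ from $a$ to $b$.

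To get primitivity I would argue in two steps. First, $T$ is strongly connected: otherwise its condensation is a DAG with at least two nodes, hence (take a topological order) has a source component $S$ and a distinct sink component $S'$; every vertex of $S$ has all of its in-arcs inside $S$ and every vertex of $S'$ has all of its out-arcs inside $S'$, so regularity ($d^+(v)=d^-(v)=\tfrac{k-1}{2}$ for all $v$) forces $|S|,|S'|\ge\tfrac{k+1}{2}$, contradicting $|S|+|S'|\le k$. Second, $T$ is aperiodic: if it had period $d\ge 2$, then $V(T)$ would split into nonempty classes $V_0,\dots,V_{d-1}$ with every arc running from some $V_i$ to $V_{(i+1)\bmod d}$; but any two vertices lying in one class are joined by an arc of the tournament, which cannot run from $V_i$ to $V_{i+1}$, so each $|V_i|\le 1$, forcing $k=d$ and $|E(T)|\le d=k$, contradicting $|E(T)|=\binom{k}{2}>k$ for $k\ge 5$. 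An irreducible aperiodic nonnegative matrix is primitive, giving the desired $N$.

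With primitivity in hand the rest is routine. Given vertices $(u,v)$ and $(x,y)$ of $T^{\otimes 2}$, fix any $\ell\ge N$ and choose directed walks $u=p_0\to p_1\to\cdots\to p_\ell=x$ and $v=q_0\to q_1\to\cdots\to q_\ell=y$ in $T$. For each $i$ we have $p_i\to p_{i+1}$ and $q_i\to q_{i+1}$, so $(p_i,q_i)\sim(p_{i+1},q_{i+1})$ in $T^{\otimes 2}$ by the first alternative of Definition~\ref{def:kroenecker_tournaments}; hence $(u,v)=(p_0,q_0),(p_1,q_1),\dots,(p_\ell,q_\ell)=(x,y)$ is a walk in $T^{\otimes 2}$. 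Since $(u,v),(x,y)$ were arbitrary, $T^{\otimes 2}$ is connected.

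The main obstacle is the aperiodicity step, and this is precisely where the hypothesis $k\ge 5$ enters (recall $k$ is odd for a regular tournament, so $k\ge 5$ means $k\ge 4$): for $k=3$ the only regular tournament is the directed $3$-cycle, whose adjacency matrix has period $3$, and correspondingly $T^{\otimes 2}$ breaks into three disjoint triangles. A spectral alternative — showing the top eigenvalue of the adjacency matrix of $T^{\otimes 2}$ has multiplicity one — is also available, but it is less convenient here since $T$ is only assumed regular rather than doubly-regular, so the eigenvalues of its signed adjacency matrix are not pinned down as in Lemma~\ref{lem:eval-dr-tourn}.
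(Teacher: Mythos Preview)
Your proof is correct and takes a genuinely different route from the paper's. The paper argues by direct case analysis: for two non-adjacent vertices $(u,v),(x,y)$ with $u\neq x$ and $v\neq y$ it counts the quantities $a_{\pm\pm}=|N^{\pm}(u)\cap N^{\pm}(x)|$ and $b_{\pm\pm}=|N^{\pm}(v)\cap N^{\pm}(y)|$, uses regularity to show some matching pair $a_{ij},b_{ij}$ are simultaneously positive, and thereby produces a common neighbour $(w,z)$; the remaining cases (one shared coordinate) are reduced to this via the diagonal clique $\{(v,v):v\in V(T)\}$. In contrast, you show that the directed adjacency matrix of $T$ is primitive (strongly connected plus aperiodic) and then lift a pair of equal-length directed walks in $T$ to a single walk in $T^{\otimes 2}$.

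What each approach buys: the paper's argument is fully elementary and yields the extra information that $T^{\otimes 2}$ has small diameter (every pair is joined by a path of length at most four). Your argument is cleaner and more conceptual, invokes only the standard Perron--Frobenius fact that an irreducible aperiodic nonnegative matrix is primitive, and makes the role of the hypothesis $k\ge 5$ completely transparent---it is exactly what rules out the periodic $3$-cycle. The cost is that the walks you produce may be long (length at least the exponent of primitivity), though for the bare connectedness statement this is immaterial.
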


Note that we prove Proposition~\ref{prop:reg_prod_connected} for regular tournaments, not just doubly regular tournaments. In Section~\ref{ssec:col_not_doubly_regular} we consider colouring with a broader collection of tournaments and discuss the optimality of doubly regular tournaments in the context of our techniques.

\begin{proof}
First, consider two non-adjacent vertices $(u, v), (x, y) \in V(T^{\otimes 2})$ with $u \ne x$ and $v \ne y$.  Without loss of generality, assume that, in $T$, $u \to x$ and $y \to v$.  Define the following:
\begin{align*}
a_{++} &= |N^+(u) \cap N^+(x)|\\
a_{+-} &= |N^+(u) \cap N^-(x)|\\
a_{-+} &= |N^-(u) \cap N^+(x)|\\
a_{--} &= |N^-(u) \cap N^-(x)|
\end{align*}
Since $T$ is regular and $u \to x$, then $a_{++} + a_{+-} = \frac{k-3}{2}$, $a_{-+} + a_{--} = \frac{k-1}{2}$, $a_{+-} + a_{--} = \frac{k-3}{2}$, and $a_{++} + a_{-+} = \frac{k-1}{2}$.  Similarly, define 
\begin{align*}
b_{++} &= |N^+(v) \cap N^+(y)|\\
b_{+-} &= |N^+(v) \cap N^-(y)|\\
b_{-+} &= |N^-(v) \cap N^+(y)|\\
b_{--} &= |N^-(v) \cap N^-(y)|
\end{align*}
Since $y \to v$, then $b_{++} + b_{+-} = \frac{k-1}{2}$, $b_{-+} + b_{--} = \frac{k-3}{2}$, $b_{++} + b_{-+} = \frac{k-3}{2}$, and $b_{+-} + b_{--} = \frac{k-1}{2}$.

If there are any $i, j \in \{+, -\}$ with $a_{ij} > 0$ and $b_{ij} > 0$, then there are vertices $w, z$ (not necessarily distinct) so that the direction of $(u, w)$ matches $(v, z)$ and the direction of $(x, w)$ matches $(y, z)$.  Thus, $(u, v), (w, z), (x, y)$ is a path in 
$T^{\otimes 2}$.
To see that such pair always exists, suppose that one does not.  If $a_{++} \ne 0$, then $b_{++} = 0$ and so $b_{+-} = \frac{k-1}{2}$ and $b_{-+} = \frac{k-3}{2}$ which implies that $b_{--} = 0$.  The fact that $b_{+-}, b_{-+} > 0$ would imply that $a_{+-} = a_{-+} = 0$.  However, this is impossible since $a_{-+} + a_{--} = \frac{k-1}{2} \ne \frac{k-3}{2} = a_{+-} + a_{--}$.

Next, consider the case of vertices $(u, u)$ and $(u, x)$.  Note that the vertices $\{(v, v) \mid v \in V(T)\}$ form a clique in $T^{\otimes 2}$.  Let $z \in V(T^{\otimes 2})\setminus \{u, x\}$.  Then, $(u, u)$ is adjacent to $(z, z)$ and by the previous case, there is a path between $(z, z)$ and $(u,x)$.

For the final case, consider vertices $(u, v), (u, x)$ with $v \ne x, v \ne u$, and $x \ne u$.  By the previous case, there is a path between $(u, v)$ and $(u, u)$ and a path between $(u, x)$ and $(u, u)$.  Thus, $(u, v)$ and $(u, x)$ are in the same component.

Since the graph $T^{\otimes 2}$ is isomorphic to itself under the operation of interchanging the order of pairs, this covers all cases and shows that every pair of vertices in $T^{\otimes 2}$ are joined by a path.
\end{proof}

\begin{remark} \label{rem:k=3}
Note that the result does not hold for $k = 3$ since the unique regular tournament on $3$ vertices is the directed $3$-cycle, whose Kroenecker square is a disjoint union of three copies of $K_3$.

Throughout the paper, it is regularly assumed that $k > 3$ and part of the reason for this is that when the product graph $G$ is a disjoint union of cliques, the optimization result from Proposition~\ref{prop:generalized_AN} holds without any assumptions on $d$ in terms of $k$.  Indeed, suppose that $G$ is a graph on vertex set with $[k]^2$ and $G \cong k K_k$, a disjoint union of $k$ cliques with the property that the vertices of each clique form a transversal in $[k]\times [k]$.  Fix $s_1, s_2, \ldots, s_k$ with $\sum_{i = 1}^k s_i = k$ and restrict to doubly stochastic matrices with the property that the sum of entries on vertices, $V_i$, corresponding to the $i$-th copy of $K_k$ is exactly $s_i$.  Among such matrices,
\begin{align*}
\mathcal{L}_G(\mbf A) &= \sum_{i = 1}^k \sum_{v, w \in V_i} a_v a_w\\
	&= \frac{1}{2}\sum_{i = 1}^k \left(\left(\sum_{v \in V_i} a_v\right)^2 -  \sum_{v \in V_i} a_v^2 \right)\\
	&\le \frac{1}{2} \sum_{i = 1}^k \left(s_i^2 - \frac{s_i^2}{k}\right)	&&\text{(by convexity)}\\
	&= \frac{(k-1)}{2k}\sum_{i = 1}^k s_i^2,
\end{align*}
Then, for any $d$,
\[
\gamma_d(\mbf A) \le -\frac{1}{k}\sum_{v \in V(G)} a_v \log a_v + \frac{d}{2}\log\left( \frac{(k-1)}{2k}\sum_{i = 1}^k s_i^2\right).
\]
Because both functions $-x \log x$ and $\log x^2$ are concave down, the upper bound for $\gamma_d(\mbf A)$ is maximized when $s_1 = s_1 = \cdots = s_k =1$, which gives precisely the value of $\gamma_d(\frac{1}{k}J_k)$.

In particular, this means that the optimization result holds without condition on $d$ for oriented $3$-colourings with the directed $3$-cycle.  However, for the expected number of such $3$-colourings to be bounded away from $0$, $d \le 2$ is required and this case is covered in Section~\ref{ssec:vsparse_vdense} using structural results of underlying graphs and their possible colourings.
\end{remark}

\begin{proposition}\label{prop:tourn_prod_eigenvectors}
Let $T$ be a doubly regular tournament on $k$ vertices with signed adjacency
matrix $M$. Then the adjacency matrix for the graph $T^{\otimes 2}$ is
\[
\frac{1}{2}\left(M \otimes M + (J_k - I_k) \otimes (J_k - I_k)\right)
\]
with eigenvalues and eigenspaces
\begin{itemize}
	\item $\frac{(k-1)^2}{2}$ with multiplicity $1$ and eigenspace spanned by $\mathbf{1} \otimes \mathbf{1}$,
	\item $\frac{-(k-1)}{2}$ with multiplicity $\frac{(k-1)(k+3)}{2}$ and eigenspace consisting of
	\begin{itemize}
		\item a subspace of dimension $2(k-1)$ spanned by $\{\mathbf{u} \otimes \mathbf{1}, \mathbf{1} \otimes \mathbf{u} \mid \mathbf{u} \ne \mathbf{0},\ \mathbf{u} \bullet \mathbf{1} = 0\}$, 
		\item a subspace of dimension $\frac{(k-1)^2}{4}$ spanned by 
		\[
		\{\mathbf{u} \otimes \mathbf{v} \mid \mathbf{u}, \mathbf{v} \text{ are eigenvectors for $M$ for the eigenvalue $i \sqrt{k}$}\},
		\]
		 and
		\item a subspace of dimension $\frac{(k-1)^2}{4}$ spanned by 
		\[
		\{\mathbf{u} \otimes \mathbf{v} \mid \mathbf{u}, \mathbf{v} \text{ are eigenvectors for $M$ for the eigenvalue $-i \sqrt{k}$}\}.
		\]
	\end{itemize}
	\item $\frac{(k+1)}{2}$ with multiplicity $\frac{(k-1)^2}{2}$ spanned by
	\begin{multline*}
	\{\mathbf{u} \otimes \mathbf{v}, \mathbf{v} \otimes \mathbf{u} \mid \mathbf{u} \text{ an eigenvector for $M$ for the eigenvalue $i\sqrt{k}$ and } \\
	\mathbf{v} \text{ an eigenvector for $M$ for the eigenvalue $-i \sqrt{k}$}\}.
	\end{multline*}
\end{itemize}
\end{proposition}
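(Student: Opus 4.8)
The plan is to compute the adjacency matrix of $T^{\otimes 2}$ directly from Definition~\ref{def:kroenecker_tournaments}, then diagonalize it by exploiting the tensor structure together with Lemma~\ref{lem:eval-dr-tourn}. First I would write, for ordered pairs $(u,v)$ and $(x,y)$ of vertices of $T$, the indicator that $u\to x$ and $v\to y$. In terms of the signed adjacency matrix $M$ of $T$, the indicator that $u\to x$ is $\tfrac12\bigl(|M|_{ux}+M_{ux}\bigr)$, where $|M| = J_k - I_k$ is the (unsigned) adjacency matrix of the underlying graph of $T$; likewise the indicator that $x\to u$ is $\tfrac12\bigl(|M|_{ux}-M_{ux}\bigr)$. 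Since $(u,v)\sim(x,y)$ iff ($u\to x$ and $v\to y$) or ($x\to u$ and $y\to v$), the adjacency matrix of $T^{\otimes2}$ has $((u,v),(x,y))$-entry
\[
\tfrac14\bigl((|M|+M)\otimes(|M|+M)\bigr)_{(u,v),(x,y)} + \tfrac14\bigl((|M|-M)\otimes(|M|-M)\bigr)_{(u,v),(x,y)},
\]
and expanding the two Kronecker products, the cross terms $M\otimes|M|$ and $|M|\otimes M$ cancel, leaving $\tfrac12\bigl(M\otimes M + |M|\otimes|M|\bigr) = \tfrac12\bigl(M\otimes M + (J_k-I_k)\otimes(J_k-I_k)\bigr)$, as claimed. (One should check the diagonal: the product has no loops because $M$ and $J_k-I_k$ both vanish on the diagonal, consistent with $T^{\otimes2}$ being a simple graph.)

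For the spectrum, the idea is that $M$ and $J_k - I_k$ are \emph{simultaneously diagonalizable} over $\mathbb{C}$, so $M\otimes M$ and $(J_k-I_k)\otimes(J_k-I_k)$ are too, and an eigenbasis of the sum is furnished by tensor products of eigenvectors of $M$. Indeed, by Lemma~\ref{lem:eval-dr-tourn}, $M$ has eigenvalue $0$ on $\mathbf{1}$ and eigenvalues $\pm i\sqrt k$ each with multiplicity $\tfrac{k-1}{2}$ on the orthogonal complement of $\mathbf{1}$; and $J_k - I_k$ has eigenvalue $k-1$ on $\mathbf{1}$ and $-1$ on $\mathbf{1}^\perp$. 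So I would fix a basis $\{\mathbf{1}\}\cup B$ of $\mathbb{C}^k$ with $B$ a union $B_+\cup B_-$ of eigenbases for the eigenvalues $i\sqrt k$ and $-i\sqrt k$ of $M$ (each vector in $B$ also lies in $\mathbf{1}^\perp$, hence is a $(-1)$-eigenvector of $J_k-I_k$), and then run through the nine combinations $\mathbf{a}\otimes\mathbf{b}$ with $\mathbf{a},\mathbf{b}\in\{\mathbf{1}\}\cup B_+\cup B_-$:
\begin{itemize}
\item $\mathbf{1}\otimes\mathbf{1}$: eigenvalue $\tfrac12(0\cdot 0 + (k-1)^2) = \tfrac{(k-1)^2}{2}$, multiplicity $1$.
\item $\mathbf{u}\otimes\mathbf{1}$ or $\mathbf{1}\otimes\mathbf{u}$ with $\mathbf{u}\in B$: eigenvalue $\tfrac12(0\cdot(\pm i\sqrt k)\ \text{or}\ (\pm i\sqrt k)\cdot 0 + (-1)(k-1)) = \tfrac{-(k-1)}{2}$; there are $2(k-1)$ such basis vectors.
\item $\mathbf{u}\otimes\mathbf{v}$ with $\mathbf{u},\mathbf{v}\in B_+$, or $\mathbf{u},\mathbf{v}\in B_-$: eigenvalue $\tfrac12\bigl((i\sqrt k)^2 + 1\bigr) = \tfrac12(-k+1) = \tfrac{-(k-1)}{2}$ (same for $(-i\sqrt k)^2$); there are $2\cdot\bigl(\tfrac{k-1}{2}\bigr)^2 = \tfrac{(k-1)^2}{2}$ such vectors, split as $\tfrac{(k-1)^2}{4}$ and $\tfrac{(k-1)^2}{4}$.
\item $\mathbf{u}\otimes\mathbf{v}$ with $\mathbf{u}\in B_+,\mathbf{v}\in B_-$ or vice versa: eigenvalue $\tfrac12\bigl((i\sqrt k)(-i\sqrt k) + 1\bigr) = \tfrac12(k+1) = \tfrac{(k+1)}{2}$; there are $2\cdot\bigl(\tfrac{k-1}{2}\bigr)^2 = \tfrac{(k-1)^2}{2}$ such vectors.
\end{itemize}
Collecting the $-\tfrac{(k-1)}{2}$ contributions gives multiplicity $2(k-1) + \tfrac{(k-1)^2}{4} + \tfrac{(k-1)^2}{4} = \tfrac{(k-1)(k+3)}{2}$, and the multiplicities sum to $1 + \tfrac{(k-1)(k+3)}{2} + \tfrac{(k-1)^2}{2} = k^2$, confirming we have the full spectrum.

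The one genuine subtlety — the main obstacle — is that $M$, being skew-symmetric and not symmetric, is diagonalizable over $\mathbb{C}$ but its eigenvectors for $\pm i\sqrt k$ are complex; I must make sure the eigenvectors in $B_\pm$ are honestly eigenvectors of $M$ (which Lemma~\ref{lem:eval-dr-tourn} guarantees via $M^2 = J_k - kI_k$ and $\operatorname{tr} M = 0$) and, crucially, that each such eigenvector is orthogonal to $\mathbf{1}$ so that it is also a $(-1)$-eigenvector of $J_k - I_k = M^2 + kI_k - I_k$ — which is automatic since $B_\pm \subseteq \ker(M^2 - (-k)I_k) = \mathbf{1}^\perp$ as $M^2$ has $0$-eigenspace exactly $\operatorname{span}(\mathbf{1})$. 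With simultaneous diagonalizability in hand the eigenvalue bookkeeping is routine arithmetic, and the stated eigenspace descriptions are exactly the spans of the corresponding groups of basis tensors. I would close by remarking that $T^{\otimes 2}$ is therefore connected-regular of degree $\tfrac{(k-1)^2}{2}$ with second-largest eigenvalue $\tfrac{k+1}{2}$, the data needed to invoke Proposition~\ref{prop:generalized_AN}.
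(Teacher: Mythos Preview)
Your proposal is correct and follows essentially the same approach as the paper: establish the adjacency-matrix formula, then diagonalize by taking tensor products of a common eigenbasis for $M$ and $J_k-I_k$. Your derivation of the adjacency matrix via the indicator identity $\tfrac12(|M|\pm M)$ is a clean algebraic alternative to the paper's direct entry-by-entry case check, and your explicit justification that the $\pm i\sqrt{k}$-eigenvectors of $M$ lie in $\mathbf{1}^\perp$ (via $M^2=J_k-kI_k$) fills in a step the paper leaves implicit.
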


\begin{proof}
To see that the adjacency matrix for $T^{\otimes 2}$ is $\frac{1}{2} \left(M
\otimes M +  (J_k - I_k) \otimes (J_k - I_k)\right)$, note that the entry of
$M \otimes M$ at $((u, v), (x, y))$ is
\begin{itemize}
	\item $0$ if either $u = x$ or $v = y$,
	\item $+1$ if either $u \to x$ and $v \to y$ in $T$ or else $x \to u$ and $y \to v$,
	\item $-1$ otherwise.
\end{itemize}
Since the $((u, v), (x, y))$ entry of $(J_k - I_k) \otimes (J_k - I_k)$ is $1$ if{f} $u \ne x$ and $v \ne y$, and $0$ otherwise, then the $((u, v), (x, y))$ entry of $\frac{1}{2}\left(M \otimes M + (J_k - I_k) \otimes (J_k - I_k)\right)$ is $1$ if{f} either $u \to x$ and $v \to y$ or else $x \to u$ and $y \to v$ in $T$, and $0$ otherwise, exactly matching the adjacency conditions for $T^{\otimes 2}$.

The rest of the Proposition follows from Lemma~\ref{lem:eval-dr-tourn} and the fact that $(J_k - I_k)$ is diagonalizable with eigenvalues $(k-1)$ (with multiplicity $1$ and eigenspace spanned by $\mathbf{1}$) and $-1$ (with multiplicity $(k-1)$). Thus, there is an orthogonal basis for $\mathbb{C}^k$ $\{\mathbf{v}_1 = \mathbf{1}, \mathbf{v}_2, \mathbf{v}_3, \ldots, \mathbf{v}_k\}$ whose elements are all eigenvectors for both $M$ (corresponding to eigenvalues $\lambda_1, \lambda_2, \ldots, \lambda_k$) and $(J_k - I_k)$ (corresponding to eigenvalues $\alpha_1, \alpha_2, \ldots, \alpha_k$).  Therefore, $\{\mathbf{v}_i \otimes \mathbf{v}_j \mid 1 \le i, j \le k\}$ is a basis for $\mathbb{C}^k \otimes \mathbb{C}^k$ consisting of eigenvectors for $\frac{1}{2}\left(M \otimes M + (J_k - I_k) \otimes (J_k - I_k)\right)$.
\end{proof}

Note that is it also possible to use Lemma~\ref{lem:T_props} to show that if $T$ is a doubly regular tournament on $k$ vertices, then $T^{\otimes 2}$ is a strongly regular graph with the property that every vertex has $\frac{(k-1)^2}{2}$ neighbours, every pair of adjacent vertices has $\frac{(k-1)(k-3)}{4} + 1$ common neighbours, and every non-adjacent pair of vertices have $\frac{(k-1)(k-3)}{4}$ common neighbours.  From this, the eigenvalues of $T^{\otimes 2}$ and their multiplicities follow from standard results in algebraic graph theory on strongly regular graphs.  In Section~\ref{ssec:second_moment}, the precise structure of the eigenspaces of the adjacency matrix of $T^{\otimes 2}$ are used and so the more precise result in Proposition~\ref{prop:tourn_prod_eigenvectors} is given.

It is conjectured that for every $n$, there are skew-Hadamard matrices of order $4n$ (see~\cite{Wa71, KS08}). By results of Brown and Reid~\cite{RB72}, this would correspond to the existence of doubly regular tournaments of order $k$, for every $k \equiv 3 \pmod{4}$.  According to Koukouvinos and Stylianou~\cite{KS08}, the smallest unresolved case is the existence of a skew-Hadamard matrix of order 276, corresponding to the existence of a doubly regular tournament of order $275$.  Brown and Reid note that if all of the odd prime factors of $k+1$ are $3\pmod{4}$ and occur to an odd power, then there exist doubly regular tournaments of order $k$.

Using results about the existence of skew-Hadamard matrices due to Williamson~\cite{Wi44}, Brown and Reid~\cite{RB72} note that for any $t_0, t_1, \ldots, t_r$ and primes $p_1, p_2, \ldots, p_r$ with the property that for every $i$, $p_i^{t_i} \equiv 3 \pmod{4}$, there exist doubly regular tournaments of order $n$ when $n = 2^{t_0}(p_1^{t_1} + 1)(p_2^{t_2} + 1) \cdots (p_r^{t_r} + 1) - 1$. 

\begin{corollary}\label{cor:opt_doubly_reg_tour}
Let $k >  3$ be such that there exist doubly regular tournaments of order $k$, fix such a tournament $T$ and set $G = T^{\otimes 2}$.  For every $d \le \frac{2 (k-1)^3}{k(k+1)(k-2)}\log(k-1)$, the function $\gamma_d$ defined on doubly stochastic matrices by
\[
\gamma_d(\mbf A) = -\frac{1}{k} \sum_{v \in V(G)} a_v \log a_v + \frac{d}{2}\log\left(\frac{2\mathcal{L}_G(\mbf A)}{k^2}\right)
\]
has a unique maximum value at $\mbf A = \frac{1}{k} J_k$ of $\gamma_d(\frac{1}{k} J_k) = \log k + \frac{d}{2} \log\left(\frac{(k-1)^2}{2k^2}\right)$.
\end{corollary}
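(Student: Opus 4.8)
The plan is to invoke Proposition~\ref{prop:generalized_AN} with $G = T^{\otimes 2}$, so that the proof reduces to checking the hypotheses for this particular graph and substituting its parameters. Since doubly-regular tournaments exist only for $k \equiv 3 \pmod 4$, the hypothesis $k > 3$ forces $k \geq 7$, hence $k \geq 5$, so the connectivity of $T^{\otimes 2}$ established above applies; its vertex set is $V(T)^2$, which we identify with $[k]^2$. First I would verify regularity and count edges: a vertex $(u,v)$ is adjacent to $(x,y)$ exactly when $u \to x$ and $v \to y$, or when $x \to u$ and $y \to v$, and these two families are disjoint, so every vertex has degree $\deg^+(u)\deg^+(v) + \deg^-(u)\deg^-(v) = 2\bigl(\tfrac{k-1}{2}\bigr)^2 = \tfrac{(k-1)^2}{2}$. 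Hence $T^{\otimes 2}$ is $\tfrac{(k-1)^2}{2}$-regular, with $|E| = \tfrac12 \cdot k^2 \cdot \tfrac{(k-1)^2}{2} = \tfrac{k^2(k-1)^2}{4}$.

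Next I would pin down the second-largest eigenvalue $\lambda$ of the adjacency matrix. By Proposition~\ref{prop:tourn_prod_eigenvectors} the eigenvalues of $T^{\otimes 2}$ are $\tfrac{(k-1)^2}{2}$, $\tfrac{k+1}{2}$, and $\tfrac{-(k-1)}{2}$; since $\tfrac{(k-1)^2}{2} - \tfrac{k+1}{2} = \tfrac{k(k-3)}{2} > 0$ for $k > 3$, and $\tfrac{k+1}{2} > 0 > \tfrac{-(k-1)}{2}$, the top eigenvalue is $\tfrac{(k-1)^2}{2}$, with one-dimensional eigenspace spanned by $\mathbf{1}\otimes\mathbf{1}$ as Proposition~\ref{prop:generalized_AN} requires, and $\lambda = \tfrac{k+1}{2}$.

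Then I would substitute into Proposition~\ref{prop:generalized_AN}. The admissible range for $d$ becomes
\[
d \;\le\; \frac{4|E|}{k^3\lambda}\cdot\frac{k-1}{k-2}\log(k-1) \;=\; \frac{4\cdot\frac{k^2(k-1)^2}{4}}{k^3\cdot\frac{k+1}{2}}\cdot\frac{k-1}{k-2}\log(k-1) \;=\; \frac{2(k-1)^3}{k(k+1)(k-2)}\log(k-1),
\]
exactly the bound in the corollary (the proof of Proposition~\ref{prop:generalized_AN} in fact also covers equality with the threshold). Moreover $\mathcal{L}_G\bigl(\tfrac1k J_k\bigr) = |E|/k^2$ shows that $\gamma_d$ here is literally the $\gamma_d$ of Proposition~\ref{prop:generalized_AN}, so uniqueness of the maximizer at $\tfrac1k J_k$ is inherited, and substituting $|E|$ into $\log k + \tfrac d2\log\bigl(\tfrac{2|E|}{k^4}\bigr)$ yields the claimed value $\log k + \tfrac d2\log\bigl(\tfrac{(k-1)^2}{2k^2}\bigr)$.

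There is no genuinely hard step. The only points needing care are confirming that the relevant second-largest eigenvalue is the positive one $\tfrac{k+1}{2}$ rather than the negative eigenvalue $\tfrac{-(k-1)}{2}$ (which is precisely why $k>3$ is imposed, so that $\mathbf{1}\otimes\mathbf{1}$ genuinely spans the top eigenspace) and the bookkeeping in the arithmetic simplification of the threshold and the value at $\tfrac1k J_k$. The excluded case $k = 3$ is legitimate: by Remark~\ref{rem:k=3}, $T^{\otimes 2} \cong 3K_3$ is disconnected there, so Proposition~\ref{prop:generalized_AN} does not apply, and that case is treated separately.
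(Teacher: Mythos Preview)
Your proposal is correct and follows exactly the approach the paper intends: the paper simply states that the corollary ``follows immediately from Propositions~\ref{prop:generalized_AN} and \ref{prop:tourn_prod_eigenvectors},'' and you have supplied the verification of connectivity, regularity, the edge count $|E|=\tfrac{k^2(k-1)^2}{4}$, the identification $\lambda=\tfrac{k+1}{2}$, and the arithmetic substitutions that make this immediate. Your observation that the proof of Proposition~\ref{prop:generalized_AN} actually establishes the conclusion for $d$ equal to the threshold (not just strictly below it) is also in line with how the paper uses the result.
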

The proof of Corollary~\ref{cor:opt_doubly_reg_tour} follows immediately from Propositions~\ref{prop:generalized_AN} and \ref{prop:tourn_prod_eigenvectors}.

In order to give some explicit bounds on $\chi_o(\vec{\mathcal{G}}(n,p=\frac{d}{n}))$ and $\chi_o(\vec{\mathcal{G}}(n,d))$ in terms of $d$ in Corollary~\ref{cor:main_cor}, it is useful to have some quantitative bounds on the gaps between orders of doubly regular tournaments. In particular, there is a doubly regular tournament of every order that is prime and equivalent to $3 \pmod{4}$. Erd\H{o}s~\cite{Er35} showed that for $n$ sufficiently large, there are always primes equivalent to each of $1 \pmod{4}$ and $3 \pmod{4}$ in the interval $[n, 2n]$, extending Bertrand's postulate. The precise bounds given in the paper by Ramar\'{e} and Rumely~\cite{RR96} can be used to show that this property holds for all $n \ge 2$. Since Paley tournaments are doubly regular, this gives the following result.

\begin{lemma}\label{lem:bertrand_postulate_dr_tourn}
For every $n \ge 2$, there exists $k$ with $n \le k \le 2n$ so that there exists a doubly regular tournament of order $k$.
\end{lemma}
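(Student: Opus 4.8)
The plan is to reduce the statement to a quantitative Bertrand-type postulate for primes in the residue class $3 \pmod 4$. Recall from the discussion of Paley tournaments above that for every prime $q \equiv 3 \pmod 4$ the Paley tournament $T_q$ is a doubly-regular tournament of order $q$. Consequently, to prove the lemma it suffices to show that for every integer $n \geq 2$ the interval $[n,2n]$ contains a prime $q$ with $q \equiv 3 \pmod 4$; the desired $k$ is then $k = q$.

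For the range of large $n$, Erd\H{o}s'~\cite{pE35} extension of Bertrand's postulate already supplies such a prime once $n$ exceeds some bound, but that bound is ineffective, so to cover all $n \geq 2$ I would instead appeal to the explicit estimates of Ramar\'e and Rumely~\cite{RR96} for $\theta(x;4,3) = \sum_{p \leq x,\ p \equiv 3 \,(\mathrm{mod}\ 4)} \log p$. Their bounds give an explicit constant $x_0$ and an $\epsilon < \tfrac13$ with $|\theta(x;4,3) - x/2| \leq \epsilon x/2$ for all $x \geq x_0$; then
\[
\theta(2x;4,3) - \theta(x;4,3) \;\geq\; (1-\epsilon)x - (1+\epsilon)\tfrac{x}{2} \;=\; \tfrac{x}{2}\,(1 - 3\epsilon) \;>\; 0
\]
for $x \geq x_0$, which forces a prime $\equiv 3 \pmod 4$ in $(x, 2x]$, and hence in $[n,2n]$ for every integer $n \geq x_0$.

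For the finitely many remaining values $2 \leq n < x_0$, the claim is verified by a direct finite computation: one lists the primes congruent to $3 \pmod 4$ up to $2x_0$ (namely $3, 7, 11, 19, 23, 31, 43, 47, 59, \ldots$) and checks that consecutive such primes $p < p'$ never satisfy $p' > 2p$; equivalently, every integer $n$ with $2 \leq n < x_0$ lies in $[p, 2p]$ for some prime $p \equiv 3 \pmod 4$. Combining this with the previous paragraph completes the proof.

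I expect the main obstacle to be purely one of bookkeeping: one must select a version of the Ramar\'e--Rumely bounds for the modulus $4$ that is valid on the entire range $x \geq x_0$ with $\epsilon < \tfrac13$ (their paper states several $(\epsilon, x_0)$ trade-offs), and ensure $x_0$ is small enough that the concluding finite check is short. No genuinely new idea beyond careful citation and a routine verification is needed.
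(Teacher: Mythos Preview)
Your proposal is correct and follows essentially the same approach as the paper: reduce to the existence of a prime $q\equiv 3\pmod 4$ in $[n,2n]$ via Paley tournaments, invoke Erd\H{o}s for large $n$, and use the Ramar\'e--Rumely explicit bounds (plus a finite check) to cover all $n\ge 2$. The paper states this argument in a single sentence before the lemma rather than spelling out the $\theta(x;4,3)$ computation, but your expansion is exactly the intended proof.
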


Very precise results about the number of primes in a given arithmetic progression are given by Bennett, Martin, O'Bryant, and Rechnitzer~\cite{BMOBR18} and give a smaller interval in which one can guarantee a prime of a given residue class.  

\section{Lower Bounds} \label{sec:lb}

In this section we use first moment arguments to identify values of $k$ for which there are a.a.s.~zero proper oriented $k$-colourings of $\vec{\mathcal{M}}(n,m)$ and $\vec{\mathcal{C}}(n,d)$.

\begin{proposition}\label{prop:gnm_lb}
Let $c > 0$ be a real number and $k$ be an integer such that $c > \frac{1}{2}u_k$. Then a.a.s.~there are no proper oriented $k$-colourings of $\vec{\mathcal{M}}(n,m=cn)$.
\end{proposition}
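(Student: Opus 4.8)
The plan is to run a first-moment (expectation) argument: I will show that the expected number of proper oriented $k$-colourings of $\vec{\mathcal{M}}(n, m = cn)$ tends to $0$ when $c > \tfrac12 u_k$, so that a.a.s.\ no such colouring exists. Recall $u_k = \frac{2\log k}{\log k - \log\frac{k-1}{2}}$, so the hypothesis $c > \tfrac12 u_k$ rearranges to $\log k + c\log\frac{k-1}{2k} < 0$, i.e.\ $\log k + c\log\frac{k-1}{2k^2} < -c\log k < 0$; this negative quantity is exactly the exponential rate we will need. First I would set up the count. A colouring here is really a pair: a partition of $[n]$ into $k$ colour classes together with, for each \emph{ordered} pair of classes, a choice of which of the two directions any arc between them must take (equivalently, a homomorphism target that is a tournament on the $k$ classes, with the added freedom that a class may be empty). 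Fixing a partition into classes of sizes $n_1, \dots, n_k$ and a tournament orientation $\tau$ on $[k]$, a single random arc of $\vec{\mathcal{M}}(n,m)$ — chosen as an ordered pair $(x,y)$ of vertices uniformly with replacement and directed $x \to y$ — is \emph{consistent} with $(\,\{n_i\}, \tau)$ precisely when $x, y$ lie in distinct classes $i \ne j$ and the direction $i \to j$ agrees with $\tau$. The probability of this for one arc is $\frac{1}{n^2}\sum_{i \to j \text{ in } \tau} n_i n_j$, and by independence of the $m$ arcs the probability all $m$ are consistent is that quantity raised to the $m$-th power.

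Next I would bound the per-arc probability uniformly over $\tau$ and over the partition. Since $\tau$ picks exactly one direction for each of the $\binom{k}{2}$ unordered pairs, $\sum_{i \to j \text{ in } \tau} n_i n_j = \sum_{i < j} n_i n_j = \tfrac12\big(n^2 - \sum_i n_i^2\big) \le \tfrac12 n^2 (1 - 1/k)$, with equality when all $n_i = n/k$; so for \emph{every} $\tau$ the per-arc consistency probability is at most $\frac12(1 - 1/k) = \frac{k-1}{2k}$. Crucially this bound does not depend on $\tau$, so I can afford to sum over all $2^{\binom{k}{2}}$ tournaments trivially. Thus
\[
\ex[\#\text{proper oriented } k\text{-colourings}] \;\le\; 2^{\binom{k}{2}} \sum_{n_1 + \cdots + n_k = n} \binom{n}{n_1, \dots, n_k} \left(\frac{k-1}{2k}\right)^{m}.
\]
The multinomial coefficients sum to $k^n$, so the right-hand side is at most $2^{\binom{k}{2}} k^n \big(\frac{k-1}{2k}\big)^{cn}$. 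Taking logarithms and dividing by $n$, and recalling $\binom{k}{2}$ is a constant independent of $n$, the exponential growth rate is $\log k + c\log\frac{k-1}{2k} + o(1)$, which by the rearrangement of $c > \tfrac12 u_k$ noted above is a negative constant. Hence the expectation tends to $0$, and by Markov's inequality (first moment method) a.a.s.\ $\vec{\mathcal{M}}(n,m=cn)$ has no proper oriented $k$-colouring, which is the claim.

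I do not expect a serious obstacle here; the argument is a clean union bound. The one point deserving care is the precise definition of "proper oriented $k$-colouring" being counted — in particular whether empty colour classes and the free choice of inter-class orientation are handled correctly — since the count must be an \emph{upper} bound on the number of actual homomorphisms to $k$-vertex tournaments; allowing empty classes and counting ordered tournament orientations only inflates the count, so the bound is safe. A secondary subtlety is that one should use the \emph{uniform} bound $\sum_{i<j} n_i n_j \le \tfrac12 n^2(1-1/k)$ rather than trying to optimize the partition-dependent exponent against the multinomial coefficient, since decoupling the two via $\sum \binom{n}{n_1,\dots,n_k} = k^n$ is exactly what makes the constant $u_k$ come out; this is the same maneuver used by Achlioptas and Naor in the undirected first-moment bound, and the factor of $\frac{k-1}{2k}$ (versus $\frac{k-1}{k}$ undirected) is precisely what halves things and produces $u_k$ rather than the classical threshold.
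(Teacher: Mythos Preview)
Your proof is correct and follows essentially the same approach as the paper: a first-moment bound where each arc is correctly coloured with probability at most $\tfrac12(1-1/k)$ (via convexity of $\sum n_i^2$), summed over the $k^n$ colourings and $2^{\binom{k}{2}}$ tournaments, yielding $\ex X \le 2^{\binom{k}{2}}\big(k(\tfrac{k-1}{2k})^c\big)^n \to 0$ under $c>\tfrac12 u_k$. The only cosmetic difference is that the paper writes the colouring count directly as $k^n$ while you route through the multinomial sum.
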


\begin{proof}
Let $X$ be a random variable counting the number of proper oriented $k$-colourings of $\vec{\mathcal{M}}(n,m=cn)$. Fix a colouring of $n$ vertices and a tournament on $k$ vertices. Let $a_i$ be the proportion of vertices that received colour $i$. Then the probability that a directed edge connects two vertices of colour $i$ is $a_i^2$, so the probability that it connects different colours is $1 - \sum_{i=1}^k a_i^2$ and the probability it connects two different colours and its orientation matches the tournament is $\frac{1}{2}(1-\sum_{i=1}^k a_i^2)$. By convexity, since the colour proportions must add to one, this probability is at most $\frac{1}{2}(1-\frac{1}{k})$. As each edge is added independently, the probability that the resulting multigraph is properly coloured is at most $(\frac{1}{2}(1-\frac{1}{k}))^m$. Therefore
\[ \ex X \le 2^{\binom{k}{2}} k^n\left(\frac{1}{2}\left(1-\frac{1}{k}\right)\right)^m = 2^{\binom{k}{2}} \left(k\left(\frac{1}{2}\left(1-\frac{1}{k}\right)\right)^c\right)^n. \]
Noting
\[ c > \frac{1}{2}u_k \implies c\log\left(\frac{2k}{k-1}\right) > \log k \implies 1 > k\left(\frac{1}{2}\left(1-\frac{1}{k}\right)\right)^c, \]
we see $\ex X = o(1)$ and thus a.a.s. there are no proper $k$-oriented-colourings of $\vec{\mathcal{M}}(n,m=cn)$.
\end{proof}

\begin{proposition}\label{prop:gnd_lb}
Let $d \ge 2$ and $k$ be integers such that $d > u_k$. Then a.a.s.~there are no proper oriented $k$-colourings of $\vec{\mathcal{C}}(n,d)$.
\end{proposition}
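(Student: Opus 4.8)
The plan is to run a first-moment argument entirely parallel to that of Proposition~\ref{prop:gnm_lb}, but on the configuration model $\vec{\mathcal{C}}(n,d)$ instead of $\vec{\mathcal{M}}(n,m)$. Let $X$ be the random variable counting proper oriented $k$-colourings of $\vec{\mathcal{C}}(n,d)$: a choice of a tournament $T$ on $k$ colours, an assignment $c\colon [n]\to[k]$ of colours to the $n$ groups of $d$ points, and a perfect matching on the $nd$ points such that every matched pair joins points in groups of distinct colours with the arc direction (after a uniform random orientation) agreeing with $T$. First I would fix $T$ and a colour assignment with colour-class proportions $a_1,\dots,a_k$, so that there are $a_i n$ groups (hence $a_i n d$ points) of colour $i$, and compute the probability that a uniform random perfect matching on $nd$ points, together with uniform random edge orientations, yields a proper oriented colouring.

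The main step is this probability estimate. The number of perfect matchings on $nd$ points is $(nd-1)!! = \frac{(nd)!}{(nd/2)!\,2^{nd/2}}$. For the numerator I would count matchings in which no edge lies inside a colour class and, simultaneously, sum over orientation patterns consistent with $T$; since each of the $nd/2$ edges is independently given a uniform orientation, the probability the orientations all agree with $T$ on a fixed "rainbow" matching is $2^{-nd/2}$, so this contributes a factor $\bigl(\tfrac12\bigr)^{nd/2}$ exactly as in the $\vec{\mathcal{M}}(n,m)$ case. The count of matchings avoiding monochromatic edges is bounded above by the number of ways to pair the $nd$ points so that matched points have different colours; a standard bound (e.g.\ expanding by the colour-class sizes, or using the permanent/contingency-table estimate of Bollob\'as) gives that this is at most roughly $\bigl(1-\sum_i a_i^2\bigr)^{nd/2}$ times $(nd-1)!!$, up to a subexponential ($e^{O(\sqrt n)}$ or polynomial) factor coming from Stirling's formula — small enough to be absorbed since $d=dn/n$ means the dominant term is exponential in $n$. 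By convexity $1-\sum_i a_i^2 \le 1-\tfrac1k$, so the per-edge factor is at most $\tfrac12\bigl(1-\tfrac1k\bigr)$, and over $nd/2$ edges we get $\bigl(\tfrac12(1-\tfrac1k)\bigr)^{nd/2}$.

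Summing over the at most $2^{\binom k2}$ tournaments and $k^n$ colour assignments, and folding in the subexponential Stirling factors into a function $\xi'(n)$ with $\log\xi'(n)=o(n)$, gives
\[
\ex X \le \xi'(n)\, 2^{\binom k2}\left(k\left(\tfrac12\left(1-\tfrac1k\right)\right)^{d/2}\right)^n .
\]
Then the hypothesis $d>u_k$ unwinds exactly as before: $d>u_k$ means $d\bigl(\log k-\log\tfrac{k-1}{2}\bigr)>2\log k$, i.e.\ $\tfrac d2\log\tfrac{2k}{k-1}>\log k$, i.e.\ $k\bigl(\tfrac12(1-\tfrac1k)\bigr)^{d/2}<1$, so the base of the $n$-th power is strictly below $1$ and $\ex X=o(1)$. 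By Markov's inequality, a.a.s.\ there are no proper oriented $k$-colourings of $\vec{\mathcal{C}}(n,d)$.

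The step I expect to be the genuine obstacle is the upper bound on the number of rainbow (colour-class-avoiding) perfect matchings: in $\vec{\mathcal{M}}(n,m)$ the arcs are independent so the probability factorizes trivially, but in the configuration model the matching is a global constraint, so I need the standard asymptotic enumeration of perfect matchings of $nd$ points with forbidden pairs — keeping careful track that the correction to the naive product $\bigl(1-\sum a_i^2\bigr)^{nd/2}$ is only subexponential in $n$ (uniformly in the choice of proportions $a_i$, or after noting the extremal $a_i$ are bounded away from degeneracy). Everything else is bookkeeping identical in spirit to Proposition~\ref{prop:gnm_lb}.
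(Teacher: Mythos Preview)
Your strategy is the same as the paper's: a first-moment bound on the number of oriented $k$-colourings in $\vec{\mathcal{C}}(n,d)$, targeting the same base $k\bigl(\tfrac12(1-\tfrac1k)\bigr)^{d/2}$. The place where your argument is incomplete is exactly the step you flag. The ``standard bound'' you invoke---that the number of colour-class-avoiding perfect matchings on $nd$ points with class proportions $(a_i)$ is at most $(1-\sum_i a_i^2)^{nd/2}(nd-1)!!$ up to a subexponential factor uniform in $(a_i)$---is not a black box one can simply cite; it is precisely the content of the first-moment computation for proper colourings of random regular graphs, and it is what the paper actually proves.

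The paper fills this gap by introducing a second layer of parameters: in addition to the class proportions $\mathbf a=(a_i)$, it records $\mathbf b=(b_{i,i'})$, the proportion of the $nd$ points of colour $i$ matched to colour $i'$. The count of oriented configurations compatible with a fixed $(\mathbf a,\mathbf b)$ is written out exactly and Stirling gives $\mathrm{poly}(n)\,e^{nf(\mathbf a,\mathbf b)}$ with
\[
f(\mathbf a,\mathbf b)=\sum_{i<i'} b_{i,i'}\log\!\left(\frac{(a_ia_{i'})^{d-1}}{2\,b_{i,i'}^{\,d}}\right).
\]
The paper then cites Molloy--Reed for the fact that the optimum over $\mathbf a$ is at equipartition $\hat{\mathbf a}=(1/k,\dots,1/k)$, and uses a short Lagrange-multiplier argument to show that, given $\hat{\mathbf a}$, the optimum over $\mathbf b$ (relaxed to $\sum b_{i,i'}=1$) is at $\hat b_{i,i'}=\tfrac{1}{k(k-1)}$, yielding $e^{nf(\hat{\mathbf a},\hat{\mathbf b})}=\bigl(k(\tfrac12(1-\tfrac1k))^{d/2}\bigr)^n$. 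Since there are only polynomially many $(\mathbf a,\mathbf b)$ the sum is $\mathrm{poly}(n)$ times this, and $d>u_k$ finishes as you wrote. In short, what you call the ``contingency-table estimate'' is the $\mathbf b$-parameterisation carried out explicitly; once you do that, your sketch becomes the paper's proof.
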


\begin{proof}

Let $X$ be the number of oriented $k$-colourings of a random $d$-regular oriented multigraph $\vec{\mathcal{C}}(n,d)$. We calculate $\ex X$ by summing over pairs $(\vec{C},\chio)$, where $\vec{C} \in \vec{\mathcal{C}}(n,d)$ is a configuration with edges given orientations independently and uniformly at random and $\chio$ is an oriented colouring that respects $\vec{C}$, then dividing by the total number of oriented configurations.

Starting with the denominator, each of the $|\vec{\mathcal{C}}(n,d)|$ many graphs
of the configuration model has $\frac{dn}{2}$ edges, each with one of two
orientations, giving
\[|\mathcal{C}(n,d)| \cdot 2^{\frac{1}{2}dn} = \frac{(dn)!}{(\frac{1}{2}dn)!2^{\frac{1}{2}dn}} \cdot 2^{\frac{1}{2}dn}= \frac{(dn)!}{(\frac{1}{2}dn)!}\]
total oriented configurations.

Now we turn to the numerator. For each $i \in [k]$, let $a_i \in
\frac{1}{n}\mathbb{Z}$ be the proportion of the vertices of $\vec{C}$ that
$\chio$ stipulates receive colour $i$. We have
\begin{equation}\label{eq:acons}
a_i \ge 0 \ \forall i \in [k] \AND \sum_{i \in [k]} a_i = 1.
\end{equation}
Set $\ba = (a_i)_{i \in [k]}$ and note that $\ba \in \frac{1}{n}\mathbb{Z}^k$.

To be a proper oriented $k$-colouring, all edges between colour classes
must have the same orientation which may be chosen arbitarily. Fix a $k$-colouring of
$K_k$ and choose any of the $2^{\binom{k}{2}}$ possible orientations uniformly
at random. Then orient any edges between vertices of colour $i$ and $i'$
according to that orientation.

Having fixed the direction of all the edges, we now need only specify their
endpoints. To generate edges in the configuration model, we replace each vertex
with a collection of $d$ vertices and select a perfect matching on the resulting
$dn$ vertices. The larger collection of vertices inherits a colouring from the
colouring of the base vertices. For each pair of distinct colours $i,i'$, let
$b_{i,i'}\in\frac{1}{n}\mathbb{Z}$ denote the proportion of the vertices that
$\chio$ colours with colour $i$ and are paired with a vertex of colour $i'$. (As $\chio$ is a
proper colouring there are no edges from a vertex of colour $i$ to another
vertex of colour $i$.) Let $K$ be the set $\{(i,i') \in [k]^2 \mid i \ne i'\}$. We have
\begin{align}
b_{i,i'} \ge 0 \quad &\quad  \forall (i,i') \in K \notag\\
b_{i,i'} = b_{i',i} \quad &\quad  \forall (i,i') \in K \notag\\
\sum_{i \ne i'} b_{i,i'} = a_{i'} \quad &\quad \forall i \in [k] \notag\\
\sum_{i' \ne i} b_{i,i'} = a_i \quad & \quad \forall i' \in [k] \label{eq:bcons}
\end{align}
Set $\bb = (b_{i,i'})_{(i,i') \in K}$ and note that $\bb \in \frac{1}{n}\mathbb{Z}^{k(k-1)}$.

To generate a pair $(\vec{C},\chio)$, we first fix $\ba$ and $\bb$. We then colour the $n$
vertices in accordance with $\ba$, which can be done in
\[ \frac{n!}{\prod\limits_{i\in[k]} (a_in)!} \]
many ways. To add edges in accordance with $\bb$ we first select the
appropriate numbers of vertices in each colour class. The configuration model
blows up the $a_in$ vertices of colour $i$ to $a_idn$ vertices, so this
selection can be made in
\[ \prod_{i \in [k]} \frac{(a_idn)!}{\prod\limits_{i' \ne i} (b_{i,i'}dn)!}\]
many ways. Note that each pair of distinct colours $\{i,i'\}$ contributes two terms to the
denominator of this product. We then select a perfect matching between the
vertices of each pair of colours. For a given pair of coulours $(i,i')$, we only
chose one matching, so there are
\[ \prod_{i < i'} (b_{i,i'}dn)! \]
many such selections. Putting these steps together, we see there are
\[ \frac{n!}{\prod_{i \in k} (a_in)!} \prod_{i\in k} \frac{(a_i dn)!}{\prod_{i'
\ne i} (b_{i,i'}dn)!} \prod_{i < i'} (b_{i,i'}dn)! = n!\prod_{i
\in k} \frac{(a_idn)!}{(a_in)!} \prod_{i < i'} \frac{1}{(b_{i,i'}dn)!} \]
many colourings that satisfy $\ba$ and $\bb$. Finally, colour the vertices of $K_k$ with $[k]$ and choose a random orientation of the edges, then orient the edges of $\vec{P}$ in accordance with that colouring; there are $2^{\binom{k}{2}}$ such orientations of $K_k$ from which to choose.

Altogether, keeping in mind (\ref{eq:acons}) and (\ref{eq:bcons}),
\begin{equation} \label{eq:sum_half_bs}
\sum_{i < i'} b_{i,i'} = \frac{1}{2} \sum_{i < i'} 2b_{i,i'} = \frac{1}{2}
\sum_{i < i'} (b_{i,i'} + b_{i',i}) = \frac{1}{2} \sum_{i \in [k]} \sum_{i' \ne i}
b_{i,i'} = \frac{1}{2} \sum_{i \in [k]} a_i = \frac{1}{2},
\end{equation}
and applying Stirling's approximation $x! \sim \sqrt{2\pi x}(x/e)^x$, we get
\begin{align*}
\ex X &= \frac{\sum_{\ba, \bb} 2^{\binom{k}{2}} \cdot n! \prod_{i \in k} \frac{(a_idn)!}{(a_in)!} \prod_{i < i'}
\frac{1}{(b_{i,i'}dn)!}}{\frac{(dn)}{(\frac{dn}{2})!}}\\
	&\sim \frac{\sum_{\ba, \bb} 2^{\binom{k}{2}} \cdot \sqrt{\pi n} (n/e)^n \prod_{i \in k} \frac{\sqrt{d}(a_idn/e)^{a_idn}}{(a_in/e)^{a_in}} \prod_{i < i'} \frac{1}{\sqrt{2\pi b_{i,i'}dn}(b_{i,i'}dn/e)^{b_{i,i'}dn}}}{\frac{(dn/e)^{dn}}{(dn/(2e))^{dn/2}}}\\
	&=\frac{\sum_{\ba, \bb} 2^{\binom{k}{2}} \cdot \sqrt{\pi n} \prod_{i \in k} \sqrt{d}(a_i)^{a_i(d-1)n} \prod_{i < i'} \frac{1}{\sqrt{2\pi b_{i,i'}dn}(b_{i,i'})^{b_{i,i'}dn}}}{2^{dn/2}}\\
	&= \sum_{\ba, \bb} \frac{2^{\binom{k}{2}} \cdot \sqrt{\pi n} \cdot d^{k/2}}{\prod_{i < i'} \sqrt{2\pi b_{i,i'}dn}} \left(\frac{1}{2}\right)^{dn/2}\prod_{i \in [k]}a_i^{a_i(d-1)n} \prod_{i < i'} \frac{1}{b_{i,i'}^{b_{i,i'}dn}}\\
	&= \sum_{\ba, \bb} \poly(n) \prod_{i < i'} \left(\frac{(a_ia_{i'})^{d-1}}{(2b_{i,i'})^d}\right)^{b_{i,i'}n}\\
	&= \sum_{\ba, \bb} \poly(n) e^{nf(\ba, \bb)}
\end{align*}
where
\begin{equation}\label{eq:fab}
f(\ba,\bb) = \sum_{i < i'} b_{i,i'}
\log\left(\frac{(a_ia_{i'})^{d-1}}{(2b_{i,i'})^d}\right)
\end{equation}
and $\poly(n)$ is a function bounded above by a function that is polynomial in $n$.

Let $\bah = (\frac{1}{k})_{i \in [k]}$ be a vector representing an equal
distribution of colours. Define $\bbh = (\frac{1}{k(k-1)})_{(i,i') \in K}$.

As reproved in Shi and Wormald~\cite[Theorem 1.3]{SW07} and originally presented in Molloy and Reed~\cite{Mo92}, the number of configurations respecting a fixed $\ba$ is maximized when $\ba =
\bah$. Furthermore, we claim that if $\ba = \bah$, then $f(\bah,\bb)$ is maximized at $\bb = \bbh$.

Note that
\[ f(\bah, \bb) = \sum_{i < i'} -d b_{i,i'} \log(2b_{i,i'}) -(d-1)\log k. \]
Since the sequence $\{2b_{i, i'}\}_{i < i'}$ forms a probability distribution on a set of size $\binom{k}{2}$, then by the maximum entropy principle (see e.g. \cite[Lemma 15.7.1(i)]{AS16}),
\begin{align*}
    f(\bah, \bb) 
    &= \frac{-d}{2}\sum_{i < i'} (2b_{i, i'})\log(2b_{i, i'}) - (d-1)\log k\\
    &\le \frac{d}{2}\log \binom{k}{2} - (d-1)\log k\\
    &=\frac{d}{2}\log\left(\frac{k-1}{2k}\right) + \log k\\
    &=\log \left(k \left(\frac{1}{2}\left(1 - \frac{1}{k} \right) \right)^{d/2} \right) = f(\bah, \bbh).
\end{align*}

Note that each of the $\frac{dn}{2}$ edges of $G$ is adjacent to two vertices of
different colours and determining those colours for each edge specifies $\bb$.
Thus there are no more than
\[ \left(\frac{dn}{2}\right)^{k(k-1)} = O\left(n^{k(k-1)}\right) \]
possible assignments for $\bb$, so
\[ \sum_{\bb \text{ s.t.} (\ref{eq:bcons})} \exp(nf(\bah,\bb)) =
O\left(n^{k(k-1)}\right)\left(k\left(\frac{1}{2}\left(1-\frac{1}{k}\right)\right)^{\frac{d}{2}}\right)^n. \]
Similarly, there are at most $n^k$ assignments of colours to vertices, each of
which determines an assignment to $\ba$, so
\[ \sum_{\ba \text{ s.t. } (\ref{eq:acons})} \sum_{\bb \text{ s.t. }
(\ref{eq:bcons})} \exp(nf(\ba, \bb)) = O(n^k) \sum_{\bb \text{ s.t. }
(\ref{eq:bcons})} \exp(nf(\bah, \bb)) =
O\left(n^{k^2}\right)\left(k\left(\frac{1}{2}\left(1-\frac{1}{k}\right)\right)^{\frac{d}{2}}\right)^n. \]
Now recalling that $\poly(n) = O(n^m)$ for some $m$,
\[ \ex X = \sum_{\ba \text{ s. t. } (\ref{eq:acons})} \sum_{\bb \text{ s.t. } (\ref{eq:bcons})} \poly(n)e^{nf(\ba,\bb)} = O(n^{k^2+m})\left(k\left(\frac{1}{2}\left(1-\frac{1}{k}\right)\right)^{\frac{d}{2}}\right)^n. \]
Note that
\[ d > u_k \implies d\log\left(\frac{2k}{k-1}\right) > 2\log k \implies 1 >
k\left(\frac{1}{2}\left(1 - \frac{1}{k}\right)\right)^{d/2}. \]
Therefore when $d > u_k$, $\ex X = o(1)$ and there are a.a.s. no $k$-colourings of $\vec{\mathcal{C}}(n,d)$.
\end{proof}

\section{Upper Bounds} \label{sec:ub}

In this section we use the second moment method to find values of $k$ for which oriented colourings of $\vec{\mathcal{M}}(n,m)$ and $\vec{\mathcal{C}}(n,d)$ exist with positive probability. To simplify the
calculations, we narrow our consideration to particularly well-behaved oriented
colourings. If $T$ is a tournament on $k$ vertices, then a $T$-colouring of
$\vec{C}$ is a colouring with a homomorphism onto $T$. We call a $k$-colouring
(oriented or otherwise) \textit{equitable} if there are $\frac{n}{k}$ vertices
of each colour. Note that this condition requires that $n$ is divisible by $k$.
We make that assumption throughout this section and discuss how to modify these
arguments for $n$ not a multiple of $k$ in Section~\ref{ssec:divisibility}.

\subsection{$\vec{\mathcal{M}}(n,m)$}

Let $c > 0 $ be real and $k \ge 3$ be an integer such that there exists a doubly regular tournament of order $k$, denoted $T_k$. Let $Y$ be the number of equitable $T_k$ oriented colourings of $\vec{\mathcal{M}}(n,m=cn)$. The first moment is straightforward to calculate.

\begin{lemma} \label{lem:gnm_first_mom}
\[ \ex Y \sim k^{k/2}(2\pi n)^{-(k-1)/2}\left(k\left(\frac{1}{2}\left(1-\frac{1}{k}\right)\right)^c\right)^n. \]
\end{lemma}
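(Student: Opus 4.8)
The plan is to compute $\ex Y$ directly by summing over all equitable $T_k$-colourings of the vertex set and, for each, computing the probability that every one of the $m = cn$ independently-added arcs of $\vec{\mathcal{M}}(n,m)$ respects the homomorphism to $T_k$. First I would count the equitable colourings: there are $\binom{n}{n/k, n/k, \ldots, n/k} = \frac{n!}{((n/k)!)^k}$ ways to partition $V$ into $k$ colour classes of size $n/k$ each. Next, fix such a colouring. A single arc added to $\vec{\mathcal{M}}(n,m)$ is an ordered pair of vertices chosen uniformly (with replacement) from the $n^2$ ordered pairs; it respects $T_k$ precisely when its head and tail lie in distinct colour classes $i \ne j$ with $i \to j$ an arc of $T_k$. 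Since $T_k$ is a tournament on $k$ vertices, exactly $\binom{k}{2}$ of the $k^2$ ordered colour-pairs are ``good'', and for an equitable colouring each colour class has $n/k$ vertices, so the number of good ordered vertex-pairs is $\binom{k}{2}(n/k)^2$, giving per-arc success probability $\binom{k}{2}(n/k)^2 / n^2 = \frac{1}{2}(1 - \frac{1}{k})$. Because the $m$ arcs are added independently, the probability the whole configuration respects the fixed colouring is $\left(\tfrac{1}{2}(1-\tfrac{1}{k})\right)^m$. Hence
\[
\ex Y = \frac{n!}{((n/k)!)^k}\left(\frac{1}{2}\left(1-\frac{1}{k}\right)\right)^{cn}.
\]

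It then remains to apply Stirling's formula to the multinomial coefficient. Using $x! = \xi(x)(x/e)^x$ with $\xi(x) \to \sqrt{2\pi x}$, the factors of $(\cdot/e)$ telescope: $\frac{(n/e)^n}{((n/k)/e)^{n}} = k^n$, which combines with the $k^n$ already implicit to yield the $\left(k(\tfrac12(1-\tfrac1k))^c\right)^n$ factor. The remaining $\xi$-factors give $\frac{\xi(n)}{\xi(n/k)^k} \sim \frac{\sqrt{2\pi n}}{(2\pi n/k)^{k/2}} = \sqrt{2\pi n}\,(2\pi n)^{-k/2} k^{k/2} = k^{k/2}(2\pi n)^{-(k-1)/2}$, which is exactly the claimed polynomial prefactor. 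Assembling these pieces gives the stated asymptotic.

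There is essentially no hard obstacle here — the lemma is a routine first-moment computation — so the only thing to be careful about is bookkeeping: confirming the per-arc probability $\frac{1}{2}(1-\frac{1}{k})$ is achieved \emph{exactly} (not merely bounded, as in Proposition~\ref{prop:gnm_lb}) because we restrict to equitable colourings where all class sizes are $n/k$, and checking that the divisibility assumption $k \mid n$ makes $n/k$ an integer so that the multinomial coefficient and Stirling expansion are literally valid. One should also note that the independence of the $m$ arc-additions in the $\vec{\mathcal{M}}(n,m)$ model is what licenses raising the single-arc probability to the $m$-th power, and that the choice of a specific doubly-regular tournament $T_k$ (rather than summing over tournaments) is why no $2^{\binom{k}{2}}$ factor appears.
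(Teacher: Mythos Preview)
Your proposal is correct and follows essentially the same approach as the paper: count equitable colourings by the multinomial coefficient, observe that each arc is independently good with probability exactly $\tfrac12(1-\tfrac1k)$ for an equitable colouring, and apply Stirling to extract the polynomial prefactor. Your additional remarks on why the probability is exact and why no $2^{\binom{k}{2}}$ factor appears are accurate and match the paper's setup.
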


\begin{proof}
As we are only counting equitable colourings, the probability an arc chosen at random is correct is precisely $\frac{1}{2}(1-\frac{1}{k})$. Therefore the probability a colouring is correct is $(\frac{1}{2}(1-\frac{1}{k}))^m$ and
\begin{align*}
\ex Y &= \frac{n!}{((n/k)!)^k}\left(\frac{1}{2}\left(1-\frac{1}{k}\right)\right)^m\\
	&\sim \frac{\sqrt{2 \pi n}}{(\sqrt{2 \pi n/k})^k} \cdot \frac{(n/e)^n}{((n/ek)^{n/k})^k}\left(\left(\frac{1}{2}\left(1-\frac{1}{k}\right)\right)^c\right)^n\\
	&= k^{k/2}(2\pi n)^{-(k-1)/2}\left(k\left(\frac{1}{2}\left(1-\frac{1}{k}\right)\right)^c\right)^n.
\end{align*}
\end{proof}

The second moment will require summing over the possible overlap matrices of
pairs of colourings. To accurately estimate the sum of this lattice, we use the
following Laplace summation technique of P\'erez and the second author~\cite{NP21} based on
work by Greenhill, Janson, and Ruc\'inski~\cite{GJR10}:

\begin{theorem}[{\cite[Proposition 3.4]{NP21}}]\label{thm:laplace_summation_gamma}
Suppose the following:
\begin{enumerate}[(i)]
\item $\Gamma=(V_\Gamma, E_\Gamma)$ is a non-empty bipartite multigraph with at least one cycle.
\item $D$ is the unsigned incidence matrix of $\Gamma$ .
\item $\tau(\Gamma)$ is the number of maximal forests in $\Gamma$.
\item $\mathbb V = \Ker(D) \subseteq \mathbb{R}^{|E_\Gamma|}$ is a vector space of dimension $r$.
\item $\mbf y\in\mathbb{R}^{|V_\Gamma|}$ such that
\begin{equation}
D\mbf x = \mbf y
\label{eq:Dxy}
\end{equation}
is a consistent linear system.
\item $K \subset \mathbb{R}^{|E_\Gamma|}$ is a compact convex set with non-empty interior $K^\circ$.
\item $\phi : K \to \mathbb{R}$ is a continuous function and the maximum of $\phi$ in $K$ subject to~\eqref{eq:Dxy} is attained at a unique maximizer $\mbf{\hat x} \in K^\circ$.
\item $\phi$ is twice continuously differentiable in a neighbourhood of $\mbf{\hat x}$ and $H$ is its Hessian matrix at $\mbf{\hat x}$.
\item $\psi : K_1 \to \mathbb{R}$ is a continuous function on some neighbourhood $K_1 \subseteq K$ of $\mbf{\hat x}$ with $\psi(\mbf{\hat x}) > 0$.
\item For each positive integer $n$,
\[
\mathbb X_n = \left\{ \mbf x\in K \cap \frac{1}{n} \mathbb{Z}^{|E_\Gamma|} : D\mbf x = \mbf y \right\}
\]
is non-empty, and there is a positive real number $b_n$ and a function $T_n : \mathbb X_n \to \mathbb{R}$ such that, as $n \to \infty$,
\[ T_n(\mbf x) = O( b_n e^{n\phi(\mbf x)+o(n)}),\quad \mbf x \in \mathbb X_n \]
and
\[ T_n(\mbf x) = b_n (\psi(\mbf x) + o(1)) e^{n\phi(\mbf x)}, \quad \mbf x \in \mathbb X_n \cap K_1,\]
uniformly for $\mbf x$ in the indicated sets.
\end{enumerate}
Then provided $\det(-H|_{\mathbb V} )\ne 0$, as $n \to \infty$,
\[
\sum_{\mbf x \in \mathbb X_n} T_n(\mbf x) \sim \frac{\psi(\mbf{\hat x})}{\tau(\Gamma)^{1/2} \det(-H|_{\mathbb V})^{1/2}} (2\pi n)^{r/2} b_n e^{n\phi(\mbf{\hat x})}.
\]
\end{theorem}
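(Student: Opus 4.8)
This is a lattice form of the multidimensional Laplace method, and I would follow the template of Greenhill, Janson and Ruci\'nski~\cite{GJR10}. The feasible set $\{\mbf x:D\mbf x=\mbf y\}$ is the affine translate $\mbf{\hat x}+\mathbb V$, and the admissible points of $\mathbb X_n$ form (the part inside $K$ of) a single coset of the fine lattice $\tfrac1n\Lambda$ in it, where $\Lambda:=\mathbb Z^{|E_\Gamma|}\cap\mathbb V$ is the integer sublattice of $\mathbb V=\Ker D$, of rank $r$. First I would localise: since $K$ is compact, $\phi$ continuous on $K$, and the constrained maximiser $\mbf{\hat x}\in K^{\circ}$ unique, for each $\varepsilon>0$ there is $\delta>0$ with $\phi\le\phi(\mbf{\hat x})-\delta$ on $(\{D\mbf x=\mbf y\}\cap K)\setminus B(\mbf{\hat x},\varepsilon)$. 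Because $\mathbb X_n$ sits in a bounded region of a lattice with spacing $O(1/n)$ in an $r$-dimensional subspace we have $|\mathbb X_n|=\poly(n)$, so by the uniform upper bound on $T_n$ the part of the sum over $\mathbb X_n\setminus B(\mbf{\hat x},\varepsilon)$ is $O(\poly(n)\,b_ne^{n(\phi(\mbf{\hat x})-\delta)+o(n)})$, which is negligible next to the target order $n^{r/2}b_ne^{n\phi(\mbf{\hat x})}$. On the remaining window we may replace $T_n(\mbf x)$ by $b_n(\psi(\mbf{\hat x})+o(1))e^{n\phi(\mbf x)}$.

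Next I would carry out the Gaussian approximation on the window. Since $\mbf{\hat x}$ is an interior maximiser of $\phi$ subject to $D\mbf x=\mbf y$, Lagrange multipliers give $\nabla\phi(\mbf{\hat x})\in\mathrm{rowspace}(D)=\mathbb V^{\perp}$, so the linear term of $\phi$ vanishes along $\mathbb V$; by the $C^{2}$ hypothesis, for $\mbf v\in\mathbb V$ small,
\[
n\phi(\mbf{\hat x}+\mbf v)=n\phi(\mbf{\hat x})+\tfrac{n}{2}\,\mbf v^{T}H|_{\mathbb V}\,\mbf v+n\cdot o(\|\mbf v\|^{2}),
\]
and nonsingularity of $H|_{\mathbb V}$ together with maximality forces $-H|_{\mathbb V}$ positive definite. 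Parametrising the lattice points near $\mbf{\hat x}$ as $\mbf x_n+\mbf u/\sqrt n$, where $\mbf x_n$ is a coset point within $O(1/n)$ of $\mbf{\hat x}$ (which exists by density, using hypothesis~(x)) and $\mbf u$ ranges over a lattice of covolume $n^{-r/2}\det(\Lambda)$ in $\mathbb V$, the localised sum is asymptotic to
\[
\frac{n^{r/2}}{\det(\Lambda)}\,b_n\,\psi(\mbf{\hat x})\,e^{n\phi(\mbf{\hat x})}\int_{\mathbb V}e^{\frac12\mbf u^{T}H|_{\mathbb V}\mbf u}\,d\mbf u=\frac{(2\pi n)^{r/2}}{\det(\Lambda)\,\det(-H|_{\mathbb V})^{1/2}}\,b_n\,\psi(\mbf{\hat x})\,e^{n\phi(\mbf{\hat x})},
\]
provided the localisation radius is taken to shrink slowly enough (say $\varepsilon=\varepsilon_n$ with $\varepsilon_n\to0$ but $\varepsilon_n\sqrt n\to\infty$) that the exponential Taylor error and the Riemann-sum discretisation error are both $1+o(1)$, uniformly.

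It remains to identify $\det(\Lambda)=\tau(\Gamma)^{1/2}$. Here bipartiteness of $\Gamma$ enters: negating the rows of $D$ indexed by one part of the bipartition converts the unsigned incidence matrix $D$ into a signed incidence matrix of $\Gamma$, a left multiplication by a diagonal $\pm1$ matrix that does not change $\Ker D$, hence not $\Lambda$. Therefore $\Lambda$ is the integer cycle lattice of $\Gamma$, and by the matrix-tree theorem --- Cauchy--Binet applied to $DD^{T}$, equivalently reduced-Laplacian determinants taken componentwise --- its covolume equals the square root of the number of maximal spanning forests, i.e.\ $\det(\Lambda)=\tau(\Gamma)^{1/2}$. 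Substituting into the previous display yields exactly
\[
\sum_{\mbf x\in\mathbb X_n}T_n(\mbf x)\sim\frac{\psi(\mbf{\hat x})}{\tau(\Gamma)^{1/2}\,\det(-H|_{\mathbb V})^{1/2}}(2\pi n)^{r/2}b_ne^{n\phi(\mbf{\hat x})}.
\]

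The main obstacle is the uniformity in the second step: one must control the Taylor error $o(\|\mbf v\|^{2})$ in the exponent and the lattice-to-integral discretisation simultaneously on a window shrinking at the right rate, and verify that enough points of the relevant coset of $\tfrac1n\Lambda$ genuinely accumulate near $\mbf{\hat x}$ (this is where hypothesis~(x) is used, beyond merely asserting $\mathbb X_n\neq\emptyset$). The combinatorial step is classical but genuinely relies on $\Gamma$ being bipartite; without it $\Ker D$ is not a cycle lattice and $\tau(\Gamma)$ is not the correct normalising constant.
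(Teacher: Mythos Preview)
The paper does not prove this theorem; it is quoted from~\cite{NP21} without proof and used as a black box. So there is no ``paper's own proof'' to compare against. That said, your proposal is correct and is exactly the argument one would expect in~\cite{NP21}: it is the Greenhill--Janson--Ruci\'nski Laplace summation (the paper's Theorem~\ref{thm:laplacian_summation}) specialised to the situation where the affine constraint comes from the incidence matrix of a bipartite multigraph, with the lattice covolume computed combinatorially rather than left as $\det(\Lambda)$. Your localisation/Gaussian steps are the standard ones, and your identification $\det(\Lambda)=\tau(\Gamma)^{1/2}$ via the bipartite signed/unsigned incidence matrix equivalence and the matrix-tree theorem is the heart of the matter and is right; this is precisely the step that distinguishes the statement here from the general~\cite{GJR10} version.
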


Using Theorem~\ref{thm:laplace_summation_gamma} and
Proposition~\ref{prop:generalized_AN}, we give the following asymptotic value for the
expected value of the square of the number of oriented colourings.  

\begin{lemma}\label{lem:gnm_second_mom}
For any integer $k > 3$ such that there exists a doubly regular tournament $T_k$ of order $k$ and $0 < c < \frac{1}{2}\ell_k$,
\[ \ex Y^2 \sim \left(\frac{(k-1)^4}{((k-1)^2-2c)^2-4c^2k^2}\right)^{(k-1)^2/4} (\ex Y)^2.\]
\end{lemma}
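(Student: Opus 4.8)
The plan is to compute $\ex Y^2$ as a sum over pairs of equitable $T_k$-colourings, parametrized by the \emph{overlap matrix} $\mbf A = (a_{ij})$, where $a_{ij}n$ counts the vertices receiving colour $i$ in the first colouring and colour $j$ in the second. Since both colourings are equitable, row and column sums of $n\mbf A$ are all $n/k$, so $k\mbf A$ is doubly-stochastic. For a fixed overlap matrix, the number of vertex pairs with a given joint colouring is multinomial, contributing $n!/\prod_{ij}(a_{ij}n)!$, and each of the $m=cn$ random arcs independently fails to create a monochromatic-in-either-colouring edge and respects both tournament orientations with probability that, via the argument in Proposition~\ref{prop:gnm_lb} applied to both colourings simultaneously, equals $\tfrac14\bigl(1 - \tfrac{2}{k} + \sum_{ij}a_{ij}^2 \pm(\text{cross terms from }T_k)\bigr)$; the cross terms are controlled by the Lagrangian of $G = T_k^{\otimes 2}$ exactly as in Definition~\ref{def:kroenecker_tournaments} and Proposition~\ref{prop:tourn_prod_eigenvectors}. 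After applying Stirling (the leading exponential and the polynomial prefactor), $\ex Y^2$ becomes $\sum_{\mbf A}\poly(n)\,e^{n g(\mbf A)}$ where $g(\mbf A) = -\tfrac1k\sum a_{ij}\log a_{ij} + c\log\bigl(\tfrac{2\mathcal L_G(k\mbf A)}{k^2}\bigr) + \text{const}$, i.e. $g = \gamma_{2c}$ up to an additive constant equal to $2\log\ex Y^{1/n}$ roughly.

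The next step is to identify the maximizer. By Corollary~\ref{cor:opt_doubly_reg_tour}, since $2c < \ell_k = \tfrac{2(k-1)^3}{k(k+1)(k-2)}\log(k-1)$, the function $\gamma_{2c}$ over doubly-stochastic matrices attains its \emph{unique} maximum at $\mbf A = \tfrac1k J_k$, with value $\gamma_{2c}(\tfrac1k J_k) = \log k + c\log\bigl(\tfrac{(k-1)^2}{2k^2}\bigr)$. One checks that $e^{n\gamma_{2c}(\frac1k J_k)} = \bigl(k(\tfrac12(1-\tfrac1k))^c\bigr)^{2n}$, matching $(\ex Y)^2$ up to the polynomial factors; this is the standard "the barycentre dominates" phenomenon and is what makes the second moment method succeed.

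To turn the pointwise maximum into the stated asymptotic constant, I would invoke Theorem~\ref{thm:laplace_summation_gamma}. The bipartite multigraph $\Gamma$ here encodes the doubly-stochastic constraints: $2k$ vertices (one per row-index and column-index of $\mbf A$) and $k^2$ edges (one per entry $a_{ij}$), with $D$ the incidence matrix; then $\tau(\Gamma)$ is the number of spanning forests of $K_{k,k}$, which is $2k^{2k-2}$ by the matrix-tree theorem / the known spanning-tree count of the complete bipartite graph, and $r = \dim\Ker D = (k-1)^2$. The function $\phi$ is $\gamma_{2c}$ (shifted), $\mbf{\hat x} = \tfrac1k J_k$ lies in the interior, and $\psi$ collects the Stirling prefactors ($d^{k/2}$-type terms do not arise here since $m=cn$ rather than regular, but the $\sqrt{\cdot}$ factors do). The remaining work is the Hessian computation: $H = H_{\text{entropy}} + H_{\text{Lagrangian}}$ at $\tfrac1k J_k$, restricted to $\mathbb V = \Ker D$. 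The entropy part contributes $-k\cdot I$ on that subspace (since $\partial^2(-\tfrac1k x\log x) = -\tfrac{1}{kx}$ evaluated at $x = 1/k$). The Lagrangian part is a multiple of the adjacency matrix of $G = T_k^{\otimes 2}$ — and this is exactly why Proposition~\ref{prop:tourn_prod_eigenvectors} gives the eigenvalues $\tfrac{(k-1)^2}{2}, -\tfrac{(k-1)}{2}, \tfrac{(k+1)}{2}$ with their multiplicities. On $\mathbb V$ (which is orthogonal to $\mathbf 1\otimes\mathbf 1$ but may not be orthogonal to all eigenspaces of $G$ — one must be careful, $\mathbb V$ kills $\mathbf u\otimes\mathbf 1$ and $\mathbf 1\otimes\mathbf u$ too, leaving precisely the two $\tfrac{(k-1)^2}{4}$-dimensional spaces at eigenvalue $-\tfrac{k-1}{2}$ and the $\tfrac{(k-1)^2}{2}$-dimensional space at eigenvalue $\tfrac{k+1}{2}$), $-H|_{\mathbb V}$ has eigenvalue $k + c\cdot(\text{scaling})\cdot\tfrac{k-1}{2}$ on a space of dimension $\tfrac{(k-1)^2}{2}$ and $k - c\cdot(\text{scaling})\cdot\tfrac{k+1}{2}$ on a space of dimension $\tfrac{(k-1)^2}{2}$; after tracking the scaling constant (which involves the $\tfrac{2}{k^2}$ normalization and a factor from $\log$ of the Lagrangian evaluated at the barycentre, namely $c/\mathcal L_G(\tfrac1k\cdot k J_k)$-type), $\det(-H|_{\mathbb V})$ comes out to a product whose $(k-1)^2/4$-th power, combined with the $(2\pi n)^{r/2}$, $\tau(\Gamma)^{-1/2}$, $\psi$, and $b_n$ factors and the first-moment asymptotic from Lemma~\ref{lem:gnm_first_mom}, collapses to $\bigl(\tfrac{(k-1)^4}{((k-1)^2-2c)^2 - 4c^2k^2}\bigr)^{(k-1)^2/4}(\ex Y)^2$. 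I expect the main obstacle to be precisely this Hessian-on-$\Ker D$ bookkeeping: getting the scaling constant in front of the adjacency matrix exactly right, correctly identifying which eigenspaces of $G$ survive the restriction to $\mathbb V$, and verifying that the resulting determinant matches the claimed rational function — and along the way confirming $\det(-H|_{\mathbb V}) \ne 0$, which is where the hypothesis $c < \tfrac12\ell_k$ re-enters to keep the eigenvalue $k - c\cdot(\text{scaling})\cdot\tfrac{k+1}{2}$ positive.
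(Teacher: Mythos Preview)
Your overall architecture matches the paper's proof: express $\ex Y^2$ as a sum over overlap matrices $\mbf A$, identify the probability that a random arc respects both colourings as the Lagrangian $\mathcal{L}_G(\mbf A)$ of $G = T_k^{\otimes 2}$, apply Corollary~\ref{cor:opt_doubly_reg_tour} (with $d=2c$) to pin the unique maximizer at $\frac{1}{k^2}J_k$, and then invoke Theorem~\ref{thm:laplace_summation_gamma} with $\Gamma = K_{k,k}$. A few small corrections: $\tau(K_{k,k}) = k^{2k-2}$, not $2k^{2k-2}$; your $\tfrac14(1-\tfrac{2}{k}+\ldots)$ expression is a leftover from the undirected problem and should simply read $\sum_{uv\in E(G)} a_u a_v$; and the entropy Hessian in the paper's parametrization ($f(\mbf A) = -\sum a_v\log a_v + c\log \mathcal{L}_G(\mbf A)$, evaluated at $a_v = 1/k^2$) is $-k^2 I$, not $-kI$.

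Where you genuinely diverge from the paper is in computing $\det(-H|_{\mathbb V})$. The paper writes down an explicit basis $U = V\otimes V$ for $\ker D$, expands $U^T(-H)U$, and applies Sylvester's determinant identity together with the signed-adjacency eigenvalues from Lemma~\ref{lem:eval-dr-tourn}. Your route---observe that $\ker D$ is exactly $\mathbb S_1\oplus\mathbb S_2\oplus\mathbb S_3$ in the eigenspace decomposition of Proposition~\ref{prop:tourn_prod_eigenvectors}, so $-H|_{\mathbb V} = k^2 I - \tfrac{4ck^2}{(k-1)^2}B$ has eigenvalue $k^2\bigl(1+\tfrac{2c}{k-1}\bigr)$ on a space of dimension $\tfrac{(k-1)^2}{2}$ and $k^2\bigl(1-\tfrac{2c(k+1)}{(k-1)^2}\bigr)$ on a space of dimension $\tfrac{(k-1)^2}{2}$---is cleaner and more conceptual. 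It works: multiplying these out gives $\det(-H|_{\mathbb V}) = \bigl(\tfrac{k^2}{(k-1)^2}\bigr)^{(k-1)^2}\bigl(((k-1)^2-2c)^2-4k^2c^2\bigr)^{(k-1)^2/2}$, matching the paper, and the positivity check reduces transparently to $(k-1)^2 > 2c(k+1)$. The paper's basis-and-Sylvester route is more hands-on but has the advantage of not needing $\ker D$ to be $B$-invariant (which it is here, but only because the eigenspaces of Proposition~\ref{prop:tourn_prod_eigenvectors} happen to align with the doubly-stochastic constraints); your route exploits that alignment directly.
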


\begin{proof}
For any oriented graph $\vec{G}$ with $n$ vertices, let $h_1, h_2: V(\vec{G})
\to V(T_k)$ be any two equitable oriented colourings with the tournament $T_k$.
For every $i, j \in [k]$, let $na_{i, j}$ be the number of vertices of $\vec{G}$
that receive colour $i$ with colouring $h_1$ and that receive colour $j$ with
colouring $h_2$. Let $\mbf A=(a_{ij})_{i,j=1}^k$ be the \emph{overlap matrix}
for the pair of colourings. As the two colourings are both equitable,
\begin{align}
\forall (i,j) \in [k]^2,  \qquad  a_{ij} &\ge 0 \notag\\
\forall i \in [k],  \qquad  \sum_{j=1}^k a_{ij} &= \frac{1}{k} \notag\\
\forall j \in [k],  \qquad  \sum_{i=1}^k a_{ij} &= \frac{1}{k} \label{eq:gnm_Acons}
\end{align}
Note that the overlap matrix can be prescribed first and then the arcs in the random oriented graph chosen.

Under the two colourings $h_1, h_2$, every vertex of $\vec{G}$ receives a pair
of colours in $[k]^2$. Let $x, y \in V(\vec{G})$ be two vertices and set $u =
(h_1(x), h_2(x))$, $v = (h_1(y), h_2(y))$. There can be an edge in $\vec{G}$
between $x$ and $y$ only in the case that there are arcs in $T_k$ between
$h_1(x)$ and $h_1(y)$ and between $h_2(x)$ and $h_2(y)$ and that the direction
of the edges matches: either $h_1(x) \to h_1(y)$ and $h_2(x) \to h_2(y)$ or else
$h_1(y) \to h_1(x)$ and $h_2(y) \to h_2(x)$. Thus, in terms of the Kroenecker
product of tournaments (Definition~\ref{def:kroenecker_tournaments}), arcs in
$\vec{G}$ between $x$ and $y$ can occur exactly when $u v \in E(T_k^{\otimes
2})$ and the direction of the arc is prescribed by the tournament $T_k$. 

Thus, given an overlap matrix $\mbf A = (a_{ij})_{i, j=1}^k$, the probability an edge in a random graph $\vec{G}$  is coloured correctly is $\sum\limits_{uv \in E(T^{\otimes 2})}a_ua_v$, so
\begin{align}
\ex Y^2 &= \sum_{\mbf A \text{ s.t. } \eqref{eq:gnm_Acons}} \frac{n!}{\prod_{v \in V(T^{\otimes 2})} (na_v)!} \left(\sum_{uv \in E(T^{\otimes 2})}a_u a_v\right)^m \label{eq:mnm_count}\\
	&=\sum_{\mbf A \text{ s.t. } \eqref{eq:gnm_Acons}} \frac{\xi(n) (n/e)^n}{\prod_{v \in V(T^{\otimes 2})} \xi(na_v) (na_v/e)^{na_v}} \left(\sum_{uv \in E(T^{\otimes 2})}a_u a_v\right)^{cn} \notag\\
	&=\sum_{\mbf A \text{ s.t. } \eqref{eq:gnm_Acons}} \xi(n) \left(\prod_{v \in V(T^{\otimes 2})} \xi(na_v) \right)^{-1} \left(\prod_{v \in V(T^{\otimes 2})}a_v^{-a_v} \left(\sum_{uv \in E(T^{\otimes 2})}a_u a_v\right)^c \right)^n \notag\\
	&=\sum_{\mbf A \text{ s.t. } \eqref{eq:gnm_Acons}} p(n,\mbf A) e^{nf(\mbf A)} \label{eq:exp_sum}
\end{align}
where
\begin{equation}
p(n,\mbf A) = \xi(n) \left(\prod_{v \in V} \xi(na_v) \right)^{-1}
\end{equation}
and
\begin{equation}
f(\mbf A) = -\sum_{v \in V(T^{\otimes 2})} a_v \log a_v + c \log\left(\sum_{uv \in E(T^{\otimes 2})}a_u a_v\right)
\end{equation}

We seek to apply Theorem~\ref{thm:laplace_summation_gamma} to the function $f$. Assign to each constraint of Equation~\eqref{eq:gnm_Acons} of the form $\sum_{j=1}^k a_{ij} = \frac{1}{k}$ a vertex $w_{1,i}$ and to each constraint of the form $\sum_{i=1}^k a_{ij} = \frac{1}{k}$ a vertex $w_{2,j}$. Let $V_\Gamma = \{w_{1,i}, w_{2,j}\}_{i,j=1}^k$, $E_\Gamma = \{\{w_{1,i}, w_{2,j}\} \mid i,j \in [k]\}$, and $\Gamma = (V_\Gamma, E_\Gamma)$. Then $\Gamma \cong K_{k,k}$, and as $k > 3$, $\Gamma$ contains a cycle. The equality constraints in Equation~\eqref{eq:gnm_Acons} are equivalent to $D\mbf x = \frac{1}{k} \mbf{1}_{2k}$ where $D$ is the unsigned incidence matrix of $\Gamma$. Onodera~\cite{On73} proved that the number of spanning trees of $K_{m,n}$ is $m^{n-1}n^{m-1}$, so $\tau(\Gamma) = k^{2k-2}$. As $\Gamma$ is bipartite and connected, standard results (see e.g. \cite[Theorem 8.2.1]{GR01}) give $\rank D = |V_\Gamma|-1 = 2k-1$ and therefore
\[ r = \dim \mathbb{V} = k^2 - \rank D = k^2-2k+1 = (k-1)^2. \]
Setting $\mbf y = \frac{1}{k} \mbf{1}_{2k}$ we see $D \mbf x = \mbf y$ is a consistent linear system; $\mbf x = \frac{1}{k^2} \mbf{1}_{k^2}$ is a solution. Set
\[ K = \{ \mbf x \in \mathbb{R}^{k^2} \mid 0 \le a_v \le \tfrac{1}{k} \} \quad \text{and} \quad K_1 = \{ \mbf x \in \mathbb{R}^{k^2} \mid \tfrac{0.9}{k^2} \le a_v \le \tfrac{1.1}{k^2}\}. \]
Then $K$ is a compact convex set with nonempty interior, $K_1 \subseteq K$, and $K_1$ contains the point $\mbf{\hat{x}} = \frac{1}{k^2}J_k$.  

In order to apply Corollary~\ref{cor:opt_doubly_reg_tour} to show that $\mbf{\hat{x}}$ is the unique maximizer, we perform a change of variables to consider functions on doubly-stochastic matrices.  For any doubly-stochastic $k \times k$ matrix $\mathbf{B} = (\alpha_v)_{v \in V}$, define
\begin{align*}
\gamma(\mathbf{B}) &= f\left(\frac{1}{k} \mbf{B} \right)\\
    &=-\frac{1}{k}\sum_{v \in V} \alpha_v \log \alpha_v + c\log\left(\frac{2\sum_{uv \in E} \alpha_u \alpha_v}{k^2}\right)+\log k - c \log 2.
\end{align*}
Since $\log k - c \log 2$ is constant, by Corollary~\ref{cor:opt_doubly_reg_tour}, for $2c \le \frac{2(k-1)^3}{k (k+1)(k-2)} \log(k-1) = \ell_k$, $\gamma$ is uniquely maximized at $\mathbf{B} = \frac{1}{k}J_k = k \mbf{\hat{x}}$.  Thus, the function $f$, which is continuous on $K^\circ$, is uniquely maximized at $\mbf{\hat{x}} \in K_1$. We have
\[ \frac{\partial f}{\partial a_u} = -\log a_u - 1 +c \cdot \frac{\sum_{w \sim u} a_w}{\sum_{xy \in E} a_xa_y} \]
and therefore
\[ 
\frac{\partial^2 f}{\partial a_u\partial a_v} = 
\begin{cases} 
\displaystyle -\frac{1}{a_u} - c\left(\frac{\sum_{w \sim u}a_w}{\sum_{xy \in E} a_xa_y} \right)^2 & v = u\\ 
\displaystyle -c\cdot \frac{(\sum_{w \sim u}a_w)(\sum_{w \sim v}a_w)}{\left(\sum_{xy \in E}a_xa_y\right)^2} & v \not\sim u \\ 
\displaystyle c\left(\frac{1}{\sum_{xy \in E}a_xa_y} - \frac{(\sum_{w \sim u}a_w)(\sum_{z \sim v}a_z)}{\left(\sum_{xy \in E}a_xa_y\right)^2}\right) & v \sim u 
\end{cases}. 
\]
Let $B$ denote the adjacency matrix for the graph $T^{\otimes 2}$. This gives
\begin{equation}\label{eq:Mnm-hessian}
 H|_{\mbf{\hat{x}}} = -\left(k^2I_{k^2}+4cJ_{k^2}-\frac{4ck^2}{(k-1)^2}B\right).
 \end{equation}
 
Then for any matrix $U$ whose columns form a basis for $\ker(D)$,
\begin{equation}\label{eq:Hessian-det}
 \det(-H|_{\mathbb{V}}) = \frac{\det(U^T(-H)U)}{\det(U^TU)}.
 \end{equation}

We give an explicit expression for the matrix $U$ and the corresponding determinant of the Hessian over the vector space $\ker(D)$. Letting $e_i$ denote the $i$-th unit vector in $\mathbb{R}^k$ and using the natural correspondence between $\mathbb{R}^{k^2}$ and $\mathbb{R}^k \otimes \mathbb{R}^k$, the following is a basis for $\ker(D)$:
\[
\mathcal{B} = \left\{ (e_1 - e_i) \otimes (e_1 - e_j) :\ 2 \le i, j \le k \right\}.
\]
Since the set  $\{(e_1 - e_i) :\ 2 \le i \le k \}$ is linearly independent over $\mathbb{R}^k$, then $\mathcal{B}$ is a set of $(k-1)^2$ linearly independent vectors in $\mathbb{R}^k \otimes \mathbb{R}^k$.  With the appropriate indexing, these are naturally elements in $\ker(D)$. Recall that $D$ is the incidence matrix of the graph $K_{k, k}$. Label the edges of $K_{k, k}$ in a natural way with the elements of the set $[k]^2$. Then, the vector $(e_1 - e_i) \otimes (e_1 - e_j)$ corresponds to the vector in $\mathbb{R}^{k^2}$ whose entries corresponding to the edges $(1, 1)$ and $(i, j)$ are $1$, entries corresponding to the edges $(i, 1)$ and $(1, j)$ are $-1$ and all other entries are $0$.

Define the $k \times (k-1)$ matrix
\[
V = \left[\begin{array}{c}
	\mbf{1}_{k-1}^T \\ \hline
	-I_{k-1}
\end{array}\right].
\]
Then the matrix $U = V \otimes V$ has columns that form a basis for $\ker(D)$.  Note that 
\begin{equation}\label{eq:vtransposev}
V^T V = \mbf{1}_{k-1} \mbf{1}_{k-1}^T + (-1)^2 I_{k-1} = J_{k-1} + I_{k-1}
\end{equation}
 and since every column of $V$ has entries summing to $0$, $J_k V = 0_{k \times (k-1)}$.  

With this in mind, consider the Hessian matrix in Equation~\eqref{eq:Mnm-hessian}. Recall that $M$ is the signed adjacency matrix of the doubly regular tournament $T_k$.
\begin{align}
-H
	&=k^2I_{k^2}+4cJ_{k^2}-\frac{4ck^2}{(k-1)^2}B \notag \\
	&=k^2I_{k^2}+4cJ_{k^2} - \frac{4ck^2}{(k-1)^2} \cdot \frac{1}{2}\left(M \otimes M + (J_k - I_k) \otimes (J_k - I_k)\right)  \notag\\
	&=k^2 \left(1 - \frac{2c}{(k-1)^2}\right) I_k \otimes I_k + 4cJ_k \otimes J_k \notag \\
	& \qquad - \frac{2ck^2}{(k-1)^2}\left(M \otimes M - J_k \otimes I_k  - I_k \otimes J_k + J_k \otimes J_k\right) \notag
\end{align}
Thus, considering the matrix expression in the numerator of Equation~\eqref{eq:Hessian-det}, and using the fact that $J_k V = 0_{k \times (k-1)}$ and hence $V^T J_k = 0_{(k-1) \times k}$,
\begin{align*}
U^T(-H)U
	&=(V^T \otimes V)(-H) (V \otimes V)\\
	&= k^2 \left(1 - \frac{2c}{(k-1)^2}\right) (V^T V \otimes V^T V) - \frac{2ck^2}{(k-1)^2}\left((V^T \otimes V^T) (M \otimes M) (V \otimes V)\right)\\
    &=\left(k^2 \left(1 - \frac{2c}{(k-1)^2}\right) (V^T V \otimes V^T V)\right) \cdot\\
    & \qquad \left(I_{k^2} -  \frac{2c/(k-1)^2}{1 - 2c/(k-1)^2} \cdot  (V^T \otimes V^T )(M \otimes M) (V \otimes V)  (V^TV \otimes V^T V)^{-1}\right)
\end{align*}

By a determinant theorem sometimes attributed to Sylvester (see~\cite[Lemma 8.2.4]{GR01} or \cite[Section 0.8.5]{HJ13}), if $A$ and $B$ are matrices of size $m\times n$ and $n \times m$, respectively, then
$\det(I_m+AB) = \det(I_n+BA).$
\begin{multline}\label{eq:Sylvester-det-swap}
\det(U^T(-H)U)
	=\det\left(k^2 \left(1 - \frac{2c}{(k-1)^2}\right) (V^T V \otimes V^T V)\right)\cdot\\
	 \det\left(I_{k^2} - \frac{2c/(k-1)^2}{1 - 2c/(k-1)^2} \cdot (M \otimes M) (V \otimes V) (V^TV \otimes V^T V)^{-1} (V^T \otimes V^T)\right)
\end{multline}
Note that $V^TV = J_{k-1} + I_{k-1}$ and hence $(V^T V)^{-1} = \frac{1}{k}\left(k I_{k-1} - J_{k-1}\right)$.  Thus,
\[
(V^TV \otimes V^T V)^{-1} = (V^TV)^{-1} \otimes (V^TV)^{-1} = \frac{1}{k^2}\left(k I_{k-1} - J_{k-1}\right) \otimes \left(k I_{k-1} - J_{k-1}\right).
\]
Since
\begin{align*}
MV(k I_{k-1} - J_{k-1}) V^T
	&=M \left(k V V^T - VJ_{k-1}V^T\right)\\
	&=M\left(k \begin{bmatrix} \begin{tabular}{c|c} $(k-1)$	&$-1_{k-1}^T$\\ \hline $-1_{k-1}$	&$I_{k-1}$ \end{tabular}\end{bmatrix}  - \begin{bmatrix} \begin{tabular}{c|c} $(k-1)^2$	&$-(k-1)1_{k-1}^T$\\ \hline $-(k-1)1_{k-1}$	&$J_{k-1}$ \end{tabular}\end{bmatrix}\right)\\
	&=M(kI_k - J_k)\\
	&=kM,
\end{align*}
then
\[
(M \otimes M) (V \otimes V) (V^TV \otimes V^T V)^{-1} (V^T \otimes V^T) = \frac{1}{k^2} (kM) \otimes (kM) = M \otimes M.
\]
Therefore, using Equation~\eqref{eq:Sylvester-det-swap} and the eigenvalues for $M$ given in Lemma~\ref{lem:eval-dr-tourn},
\begin{align*}
\det(U^T(-H)U)
	&=\det\left(k^2 \left(1 - \frac{2c}{(k-1)^2}\right) (V^T V \otimes V^T V)\right) \det\left(I_{k^2} - \frac{2c/(k-1)^2}{1 - 2c/(k-1)^2} \cdot (M \otimes M)\right)\\
	&=\det\left(k^2 \left(\frac{(k-1)^2 - 2c}{(k-1)^2}\right) (V^T V \otimes V^T V)\right) \det\left(I_{k^2} - \frac{2c}{(k-1)^2 - 2c} \cdot (M \otimes M)\right)\\
	&=\left(k^2 \left( \frac{(k-1)^2- 2c}{(k-1)^2}\right)\right)^{(k-1)^2} \det(U^TU) \cdot (1 - 0)^{2k-1} \cdot\\
	&\qquad \left(1 - k \frac{2c}{(k-1)^2 - 2c}\right)^{(k-1)^2/2} \cdot \left(1 + k \frac{2c}{(k-1)^2 - 2c}\right)^{(k-1)^2/2}\\
	&=\left(\frac{k^2}{(k-1)^2}\right)^{(k-1)^2} \det(U^TU) \cdot \left(((k-1)^2 - 2c)^2 - 4k^2c^2\right)^{(k-1)^2/2}
\end{align*}

Thus, from Equation~\eqref{eq:Hessian-det},
\begin{align*}
 \det(-H|_{\mathbb{V}}) 
  &= \left(\frac{k^2}{(k-1)^2}\right)^{(k-1)^2} \det(U^TU) \cdot \left(((k-1)^2 - 2c)^2 - 4k^2c^2\right)^{(k-1)^2/2}/ \det(U^TU)\\
  &=  \left(\frac{k^2}{(k-1)^2}\right)^{(k-1)^2}  \cdot \left(((k-1)^2 - 2c)^2 - 4k^2c^2\right)^{(k-1)^2/2}.
\end{align*}
Note that for $k > 1$, $\det(-H|_{\mathbb{V}})$ has zeros at $k = (c+1) \pm \sqrt{c^2+4c}$, so for $c < \frac{1}{2}\ell_k = o(k), \det(-H|_{\mathbb{V}}) > 0$.

Let $\psi = \prod_{v\in V} a_v^{-1/2}$. Then $\psi$ is continuous on $K_1$ and $\psi(\mbf{\hat{x}}) = k^{k^2} > 0$. Set $b_n = (2 \pi n)^{-(k^2-1)/2}$ and $T_n(\mbf x) = p(n,\mbf{x})e^{nf(\mbf{x})}$. Then as $1 \le \xi(n) \sim \sqrt{2\pi n}$,
\[ p(n,\mbf{x}) = \xi(n)\left(\prod_{v \in V} \xi(na_v)\right)^{-1} = O(\sqrt{n}) \]
so
\[ p(n, \mbf{x})/b_n = O(\sqrt{n}/b_n) = e^{o(n)} \]
and
\[ T_n(\mbf x) = b_n(p(n,\mbf{x})/b_n)e^{nf(\mbf x)} = O(b_ne^{n\phi(\mbf{\hat{x}})+o(n)}). \]
Furthermore, for $\mbf{x} \in K_1$, 
\[ p(n, \mbf x) = b_n(\psi(\mbf x)+o(1)). \]
Therefore Theorem~\ref{thm:laplace_summation_gamma} gives
\begin{align*}
\ex Y^2 = \sum_{\mbf x \in \mathbb{X}_n} T_n(\mbf x) &\sim \frac{\psi(\mbf{\hat{x}})}{\tau(\Gamma)^{1/2}\det(-H|_{\mathbb{V}})^{1/2}}(2\pi n)^{r/2}c_ne^{n\phi(\mbf{\hat{x}})}\\
	&=\frac{k^{k^2}(2\pi n)^{(k-1)^2/2}(2\pi n)^{-(k^2-1)/2}}{k^{(k-1)}(\frac{k^2}{(k-1)^2})^{(k-1)^2/2}(((k-1)^2-2c)^2-4c^2k^2)^{(k-1)^2/4}}e^{2n\log(k(\frac{1}{2}(1-\frac{1}{k}))^c)}\\
	&= \left(\frac{(k-1)^4}{((k-1)^2-2c)^2-4c^2k^2}\right)^{(k-1)^2/4} k^k(2\pi n)^{-(k-1)}\left(k\left(\frac{1}{2}\left(1-\frac{1}{k}\right)\right)^c\right)^{2n}\\
	&\sim \left(\frac{(k-1)^4}{((k-1)^2-2c)^2-4c^2k^2}\right)^{(k-1)^2/4} (\ex Y)^2
\end{align*}
\end{proof}

\begin{proposition} \label{prop:gnm_ub}
For $k > 3$ an integer such that there exists a doubly regular tournament of order $k$ and $0 < c < \frac{1}{2}\ell_k$, the number of $k$-oriented colourings of $\vec{M} \sim \vec{\mathcal{M}}(n,m=cn))$ is positive with positive probability.
\end{proposition}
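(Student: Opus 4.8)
The plan is to run the second moment method on $Y$, the number of equitable $T_k$-oriented colourings of $\vec{\mathcal{M}}(n,m=cn)$ introduced before Lemma~\ref{lem:gnm_first_mom}. Throughout I keep this section's standing assumption that $k \mid n$, so that equitable colourings are defined; the case $k \nmid n$ is dealt with in Section~\ref{ssec:divisibility}. The key observation is that producing \emph{any} such colouring with non-vanishing probability already proves the proposition, since a $T_k$-colouring is, by definition, a homomorphism onto a tournament of order $k$, so $Y > 0$ forces $\chio(\vec{\mathcal{M}}(n,cn)) \le k$.

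First I would apply the Paley--Zygmund (equivalently Cauchy--Schwarz) inequality: whenever $Y \not\equiv 0$,
\[
\pr[Y > 0] \;\ge\; \frac{(\ex Y)^2}{\ex Y^2}.
\]
Substituting Lemmas~\ref{lem:gnm_first_mom} and~\ref{lem:gnm_second_mom}, the polynomial prefactors and the exponential base $k\big(\tfrac12(1-\tfrac1k)\big)^c$ common to $(\ex Y)^2$ and $\ex Y^2$ cancel, and
\[
\lim_{n\to\infty}\frac{(\ex Y)^2}{\ex Y^2} \;=\; \left(\frac{((k-1)^2-2c)^2-4c^2k^2}{(k-1)^4}\right)^{(k-1)^2/4} \;=:\; \rho_{k,c}.
\]
Then $\liminf_{n\to\infty}\pr[Y>0] \ge \rho_{k,c}$, so it only remains to see that $\rho_{k,c}$ is a strictly positive constant (it is automatically at most $1$ by Cauchy--Schwarz), from which $\chio(\vec{\mathcal{M}}(n,cn)) \le k$ with probability bounded away from $0$.

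The one place where the hypothesis $c < \tfrac12\ell_k$ is consumed at this level (the optimization hypothesis of Corollary~\ref{cor:opt_doubly_reg_tour} having been used already inside Lemma~\ref{lem:gnm_second_mom}) is in confirming $\rho_{k,c} > 0$, i.e. $((k-1)^2-2c)^2 > 4c^2k^2$. Factoring as a difference of squares gives the factors $(k-1)^2 - 2c(k+1)$ and $(k-1)^2 + 2c(k-1)$; the latter is positive, and the former is positive exactly when $c < \tfrac{(k-1)^2}{2(k+1)}$, which holds for $c < \tfrac12\ell_k$ since $\tfrac12\ell_k = \tfrac{(k-1)^3\log(k-1)}{k(k+1)(k-2)}$ and $2(k-1)\log(k-1) < k(k-2)$ on the relevant range of $k$. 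This is precisely the positivity of $\det(-H|_{\mathbb V})$ noted in the proof of Lemma~\ref{lem:gnm_second_mom}. (As a sanity check that the bound is non-trivial, $\ex Y \to \infty$: the base $k\big(\tfrac12(1-\tfrac1k)\big)^c$ exceeds $1$ because $c < \tfrac12\ell_k \le \tfrac12 u_k$, by the manipulation in Proposition~\ref{prop:gnm_lb}.)

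I do not anticipate a real obstacle here: the substantive computation — the Laplace-type evaluation of $\ex Y^2$ via Theorem~\ref{thm:laplace_summation_gamma} together with Corollary~\ref{cor:opt_doubly_reg_tour} — is already carried out in Lemma~\ref{lem:gnm_second_mom}, and what remains is the cancellation of the two moments plus the elementary check that $\rho_{k,c} \in (0,1]$ throughout $0 < c < \tfrac12\ell_k$.
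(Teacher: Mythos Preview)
Your proposal is correct and follows essentially the same route as the paper: apply Paley--Zygmund to $Y$, plug in Lemmas~\ref{lem:gnm_first_mom} and~\ref{lem:gnm_second_mom}, and observe that the resulting ratio is a positive constant. You are in fact a bit more careful than the paper, which simply asserts ``$> 0$'' at the end; your factoring of $((k-1)^2-2c)^2 - 4c^2k^2$ and the check $c < \tfrac{(k-1)^2}{2(k+1)}$ make explicit what the paper relegates to the remark on the zeros of $\det(-H|_{\mathbb V})$ inside the proof of Lemma~\ref{lem:gnm_second_mom}.
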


\begin{proof}
By the Paley-Zygmund inequality and Lemmas~\ref{lem:gnm_first_mom} and \ref{lem:gnm_second_mom},
\[ \pr[Y > 0] \ge \frac{(\ex Y)^2}{\ex(Y^2)} \sim \left(\frac{((k-1)^2-2c)^2-4c^2k^2}{(k-1)^4}\right)^{(k-1)^2/4} > 0. \]
\end{proof}

Note that we do not claim that the ratio $\frac{(\ex Y)^2}{\ex(Y^2)}$ tends to $1$, which would show that a.a.s.~$\vec{M} \sim \vec{\mathcal{M}}(n,m=cn)$ could be $k$-oriented-coloured. Indeed, we will see that $\vec{M}$ is not even colourable with probability tending to $1$. But this
ratio does not tend towards $1$ in many investigations of even the undirected chromatic number of random graphs. When considering the undirected question
$\chi(\mathcal{G}(n,m=cn))$, Achlioptas and Naor~\cite{AN05} circumvent this
issue using a sharp threshold result of Achlioptas and Friedgut~\cite{AF99}.
There is no reason to believe an equivalent result holds in the directed case.
As we will see in Section~\ref{ssec:vsparse_vdense}, there are values for $d$ for
which $\vec{\mathcal{G}}(n,p=\frac{d}{n})$ and $\vec{\mathcal{G}}(n,d)$ are
concentrated in more than one value. Instead, in Section~\ref{sec:window} we
adapt an argument of {\L}uczak~\cite{Lu91b}, using the fact that $k$ is an upper
bound with positive probability to find a weaker bound that holds with high
probability.

\subsection{$\vec{\mathcal{C}}(n,d)$}

In this section we use a second moment argument to find values of $k$ for which $\vec{C} \sim \vec{\mathcal{C}}(n,d)$ are $k$-oriented colourable with positive probability. Fix a doubly regular tournament on $k$ vertices $T_k$ and let $Y$ count the number of equitable doubly regular $T_k$-colourings of $\vec{\mathcal{C}}(n,d)$. 

We start with the first moment by adapting the following result of Kemkes et al.~\cite{KPW10}:
\begin{proposition}[{\cite[Proposition 2(a)]{KPW10}}]
Fix integers $d,k \ge 3$. Let $Z$ be the number of equitable $k$-colourings of a random $d$-regular multigraph $\mathcal{C}(n,d)$ (where $n$ is restricted to the set of multiples of $k$). Then
\[ \ex Z \sim k^{k/2} \left(\frac{k-1}{2\pi(k-2)}\right)^{(k-1)/2}n^{-(k-1)/2}k^n\left(1-\frac{1}{k}\right)^{dn/2}. \]
\end{proposition}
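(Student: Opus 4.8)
The plan is a direct first--moment computation in the configuration model, this time keeping track of the subexponential factors that Proposition~\ref{prop:gnd_lb} discards. Write $\ex Z$ as the number of pairs $(C,\chi)$, with $C$ a $d$--regular configuration on $n$ vertices and $\chi$ a proper equitable $k$--colouring of the associated multigraph, divided by the total number $(dn-1)!!=\frac{(dn)!}{2^{dn/2}(dn/2)!}$ of configurations. Such a pair is built in three stages: choose the equitable colour classes of the $n$ vertices, in $\frac{n!}{((n/k)!)^k}$ ways (the model then replaces class $i$ by $dn/k$ points); prescribe the \emph{edge profile} $\bb=(b_{ij})_{1\le i<j\le k}$, where $b_{ij}$ counts the edges between classes $i$ and $j$, the only constraints being $\sum_{j\ne i}b_{ij}=dn/k$ for each $i$ (properness forbids monochromatic edges); and, given $\bb$, count the matchings realising it by splitting each class's $dn/k$ points into blocks destined for the other classes and pairing up corresponding blocks, which gives $\bigl(\prod_i\frac{(dn/k)!}{\prod_{j\ne i}b_{ij}!}\bigr)\prod_{i<j}b_{ij}!=\frac{((dn/k)!)^k}{\prod_{i<j}b_{ij}!}$. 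Hence
\[
\ex Z=\frac{n!}{((n/k)!)^k}\,((dn/k)!)^k\,\frac{2^{dn/2}(dn/2)!}{(dn)!}\;S,\qquad S:=\sum_{\bb}\ \prod_{1\le i<j\le k}\frac{1}{b_{ij}!},
\]
the sum running over nonnegative integer $\bb$ with $\sum_{j\ne i}b_{ij}=dn/k$ for every $i$.

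Everything but $S$ is handled by Stirling's formula, so the substance is estimating $S$. Writing $\beta_{ij}=b_{ij}/n$ and noting $\sum_{i<j}\beta_{ij}=d/2$, Stirling gives $\prod_{i<j}\frac{1}{b_{ij}!}=n^{-dn/2}\bigl(\prod_{i<j}(2\pi n\beta_{ij})^{-1/2}+o(1)\bigr)e^{n\phi(\mbf\beta)}$ with $\phi(\mbf\beta)=-\sum_{i<j}(\beta_{ij}\log\beta_{ij}-\beta_{ij})$. The feasible region $\{\mbf\beta\ge 0:\sum_{j\ne i}\beta_{ij}=d/k\ \forall i\}$ is a polytope invariant under the $S_k$--action permuting colours, and $\phi$ is strictly concave and $S_k$--invariant, so it is maximised at the unique symmetric interior point $\hat\beta_{ij}=\frac{d}{k(k-1)}$, where the Hessian of $\phi$ equals $-\operatorname{diag}(1/\hat\beta_{ij})=-\frac{k(k-1)}{d}I$. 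A local central limit / Laplace estimate for the lattice sum $\sum_{\bb}e^{n\phi(\mbf\beta)}$ over $\{\bb\in\mathbb{Z}^{\binom k2}:\ D\bb=\tfrac{dn}{k}\mathbf 1_k\}$, where $D$ is the unsigned incidence matrix of $K_k$, then yields $S\sim e^{n\phi(\mbf{\hat\beta})}\cdot n^{-dn/2}\cdot\bigl(\tfrac{k(k-1)}{2\pi n d}\bigr)^{\binom k2/2}\cdot\dfrac{(2\pi n)^{\delta/2}\bigl(d/(k(k-1))\bigr)^{\delta/2}}{\operatorname{covol}\bigl(\ker D\cap\mathbb{Z}^{\binom k2}\bigr)}$, where $\delta=\dim\ker D=\binom k2-k$ and the covolume equals $\sqrt{\det(DD^T)}$ up to a $2$--power index, with $\det(DD^T)=\det(Q_{K_k})=2(k-1)(k-2)^{k-1}$ the determinant of the signless Laplacian of $K_k$ (a matrix--tree--type quantity). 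This covolume is the only source of the factor $(k-2)^{(k-1)/2}$ in the answer.

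Substituting this into the formula for $\ex Z$ and simplifying with Stirling, the exponential parts collapse --- as a check they combine to exactly $k^n(1-1/k)^{dn/2}$ --- and all $d$--dependence (and almost all $n$-- and $2\pi$--dependence) in the subexponential factor cancels, leaving $k^{k/2}\bigl(\frac{k-1}{2\pi(k-2)}\bigr)^{(k-1)/2}n^{-(k-1)/2}$; in particular the net power of $n$ is $\tfrac12\bigl(1+\delta-\binom k2\bigr)=-\tfrac{k-1}{2}$. The hard part is the middle step: since $K_k$ is not bipartite, Theorem~\ref{thm:laplace_summation_gamma} does not apply verbatim, so one needs the more general non--bipartite lattice--point estimate of Greenhill, Janson and Ruci\'nski~\cite{GJR10} (or, exploiting that the Hessian above is a scalar matrix, one runs the local central limit theorem directly), and then one must evaluate the covolume factor carefully enough to pin down the constant; the remaining manipulations are routine Stirling bookkeeping.
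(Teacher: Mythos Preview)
The paper does not give its own proof of this proposition: it is quoted verbatim from \cite{KPW10} and used as a black box to derive Corollary~\ref{cor:first_mom}. So there is no in-paper argument to compare against.

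Your sketch is the standard approach and is correct. The decomposition of $\ex Z$ into the multinomial for the colour classes, the product $((dn/k)!)^k$, the reciprocal of $(dn-1)!!$, and the sum $S$ over edge profiles is right; the strictly concave, $S_k$-invariant $\phi$ forces the unique maximiser $\hat\beta_{ij}=d/(k(k-1))$ with scalar Hessian $-\tfrac{k(k-1)}{d}I$, and the exponential bookkeeping collapses to $k^n(1-1/k)^{dn/2}$ exactly as you indicate. Two points worth tightening: first, the lattice index you refer to as ``a $2$-power'' is precisely $q=2$, coming from the odd cycle in $K_k$ (the congruences $t_i+t_j\equiv 0\pmod 1$ force all $t_i$ equal and in $\{0,\tfrac12\}$), so the covolume is $\tfrac12\sqrt{2(k-1)(k-2)^{k-1}}$; and second, since the Hessian is a scalar multiple of the identity, $\det(-H|_{\ker D})=(k(k-1)/d)^{\delta}$ with $\delta=\binom{k}{2}-k$, which combined with $\psi(\hat\beta)=(k(k-1)/d)^{\binom{k}{2}/2}$ leaves exactly a factor $(k(k-1)/d)^{k/2}$ that cancels against $(2\pi dn/k)^{k/2}$ from Stirling on $((dn/k)!)^k$. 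With these two constants pinned down, the subexponential factor comes out to $k^{k/2}\bigl(\tfrac{k-1}{2\pi(k-2)}\bigr)^{(k-1)/2}n^{-(k-1)/2}$ on the nose. Your observation that Theorem~\ref{thm:laplace_summation_gamma} does not apply because $K_k$ is not bipartite is correct; the general lattice Laplace estimate (the paper's Theorem~\ref{thm:laplacian_summation}, i.e.\ \cite[Theorem~2.3]{GJR10}) handles it without further difficulty.
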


\begin{corollary}\label{cor:first_mom}
Fix integers $d,k \ge 3$ and a tournament $T_k$ on $k$ vertices (not necessarily doubly regular). Then
\[ \ex Y \sim k^{k/2} \left(\frac{k-1}{2\pi(k-2)}\right)^{(k-1)/2}n^{-(k-1)/2}k^n\left(\frac{1}{2}\left(1-\frac{1}{k}\right)\right)^{dn/2}. \]
\end{corollary}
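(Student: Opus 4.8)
The plan is to reduce directly to the undirected count in the preceding proposition of Kemkes et al.~\cite{KPW10}, using that in $\vec{\mathcal{C}}(n,d)$ the edge orientations are chosen independently and uniformly once the underlying configuration is fixed. Generate $\vec{\mathcal{C}}(n,d)$ in two stages: first sample the undirected configuration multigraph $C$ from $\mathcal{C}(n,d)$, then orient each of its $\tfrac{dn}{2}$ edges by an independent fair coin. Fix any equitable assignment $c$ of the colours $V(T_k) = [k]$ to the $n$ vertices, with $\tfrac{n}{k}$ vertices of each colour (such assignments exist since $n$ is a multiple of $k$). A homomorphism $c : \vec{C} \to T_k$ requires exactly two things: that $c$ is a proper colouring of the underlying multigraph $C$ (no monochromatic edge, in particular no loop), and that every edge of $C$ is oriented to agree with the corresponding arc of $T_k$.

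Conditioning on $C$: if $c$ is not a proper colouring of $C$, the conditional probability that $c$ is a valid $T_k$-colouring is $0$; if $c$ is proper for $C$, then $C$ has exactly $\tfrac{dn}{2}$ bichromatic edges, each of which must receive one prescribed orientation, an event of probability $2^{-dn/2}$ by independence of the orientations. Hence, for every equitable $c$,
\[
\pr\!\left[c \text{ is a valid } T_k\text{-colouring of } \vec{C}\right] = 2^{-dn/2}\,\pr\!\left[c \text{ is a proper colouring of } C\right].
\]
Summing over all equitable $c$ and using linearity of expectation on both sides (the equitable $T_k$-colourings and the equitable proper $k$-colourings of $\mathcal{C}(n,d)$ are counted by ranging over the same set of vertex-to-colour assignments, so this is a term-by-term identity) gives $\ex Y = 2^{-dn/2}\,\ex Z$, where $Z$ is the number of equitable proper $k$-colourings of $\mathcal{C}(n,d)$. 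Substituting the asymptotic formula for $\ex Z$ from the cited proposition and absorbing the factor $2^{-dn/2}$ into the base, using $\big(\tfrac12(1-\tfrac1k)\big)^{dn/2} = 2^{-dn/2}(1-\tfrac1k)^{dn/2}$, yields precisely the claimed expression. The hypotheses $d,k \ge 3$ are inherited from the Kemkes et al. proposition, and note the argument never uses that $T_k$ is doubly regular: any tournament on $k$ vertices works, since all that is needed is that each bichromatic edge has a unique "correct" orientation.

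There is essentially no obstacle here; the only point requiring care is the bookkeeping of loops and the exact edge count, namely that a properly coloured configuration has exactly $\tfrac{dn}{2}$ (non-loop) edges, so that the orientation factor is exactly $2^{-dn/2}$ and the relation $\ex Y = 2^{-dn/2}\,\ex Z$ is an identity rather than merely an asymptotic statement, whence the $\sim$ in the conclusion is immediate from that for $\ex Z$.
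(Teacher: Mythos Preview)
Your proof is correct and takes essentially the same approach as the paper: both establish the exact identity $\ex Y = 2^{-dn/2}\,\ex Z$ by observing that a proper unoriented $k$-colouring of a configuration has exactly one orientation compatible with $T_k$, and then substitute the Kemkes et al.\ asymptotic for $\ex Z$. The only cosmetic difference is that the paper phrases the bijection as a count of pairs $(\vec{C},\chi_o)$ versus $(C,\chi)$ divided by the number of orientations, while you phrase it as a conditional-probability calculation; these are the same argument.
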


\begin{proof}
Each proper (unoriented) $k$-colouring of $\mathcal{C}(n,d)$ has exactly one orientation that adheres to $T_k$, so there are the same number of proper $k$-oriented-colourings of $\vec{\mathcal{C}}(n,d)$ that are coloured by $T_k$ as there are unoriented colourings. However, each $C \in \mathcal{C}(n,d)$ has $2^{dn/2}$ orientations. Thus we have
\begin{align*}
|\vec{\mathcal{C}}(n,d)| Y &= |\{(\vec{C}, \chi_o) : \vec{C} \sim \vec{\mathcal{C}}(n,d) \text{ and $\chi_o$ properly $T_k$-colours $\vec{C}$}\}|\\
	&= |\{(C, \chi) : C \sim \mathcal{C}(n,d) \text{ and $\chi$ properly $k$-colours $C$}\}|\\
	&= |\mathcal{C}(n,d)| Z
\end{align*}
so
\[ \ex Y = \frac{|\mathcal{C}(n,d)|}{|\vec{\mathcal{C}}(n,d)|} \ex Z = \frac{1}{2^{dn/2}} \ex Z. \]
\end{proof}

In order to prove Proposition~\ref{prop:gnd_ub}, we also need an estimate for the second moment $\ex Y^2$. This will require a substantial amount more effort. First, in Section~\ref{ssec:counting} we give a counting argument similar to that in Section~\ref{sec:lb}. Then we use Proposition~\ref{prop:generalized_AN} to optimize the exponential part of resulting equation in Section~\ref{ssec:optimization}. Finally, we complete the second moment argument and prove Proposition~\ref{prop:gnd_ub} in Section~\ref{ssec:second_moment}.

\subsubsection{Counting Argument} \label{ssec:counting}

To calculate $\ex(Y^2)$, we count ordered triples $(\vec{C},h_1,h_2)$ such that $h_1$ and $h_2$ are balanced proper $T_k$-colourings of $\vec{C}\in \vec{\mathcal{C}}(n,d)$. Let $A = (a_{ij})_{\substack{1 \le i \le k \\ 1 \le j \le k}}$ be the overlap matrix of $h_1$ and $h_2$; that is, $a_{ij}$ is the proportion of vertices $x \in \vec{C}$ such that $h_1(x) = i$ and $h_2(x) = j$. Note that $a_{ij} \ge 0$ for each $i,j \in [k]$ and, as $h_1$ and $h_2$ are balanced,
\begin{equation} \label{eq:Acons}
\forall i \in [k] \quad \sum_{j=1}^k a_{ij} = \frac{1}{k} \qquad \text{and} \qquad  \forall j \in [k] \quad \sum_{i=1}^k a_{ij} = \frac{1}{k}.
\end{equation}
We introduce an (undirected) auxiliary graph $\Gamma$. The vertex set of $\Gamma$ is $[k]^2$, so there is $v \in V(\Gamma)$ corresponding to each $a_{ij}$, and we may, when convenient, refer to $a_{ij}$ as $a_v$. The edge set of $\Gamma$ is defined by
\[ E(\Gamma) = \bigl\{ \{u,v\} \mid u = (i,j), v = (i',j'), \ \text{and} \ i \to i', j \to j' \in A(T_k)\bigr\}. \]
Note that if we partition edges in $\vec{C}$ by the pairs of colours their vertices receive in $h_1$ and $h_2$, the set of equivalence classes is exactly the edges of $\Gamma$: in order for an edge to be directed from a vertex receiving colour $i$ in $h_1$ and colour $j$ in $h_2$ towards a vertex receiving colour $i'$ in $h_1$ and $j'$ in $h_2$, we require that $i \to i'$ and $j \to j'$ in $T_k$. Note also that as $T_k$ is doubly regular, $\Gamma$ is regular of degree $\frac{(k-1)^2}{2}$.

Then let $B = (b_{uv})_{u=(i,j), v = (i',j')}$ record the proportion of edges between a vertex $x \in \vec{C}$ such that $h_1(x) = i$ and $h_2(x) = j$ and a vertex $y \in \vec{C}$ such that $h_1(y) = i'$ and $h_2(y) = j'$. Then for any tournament $T_k$, $B$ satisfies
\begin{align}\label{eq:Bcons}
\forall u,v & \quad b_{uv} \ge 0 \notag\\
\forall u,v &\quad \text{$b_{uv} = 0$ unless $i \to i'$ and $j \to j'$ (or $i' \to i$ and $j' \to j$) in $T_k$} \notag\\
\forall u,v & \quad b_{uv} = b_{vu} \notag\\
\forall v = (i,j) & \quad \sum\limits_{u \sim v} b_{uv} = a_{ij}.
\end{align}

To choose a $\vec{C} \in \vec{\mathcal{C}}(n,d)$ that respects $h_1$ and $h_2$, first select vertices to receive each pair of colours, which we do in one of 
\[ \frac{n!}{\prod_{i,j} (na_{ij})!} \]
ways. Then, for each pair of colour classes, select the vertices that will be connected by an edge, which we do in
\[ \prod_{i,j} \frac{(dna_{ij})!}{\prod_{u\sim v=(i,j)} (dnb_{uv})!} \]
many ways. Finally, match the chosen vertices of each edge type in one of
\[ \prod_{uv \in E(\Gamma)} (dnb_{uv})! \]
ways. Noting that
\[ \frac{\prod_{uv \in E(\Gamma)} (dnb_{uv})!}{\prod_{v \in V(\Gamma)} \prod_{u \sim v} (dnb_{uv})!} = \frac{1}{\prod_{uv \in E(\Gamma)} (dnb_{uv})!} ,\]
repeating this process for each choice of $A$ and $B$ that satisfy (\ref{eq:Acons}) and (\ref{eq:Bcons}), and dividing by $|\vec{\mathcal{C}}(n,d)|$ gives
\begin{align}
\ex(Y^2) &= \frac{1}{2^{dn/2}(dn-1)!!} \sum_{A \text{ s.t. } \eqref{eq:Acons}} \frac{n!}{\prod_{ij} (na_{ij})!} \sum_{B \text{ s.t. } \eqref{eq:Bcons}} \prod_{i,j} \frac{(dna_{ij})!}{\prod_{u\sim v=(i,j)} (dnb_{uv})!} \prod_{uv \in E} (dnb_{uv})! \notag\\
	&= \sum_{A \text{ s.t.} \eqref{eq:Acons}, \ B \text{ s.t.} \eqref{eq:Bcons}} \frac{n!}{\prod_{ij} (na_{ij})!} \cdot \frac{\prod_{ij} (dna_{ij})!}{(dn)!} \cdot \frac{(\frac{dn}{2})!}{\prod\limits_{uv \in E(\Gamma)} (dnb_{uv})!} \notag\\
	&= \sum_{A \text{ s.t.} \eqref{eq:Acons}, \ B \text{ s.t.} \eqref{eq:Bcons}} \frac{\xi(n)(\frac{n}{e})^n}{\prod_{ij} \xi(na_{ij})(\frac{na_{ij}}{e})^{na_{ij}}} \cdot \frac{\prod_{ij} \xi(dna_{ij})(\frac{dna_{ij}}{e})^{dna_{ij}}}{\xi(dn)(\frac{dn}{e})^{dn}} \cdot \frac{\xi(\frac{dn}{2})(\frac{dn}{2e})^{\frac{dn}{2}}}{\prod\limits_{uv \in E(\Gamma)} \xi(dnb_{uv})(\frac{dnb_{uv}}{e})^{dnb_{uv}}} \notag\\
	&= \sum_{A \text{ s.t.} \eqref{eq:Acons}, \ B \text{ s.t.} \eqref{eq:Bcons}} p(A,B,n) e^{nf(A,B)} \label{eq:second_moment}
\end{align}
where
\begin{equation} \label{eq:p_def}
p(A,B,n) = \frac{\xi(n)\xi(\frac{dn}{2})\prod\limits_{v \in
V(\Gamma)}\xi(dna_v)}{\xi(dn)\prod\limits_{v \in V(\Gamma)}\xi(na_v) \prod\limits_{uv
\in E(\Gamma)}\xi(dnb_{uv})}
\end{equation}
and
\begin{equation}\label{eq:f1}
f(A,B) = (d-1)\sum_{v \in V(\Gamma)} a_v \log a_v - d\sum_{uv \in E(\Gamma)} b_{uv} \log
(2b_{uv}).
\end{equation}

\subsubsection{Optimization} \label{ssec:optimization}

In order to estimate $\ex Y^2$, we will use a Laplace summation technique. As we are summing over a polynomial number of exponential terms, all but $o(1)$ of the weight of the sum comes from terms with maximum exponential contribution. In this section we use Proposition~\ref{prop:generalized_AN} to determine that maximum value.

\begin{lemma} \label{lem:second_mom_opt}
Fix an integer $k$ such that there exists a doubly regular tournament $T_k$ of
order $k$ and let $d < \ell_k$. Let $A = (a_{ij})_{i,j=1}^k$ be a $k \times k$
real positive matrix satisfying Equation~\eqref{eq:Acons} and let $B = (b_{uv})_{u=(i,j),
v = (i',j')}$ be a $k^2 \times k^2$ real positive matrix satisfying
Equation~\eqref{eq:Bcons}. Define $f(A,B)$ as in Equation~\eqref{eq:f1}. Then $f(A,B)$ is uniquely
maximized at $f(\hat{A},\hat{B}) = \log k^2 + d\log(\frac{1}{2}(1-\frac{1}{k}))$
where $\hat{A} = \frac{1}{k^2}J_{k,k}$ and $\hat{B} = (\hat{b}_{uv})_{u=(i,j), v
= (i',j')}$ where
\[ \hat{b}_{uv} = \begin{cases} \frac{2}{k^2(k-1)^2} & u\sim v \\ 0 & u \not\sim v \end{cases}. \]
\end{lemma}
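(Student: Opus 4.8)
The plan is to reduce the whole optimization to the already-established maximization of $\gamma_d$ over doubly stochastic matrices, i.e.\ to Corollary~\ref{cor:opt_doubly_reg_tour}. First I would record two easy facts. The stated pair $(\hat A,\hat B)$ is feasible: $\hat A=\tfrac{1}{k^2}J_{k,k}$ satisfies \eqref{eq:Acons}, and $\hat B$ is symmetric, supported on $E(\Gamma)$, and has $\sum_{u\sim v}\hat b_{uv}=\deg_\Gamma(v)\cdot\tfrac{2}{k^2(k-1)^2}=\tfrac{(k-1)^2}{2}\cdot\tfrac{2}{k^2(k-1)^2}=\tfrac{1}{k^2}=\hat a_v$. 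A direct computation using $|E(\Gamma)|=\tfrac14 k^2(k-1)^2$ and $\deg_\Gamma(v)=\tfrac12(k-1)^2$ then gives $f(\hat A,\hat B)=\log k^2+d\log(\tfrac12(1-\tfrac1k))$. So the content is the matching upper bound: $f(A,B)\le\log k^2+d\log(\tfrac12(1-\tfrac1k))$ for every feasible $(A,B)$, with equality only at $(\hat A,\hat B)$.

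The first and main step is to eliminate $B$ by a relative-entropy (Gibbs) comparison, which lets us avoid ever solving the per-vertex-constrained optimization over $B$ explicitly. Put $\pi_{uv}=2b_{uv}$ for $uv\in E(\Gamma)$; then \eqref{eq:Acons} and \eqref{eq:Bcons} force $\sum_{uv\in E}\pi_{uv}=1$ and $\sum_{u\sim v}\pi_{uv}=2a_v$, so $\pi$ is a probability distribution on $E(\Gamma)$ with vertex-incidence marginals $2a_v$. Since $A$ has positive entries, $q_{uv}:=a_ua_v/\mathcal{L}_\Gamma(\mbf a)$ is a well-defined strictly positive probability distribution on $E(\Gamma)$ (and $\mathcal{L}_\Gamma(\mbf a)>0$), so $\sum_{uv}\pi_{uv}\log(\pi_{uv}/q_{uv})\ge 0$. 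Expanding $\sum_{uv}\pi_{uv}\log q_{uv}$ and collecting, for each vertex $v$, the contribution $\log a_v$ weighted by its marginal $2a_v$ yields
\[
-\sum_{uv\in E}b_{uv}\log(2b_{uv})\ \le\ -\sum_{v\in V}a_v\log a_v+\tfrac12\log\mathcal{L}_\Gamma(\mbf a),
\]
with equality precisely when $\pi=q$, i.e.\ when $2b_{uv}=a_ua_v/\mathcal{L}_\Gamma(\mbf a)$ on every edge. Multiplying by $d$ and adding $(d-1)\sum_v a_v\log a_v$, the negative-entropy terms combine with coefficient $-1$, giving a bound depending on $A$ alone:
\[
f(A,B)\ \le\ -\sum_{v\in V}a_v\log a_v+\tfrac d2\log\mathcal{L}_\Gamma(\mbf a).
\]

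The second step recognizes this right-hand side as a constant plus the Achlioptas--Naor-type function $\gamma_d$ from Corollary~\ref{cor:opt_doubly_reg_tour}. Since $\Gamma=T_k^{\otimes 2}$ and $kA$ is a genuine doubly stochastic matrix, the rescaling $\mbf A\mapsto kA$ (using $\sum_v a_v=1$ and $\mathcal{L}_\Gamma(k\mbf a)=k^2\mathcal{L}_\Gamma(\mbf a)$) shows the bound equals $\gamma_d(kA)+\log k-\tfrac d2\log 2$. As $d<\ell_k$, Corollary~\ref{cor:opt_doubly_reg_tour} gives $\gamma_d(kA)\le\gamma_d(\tfrac1kJ_k)=\log k+\tfrac d2\log\tfrac{(k-1)^2}{2k^2}$, with equality only for $kA=\tfrac1kJ_k$, i.e.\ $A=\hat A$. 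Substituting collapses the bound to $2\log k+\tfrac d2\log\tfrac{(k-1)^2}{4k^2}=\log k^2+d\log(\tfrac12(1-\tfrac1k))$, as wanted. Equality requires both $A=\hat A$ and $\pi=q$; with $A=\hat A$ the latter forces $b_{uv}=\tfrac{2}{k^2(k-1)^2}=\hat b_{uv}$, so $(\hat A,\hat B)$ is the unique maximizer.

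I expect the only real difficulty to be bookkeeping: tracking the $\log 2$, the $\log k$, the factor $k$ introduced when rescaling $A$ to a doubly stochastic matrix, and the value $\mathcal{L}_\Gamma(\mbf a)=\tfrac{(k-1)^2}{4k^2}$ at the uniform point, and verifying feasibility of $(\hat A,\hat B)$. The conceptual crux is the first step: comparing the edge distribution $2b_{uv}$ against the product reference $q_{uv}\propto a_ua_v$ is exactly tight at the symmetric configuration and reduces the problem to the optimization of $\gamma_d$ already handled by Corollary~\ref{cor:opt_doubly_reg_tour}; everything after that is routine.
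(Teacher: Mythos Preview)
Your proposal is correct and follows essentially the same route as the paper: eliminate $B$ via a Gibbs/KL-divergence comparison against the reference distribution $q_{uv}\propto a_ua_v$ on $E(\Gamma)$ to reduce to $-\sum_v a_v\log a_v+\tfrac d2\log\mathcal{L}_\Gamma(\mbf a)$, then rescale $A\mapsto kA$ and invoke Corollary~\ref{cor:opt_doubly_reg_tour}. The only cosmetic difference is that the paper first rewrites $f$ as $-\sum_v a_v\log a_v+d\sum_{uv}b_{uv}\log\!\bigl(\tfrac{a_ua_v}{2b_{uv}}\bigr)$ using the marginal constraints and then relaxes the $B$-constraints to $\sum_{uv}b_{uv}=1$ before applying Gibbs, whereas you feed the marginals $\sum_{u\sim v}\pi_{uv}=2a_v$ directly into the KL computation; the two are the same argument.
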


\begin{proof}
We start by finding a more convenient expression of $f$. Note that using Equation~\eqref{eq:Bcons},
\begin{align}
(d-1) \hspace{-4pt}\sum_{v \in V(\Gamma)} \hspace{-3pt} a_v\log a_v - d \hspace{-6pt} \sum_{uv \in E(\Gamma)} &\hspace{-3pt} b_{uv}\log(2b_{uv}) = - \hspace{-5pt} \sum_{v \in V(\Gamma)} \hspace{-2pt} a_v \log a_v +d\left(\sum_{v \in V(\Gamma)} \hspace{-3pt} a_v \log a_v + \hspace{-5pt} \sum_{uv \in E(\Gamma)} \hspace{-4pt} b_{uv} \log\left(\frac{1}{2b_{uv}}\right) \right) \notag\\
	&= -\sum_{v \in V(\Gamma)} a_v \log a_v +d\left(\sum_{v \in V(\Gamma)} \left(\sum_{u \sim v} b_{uv}\right) \log a_v + \hspace{-3pt} \sum_{uv \in E(\Gamma)} \hspace{-3pt} b_{uv} \log\left(\frac{1}{2b_{uv}}\right) \right) \notag\\
	&= -\sum_{v \in V(\Gamma)} a_v \log a_v +d\sum_{uv \in E(\Gamma)} b_{uv} \left(\log a_u + \log a_v + \log\left(\frac{1}{2b_{uv}}\right) \right) \notag\\
	&= -\sum_{v \in V(\Gamma)} a_v \log a_v +d\sum_{uv \in E(\Gamma)} b_{uv} \log\left(\frac{a_ua_v}{2b_{uv}}\right) \label{eq:f2}
\end{align}
Using an idea from~\cite{NP21}, we now maximize $f$ maintaining the constraints from Equation~\eqref{eq:Acons} but relaxing those in Equation~\eqref{eq:Bcons} to $b_{uv} \ge 0$ and
\begin{equation} \label{eq:Brelaxedcons}
\sum_{u, v \in V} b_{uv} = 1.
\end{equation}
In view of this relaxation, we can regard $B$ as an arbitrary probability distribution. We define another probability distribution given by
\[
b^*_{uv} = \begin{cases} \frac{a_{u}a_{v}}{2\sum_{xy \in E(\Gamma)} a_{x}a_{y}} & u\sim v, \\ 0 & u \not\sim v\end{cases},
\]
and we write $B^*=(b^*_{uv})_{u,v \in V(\Gamma)}$. Then
\[
\sum_{uv \in E(\Gamma)} b_{uv}\log\left( \frac{a_{u}a_{v}}{2b_{uv}}\right) = \frac{1}{2}\log \left(\sum_{xy \in E(\Gamma)} a_xa_y\right) - D_{KL}(B \| B^*),
\]
where $D_{KL}(B \| B^*) = \sum_{uv \in E(\Gamma)} b_{uv}\log\left( \frac{b_{uv}}{b^*_{uv}}\right)$ is the Kullback-Leibler divergence from $B$ to $B^*$. By Gibb's inequality, $D_{KL}(B \| B^*)\ge0$ with equality if and only if $B = B^*$. As a result,
\begin{equation}\label{eq:maxgamma}
\max_{B\text{ s.t.~\eqref{eq:Brelaxedcons}}} f(A, B) = -\sum_{v\in V(\Gamma)} a_v \log a_v + \frac{d}{2}\log\left(\sum_{xy \in E(\Gamma)} a_xa_y\right) =: \tilde{f}_d(A),
\end{equation}
with one unique maximizer at $B = B^*$. Note that if $A=\hat A$ then $B^*=\hat B$.

Next we seek to apply Corollary~\ref{cor:opt_doubly_reg_tour}, but our matrix $A$ is not currently doubly stochastic. We reparameterize, setting $\mbf \alpha = kA$. Then
\[ \tilde{f}_d(A) = \tilde{f}_d(\tfrac{1}{k}\mbf \alpha) =  \log k-\frac{1}{k} \sum_{v \in V(\Gamma)} \alpha_v \log \alpha_v + \frac{d}{2} \log\left(\frac{2\sum_{xy \in E(\Gamma)} \alpha_x\alpha_y}{k^2}\right) - \frac{d}{2}\log2. \]
Then as $\log k-\frac{d}{2}\log2$ is constant with respect to $\mbf \alpha$, Corollary~\ref{cor:opt_doubly_reg_tour} gives
\begin{equation} \label{eq:gamma_ub}
\tilde{f}_d(A) \le \log k +\log k + \frac{d}{2}\log\left(\frac{2|E|}{k^4}\right) -\frac{d}{2}\log2 = \log k^2 + d\log\left(\frac{1}{2}\left(1-\frac{1}{k}\right)\right)
\end{equation}
whenever $d \le \frac{2 (k-1)^3}{k(k+1)(k-2)}\log(k-1)$. Now, using Equations \eqref{eq:maxgamma} and \eqref{eq:gamma_ub}, we have
\begin{align*}
f(A,B) \le \max_{A \text{ s.t.} \eqref{eq:Acons}, \ B \text{ s.t.} \eqref{eq:Bcons}} f(A,B) &\le \max_{A \text{ s.t.} \eqref{eq:Acons}, \ B \text{ s.t.} \eqref{eq:Brelaxedcons}} f(A,B)\\
	&\le \max_{A \text{ s.t.} \eqref{eq:Acons}} \tilde{f}_d(A) = \log k^2 + d\log\left(\frac{1}{2}\left(1-\frac{1}{k}\right)\right) = f(\hat{A},\hat{B})
\end{align*}

\end{proof}

\subsubsection{Second Moment Argument} \label{ssec:second_moment}

We now return to Equation~\eqref{eq:second_moment}. In order to estimate the
summation, we again use a version of Laplace summation. This time, however, the
constraints in Equations \eqref{eq:Acons} and \eqref{eq:Bcons} cannot be expressed as the signed incidence matrix of an auxiliary graph. Instead, we use the following theorem proved by Greenhill et al. in \cite{GJR10}. 

\begin{theorem}[{\cite[Theorem 2.3]{GJR10}}]\label{thm:laplacian_summation}
Suppose the following:
\begin{enumerate}[(i)]
\item $\Lambda \subset \mathbb{R}^N$ is a lattice with rank $1\le r \le N$.
\item $\mathbb V \subseteq \mathbb{R}^N$ is the $r$-dimensional subspace spanned by $\Lambda$.
\item $\mathbb W = \mathbb V + \mbf w$ is an affine subspace parallel to $\mathbb V$, for some $\mbf w \in \mathbb{R}^N$.
\item $K \subset \mathbb{R}^N$ is a compact convex set with non-empty interior $K^\circ$.
\item $\phi : K \to \mathbb{R}$ is a continuous function and the restriction of $\phi$ to $K \cap \mathbb W$ has a unique maximum at some point $\mbf z_0 \in K^\circ \cap \mathbb W$.
\item $\phi$ is twice continuously differentiable in a neighbourhood of $\mbf z_0$ and $H := D^2\phi(\mbf z_0)$ is its Hessian at $\mbf z_0$.
\item $\psi : K_1 \to \mathbb{R}$ is a continuous function on some neighbourhood $K_1 \subseteq K$ of $\mbf z_0$ with $\psi(\mbf z_0) > 0$.
\item For each positive integer $n$ there is a vector $\ell_n \in \mathbb{R}^N$ with $\ell_n/n \in \mathbb W$,
\item For each positive integer $n$ there is a positive real number $b_n$ and a function $a_n : (\Lambda + \ell_n) \cap nK \to \mathbb{R}$ such that, as $n \to \infty$,
\[ a_n(\ell) = O(b_n e^{n\phi(\ell/n)+o(n)}),\quad \ell \in (\Lambda +\ell_n) \cap nK \]
and
\[ a_n(\ell) = b_n(\psi(\ell/n) + o(1)) e^{n\phi(\ell/n)}, \quad \ell \in (\Lambda +\ell_n) \cap nK_1,\]
uniformly for $\ell$ in the indicated sets.
\end{enumerate}
Then provided $\det(-H|_{\mathbb V} )\ne 0$, as $n \to \infty$,
\[ \sum_{\ell\in(\Lambda+\ell_n)\cap nK}a_n(\ell) \sim \frac{(2\pi)^{r/2}\psi(\mbf z_0)}{\det(\Lambda)\det(-H|_{\mathbb V})^{1/2}}b_nn^{r/2}e^{n\phi(\mbf z_0)}. \]
\end{theorem}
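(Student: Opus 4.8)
The plan is to run the classical Laplace (saddle–point) method, but for a sum over a lattice coset rather than an integral. Write $S_n=\sum_{\ell\in(\Lambda+\ell_n)\cap nK}a_n(\ell)$; after dividing by $b_n e^{n\phi(\mathbf z_0)}$ it suffices to show that $b_n^{-1}e^{-n\phi(\mathbf z_0)}S_n\sim \frac{(2\pi)^{r/2}\psi(\mathbf z_0)}{\det(\Lambda)\det(-H|_{\mathbb V})^{1/2}}\,n^{r/2}$. Fix a small $\eta\in(0,1/6)$ and a small $\delta>0$ with the ball of radius $\delta$ about $\mathbf z_0$ contained in $K^\circ\cap K_1$, and split the index set into three regions according to the value of $t:=|\ell/n-\mathbf z_0|$: a \emph{core} $t\le n^{-1/2+\eta}$, a \emph{collar} $n^{-1/2+\eta}<t<\delta$, and a \emph{far} region $t\ge\delta$. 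The crucial structural remark, used throughout, is that every $\ell$ in the coset $\Lambda+\ell_n$ satisfies $\ell/n\in\mathbb W$, so $\ell/n-\mathbf z_0\in\mathbb V$; since $\mathbf z_0$ is an interior maximum of $\phi$ restricted to $\mathbb W$, the directional derivative of $\phi$ at $\mathbf z_0$ vanishes in every direction of $\mathbb V$, and hence the \emph{linear} term of the Taylor expansion disappears for all the displacements that actually occur.

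\textbf{Far and collar regions.} On the far region, compactness of $K\cap\mathbb W$ together with uniqueness of the maximizer gives a constant $\varepsilon_\delta>0$ with $\phi(\ell/n)\le\phi(\mathbf z_0)-\varepsilon_\delta$; using the crude hypothesis $a_n(\ell)=O(b_ne^{n\phi(\ell/n)+o(n)})$ and the polynomial bound $|(\Lambda+\ell_n)\cap nK|=O(n^r)$ on the number of lattice points, this region contributes $O(n^r b_n e^{n\phi(\mathbf z_0)}e^{-\varepsilon_\delta n+o(n)})=o(b_n n^{r/2}e^{n\phi(\mathbf z_0)})$. On the collar, which lies inside $K_1$, the refined hypothesis $a_n(\ell)=b_n(\psi(\ell/n)+o(1))e^{n\phi(\ell/n)}=O(b_ne^{n\phi(\ell/n)})$ applies (this is essential — the uncontrolled $o(n)$ of the crude bound would otherwise swamp the estimate here); negative definiteness of $H|_{\mathbb V}$ gives, via Taylor, a quadratic bound $\phi(\ell/n)\le\phi(\mathbf z_0)-c t^2$ for $t<\delta$, whence $n\phi(\ell/n)\le n\phi(\mathbf z_0)-c\,n^{2\eta}$, and summing over $O(n^r)$ points yields $O(n^r b_n e^{n\phi(\mathbf z_0)}e^{-cn^{2\eta}})=o(b_n n^{r/2}e^{n\phi(\mathbf z_0)})$ since super-polynomial decay beats any power of $n$.

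\textbf{Core region.} On the core, Taylor-expand about $\mathbf z_0$: with $\mathbf m:=\ell-n\mathbf z_0\in\mathbb V$ one has $n\phi(\ell/n)=n\phi(\mathbf z_0)+\tfrac1{2n}\mathbf m^{\top}H\mathbf m+O\!\big(n\,|\ell/n-\mathbf z_0|^3\big)$, and on $t\le n^{-1/2+\eta}$ the cubic term is $O(n^{-1/2+3\eta})=o(1)$; also $\psi(\ell/n)=\psi(\mathbf z_0)(1+o(1))$ by continuity. Hence the core contributes $b_ne^{n\phi(\mathbf z_0)}\psi(\mathbf z_0)(1+o(1))\sum_{\mathbf m}e^{-\frac1{2n}\mathbf m^{\top}(-H)\mathbf m}$, the sum taken over the translated lattice $(\Lambda+\ell_n-n\mathbf z_0)\cap\{|\mathbf m|\le n^{1/2+\eta}\}$ inside $\mathbb V\cong\mathbb R^r$. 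Since $-H|_{\mathbb V}$ is positive definite (it is $\succeq0$ because $\mathbf z_0$ maximizes $\phi$ along $\mathbb V$, and nonsingular by hypothesis), the summand is a Gaussian of width $\Theta(\sqrt n)$; extending the sum from the ball back to the whole translated lattice costs only an exponentially small error, and then an equidistribution/Poisson-summation argument gives $\sum_{\mathbf m\in\Lambda+\mathbf c}e^{-\frac1{2n}\mathbf m^{\top}(-H)\mathbf m}=(1+o(1))\,\det(\Lambda)^{-1}\!\int_{\mathbb V}e^{-\frac1{2n}\mathbf x^{\top}(-H)\mathbf x}\,d\mathbf x$, which after the scaling $\mathbf x\mapsto\sqrt n\,\mathbf x$ and the standard Gaussian integral equals $(1+o(1))\,\frac{n^{r/2}}{\det(\Lambda)}\cdot\frac{(2\pi)^{r/2}}{\det(-H|_{\mathbb V})^{1/2}}$. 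Combining the three regions reproduces the claimed asymptotic.

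\textbf{Main obstacle.} The delicate point is the lattice-to-integral passage in the core: making the Riemann-sum/Poisson-summation approximation uniform while simultaneously tracking the coset shift $\ell_n-n\mathbf z_0$ (which need not be a lattice vector and may drift with $n$), the truncation to radius $n^{1/2+\eta}$, and the cubic Taylor remainder. The bookkeeping hinges on calibrating $\eta$ so that all three error sources are simultaneously negligible — $\eta<1/6$ kills the Taylor remainder, while $\eta>0$ must be large enough that the window holds $\gg1$ lattice points and the Gaussian mass outside it is negligible — and on being careful to invoke the refined hypothesis on $K_1$, not the crude exponential bound, precisely in the collar where the two estimates must meet.
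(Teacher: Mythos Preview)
The paper does not prove this statement: it is quoted verbatim from Greenhill, Janson, and Ruci\'nski~\cite{GJR10} (their Theorem~2.3) and used as a black box in Section~\ref{ssec:second_moment}. So there is no ``paper's own proof'' to compare against.

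That said, your sketch is a faithful outline of the standard Laplace-method argument that underlies the result in~\cite{GJR10}: the three-zone decomposition (far/collar/core), the use of uniqueness of the maximizer plus compactness to dispose of the far region, the quadratic Taylor bound to handle the collar, and the Gaussian Riemann sum in the core are exactly the ingredients one expects. Your identification of the lattice-to-integral passage as the delicate step is correct, and your calibration $\eta\in(0,1/6)$ is the right window. One point worth sharpening if you ever write this out in full: the claim that the coset shift $\ell_n-n\mathbf z_0$ is harmless in the Riemann-sum approximation is true, but the clean way to see it is that on the scale $\sqrt{n}$ of the Gaussian the lattice spacing is $O(1)$, so any bounded translate of the lattice gives the same Riemann sum up to $o(1)$; Poisson summation is more than is needed here and would require additional decay hypotheses on the Fourier side that are not part of the statement.
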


We also reference this additional results from that paper:

\begin{lemma}[{\cite[Lemma 6.2]{GJR10}}] \label{lem:det_lattice}
Let $0 \le m \le N$. Let $x_1, \ldots, x_m$ be linearly independent vectors in $\mathbb{Z}^N$. Let $V$ be the subspace of $\mathbb{R}^N$ spanned by $x_1, \ldots, x_m$ and let $V^{\perp}$ be its orthogonal complement; thus
\[ V^{\perp} = \{y \in \mathbb{R}^N : \langle y,x_i \rangle = 0 \ \text{for} \ i=1,\ldots,m\}. \]
Let $\mathcal{L}$ and $\mathcal{L}^{\perp}$ be the lattices $V \cap \mathbb{Z}^N$ and $V^{\perp} \cap \mathbb{Z}^N$, and let $\mathcal{L}_0$ be the lattice spanned by $x_1, \ldots, x_m$ (i.e., the set $\{\sum_{i=1}^m n_ix_i : n_ \in \mathbb{Z}\}$ of integer combinations). Then $\mathcal{L}^{\perp}$ has rank $N - m$ and
\[ \det(\mathcal{L}^{\perp}) = \det(\mathcal{L}) = \det(\mathcal{L}_0)/q, \]
where $q$ is the order of the finite group $\mathcal{L}/\mathcal{L}_0$. Explicitly, $q$ is the number of solutions $(t_1, \ldots, t_m)$ in $(\mathbb{R}/\mathbb{Z})^m$ (or $(\mathbb{Q}/\mathbb{Z})^m$) of the system
\[ \sum_i x_{ij}t_i \equiv 0 (\text{mod }1), \quad j=1, \ldots, N, \]
where $x_i = (x_{ij})_{j=1}^N$ for $i = 1,\ldots, m$.
\end{lemma}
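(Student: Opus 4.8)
The plan is to prove the three assertions in turn: (i) that $\mathcal{L}^{\perp}$ has rank $N-m$; (ii) the index identity $\det(\mathcal{L}_0)=q\det(\mathcal{L})$ together with the stated enumeration of $q$; and (iii) the symmetry $\det(\mathcal{L})=\det(\mathcal{L}^{\perp})$. For (i), I would note that $V^{\perp}$ is cut out by the $m$ linear equations $\langle y,x_i\rangle=0$ with integer coefficients, so it is a rational subspace of dimension $N-m$; clearing denominators in any basis of $V^{\perp}$ gives integer vectors spanning it, so $\mathcal{L}^{\perp}=V^{\perp}\cap\mathbb{Z}^N$ is a full-rank lattice in $V^{\perp}$ of rank $N-m$. (The same argument shows $\mathcal{L}$ has rank $m$, which is used below.)

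For (ii), since $x_1,\dots,x_m$ are linearly independent, $\mathcal{L}_0$ and $\mathcal{L}$ are rank-$m$ lattices both spanning $V$ with $\mathcal{L}_0\subseteq\mathcal{L}$, so $\mathcal{L}/\mathcal{L}_0$ is finite, of order $q$ say, and covolume multiplies by index under a finite-index inclusion, giving $\det(\mathcal{L}_0)=q\det(\mathcal{L})$. For the enumeration of $q$: every $v\in\mathcal{L}\subseteq V$ has unique rational coordinates with $v=\sum_i t_i(v)x_i$, and $v\mapsto(t_1(v),\dots,t_m(v))\bmod\mathbb{Z}^m$ is a group homomorphism $\mathcal{L}\to(\mathbb{R}/\mathbb{Z})^m$ with kernel exactly $\mathcal{L}_0$ and image exactly $\{(t_i):\sum_i t_ix_i\in\mathbb{Z}^N\}$, i.e.\ the solution set of the congruences $\sum_i x_{ij}t_i\equiv 0\pmod 1$, $j=1,\dots,N$; hence that set is finite of size $q=\lvert\mathcal{L}/\mathcal{L}_0\rvert$.

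Assertion (iii) is the crux, and I expect it to be the main obstacle: it is the general fact that a saturated sublattice of $\mathbb{Z}^N$ and its orthogonal-complement lattice have equal covolume. I would argue by exterior algebra. Since $\mathcal{L}=V\cap\mathbb{Z}^N$ is saturated, $\mathbb{Z}^N/\mathcal{L}$ is torsion-free, hence free, so $\mathcal{L}$ is a direct summand and extends to a unimodular basis $e_1,\dots,e_N$ of $\mathbb{Z}^N$ with $e_1,\dots,e_m$ a basis of $\mathcal{L}$; then $\det(\mathcal{L})^2=\langle e_1\wedge\dots\wedge e_m,\,e_1\wedge\dots\wedge e_m\rangle$ is the sum of squares of the $\binom{N}{m}$ maximal minors (the Plücker coordinates) of the matrix with columns $e_1,\dots,e_m$, and these integers have $\gcd$ equal to $1$ precisely because the $e_i$ extend to a unimodular basis; likewise $\det(\mathcal{L}^{\perp})^2$ is the sum of squares of the coprime Plücker coordinates of a basis $f_1,\dots,f_{N-m}$ of $\mathcal{L}^{\perp}$. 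Because the standard inner product on $\mathbb{R}^N$ is the identity, the Hodge star $\wedge^m\mathbb{R}^N\to\wedge^{N-m}\mathbb{R}^N$ sends each standard basis vector $e_I$ to $\pm e_{I^c}$, hence is an isometry restricting to a $\mathbb{Z}$-module isomorphism $\wedge^m\mathbb{Z}^N\to\wedge^{N-m}\mathbb{Z}^N$ that preserves primitivity; it carries the decomposable vector $e_1\wedge\dots\wedge e_m$, which spans the line $\wedge^m V$, to a primitive integer vector spanning $\wedge^{N-m}V^{\perp}$, i.e.\ to a primitive generator of the rank-one lattice $\wedge^{N-m}V^{\perp}\cap\wedge^{N-m}\mathbb{Z}^N=\mathbb{Z}\,(f_1\wedge\dots\wedge f_{N-m})$. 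Two primitive generators of a rank-one lattice agree up to sign, so $\star(e_1\wedge\dots\wedge e_m)=\pm f_1\wedge\dots\wedge f_{N-m}$, and since $\star$ is an isometry, $\det(\mathcal{L})=\|e_1\wedge\dots\wedge e_m\|=\|f_1\wedge\dots\wedge f_{N-m}\|=\det(\mathcal{L}^{\perp})$.

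Chaining (iii) with (ii) then gives $\det(\mathcal{L}^{\perp})=\det(\mathcal{L})=\det(\mathcal{L}_0)/q$, as claimed. If the Plücker-coordinate bookkeeping is cumbersome, I would instead derive (iii) from the orthogonal projection $\pi_V\colon\mathbb{Z}^N\to V$: its kernel is $V^{\perp}\cap\mathbb{Z}^N=\mathcal{L}^{\perp}$, of full rank in $V^{\perp}$, so $\pi_V(\mathbb{Z}^N)$ is a full-rank lattice in $V$ with $1=\det(\mathbb{Z}^N)=\det(\mathcal{L}^{\perp})\det(\pi_V(\mathbb{Z}^N))$; and since $\mathbb{Z}^N$ is self-dual, $\pi_V(\mathbb{Z}^N)$ equals the dual within $V$ of $\mathcal{L}=V\cap\mathbb{Z}^N$, so $\det(\pi_V(\mathbb{Z}^N))=1/\det(\mathcal{L})$, and comparing the two expressions again yields $\det(\mathcal{L})=\det(\mathcal{L}^{\perp})$.
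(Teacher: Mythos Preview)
The paper does not prove this lemma; it is quoted verbatim from Greenhill, Janson, and Ruci\'nski (their Lemma~6.2) and invoked as a black box in the second-moment computation for $\vec{\mathcal{C}}(n,d)$. There is therefore no in-paper proof to compare your proposal against.

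That said, your argument is correct and self-contained. Parts (i) and (ii) are routine, and both of your routes to the key equality $\det(\mathcal{L})=\det(\mathcal{L}^{\perp})$ are valid: the Hodge-star argument works because $\star$ is an isometry of $\wedge^\bullet\mathbb{Z}^N$ carrying the primitive decomposable vector $e_1\wedge\cdots\wedge e_m$ (primitive since $\mathcal{L}$ is saturated) to a primitive decomposable vector representing $V^\perp$, which must then be $\pm f_1\wedge\cdots\wedge f_{N-m}$; and the projection argument works because $\pi_V(\mathbb{Z}^N)=\mathcal{L}^*$ (your inclusion $\supseteq$ uses that a basis of $\mathcal{L}$ extends to a $\mathbb{Z}$-basis of $\mathbb{Z}^N$, whose dual basis is again integral by self-duality of $\mathbb{Z}^N$), combined with the covolume factorisation $1=\det(\mathbb{Z}^N)=\det(\mathcal{L}^{\perp})\det(\pi_V(\mathbb{Z}^N))$ for the orthogonal short exact sequence.
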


To each restriction in Equation~\eqref{eq:Acons} of the form $\sum_{j = 1}^k a_{ij} = \frac{1}{k}$ we associate the vector $r_i \in \mathbb{R}^{k^2+\frac{1}{4}k^2(k-1)^2}$ with value $1$ at index $j+ki$ if the variable $a_{ij}$ appears in the constraint and $0$ at all other indices. To each restriction in Equation~\eqref{eq:Acons} of the form $\sum_{i=1}^k a_{ij} = \frac{1}{k}$ we associate the vector $c_j \in \mathbb{R}^{k^2+\frac{1}{4}k^2(k-1)^2}$ in the same manner. Finally, to each restriction in Equation~\eqref{eq:Bcons} of the form $\sum_{u \sim v} b_{uv} = a_{ij}$ we associate a vector $e_{ij} \in \mathbb{R}^{k^2+\frac{1}{4}k^2(k-1)^2}$ with entry $-1$ at index $j+ki$ and, given some fixed ordering of the edges, index $1$ at position $k^2+\ell$ if the $\ell$th edge is $\{u,v\}$ for some $b_{uv}$ in the constraint.

Define the $(2k+k^2) \times (k^2+\frac{1}{4}k^2(k-1)^2)$ matrix $D$ by setting the first $k$ rows equal to $r_i$, the next $k$ rows equal to $c_j$, and the last $k^2$ rows equal to $e_{ij}$. Let $\mbf y \in \mathbb{R}^{2k+k^2}$ be the
vector with first $2k$ entries $\frac{1}{k}$ and last $k^2$ entries $0$.

\begin{lemma}\label{lem:cons_space}
If $\mbf x \in \mathbb{R}^{k^2+\frac{1}{4}k^2(k-1)^2}$ satisfies $D\mbf x = \mbf
y$ then assigning (as vectors) $A = (x_i)_{i=1}^{k^2}$ and $B =
(x_i)_{i=k^2+1}^{k^2+\frac{1}{4}k^2(k-1)^2}$ gives an assignment to $A$ and $B$ that
satisfies Equations \eqref{eq:Acons} and \eqref{eq:Bcons}, respectively, with the exception that entries may not be positive.
\end{lemma}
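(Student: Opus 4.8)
The plan is to prove the lemma by directly unpacking the definitions of $D$, of $\mbf y$, and of the coordinate-to-variable correspondence fixed in the paragraph preceding the statement; there is nothing to prove beyond careful bookkeeping. First I would record the relevant identification: the first $k^2$ coordinates of $\mbf x \in \mathbb{R}^{k^2+\frac14 k^2(k-1)^2}$ are in bijection with the entries $a_{ij}$ of $A$ (via the indexing $a_{ij} \leftrightarrow$ position $j+ki$), and the remaining $\frac14 k^2(k-1)^2$ coordinates are in bijection with the edges of $\Gamma$ via the fixed edge ordering, the coordinate of the $\ell$th edge $\{u,v\}$ being $b_{uv}$. Here I would note this is internally consistent: $\Gamma$ is simple --- there is no loop at $(i,j)$ since $i\to i$ is impossible, and no double edge between two vertices since exactly one of $i\to i'$, $i'\to i$ holds in the tournament $T_k$ --- and, since $T_k$ is regular, each vertex $(i,j)$ has $\big(\tfrac{k-1}{2}\big)^2$ ``out-type'' neighbours and as many disjoint ``in-type'' neighbours, so $\Gamma$ is $\tfrac{(k-1)^2}{2}$-regular with $|E(\Gamma)| = \tfrac12 k^2 \cdot \tfrac{(k-1)^2}{2} = \tfrac14 k^2(k-1)^2$, matching the number of $b$-coordinates.

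Next I would read off the three blocks of rows of the system $D\mbf x = \mbf y$. The first $k$ rows are the vectors $r_i$, which are $1$ exactly in the positions of $a_{i1},\dots,a_{ik}$, so $r_i \cdot \mbf x = \tfrac1k$ is precisely $\sum_{j=1}^k a_{ij} = \tfrac1k$, i.e. the row-sum half of Equation~\eqref{eq:Acons}; symmetrically, the next $k$ rows are the $c_j$ and give the column-sum half $\sum_{i=1}^k a_{ij} = \tfrac1k$. The final $k^2$ rows are the vectors $e_{ij}$, which carry a $-1$ in the position of $a_{ij}$ and a $+1$ in the position of each $b_{uv}$ with $\{u,v\}$ incident to $(i,j)$ in $\Gamma$; hence $e_{ij}\cdot\mbf x = 0$ unpacks to $-a_{ij} + \sum_{u \sim v} b_{uv} = 0$ for $v=(i,j)$, which is exactly the last line of Equation~\eqref{eq:Bcons}.

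It then remains only to observe that the two other conditions of Equation~\eqref{eq:Bcons} are automatic from the parameterization rather than from the system $D\mbf x=\mbf y$: the symmetry $b_{uv}=b_{vu}$ is vacuous because $B$ is indexed by the unordered edge $\{u,v\}$, which is a single coordinate of $\mbf x$, and the support condition ``$b_{uv}=0$ unless $i\to i'$ and $j\to j'$ (or $i'\to i$ and $j'\to j$)'' holds because coordinates are assigned only to edges of $\Gamma$, which are precisely those pairs. Nonnegativity of the entries of $A$ and $B$ is the single constraint not enforced, exactly as the lemma states. The only step with any content is the dimension count matching the number of $b$-coordinates to $|E(\Gamma)|$; everything else is a transcription of the definitions, so I do not expect a genuine obstacle.
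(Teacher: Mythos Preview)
Your proposal is correct and follows essentially the same approach as the paper: unpack the dot products $r_i\cdot\mbf x$, $c_j\cdot\mbf x$, $e_{ij}\cdot\mbf x$ to recover the equality constraints, then note that the symmetry and support conditions on $B$ are built into the edge-indexed parameterization. The only addition you make beyond the paper is the explicit edge-count verification for $\Gamma$, which is a welcome sanity check but not needed for the argument.
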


\begin{proof}
By construction,
\[ r_i \cdot \mbf x = \sum_{j=1}^k a_{ij} \quad \text{and} \quad c_j \cdot \mbf x = \sum_{i=1}^k a_{ij} \]
so as the first $2k$ entries of $\mbf y$ are $\frac{1}{k}$, $D\mbf x = \mbf y$ enforces the equality constraints in Equation~\eqref{eq:Acons}. For the latter $k^2$ rows,
\[ e_{ij} \cdot \mbf x = -a_{ij} + \sum_{u \sim v} b_{uv} \]
and thus as $\mbf y$ has $0$s in the corresponding rows, $D\mbf x = \mbf y$ also enforces the last constraints in Equation~\eqref{eq:Bcons}. Note that each edge appears only as a single variable in $D$, and thus we may set $b_{uv} = b_{vu}$ to satisfy the third constraint, and that nonedges do not appear in the constraints, so we may set them to $0$ to satisfy the second constraint.
\end{proof}

\begin{lemma}\label{lem:rankD}
The collection $ \{r_i\}_{i=2}^k \cup \{c_j\}_{j=1}^{k} \cup \{e_{ij}\}_{i,j=1}^k$ is linearly independent, and furthermore
\[ \rank(D) = k^2+2k-1. \]
\end{lemma}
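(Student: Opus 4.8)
The plan is to prove the rank formula and the linear independence claim in tandem. First I would record the unique dependence among the full set of rows of $D$: summing the vectors $r_1,\dots,r_k$ produces the vector that is $1$ on every $A$-coordinate and $0$ on every $B$-coordinate, and summing $c_1,\dots,c_k$ produces the same vector; hence $\sum_{i=1}^k r_i=\sum_{j=1}^k c_j$, so $r_1=\sum_{j=1}^k c_j-\sum_{i=2}^k r_i$ lies in the span of the reduced collection $\{r_i\}_{i=2}^k\cup\{c_j\}_{j=1}^k\cup\{e_{ij}\}_{i,j=1}^k$. Since every row of $D$ is then in this span and the reduced collection has exactly $(k-1)+k+k^2=k^2+2k-1$ members, this already gives $\rank(D)\le k^2+2k-1$. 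It therefore remains only to prove the reduced collection is linearly independent, which simultaneously yields $\rank(D)\ge k^2+2k-1$ and hence equality.

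For the independence, suppose $\sum_{i=2}^k\beta_i r_i+\sum_{j=1}^k\gamma_j c_j+\sum_{i,j}\delta_{ij}e_{ij}=\mathbf 0$. The crucial structural point is that among all these vectors only the $e_{ij}$ have nonzero entries in $B$-coordinates, and the coordinate indexed by an edge $\{u,v\}$ of $\Gamma$ (say $u=(i_1,j_1)$ and $v=(i_2,j_2)$) equals $+1$ in precisely the two vectors $e_{i_1j_1}$ and $e_{i_2j_2}$ corresponding to its endpoints, and $0$ in all others. Reading the dependence in that coordinate gives $\delta_u+\delta_v=0$ for every edge $uv$ of $\Gamma$; that is, $\delta$, viewed as a function on $V(\Gamma)=[k]^2$, alternates in sign along every edge. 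Because $\Gamma=T_k^{\otimes 2}$ is connected (for $k\ge 5$) and non-bipartite — it contains triangles, since adjacent vertices have $\tfrac{(k-1)(k-3)}{4}+1\ge 1$ common neighbours — such a function can only be identically $0$: following any walk from a fixed vertex forces the value up to sign, and an odd closed walk then forces that value to vanish. Hence all $\delta_{ij}=0$.

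Once the $\delta$-terms are eliminated, the relation $\sum_{i=2}^k\beta_i r_i+\sum_{j=1}^k\gamma_j c_j=\mathbf 0$ is supported entirely on the $A$-coordinates. The $(1,j)$-coordinate of the left side is simply $\gamma_j$ (there is no $r_1$ in the reduced collection), so $\gamma_j=0$ for all $j\in[k]$; then the $(i,j)$-coordinate for $i\ge 2$ is $\beta_i+\gamma_j=\beta_i$, forcing $\beta_i=0$ for all $i$. Thus the reduced collection is linearly independent, and combining with the upper bound we conclude $\rank(D)=k^2+2k-1$.

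I expect the one point needing care to be the assertion that $\Gamma$ is connected and non-bipartite; both facts are already available in the excerpt (connectedness from the proposition on Kronecker products of regular tournaments, non-bipartiteness from the strongly-regular parameters that guarantee triangles), and once these are invoked the remainder is purely bookkeeping about which coordinates each row of $D$ touches.
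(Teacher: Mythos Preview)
Your proof is correct and follows essentially the same architecture as the paper's: exhibit the single dependence $r_1=\sum_j c_j-\sum_{i\ge 2} r_i$, then show the remaining $k^2+2k-1$ vectors are linearly independent by reading off coordinates. The one substantive difference is in how you kill the $\delta_{ij}$'s: the paper argues via the $A$-coordinates (each $e_{ij}$ has a $-1$ in the unique column $a_{ij}$), whereas you read the $B$-coordinates and invoke that $\Gamma=T_k^{\otimes 2}$ is connected and non-bipartite so that $\delta_u+\delta_v=0$ on every edge forces $\delta\equiv 0$; your route is arguably more careful, since the paper's sentence ``the coefficients of the $\{e_{ij}\}$ must be $0$ because $\{e_{ij}\}$ is linearly independent and $R$ does not contain any entries past index $k^2$'' tacitly needs exactly the incidence-matrix fact you spell out.
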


\begin{proof}
As each vector contains a nonzero entry in a unique column, the collection
$\{e_{ij}\}$ is linearly independent. We claim that the collection $R =
\{r_i\}_{i=2}^k \cup \{c_j\}_{j=1}^{k}$ is linearly independent. Consider any
linear combination of $R$ that sums to $\vec{0}$. Note that $a_{1j}$ occurs in
$c_j$ but does not occur in any $r_i$ as $r_1 \notin R$. Thus each $c_j$ must
have a coefficient of $0$. But each $r_i$ has unique entries (there is no
$a_{ij}$ that occurs in two different row constraints because the value of $i$
would differ) so these must also have coefficients $0$. Now $R \cup \{e_{ij}\}$
is also linearly independent: given any linear combination that sums to $0$, the
coefficients of the $\{e_{ij}\}$ must be $0$ because $\{e_{ij}\}$ is linearly
independent and $R$ does not contain any entries past index $k^2$ while each
$\{e_{ij}\}$ does, and then because $R$ is linearly independent its coefficients
must also be zero. Finally, as each $a_{ij}$ occurs in exactly $r_i$ and $c_j$,
\[ r_1 = \sum_{j=1}^k c_j - \sum_{i=2}^{k} r_i \]
so no larger collection of rows of $D$ is linearly independent and $\rank(D) = k^2+2k-1$.
\end{proof}

\begin{lemma} \label{lem:det_l_nought}
Let $\hat{D}$ be the matrix that results from removing the row corresponding to $r_1$ from $D$. Then
\[ \det(\hat{D}^T\hat{D}) = \frac{(k-1)^{2k}(k(k-2))^{2k-2}((k^2-k+4)(k^2-3k+4))^{\frac{1}{2}(k-1)^2}}{2^{k^2-1}}.\]
\end{lemma}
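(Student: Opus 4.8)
The plan is to compute the Gram matrix $G=\hat{D}\hat{D}^{T}$ of the $k^{2}+2k-1$ row vectors of $\hat D$ (linearly independent, hence $G$ invertible, by Lemma~\ref{lem:rankD}) and evaluate $\det G$, which is the Gram determinant in question. Organize the rows of $\hat D$ into the blocks $\{r_i\}_{i=2}^{k}$, $\{c_j\}_{j=1}^{k}$, $\{e_{ij}\}_{i,j=1}^{k}$ and read off the inner products from the definitions: $\langle r_i,r_{i'}\rangle=k\delta_{ii'}$, $\langle c_j,c_{j'}\rangle=k\delta_{jj'}$, $\langle r_i,c_j\rangle=1$; the only coordinate $r_i$ (resp.\ $c_j$) shares with $e_{i'j'}$ is that of $a_{i'j'}$, so $\langle r_i,e_{i'j'}\rangle=-\delta_{ii'}$ and $\langle c_j,e_{i'j'}\rangle=-\delta_{jj'}$; and, since $e_{ij}$ and $e_{i'j'}$ share the coordinate of an edge of $\Gamma=T_k^{\otimes 2}$ exactly when that edge joins $(i,j)$ to $(i',j')$ and $\Gamma$ is simple of degree $\tfrac{(k-1)^2}{2}$, $\langle e_{ij},e_{i'j'}\rangle=\bigl(1+\tfrac{(k-1)^2}{2}\bigr)\delta_{(i,j),(i',j')}+[(i,j)\sim(i',j')\text{ in }\Gamma]$. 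Thus the $\{e\}$--$\{e\}$ block of $G$ is $E_0:=\bigl(1+\tfrac{(k-1)^2}{2}\bigr)I_{k^2}+\mathcal A(\Gamma)$, and the off-diagonal blocks are $-P$ and $-Q$, where $P$ (resp.\ $Q$) is the $0$--$1$ matrix whose rows are the incidence vectors $(e_i\otimes\mathbf 1_k)^{T}$ for $2\le i\le k$ (resp.\ $(\mathbf 1_k\otimes e_j)^{T}$ for $1\le j\le k$).

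Next I would exploit the eigenstructure of $\mathcal A(\Gamma)$ from Proposition~\ref{prop:tourn_prod_eigenvectors}. Since $E_0$ is a polynomial in $\mathcal A(\Gamma)$, its eigenvalues are $\mu_0=(k-1)^2+1$ (on $\mathbf 1\otimes\mathbf 1$, multiplicity $1$), $\mu_1=\tfrac{k^2-3k+4}{2}$ (multiplicity $\tfrac{(k-1)(k+3)}{2}$) and $\mu_2=\tfrac{k^2-k+4}{2}$ (multiplicity $\tfrac{(k-1)^2}{2}$), all positive, so $\det E_0=\mu_0\,\mu_1^{(k-1)(k+3)/2}\,\mu_2^{(k-1)^2/2}$ and $E_0$ is invertible. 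The Schur complement of $G$ with respect to its $\{e\}$--$\{e\}$ block gives $\det G=\det E_0\cdot\det S$ with
\[
S=\begin{pmatrix} kI_{k-1}-PE_0^{-1}P^{T} & J_{(k-1)\times k}-PE_0^{-1}Q^{T}\\ J_{k\times(k-1)}-QE_0^{-1}P^{T} & kI_{k}-QE_0^{-1}Q^{T}\end{pmatrix}.
\]
The key point is that each row of $P$ and $Q$ lies in the span of only the $\mu_0$- and $\mu_1$-eigenspaces of $E_0$ (because $e_i\otimes\mathbf 1_k=\tfrac1k\mathbf 1\otimes\mathbf 1+(e_i-\tfrac1k\mathbf 1)\otimes\mathbf 1$ and $(e_i-\tfrac1k\mathbf 1)\otimes\mathbf 1$ lies in the $-\tfrac{k-1}{2}$-eigenspace of $\mathcal A(\Gamma)$), so $E_0^{-1}$ acts there by $\tfrac1{\mu_0}$ and $\tfrac1{\mu_1}$ and a direct computation gives
\[
PE_0^{-1}P^{T}=\tfrac{k}{\mu_1}I_{k-1}+\bigl(\tfrac1{\mu_0}-\tfrac1{\mu_1}\bigr)J_{k-1},\quad QE_0^{-1}Q^{T}=\tfrac{k}{\mu_1}I_{k}+\bigl(\tfrac1{\mu_0}-\tfrac1{\mu_1}\bigr)J_{k},\quad PE_0^{-1}Q^{T}=\tfrac1{\mu_0}J_{(k-1)\times k}.
\]
Hence $S$ has the form $\alpha I+\beta J$ on each diagonal block (with $\alpha=k(1-\tfrac1{\mu_1})$, $\beta=\tfrac1{\mu_1}-\tfrac1{\mu_0}$) and $\gamma J$ off-diagonal (with $\gamma=1-\tfrac1{\mu_0}$); decomposing $\mathbb R^{2k-1}$ into the $2$-dimensional space of vectors constant on each block and its complement (on which $S$ is the scalar $\alpha$) yields $\det S=\alpha^{2k-3}\bigl[(\alpha+\beta(k-1))(\alpha+\beta k)-\gamma^2k(k-1)\bigr]$.

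Finally I would simplify and collect. Using $\mu_0-1=(k-1)^2$, $\mu_0-\mu_1=\tfrac{k(k-1)}{2}$, $\mu_1-1=\tfrac{(k-1)(k-2)}{2}$ one gets $\alpha=\tfrac{k(k-1)(k-2)}{2\mu_1}$, and the factorizations $(k-2)\mu_0+k=2(k-1)\mu_1$ and $2(k-1)\mu_1-k=(k-2)\mu_0$ collapse the bracket to $\tfrac{k(k-1)^3(k-2)}{2\mu_0\mu_1}$. Substituting into $\det G=\det E_0\cdot\alpha^{2k-3}\cdot\tfrac{k(k-1)^3(k-2)}{2\mu_0\mu_1}$, the $\mu_0$ factors cancel, the $\mu_1$ powers combine to $\mu_1^{(k-1)^2/2}$ (via $\tfrac{(k-1)(k+3)}{2}-(2k-3)-1=\tfrac{(k-1)^2}{2}$), the surviving powers of $k,k-1,k-2$ give $(k-1)^{2k}(k(k-2))^{2k-2}$, and $(\mu_1\mu_2)^{(k-1)^2/2}=((k^2-k+4)(k^2-3k+4))^{(k-1)^2/2}/2^{(k-1)^2}$ combines with the leftover $2^{-(2k-2)}$ to the denominator $2^{(k-1)^2+2k-2}=2^{k^2-1}$, which is exactly the claimed formula.

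\textbf{Main obstacle.} The difficulty is entirely in the bookkeeping: getting every entry of the $3\times 3$ block Gram matrix correct (in particular the $\{e\}$--$\{e\}$ block, which is where the simplicity and regularity of $T_k^{\otimes 2}$ and Proposition~\ref{prop:tourn_prod_eigenvectors} enter), then carrying the Schur-complement algebra through without sign or exponent slips. There is essentially no conceptual content once one observes that $E_0$ is diagonalized by Proposition~\ref{prop:tourn_prod_eigenvectors} and that the vectors $E_0^{-1}$ must act on occupy only two of its eigenspaces, reducing everything to a $(2k-1)\times(2k-1)$ determinant of $I/J$-block type.
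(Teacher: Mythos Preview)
Your argument is correct. Like the paper, you compute the Gram matrix of the rows of $\hat D$, write it in block form, invoke the eigenstructure of $\mathcal A(\Gamma)$ from Proposition~\ref{prop:tourn_prod_eigenvectors}, and reduce via a Schur complement. The genuine difference is \emph{which} block you pivot on. The paper takes the Schur complement with respect to the small $(2k-1)\times(2k-1)$ block $U_kU_k^{T}$, which forces it to compute $(U_kU_k^{T})^{-1}$ explicitly, work out $U_k^{T}(U_kU_k^{T})^{-1}U_k=\tfrac{1}{k^2}(kI_k\otimes J_k+kJ_k\otimes I_k-J_{k^2})$ via tensor algebra, and then diagonalize a $k^2\times k^2$ matrix by matching its eigenspaces against those of $A_\Gamma$. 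You instead pivot on the large block $E_0$, observing that the rows of $P$ and $Q$ live in only the $\mu_0$- and $\mu_1$-eigenspaces of $E_0$; this makes $PE_0^{-1}P^{T}$, $QE_0^{-1}Q^{T}$, $PE_0^{-1}Q^{T}$ immediate and leaves only a $(2k-1)\times(2k-1)$ Schur complement of $I/J$-block type, whose determinant falls out from a two-dimensional calculation. Both routes use exactly the same structural inputs (regularity of $\Gamma$ and Proposition~\ref{prop:tourn_prod_eigenvectors}); yours trades the paper's tensor-product manipulations for the single clean observation about which eigenspaces the off-diagonal blocks occupy, and is somewhat more economical.
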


\begin{proof}
We begin by considering the structure of $\hat{D}$. We claim that $\hat{D}$ is a block matrix
\[ \hat{D} = \begin{bmatrix} U_k & 0_{2k-1,|E|} \\ -I_{k^2} & E_\Gamma \end{bmatrix} \]
where $E_\Gamma$ is the $k^2 \times \frac{1}{4}k^2(k-1)^2$ incidence matrix of our auxiliary graph $\Gamma$ and $U_k$ is itself a block matrix, defined by
\[ U_k = \begin{bmatrix} 0_{k-1,k} & \mathbf{1}_k \otimes I_{k-1} \\ I_k & I_k \otimes \mathbf{1}_{k-1} \end{bmatrix}. \]
The $0_{2k-1,|E|}$ block occurs in $\hat{D}$ because the constraints in
Equation~\eqref{eq:Acons} only contain entries in the first $k^2$ columns. The
negative identity block comes from the $-a_{ij}$ terms that arise when
rearranging the constraints in Equation~\eqref{eq:Bcons}. The incidence matrix
of $\Gamma$ arises because the latter $\frac{1}{4}k^2(k-1)^2 = |E(\Gamma)|$
columns of $\hat{D}$ contain exactly two non-zero entries corresponding to the
two constraints in Equation~\eqref{eq:Bcons} in which the corresponding $b_{uv}$
appears. To understand the structure of $U_k$, note that each $r_i$ vector
contains $k$ consecutive 1 entries while each $c_j$ vector contains $k$ entries
with value 1, each separated by $k-1$ consecutive 0 entries, with the first 1 entry occurring at
index $j$. As we removed $r_1$ from $D$ to get $\hat{D}$, the first block of
$\hat{D}$ contains no 1 entries.

Now set $G = \hat{D}^T\hat{D}$ and note that $G$ is also a block matrix with blocks
\[  G = \begin{bmatrix} U_kU_k^T & -U_k \\ -U_k^T & I_{k^2} + E_\Gamma E_\Gamma^T\end{bmatrix} \]
It is a fact of spectral graph theory~\cite[Lemma 8.2.3]{GR01} that $E_\Gamma E_\Gamma^T = A_{\Gamma} + D_\Gamma$ where $A_\Gamma$ is the adjacency matrix of $\Gamma$ and $D_\Gamma$ is the degree matrix, a diagonal matrix with $d_{ii} = d_\Gamma(i)$ and zero entries elsewhere. Straightforward calculation gives
\[ U_kU_k^T = \begin{bmatrix} kI_{k-1} & J_{k-1,k} \\ J_{k,k-1} & kI_k \end{bmatrix}. \]
To determine $\det(G)$, we use a block matrix determinant formula using the Schur complement (see~\cite[Section 0.8.5]{HJ13}):
\[ \det\left(\begin{bmatrix} A & B \\ C & D \end{bmatrix}\right) = \det(A)\det(D-CA^{-1}B). \]
This requires several calculations. First,
\[ \det(U_kU_k^T) = \det(kI_{k-1})\det(kI_{k}-J_{k,k-1}(kI_{k-1})^{-1}J_{k-1,k}) = k^{k-1}\det(kI_{k} - J_{k}). \]
The eigenvalues of $-J_{k}$ are $-k$ with multiplicity 1 and $0$ with multiplicity $k-1$. Shifting these by $k$ gives $\det(kI_{k}-J_{k,k}) = k^{k-1}$ and therefore
\begin{equation}\label{eq:detF}
\det(U_kU_k^T) = k^{2k-2}.
\end{equation}
One can verify that
\[ (U_kU_k^T)^{-1} = \frac{1}{k^2} \begin{bmatrix} kJ_{k-1} + I_{k-1} & -kJ_{k-1,k} \\ -kJ_{k,k-1} & (k-1)J_k + kI_k \end{bmatrix} \]
and then, using standard results on tensor products and an iron will, one can calculate
\[ (-U_k^T)(U_kU_k^T)^{-1}(-U_k) = \frac{1}{k^2}(kI_k \otimes J_{k} + kJ_{k} \otimes I_k - J_{k^2}). \]
Thus
\[ I_{k^2}+E_\Gamma E_\Gamma^T - (-U_k^T)(U_kU_k^T)^{-1}(-U_k) = I_{k^2} + A_\Gamma + D_{\Gamma} - \frac{1}{k^2}(kI_k \otimes J_{k} + kJ_{k} \otimes I_k - J_{k^2}). \]
We now investigate the eigenvalues and eigenspaces of the matrices $A_\Gamma, D_{\Gamma}$, and $kI_k \otimes J_{k} + kJ_{k} \otimes I_k - J_{k^2}$.

As $\Gamma$ is regular, $D_{\Gamma} = \frac{(k-1)^2}{2}I_{k^2}$.

By Proposition~\ref{prop:tourn_prod_eigenvectors}, the eigenvalues and eigenspaces of $A_\Gamma$ are
\[ \begin{array}{c|c}
\text{e-value} & \text{e-space}\\
\hline
\frac{(k-1)^2}{2} & \mathbf{1}_k \otimes \mathbf{1}_k = \mathbf{1}_{k^2}\\
-\frac{(k-1)}{2} & \Span(\{\mathbf{u} \otimes \mathbf{1} \mid \mathbf{u} \in \mathbf{1}_k^\perp\})\\
-\frac{(k-1)}{2} & \Span(\{\mathbf{1} \otimes \mathbf{u} \mid \mathbf{u} \in \mathbf{1}_k^\perp\})\\
-\frac{(k-1)}{2} & \mathbb{S}_1\\
-\frac{(k-1)}{2} & \mathbb{S}_2\\
\frac{(k+1)}{2} & \mathbb{S}_3\\
\end{array}\]
where the $\mathbb{S}_i$ are subspaces of $\Span(\{\mathbf{u} \otimes \mathbf{v} \mid \mathbf{u}, \mathbf{v} \in \mathbf{1}_k^\perp\})$ defined carefully in Proposition~\ref{prop:tourn_prod_eigenvectors}.

Finally, for any constant $\alpha$, $J_{\alpha}$ has two eigenvalues, $\alpha$ with multiplicity one corresponding to the eigenspace spanned by $\mathbf{1}_\alpha$ and $0$ with multiplicity $\alpha-1$ corresponding to the eigenspace $\mathbf{1}_k^\perp$. Therefore the eigenvalues and eigenspaces of $kI_k \otimes J_{k}$ are
\[ \begin{array}{c|c}
\text{e-value} & \text{e-space}\\
\hline
k^2 & \mathbf{1}_k \otimes \mathbf{1}_k = \mathbf{1}_{k^2}\\
k^2 & \Span(\{\mathbf{u} \otimes \mathbf{1} \mid \mathbf{u} \in \mathbf{1}_k^\perp\})\\
0 & \Span(\{\mathbf{1} \otimes \mathbf{u} \mid \mathbf{u} \in \mathbf{1}_k^\perp\})\\
0 & \Span(\{\mathbf{u} \otimes \mathbf{v} \mid \mathbf{u}, \mathbf{v} \in \mathbf{1}_k^\perp\})
\end{array}\]
while the eigenvalues of $kJ_{k} \otimes I_k$ are
\[ \begin{array}{c|c}
\text{e-value} & \text{e-space}\\
\hline
k^2 & \mathbf{1}_k \otimes \mathbf{1}_k = \mathbf{1}_{k^2}\\
0 & \Span(\{\mathbf{u} \otimes \mathbf{1} \mid \mathbf{u} \in \mathbf{1}_k^\perp\})\\
k^2 & \Span(\{\mathbf{1} \otimes \mathbf{u} \mid \mathbf{u} \in \mathbf{1}_k^\perp\})\\
0 & \Span(\{\mathbf{u} \otimes \mathbf{v} \mid \mathbf{u}, \mathbf{v} \in \mathbf{1}_k^\perp\})
\end{array}\]
Therefore the eigenvalues and eigenspaces of $-\frac{1}{k^2}(kI_k \otimes J_{k} + kJ_{k} \otimes I_k - J_{k^2})$ are
\[ \begin{array}{c|c}
\text{e-value} & \text{e-space}\\
\hline
-1 & \mathbf{1}_k \otimes \mathbf{1}_k = \mathbf{1}_{k^2}\\
-1 & \Span(\{\mathbf{u} \otimes \mathbf{1} \mid \mathbf{u} \in \mathbf{1}_k^\perp\})\\
-1 & \Span(\{\mathbf{1} \otimes \mathbf{u} \mid \mathbf{u} \in \mathbf{1}_k^\perp\})\\
0 & \Span(\{\mathbf{u} \otimes \mathbf{v} \mid \mathbf{u}, \mathbf{v} \in \mathbf{1}_k^\perp\})
\end{array}\]

Fortunately, the eigenspaces of $A_\Gamma$ and $kI_k \otimes J_{k} + kJ_{k} \otimes I_k - J_{k^2}$ align, so, shifting by $1+\frac{(k-1)^2}{2}$ to account for $I_{k^2}+D_\Gamma = (1+\frac{(k-1)^2}{2})I_{k^2}$, we see that the eigenvalues of $I_{k^2} + A_\Gamma + D_{\Gamma} - \tfrac{1}{k^2}(kI_k \otimes J_{k} + kJ_{k} \otimes I_k - J_{k^2})$ are

\[ \begin{array}{c|c}
\text{e-value} & \text{multiplicity}\\
\hline
1+\frac{(k-1)^2}{2}+\frac{(k-1)^2}{2}-1 = (k-1)^2& 1\\
1+\frac{(k-1)^2}{2}-\frac{(k-1)}{2}-1 = \frac{1}{2}(k-1)(k-2)& 2(k-1)\\
1+\frac{(k-1)^2}{2}-\frac{(k-1)}{2} = \frac{1}{2}(k^2-3k+4)& \frac{(k-1)^2}{2}\\
1+\frac{(k-1)^2}{2}+\frac{(k+1)}{2} = \frac{1}{2}(k^2-k+4)& \frac{(k-1)^2}{2}
\end{array}\]
and therefore
\begin{align*}
\det\bigl(I_{k^2} + A_\Gamma + D_{\Gamma} - &\tfrac{1}{k^2}(kI_k \otimes J_{k} + kJ_{k} \otimes I_k - J_{k^2})\bigr)\\
	&= \frac{(k-1)^{2k}(k-2)^{2(k-1)}\bigl((k^2-3k+4)(k^2-k+4)\bigr)^{\frac{1}{2}(k-1)^2}}{2^{k^2-1}}
\end{align*}
which, along with Equation~\eqref{eq:detF}, gives
\[ \det(G) = \frac{(k-1)^{2k}(k(k-2))^{2k-2}((k^2-k+4)(k^2-3k+4))^{\frac{1}{2}(k-1)^2}}{2^{k^2-1}}. \]
\end{proof}

\begin{proposition} \label{prop:PZ_for_C}
There is a value $C_{d,k}$ depending on $d$ and $k$ such that
\[ \ex Y^2 \sim C_{d,k} (\ex Y)^2. \]
\end{proposition}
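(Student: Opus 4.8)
The plan is to apply the Laplace summation theorem of Greenhill, Janson, and Ruci\'nski (Theorem~\ref{thm:laplacian_summation}) to the sum over overlap data in Equation~\eqref{eq:second_moment}, mirroring the proof of Lemma~\ref{lem:gnm_second_mom} but working in the general framework with the matrix $D$ and vector $\mbf y$ introduced in this section rather than with an incidence matrix. Most of the required data has already been assembled: by Lemma~\ref{lem:rankD}, $\rank(D) = k^2+2k-1$, so the lattice $\Lambda = \Ker(D)\cap\mathbb{Z}^N$, with $N = k^2 + \tfrac14 k^2(k-1)^2$, has rank $r = \tfrac14 k^2(k-1)^2 - 2k + 1$; the Gram determinant of Lemma~\ref{lem:det_l_nought}, together with Lemma~\ref{lem:det_lattice} applied to $\hat D$ and division by the order $q$ of the associated finite quotient group, yields $\det(\Lambda)$; and the exponential rate $\phi$, in the form of Equation~\eqref{eq:f2}, is uniquely maximised over the constraint set at $\mbf z_0 = (\hat A, \hat B)$ with $\phi(\mbf z_0) = \log k^2 + d\log\bigl(\tfrac12(1-\tfrac1k)\bigr)$ by Lemma~\ref{lem:second_mom_opt}, noting that $(\hat A,\hat B)$ satisfies Equations~\eqref{eq:Acons}--\eqref{eq:Bcons} and lies in the interior of any reasonable box, since all of its coordinates that are free (all of $\hat A$, and the edge-coordinates of $\hat B$) are strictly positive.

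I would first verify the hypotheses of Theorem~\ref{thm:laplacian_summation} in turn. Take $\mathbb W$ to be the affine subspace $\{\mbf x : D\mbf x = \mbf y\}$, $\mathbb V = \Ker(D)$, $K$ a compact box pinning the $a_v$ and the edge-variables $b_{uv}$ into a small neighbourhood of their values at $\mbf z_0$ and fixing non-edge $b_{uv}$ at $0$ (so that $E_\Gamma$, the incidence matrix of $\Gamma = T_k^{\otimes 2}$, governs the active coordinates), and $K_1$ a smaller box around $\mbf z_0$. With $\phi$ the continuous extension of $f(A,B)$ from Equations~\eqref{eq:f1}--\eqref{eq:f2}, smoothness near $\mbf z_0$ is clear, and the unique-maximiser hypothesis is Lemma~\ref{lem:second_mom_opt}. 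Applying Stirling's formula $x! = \xi(x)(x/e)^x$ to the ratio $p(A,B,n)$ of Equation~\eqref{eq:p_def}, the $\xi$-factors contribute a net power $b_n = \Theta\bigl(n^{(1-\frac14 k^2(k-1)^2)/2}\bigr)$ and a continuous positive function $\psi$ with $\psi(\mbf z_0) > 0$; set $a_n = p(A,B,n)e^{nf(A,B)}$ and check the two uniform growth estimates in hypothesis~(ix).

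The main obstacle is the Hessian: showing $\det(-H|_{\mathbb V}) \neq 0$ and evaluating it. Differentiating $\phi$ twice at $\mbf z_0$ (most conveniently from the form in Equation~\eqref{eq:f2}) gives a matrix with a natural block structure in the $a_v$-coordinates and the edge-$b_{uv}$-coordinates, the cross-blocks carrying the incidence structure of $\Gamma$; the difficulty is that $\mathbb V = \Ker(D)$ mixes these two groups of coordinates through the coupling constraints $\sum_{u\sim v} b_{uv} = a_v$, so $-H|_{\mathbb V}$ does not split over the blocks. I would handle this as in Lemma~\ref{lem:gnm_second_mom}: produce an explicit basis $U$ for $\Ker(D)$ extending the $(e_1-e_i)\otimes(e_1-e_j)$ basis used in the $\vec{\mathcal{M}}$ case to also record compensating perturbations of the $b_{uv}$, express $\det(-H|_{\mathbb V}) = \det(U^T(-H)U)/\det(U^TU)$, and reduce the computation to block determinants via Sylvester's identity, diagonalising against the eigenspaces of the adjacency matrix of $\Gamma$ supplied by Proposition~\ref{prop:tourn_prod_eigenvectors} (eigenvalues $\tfrac{(k-1)^2}{2}$, $-\tfrac{k-1}{2}$, $\tfrac{k+1}{2}$). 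On each eigenspace the problem collapses to a small determinant that is an explicit rational function of $k$ and $d$, which one checks is nonzero (indeed positive, as it must be at a maximum) throughout the admissible range; this is the second place the bound $d<\ell_k$ enters.

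Finally, Theorem~\ref{thm:laplacian_summation} gives
\[
\ex Y^2 \sim \frac{(2\pi)^{r/2}\,\psi(\mbf z_0)}{\det(\Lambda)\,\det(-H|_{\mathbb V})^{1/2}}\, b_n\, n^{r/2}\, e^{n\phi(\mbf z_0)}.
\]
Since $e^{n\phi(\mbf z_0)} = \bigl(k^2(\tfrac12(1-\tfrac1k))^d\bigr)^n$, the exponential part is exactly the square of the exponential part of $\ex Y$ in Corollary~\ref{cor:first_mom}; the remaining powers of $n$, namely $n^{r/2}$ times the power of $n$ in $b_n$, combine to $n^{-(k-1)}$, matching the $n^{-(k-1)}$ in $(\ex Y)^2$; and every other factor — $(2\pi)^{r/2}$, $\psi(\mbf z_0)$, $\det(\Lambda)$, $\det(-H|_{\mathbb V})$, and the numerical constant $k^{k}\bigl(\tfrac{k-1}{2\pi(k-2)}\bigr)^{k-1}$ from Corollary~\ref{cor:first_mom} — is independent of $n$. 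Collecting all of these $n$-independent quantities into a single constant $C_{d,k}$ gives $\ex Y^2 \sim C_{d,k}(\ex Y)^2$. Tracking them explicitly would yield a closed form for $C_{d,k}$ as in Lemma~\ref{lem:gnm_second_mom}, but only its existence and finiteness is needed here.
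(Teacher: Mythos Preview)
Your setup matches the paper's: apply Theorem~\ref{thm:laplacian_summation} with $\Lambda=\Ker(D)\cap\mathbb Z^N$, $\mathbb V=\Ker(D)$, $\phi=f$, and invoke Lemma~\ref{lem:second_mom_opt} for the unique maximiser. Two points of divergence are worth flagging.

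First, a simplification you miss: the paper differentiates the \emph{original} form~\eqref{eq:f1}, not~\eqref{eq:f2}. From~\eqref{eq:f1} the Hessian at $(\hat A,\hat B)$ is \emph{diagonal}, with $k^2$ entries $(d-1)k^2$ on the $a$-block and $\tfrac14 k^2(k-1)^2$ entries $-\tfrac12 dk^2(k-1)^2$ on the $b$-block. The cross-terms you anticipate from~\eqref{eq:f2} are an artefact of that reparametrisation; they vanish once you undo the substitution of the constraints. So the Hessian itself is as simple as it could be --- the difficulty is entirely in restricting to $\mathbb V$.

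Second, and this is the real issue: you propose to compute $\det(-H|_{\mathbb V})$ by extending the tensor basis $(e_1-e_i)\otimes(e_1-e_j)$ to $\Ker(D)$ and diagonalising against the eigenspaces of $T_k^{\otimes 2}$. The paper attempts exactly this and explicitly \emph{fails}: ``Unfortunately, we were not able to determine $\det(-H|_{\mathbb V})$.'' The obstacle is that $\Ker(D)$ couples the $a$- and $b$-coordinates through the incidence matrix $E_\Gamma$, and there is no clean tensor description of a basis as there was in the $\vec{\mathcal M}$ case. Your sketch (``on each eigenspace the problem collapses to a small determinant'') is precisely the step that does not go through. The paper instead sidesteps the computation: from the uniqueness of the maximum it extracts only an \emph{upper bound} $\ex Y^2 \le C_{d,k}(\ex Y)^2$ for some unspecified constant, which is all that Paley--Zygmund needs for Proposition~\ref{prop:gnd_ub}. (The paper's Section~\ref{ssec:second_mom_const} discusses what an exact value would buy.) The lattice determinant is handled via Lemma~\ref{lem:det_lattice}, with the quotient order computed as $q=2$ using that $\Gamma$ is connected and contains an odd cycle.

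A minor technical slip: your $K$ cannot be a small neighbourhood of $\mbf z_0$ --- it must contain the entire summation range, so take $K=\{0\le x_i\le 1/k\}$ as the paper does, with $K_1$ the small box.
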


\begin{proof}
We use Theorem~\ref{thm:laplacian_summation}. Let $\Lambda = \ker(D) \cap
\mathbb{Z}^{k^2+\frac{1}{4}k^2(k-1)^2}$ and $\mathbb{V} = \ker(D)$. Then
$\Lambda$ and $\mathbb{V}$ satisfy (i) and (ii).

Let $\mbf w \in \mathbb{R}^{k^2+\frac{1}{4}k^2(k-1)^2}$ be defined by $w_i =
\frac{1}{k^2}$ for $1 \le i \le k^2$ and $w_i = \frac{2}{k^2(k-1)^2}$ for $k^2+1
\le i \le k^2+\frac{1}{4}k^2(k-1)^2$ and define $\mathbb{W} = \mathbb{V} + \mbf
w$, satisfying (iii). Note that for any $\mbf x \in \mathbb{W}$,
\[ D\mbf x = D\mbf x + D(\mbf w - \mbf x) = D \mbf w = \mbf y \]
as $\mbf w - \mbf x \in \ker(D)$.

Define
\[ K = \{ \mbf x \in \mathbb{R}^{k^2+\frac{1}{4}k^2(k-1)^2} : 0 \le x_i \le
\frac{1}{k} \} \]
which satisfies (iv).

Let $\phi = f$. By Lemma~\ref{lem:cons_space}, the restriction of $\phi$ to $K
\cap \mathbb{W}$ ensures $A$ satisfies Equation~\eqref{eq:Acons} and $B$ satisfies
Equation~\eqref{eq:Bcons} and thus Lemma~\ref{lem:second_mom_opt} assures $\phi$ has a unique maximum at $z_0 =
(\hat{A},\hat{B}) \in K^\circ \cap \mathbb{W}$.

Using Equation~\eqref{eq:f1} it is easy to see that $\phi$ is twice differentiable and
furthermore that
\begin{align*}
\frac{d^2\phi}{da_uda_v} &= \begin{cases} \frac{d-1}{a_u} & a_u = a_v \\ 0 & a_u
\ne a_v \end{cases},\\
\frac{d^2\phi}{da_udb_{xy}} &= 0, \qquad \text{and}\\
\frac{d^2\phi}{db_{uv}db_{xy}} &= \begin{cases} -\frac{d}{b_{uv}} & b_{uv} =
b_{xy} \\ 0 & b_{uv} \ne b_{xy} \end{cases}.
\end{align*}
and therefore $H$ is a diagonal matrix with $k^2$ entries $(d-1)k^2$ and
$\frac{1}{4}k^2(k-1)^2$ entries $-\frac{1}{2}dk^2(k-1)^2$. We can write
\[ H = -\tfrac{1}{2}dk^2(k-1)^2I_{k^2+\frac{1}{4}k^2(k-1)^2} +
\left(\bigl((d-1)k^2+\tfrac{1}{2}dk^2(k-1)^2\bigr)I_{k^2} \otimes
E_{\frac{1}{4}(k-1)^2}\right) \]
where $E_{\frac{1}{4}(k-1)^2}$ is a matrix with $e_{11} = 1$ and zeroes
elsewhere. Thus $H$ satisfies (vi).

Define $K_1 \subseteq K$ by those $\mbf x \in
\mathbb{R}^{k^2+\frac{1}{4}k^2(k-1)^2}$ such that
\[ \frac{0.9}{k^2} \le x_i \le \frac{1.1}{k^2}, 1 \le i \le k^2 \]
and
\[ \frac{1.9}{k^2(k-1)^2} \le x_i \le \frac{2.1}{k^2(k-1)^2}, k^2+1 \le i \le
k^2 + \frac{1}{4}k^2(k-1)^2. \]
Let $\psi = \prod_{uv \in E} \frac{1}{\sqrt{b_{uv}}}$. Then $K_1$ and $\psi$
satisfy (vii).

For each $n$, define $\ell_n \in \mathbb{R}^{k^2+\frac{1}{4}k^2(k-1)^2}$ to have
first $k^2$ entries $\frac{n}{k^2}$ and remaining entries
$\frac{2n}{k^2(k-1)^2}$, satisfying (viii).

Finally, set
\[ b_n = \frac{1}{\sqrt{2}} \cdot (\sqrt{2\pi n})^{1-\frac{k^2(k-1)^2}{4}}
(\sqrt{d})^{k^2-\frac{k^2(k-1)^2}{4}} \]
and $a_n(\ell) = p(A,B,n)e^{nf(A,B)}$ where $A$ is the first $k^2$ elements of
$\ell/n$ and $B$ is the remaining elements of $\ell/n$. Then as $p(A,B,n) =
O(n^{\frac{1}{2}(k^2+2)})$, for any $\ell \in (\Lambda + \ell_n) \cap nK$,
\[ a_n(\ell) = p(A,B,n)e^{nf(A,B)} =
b_n\left(\frac{p(A,B,n)}{b_n}\right)e^{n\phi(\ell/n)} =
O(b_ne^{n\phi(\ell/n)+o(n)}) \]
and for any $\ell \in (\Lambda + \ell_n) \cap nK_1$,
\[ a_n(\ell) = p(A,B,n)e^{nf(A,B)} = b_n(\psi(\ell/n)+o(1))e^{n\phi(\ell/n)}, \]
both uniformly.

We need to determine $\rank(\Lambda)$ and $\det(\Lambda)$. We do so using Lemma~\ref{lem:det_lattice}. As we proved in Lemma~\ref{lem:rankD}, the $k^2+2k-1$ rows of $D$ are linearly independent. Our $\mathbb{V} = V^{\perp}$ in Lemma~\ref{lem:det_lattice} and thus
\[ \rank(\Lambda) = \rank(\mathcal{L}^\perp) = N-m = k^2+\frac{1}{4}k^2(k-1)^2 - (k^2+2k-1) = \frac{1}{4}k^2(k-1)^2-2k+1. \]
We claim the order of $\mathcal{L}/\mathcal{L}_0$ is $2$. Consider a solution $(t_1, \ldots, t_m)$. It will be
convenient to rename these vectors $t_{r_2}, \ldots, t_{r_k}$ corresponding to
the rows $r_2, \ldots r_k$, $t_{c_1}, \ldots, t_{c_k}$ corresponding to the rows
$c_1, \ldots, c_k$, and $t_{e_{11}}, \ldots, t_{e_{kk}}$ corresponding to the
rows $e_{11}, \ldots, e_{kk}$. The first $k$ columns of $\hat{D}$ require
$t_{c_j} - t_{e_{1j}} \equiv 0 (\text{mod }1)$. The next $(k-1)k$ columns
require $t_{r_i} + t_{c_j} - t_{e_{ij}} \equiv 0 (\text{mod }1)$. The remaining
$|E|$ columns require $t_{e_u} + t_{e_v} \equiv 0 (\text{mod }1)$ whenever $u
\sim v$. As $\Gamma$ is connected and contains an odd cycle, we may either set
$t_{e_{ij}} = 0$ for each $i,j$ or $t_{e_{ij}} = \frac{1}{2}$. Whichever choice
we make determine $t_{c_j}$ for all $j$, due to the first $k$ columns, which in
turn determine $t_{r_i}$. Therefore once we make the choice to set each
$t_{e_{ij}}$ to $0$ or $\frac{1}{2}$, the other variables are determined,
meaning $q=2$. Thus
\[ \det(\Lambda) = \det(\mathcal{L}^\perp) = \frac{1}{2}\det(\mathcal{L}_0). \]
By Lemma 2.1 in \cite{GJR10},
\[ \det(\mathcal{L}_0) = \sqrt{\det(\hat{D}\hat{D}^T)} \]
and thus by Lemma~\ref{lem:det_l_nought}
\[ \det(\Lambda) = \frac{(k-1)^{k}(k(k-2))^{k-1}((k^2-k+4)(k^2-3k+4))^{\frac{1}{4}(k-1)^2}}{\sqrt{2}^{k^2+1}}. \]

Unfortunately, we were not able to determine $\det(-H|_{\mathbb{V}})$. We
consider the implications of finding an exact value in
Section~\ref{ssec:second_mom_const}. 

For now, we note that since the function $f(A, B)$ achieves a unique global maximum in the lattice, there is some constant, $h_{d, k} > 0$, depending on $d$ and $k$ so that 
\[
 \sum_{\ell \in (\Lambda +\ell_n) \cap nK}a_n(\ell) \le  h_{d, k} (2\pi n)^{r/2} \psi(z_0) b_n e^{n\phi(z_0)}
\]

Noting that
\[\psi(z_0) = \prod_{uv \in E} \frac{1}{\sqrt{\frac{2}{k^2(k-1)^2}}} = \left(\frac{k(k-1)}{\sqrt{2}}\right)^{\frac{1}{4}k^2(k-1)^2}\]
and
\[ \phi(z_0) = -\sum_{v \in V} \frac{1}{k^2} \log \frac{1}{k^2} + d\sum_{uv \in E} \frac{2}{k^2(k-1)^2}
\log\left(\frac{\frac{1}{k^2}\cdot\frac{1}{k^2}}{2\cdot\frac{2}{k^2(k-1)^2}}\right) = \log k^2 + d\log \left(\frac{1}{2}\left(1-\frac{1}{k}\right)\right),\]
we have
\begin{align}
\ex Y^2 &= \sum_{A \text{ s.t.} \eqref{eq:Acons}, \ B \text{ s.t.}
\eqref{eq:Bcons}} p(A,B,n) e^{nf(A,B)} \notag\\
	&= \sum_{\ell \in (\Lambda +\ell_n) \cap nK}a_n(\ell) \notag\\
	&\le  h_{d, k} (2\pi n)^{r/2} \psi(z_0) b_n e^{n\phi(z_0)} \notag \\
	&= C_{d, k} \cdot k^k\left(\frac{k-1}{2\pi(k-2)}\right)^{(k-1)}n^{-(k-1)}
k^{2n}\left(\frac{1}{2}\left(1-\frac{1}{k}\right)\right)^{dn} \notag\\
	&= C_{d,k} (\ex Y)^2 \label{eq:PZ_constant}
\end{align}
where $C_{d, k}$ is some positive constant depending on $d$ and $k$.
\end{proof}

We conclude this section by proving the following proposition.

\begin{proposition} \label{prop:gnd_ub}
For $k \ge 3$ such that a doubly regular tournament exists on $k$ vertices and $d < \ell_k$, the number of $k$-oriented colourings of $\vec{C} \sim \vec{\mathcal{C}}(n,d)$ is positive with positive probability.
\end{proposition}

\begin{proof}
By the Paley-Zygmund inequality and Lemma~\ref{prop:PZ_for_C},
\[ \pr[Y > 0] > \frac{(\ex Y)^2}{\ex(Y^2)} \sim \frac{(\ex Y)^2}{C_{d,k}(\ex Y)^2} = \frac{1}{C_{d,k}} > 0. \]
\end{proof}

\section{Range of Possible Values of the Oriented Chromatic Number} \label{sec:window}

In this section, we adapt an argument of {\L}uczak~\cite{Lu91b} who proved that for every $p=\frac{d}{n}$ with $d < n^{1/6-\varepsilon}$, the chromatic number of $\mathcal{G}(n,p)$ is concentrated in an interval of length two. (We note that our adaptation is closely based on that of Achlioptas and Moore~\cite{AM04}.) We prove a weaker result for $\vec{\mathcal{C}}(n,d)$, showing that there is $k=k(d)$ such that, conditioned on being an orientation of a simple graph, a.a.s.~the oriented chromatic number of $\vec{C} \sim \vec{\mathcal{C}}(n,d)$ is concentrated in an interval of length linear in $k$, as well as an analogous result for $\vec{\mathcal{M}}(n,d)$.

We start with a result regarding the oriented chromatic number of sparse digraphs.

\begin{lemma}[{\cite[Theorem 1]{BKNRS99}}]\label{lem:eleven_colour}
Let $\vec{G}$ be an oriented graph with the property that any subgraph of $\vec{G}$ has average degree strictly smaller than three. Then $\chi_o(\vec{G}) \le 11$.
\end{lemma}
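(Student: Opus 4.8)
The plan is to establish the stronger statement that there is a fixed digraph $T$ on $11$ vertices --- a ``universal target'' --- such that every digraph $\vec{G}$ with $\mathrm{mad}(\vec{G}) < 3$ admits a homomorphism into $T$; since $\chio(\vec{G})$ is the smallest order of a tournament into which $\vec{G}$ maps, this gives $\chio(\vec{G}) \le 11$. For $T$ one takes a carefully chosen tournament on $11$ vertices, for instance a doubly-regular tournament of order $11$ (such as the Paley tournament on $11$ vertices, which exists since $11$ is prime and $11 \equiv 3 \pmod 4$); such a $T$ is attractive because its high degree of symmetry together with the path counts of Lemma~\ref{lem:T_props} (taken with $k = 11$, where the four counts are $2,2,3,2$) make many local colour-extension steps available.

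The first step is to record the ``flexibility'' of $T$ that the argument exploits. Because $T$ is regular of out- and in-degree $5$, for every colour $c$ and every prescribed arc direction there are at least $5$ colours $z \neq c$ realizing it; and by Lemma~\ref{lem:T_props} with $k=11$, for every \emph{ordered} pair of distinct colours $x, y$ and every prescribed pair of arc directions there are at least $2$ colours $z \notin \{x,y\}$ with arcs to $x$ and to $y$ in the required directions. I would also point out the \emph{negative} fact that the analogous statement for \emph{triples} of target colours is false for an $11$-vertex tournament (a random target colour would on average work for only $8 \cdot 2^{-3} = 1$ of the $8$ possibilities, and the distribution is not perfectly even); this is precisely why a plain greedy/degeneracy argument on $2$-degenerate graphs does not suffice and a discharging argument is needed.

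Next, take a counterexample $\vec{G}$ minimizing $|V(\vec{G})| + |E(\vec{G})|$ among digraphs with $\mathrm{mad}(\vec{G}) < 3$ that admit no homomorphism to $T$. Then $\vec{G}$ is connected and, by minimality together with the single-colour flexibility, has minimum degree at least $2$; since $\mathrm{mad}(\vec{G}) < 3$ forces some vertex to have degree at most $2$, $\vec{G}$ contains a degree-$2$ vertex. One then proves a family of \emph{reducible configurations} around low-degree vertices. The easy prototype: if a degree-$2$ vertex $v$ has adjacent neighbours $u,w$, colour $\vec{G} - v$ (possible by minimality), note $u,w$ receive distinct colours, and extend to $v$ using the ordered-pair flexibility --- a contradiction. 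The delicate case is a degree-$2$ vertex $v$ with non-adjacent neighbours: here extension can fail only if the arcs at $v$ have opposite orientation and every $T$-colouring of $\vec{G}-v$ forces $u$ and $w$ to share a colour, and one rules this out by a reduction (either adding the edge $uw$, or identifying $u$ with $w$, before deleting $v$) combined with a structural analysis keeping $\mathrm{mad}$ below $3$; bundling $v$ with the structure around $u$ and $w$ yields the remaining reducible configurations (a degree-$2$ vertex adjacent to another low-degree vertex, two adjacent degree-$2$ vertices, and so on).

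Finally I would run the discharging: assign each vertex $v$ the charge $\deg(v) - 3$, so the total charge equals $2|E(\vec{G})| - 3|V(\vec{G})| < 0$ by $\mathrm{mad}(\vec{G}) < 3$; then move charge along edges from vertices of large degree to nearby low-degree vertices according to rules tuned so that, \emph{using the reducibility lemmas}, every vertex ends with nonnegative charge --- contradicting the negativity of the total. This rules out a counterexample, proving the lemma. The main obstacle is exactly this interplay: pinning down the precise list of reducible configurations --- in particular proving reducibility in the non-adjacent-neighbours case, where controlling the effect of the reduction on $\mathrm{mad}$ is fiddly --- and then designing discharging rules that exactly cancel the charges against that list. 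A more routine secondary obstacle is certifying that the chosen $11$-vertex tournament genuinely has all the required local symmetry, which can be done either from the quadratic-residue description of the Paley tournament by elementary counting (as in Lemma~\ref{lem:T_props}) or by a finite case check.
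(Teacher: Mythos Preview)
The paper does not prove this lemma at all: it is quoted as Theorem~1 of \cite{BKNRS99} and used as a black box in Section~\ref{sec:window}. There is therefore no proof in the paper to compare your attempt against.

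That said, your overall plan --- fix a universal $11$-vertex target tournament with the ``every ordered pair extends in all four arc patterns'' property, then argue by minimal counterexample --- is the right skeleton and is the approach of the original reference. The genuine gap in your sketch is exactly the step you yourself flag as ``fiddly'': you do not actually produce the list of reducible configurations or the discharging rules, and the two concrete reductions you propose for the hard case (adding the edge $uw$, or identifying $u$ with $w$, before deleting $v$) both fail to preserve the hypothesis $\mathrm{mad}<3$. Adding an edge plainly raises the edge count; identifying $u$ with $w$ and deleting $v$ takes an $n$-vertex graph with $e$ edges to one with $n-2$ vertices and at least $e-2$ edges (when $u,w$ share no neighbours), and $\frac{2(e-2)}{n-2}$ can exceed $3$ even when $\frac{2e}{n}<3$. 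So neither reduction is available, and without them your minimal-counterexample argument stalls at precisely the obstruction you identified. The original proof avoids modifying the graph in this way and instead exploits longer stretches of low-degree structure (guaranteed by $\mathrm{mad}<3$, which is strictly stronger than $2$-degeneracy) together with the flexibility of the target to \emph{recolour} a neighbour before extending; filling this in is the entire content of the lemma, not a detail.
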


Now we argue that a.a.s.~subgraphs of $\vec{\mathcal{C}}(n,d)$ of sub-linear size are sparse.

\begin{lemma}\label{lem:gnd_subgraph_colourable}
For any $0 < \varepsilon < \frac{1}{2}$ and constant $d$, with high probability every subgraph of $\vec{C} \sim \vec{\mathcal{C}}(n,d)$ induced by $s \le e^{-5}d^{-3}n^{1-\varepsilon}$ vertices contains at most $(\frac{3}{2}-\varepsilon)s$ edges.
\end{lemma}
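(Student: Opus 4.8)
The statement is a standard first-moment (union bound) estimate over small vertex subsets in the configuration model, so the plan is to bound the expected number of "dense" subgraphs and show this expectation is $o(1)$. Fix $s \le e^{-5}d^{-3}n^{1-\varepsilon}$ and let $t = \lceil(\frac{3}{2}-\varepsilon)s\rceil$; call a set of $s$ vertices \emph{bad} if the subgraph it induces has at least $t$ edges. I would first bound the probability that a fixed $s$-set $S$ is bad. In $\vec{\mathcal{C}}(n,d)$, the $sd$ configuration points inside $S$ are matched into the full set of $dn$ points; the number of edges induced in $S$ is at least $t$ only if at least $2t$ of these $sd$ points are matched to other points of $S$. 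A clean way to estimate this: the probability that a specified set of $2t$ points within $S$ are all matched internally is at most $\left(\frac{sd}{dn}\right)^{2t} = \left(\frac{s}{n}\right)^{2t}$ (each successive pairing has probability at most $\frac{sd}{dn-1} \le \frac{2s}{n}$ for $n$ large, say), so by a union bound over the $\binom{sd}{2t} \le (sd)^{2t}$ choices of which points pair internally,
\[
\Pr[S \text{ bad}] \le (sd)^{2t}\left(\frac{2s}{n}\right)^{2t} = \left(\frac{2s^2 d}{n}\right)^{2t}.
\]

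Next I would sum over all $s$ and all $\binom{n}{s} \le \left(\frac{en}{s}\right)^s$ choices of $S$. Writing $X_s$ for the number of bad $s$-sets,
\[
\ex X_s \le \left(\frac{en}{s}\right)^s \left(\frac{2s^2d}{n}\right)^{2t},
\]
and since $2t \ge (3-2\varepsilon)s - 2 \ge (3-2\varepsilon-o(1))s$, the dominant behaviour is
\[
\ex X_s \le \left[\left(\frac{en}{s}\right)\left(\frac{2s^2 d}{n}\right)^{3-2\varepsilon}\right]^{s(1+o(1))} = \left[ C\, d^{3-2\varepsilon}\, \frac{s^{5-4\varepsilon}}{n^{2-2\varepsilon}}\right]^{s(1+o(1))}
\]
for an absolute constant $C$ (coming from $e \cdot 2^{3-2\varepsilon}$). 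Plugging in the hypothesis $s \le e^{-5}d^{-3}n^{1-\varepsilon}$ bounds the bracketed quantity: $s^{5-4\varepsilon} \le (e^{-5}d^{-3})^{5-4\varepsilon} n^{(1-\varepsilon)(5-4\varepsilon)}$, and one checks $(1-\varepsilon)(5-4\varepsilon) - (2-2\varepsilon) = 3 - 7\varepsilon + 4\varepsilon^2 = (1-\varepsilon)(3-4\varepsilon) > 0$ for $\varepsilon < \frac12$, so the bracketed quantity does \emph{not} automatically go to zero from the $n$-powers alone — it is kept below a constant $\delta < 1$ by the $e^{-5}d^{-3}$ factor in the hypothesis (this is exactly why that peculiar constant appears). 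So $\ex X_s \le \delta^{s(1+o(1))}$ with $\delta < 1$, uniformly for all $s$ in range, and $\sum_{s \ge s_0} \delta^{s(1+o(1))} = o(1)$ for any fixed $s_0$. For the finitely many small values of $s$ (say $s \le s_0$), a bad set forces a fixed bounded structure (a subgraph on $\le s_0$ vertices with more edges than vertices, roughly), whose expected count is $O(n^{s}(1/n)^{2t-s}) \to 0$ since $2t > s$ there too; I would handle these directly. A union bound over all $s$ then gives that a.a.s. no bad set exists.

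The constant-hunting is the only real obstacle: one must verify that the $e^{-5}d^{-3}$ in the hypothesis, together with the $e$ from $\binom{n}{s}$ and the $2$ from the matching probability, genuinely forces the per-vertex base below $1$ across the whole range of $s$, including the transition region where $s$ is a small power of $n$ versus where $s$ is close to $e^{-5}d^{-3}n^{1-\varepsilon}$. I expect this to come down to the single inequality $e^{-5}d^{-3} \cdot e \cdot 2^{3-2\varepsilon} d^{3-2\varepsilon} \cdot (\cdots) < 1$ after collecting powers, which the stated constants are evidently chosen to make work; there is also a minor bookkeeping point in that $t = \lceil(\frac32-\varepsilon)s\rceil$ introduces a harmless additive $O(1)$ in the exponent $2t$ that must be absorbed into the $(1+o(1))$. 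Once the expectation bound is in hand, the conclusion "with high probability every induced subgraph on $s$ vertices has at most $(\frac32-\varepsilon)s$ edges" follows immediately by Markov's inequality and the union bound over $s$.
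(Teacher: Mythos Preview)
Your overall strategy—a first-moment union bound over vertex subsets—is the same as the paper's. The gap is in your per-set probability estimate. You claim that the probability a specified set of $2t$ points in $S$ are all matched inside $S$ is at most $(s/n)^{2t}$, arguing ``each successive pairing has probability at most $2s/n$''. This is false: when you reveal matches sequentially, a specified point may already have been matched to another specified point, in which case the conditional probability of that event is $1$, not $s/n$. Concretely, with $s=2$ and $t=1$ the probability two specified points in $S$ are both matched in $S$ is $\frac{1}{dn-1}+O(n^{-2})\sim\frac{1}{dn}$, while your bound asserts it is at most $(2/n)^2=4/n^2$. So the inequality $\Pr[S\text{ bad}]\le(2s^2d/n)^{2t}$ is simply wrong.

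This error is exactly what produces the positive $n$-exponent $3-7\varepsilon+4\varepsilon^2$ that you noticed. Your subsequent claim that ``it is kept below a constant $\delta<1$ by the $e^{-5}d^{-3}$ factor'' cannot be right: $e^{-5}d^{-3}$ is a constant, and no constant prefactor suppresses a positive power of $n$. The fix is what the paper does: if $S$ contains $t$ internal edges, pick one endpoint from each edge to get $t$ \emph{distinct} configuration points in $S$ each of whose match lies in $S$; now the sequential conditioning genuinely gives a factor $\le sd/(dn-O(t))$ per point, and one obtains
\[
\Pr[S\text{ bad}]\le\binom{sd}{t}\Bigl(\frac{s}{n}\Bigr)^t.
\]
With $t=(\tfrac{3}{2}-\varepsilon)s$ and $\binom{n}{s}\le(en/s)^s$, the bracketed base becomes $e^{5/2-\varepsilon}d^{3/2-\varepsilon}(s/n)^{1/2-\varepsilon}$, and plugging in $s\le e^{-5}d^{-3}n^{1-\varepsilon}$ yields a bracket of size $e^{O(\varepsilon)}d^{O(\varepsilon)}n^{-\varepsilon(1/2-\varepsilon)}$, which does tend to zero. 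That is the computation you need; once you halve the exponent from $2t$ to $t$, the rest of your outline goes through.
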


It should be noted that it is possible to prove a stronger version of Lemma~\ref{lem:gnd_subgraph_colourable} where $d$ is allowed to grow bounded by a function of $n$ rather than being constant; however, this simpler version meets our current needs.

\begin{proof}
Let $\vec{C} \sim \vec{\mathcal{C}}(n,d)$ and let $S \subseteq V(\vec{C})$ such that $|S| = s$. Let $v \in S$ and $(u,v) \in E(\vec{C})$. The probability $u \in S$ is at most $\frac{s}{n}$. Thus
\[ \pr[\text{$S$ has at least $k$ edges}] \le \binom{ds}{k} \left(\frac{s}{n}\right)^k \]
and
\[ \pr[\text{There exists a set $S$ with at least $k$ edges}] \le \binom{n}{s}\binom{ds}{k} \left(\frac{s}{n}\right)^k. \]
Using $k = \frac{3}{2}-\varepsilon, 1 \le s \le e^{-5}d^{-3}n^{1-\varepsilon}$, and $0 < \varepsilon < \frac{1}{2}$ we get
\[ \binom{n}{s}\binom{ds}{k} \left(\frac{s}{n}\right)^k < \frac{de^2}{n^{\varepsilon(\frac{1}{2}-\varepsilon)}} = o(1) \]
as $\varepsilon(\frac{1}{2}-\varepsilon) > 0$ for $0 < \varepsilon <
\frac{1}{2}$. Therefore the probability such a set $S$ exists approaches zero as $n$ approaches infinity.
\end{proof}

\begin{corollary} \label{cor:gnd_subgraph_colourable}
Lemma~\ref{lem:gnd_subgraph_colourable} still holds conditioned on $\vec{C}$ being an orientation of a simple graph. In other words, for any $0 < \varepsilon < \frac{1}{2}$ and constant $d$, conditioned on $\vec{C} \sim \vec{\mathcal{C}}(n,d)$ being an orientation of a simple graph, with high probability every subgraph of $\vec{C}$ induced by $s \le e^{-5}d^{-3}n^{1-\varepsilon}$ vertices contains at most $(\frac{3}{2}-\varepsilon)s$ edges. 
\end{corollary}

\begin{proof}
It is well-known (see, for example, \cite{Bo80}) that the (undirected) configuration model produces a simple graph with probability tending toward $\exp(-\frac{1}{2}(d-1)-\frac{1}{4}(d-1)^2) =: \kappa > 0$, and thus $\vec{C}$ is an orientation of a simple graph with probability tending toward $\kappa$. If the probability that the result of Lemma~\ref{lem:gnd_subgraph_colourable} holding conditioned on $\vec{C}$ being an orientation of a simple graph tended toward $\alpha < 1$ as $n$ approaches infinity, then the unconditioned probability would tend to $(1-\kappa)(1) + \kappa\alpha < 1$, contradicting Lemma~\ref{lem:gnd_subgraph_colourable}.
\end{proof}

Next we appeal to a martingale result of Wormald on undirected configurations.

\begin{theorem}[{\cite[Theorem 2.19]{Wo99}}]\label{thm:original_martingale}
Given an undirected configuration $C$, we define a \emph{switching} in $C$ to be the replacement of two edges $\{v_1, v_2\}$, $\{v_3, v_4\}$ by the edges $\{v_1, v_3\}$ and $\{v_2,v_4\}$ or by the edges $\{v_1,v_4\}$ and $\{v_3,v_2\}$. Let $X_n$ be a random variable defined on $\mathcal{C}(n,d)$ such that for any configurations $C, C'$ that differ by a switching
\[ |X_n(C)-X_n(C')| \le b \]
for some constant $b > 0$. Then for every $t > 0$,
\[ \pr[X_n \le \ex[X_n] - t] \le \exp\left(\frac{-t^2}{dnb^2}\right) \quad \text{and} \quad \pr[X_n \ge \ex[X_n] + t] \le \exp\left(\frac{-t^2}{dnb^2}\right). \]
\end{theorem}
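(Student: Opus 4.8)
The plan is to prove this as a standard Azuma-type concentration result, exposing the random perfect matching of the configuration model one pair at a time and controlling the Doob martingale increments via the switching hypothesis. Realize $\mathcal{C}(n,d)$ as a uniformly random perfect matching $M$ on the set $W$ of $dn$ points, grouped into $n$ cells of size $d$, and write $X_n = X_n(M)$. Fix an ordering $w_1, \ldots, w_{dn}$ of $W$. For $i = 1, \ldots, dn/2$, in round $i$ let $p_i$ be the unmatched point of smallest index and reveal its partner $M(p_i)$, which conditionally is uniform over the points still unmatched. Let $\mathcal{F}_0 \subseteq \mathcal{F}_1 \subseteq \cdots \subseteq \mathcal{F}_{dn/2}$ be the induced filtration and put $Z_i = \ex[X_n \mid \mathcal{F}_i]$, so $Z_0 = \ex X_n$, $Z_{dn/2} = X_n$, and $(Z_i)$ is a martingale with $dn/2$ steps.

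The crucial step is the bounded-difference estimate $|Z_i - Z_{i-1}| \le b$. Condition on $\mathcal{F}_{i-1}$; let $p = p_i$, let $R$ be the set of currently available points, and for $q \in R$ set $g(q) = \ex[X_n \mid \mathcal{F}_{i-1},\ M(p) = q]$. Then $Z_{i-1}$ is the average of $g(q)$ over $q \in R$ (uniformly), while $Z_i = g(q_0)$ for the realized partner $q_0$, so it suffices to show $|g(q) - g(q')| \le b$ for all $q, q' \in R$. For this I would build a measure-preserving bijection $\Phi$ from the set of completions with $M(p) = q$ to those with $M(p) = q'$: given such a completion $M$, let $q'' = M(q')$; one checks $q''$ is available and that $p, q, q', q''$ are distinct, and defines $\Phi(M)$ by deleting the pairs $\{p, q\}$, $\{q', q''\}$ and inserting $\{p, q'\}$, $\{q, q''\}$. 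This is exactly a switching, $\Phi$ is a bijection (its inverse is the analogous switching with $q$ and $q'$ interchanged), and it preserves the uniform conditional measure; hence $|X_n(M) - X_n(\Phi(M))| \le b$ pointwise, and averaging gives $|g(q) - g(q')| \le b$, so $|Z_i - Z_{i-1}| \le b$.

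With the increment bound in hand, the Azuma--Hoeffding inequality applied to $(Z_i)_{i=0}^{dn/2}$ yields, for every $t > 0$,
\[
\pr[X_n \le \ex X_n - t] \le \exp\!\left(\frac{-t^2}{2 \cdot (dn/2) \cdot b^2}\right) = \exp\!\left(\frac{-t^2}{dn b^2}\right),
\]
and symmetrically for the upper tail, which is precisely the claimed pair of inequalities.

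The hard part is the careful justification of the bijection $\Phi$ in the bounded-difference step: verifying that $q'' = M(q')$ is always defined and lies outside $\{p, q, q'\}$ under the conditioning (so the switching is legitimate), that $\Phi$ maps the completions with $M(p) = q$ exactly onto those with $M(p) = q'$, and that it is measure-preserving. One also has to be slightly careful that the exposure procedure uses exactly $dn/2$ rounds so that the Azuma constant comes out as $dn b^2$ rather than a larger value. The remaining steps are routine.
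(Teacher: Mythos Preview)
Your proof is correct and follows exactly the standard argument: the paper itself does not prove this theorem but merely cites it from Wormald's survey~\cite{Wo99}, and what you have written is precisely Wormald's proof there---expose the uniform perfect matching one pair at a time, use the switching hypothesis together with the bijection $\Phi$ to bound the Doob martingale increments by $b$, and apply Azuma--Hoeffding over the $dn/2$ steps. There is nothing to add.
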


The proof of Theorem~\ref{thm:original_martingale} uses the Azuma-Hoeffding inequality for martingales by canonically ordering edges in a pairing model and defining an edge-exposure martingale.  The same proof gives an extension of Wormald's result to directed configurations as long as the constant $b$ is independent of the orientations of the two edges after switching.

Our next step is to prove that as soon as there is some small probability that $k$ colours are sufficient to colour $\vec{C} \sim \vec{\mathcal{C}}(n,d)$, we can almost surely $k$-colour all but a small number of vertices.

\begin{lemma}[{\cite[Lemma 2]{AM04}}]\label{lem:gnd_mostly_colourable}
For $\vec{C}\sim\vec{\mathcal{C}}(n,d)$, let $W_j(\vec{C})$ be the minimal size of a set of vertices $S$ such that $\vec{C}-S$ is $j$-oriented-colourable. For a given function $\omega(n)$, let $k = k(n, d, \omega(n))$ be the smallest $k$ such that
\[ \pr[W_k = 0] \ge 1/\omega(n). \]
With probability greater than $1-1/\omega(n)$, all but $8\sqrt{nd\log \omega(n)}$ vertices of $\vec{\mathcal{C}}(n,d)$ can be properly oriented-coloured using $k$ colours.
\end{lemma}

\begin{proof}
For any $j$ and $\vec{C}$, switching two arcs of $\vec{C}$ can affect $W_j(\vec{C})$ by at most 4 as, in the worst case, we can add all four vertices to $S$ and still $j$-oriented-colour $\vec{C}-S$. By Theorem~\ref{thm:original_martingale}, with $\mu_j = \ex[W_j(\vec{\mathcal{C}}(n,d))]$,
\[ \pr[W_j \le \mu_j - \lambda \sqrt{n}] < e^{-\frac{\lambda^2}{16d}} \quad \text{and} \quad \pr[W_j \ge \mu_j + \lambda \sqrt{n}] < e^{-\frac{\lambda^2}{16d}}. \]
Choose $\lambda = \lambda(n) = 4\sqrt{d\log \omega(n)}$ so that $e^{-\frac{\lambda^2}{16d}} = \frac{1}{\omega(n)}$. Then, recalling the definition of $k$,
\[ \pr[W_k \le \mu_k - \lambda \sqrt{n}] < \frac{1}{\omega(n)} = \pr[W_k = 0]. \]
If $\mu_k \ge \lambda \sqrt{n}$, then the event $W_k = 0$ implies the event $W_k \le \mu_k-\lambda \sqrt{n}$; since we have $\pr[W_k \le \mu_k-\lambda \sqrt{n}] < \pr[W_k = 0]$, this must not be the case and instead we must have $\mu_k < \lambda\sqrt{n}$. Therefore
\[ \frac{1}{\omega(n)} = e^{-\frac{\lambda^2}{16d}} > \pr[W_k \ge \mu_k + \lambda \sqrt{n}] \ge \pr[W_k \ge 2\lambda\sqrt{n}] \]
which completes the proof.
\end{proof}

Finally, we prove that a.a.s.~$\vec{C} \sim \vec{\mathcal{C}}(n,d)$ either is not an orientation of a simple graph or else $\chio(\vec{C})$ is concentrated in an interval of length linear in the number of colours.

\begin{proposition} \label{prop:gnd_window}
For every integer $d$, there exists an integer $k = k(d)$ such that a.a.s.~either $\vec{C} \sim \vec{\mathcal{C}}(n,d)$ is not an orientation of a simple graph or $\chio(\vec{C}) \in [k,2k+11]$.
\end{proposition}

\begin{proof}
Let $d$ be given and let $k = k(d,n)$ be the smallest integer for which the probability that $\vec{C} \sim\vec{\mathcal{C}}(n,d)$ is $k$-oriented-colourable is at least $1/\log \log n$. Note that such a $k$ exists by Proposition~\ref{prop:gnd_ub} as $C_{d,k}$ is independent of $n$. Fix $\vec{C} \sim \vec{\mathcal{C}}(n,d)$. By Lemma~\ref{lem:gnd_mostly_colourable}, with high probability there exists a set of vertices $S$ such that $\vec{C}-S$ can be $k$-oriented-coloured and $|S| < 8\sqrt{nd \log \log \log n} < \sqrt{nd} \log n \equiv s_0$. From $S$, we will construct an increasing sequence of sets of vertices $\{U_i\}$ as follows. $U_0 = S$; for $i \ge 0, U_{i+1} = U_i \cup W$, where either $W = \{w_1,w_2\}$ for some $w_1,w_2 \notin U_i$ which are adjacent and each of which has some neighbour in $U_i$ or $W = \{w\}$ for some $w \notin U_i$ for which there are $u_1,u_2 \in U_i$ such that $(u_1,w)$ and $(w,u_2)$ are arcs in $\vec{C}$. The construction ends, with $U_t$, when no such pair exists.

Define $N = \{v \in \vec{C} : v \notin U_t \ \text{and} \ \exists u \in U_t \ \text{such that} \ u\sim v\}$
to be the neighbourhood of $U_t$ in the rest of the graph. Note that $N$ is an independent set; if $v_1, v_2 \in N$ and $v_1 \sim v_2$, then we could have continued in the construction with $U_{t+1} = U_t\cup\{v_1,v_2\}$. Furthermore, $N = N_I \cup N_O$ where each edge is directed from $U_t$ into $N_I$ and each edge is directed from $N_O$ out to $U_t$; if some $v \in N$ satisfies neither of these constraints, then there are $u_1, u_2 \in U_t$ such that $(u_1, v)$ and $(v,u_2)$ are arcs of $\vec{C}$ and we could have continued the construction with $U_{t+1} = U_t \cup \{v\}$.

We claim that if $\vec{C}$ is an orientation of a simple graph, then we can properly $11$-oriented-colour $U_t$ with high probability. Set $\varepsilon = 0.1$ and let $n$ be large enough so that
\[ \frac{s_0(1+2\log n)}{s_0(1+2\log n)+2} > 1-\frac{\varepsilon}{3}, \quad \frac{3\log n}{1+2\log n} >  \frac{3}{2}- \frac{\varepsilon}{2}, \]
and that $n^{1/2-\varepsilon} > 3e^{5}d^{7/2}(\log n)^2$. It suffices to show that $|U_t| < s_0(1+2\log n)$: then
\[ s_0(1+2\log n) = \sqrt{nd}\log n(1+2 \log n) \le 3\sqrt{nd}(\log n)^2 \le e^{-5}d^{-3}n^{1-\varepsilon} \]
so Corollary~\ref{cor:gnd_subgraph_colourable} ensures the condition of Lemma~\ref{lem:eleven_colour}.

Assume for contradiction that $|U_t| \ge s_0(1+2\log n)$ and let $i^\ast$ be the first step at which $|U_{i^\ast}| \ge s_0(1+2\log n)$. Because $|U_0| = s_0$, at least $2s_0\log n$ vertices have been added. At step $i^\ast$ we added at most two vertices to $U_{i^\ast-1}$, and therefore $|U_{i^\ast}| \le s_0(1+2\log n)+2$. Note that in steps in which two vertices are added to $U_i$, three edges are added, and in steps in which one vertex is added, we add two edges to $U_i$. Thus each vertex added to $U_{i^\ast}$ also added at least $\frac{3}{2}$ edges to $U_{i^\ast}$, from which we conclude $U_{i^\ast}$ contains at least $\frac{3}{2}(2s_0\log n) = 3s_0\log n$ edges. But then	
\[ e(U_{i^\ast}) = 3s_0\log n = (s_0 (1+2\log n)+2)\cdot \frac{s_0(1+2\log n)}{s_0(1+2\log n)+2} \cdot \frac{3\log n}{1+2\log n} > |U_{i^\ast}|(1-\tfrac{\varepsilon}{3})(\tfrac{3}{2}-\tfrac{\varepsilon}{2}) > |U_{i^\ast}|(\tfrac{3}{2}-\varepsilon). \]
This contradicts Corollary~\ref{cor:gnd_subgraph_colourable} as
\[ |U_{i^\ast}| \le s_0(1+2\log n)+2 = \sqrt{nd}\log n(1+2 \log n)+2 \le 3\sqrt{nd}(\log n)^2 \le e^{-5}d^{-3}n^{1-\varepsilon}. \]

Now recall that $\vec{C}-S$ is $k$-oriented-colourable, say by $\chio:\vec{C}-S \to [k]$. We colour $\vec{C}$ as follows. First, for $v \in \vec{C} - U_t - N_O$, colour $v$ with $(\chio(v),0)$. As $N_O \subseteq \vec{C}-U_t \subseteq \vec{C}-S$, each vertex in $N_O$ is also coloured by $\chio$. Colour each $v \in N_O$ with colour $(\chio(v),1)$. Finally, use the eleven unique colours $(1,2), \ldots, (11,2)$ to colour $U_t$. We claim this is a proper oriented $(2k+11)$-oriented-colouring of $\vec{C}$. First, no adjacent vertices share a colour: $U_t$ is properly coloured by Lemma~\ref{lem:eleven_colour}, $\vec{C}-U_t$ is properly $k$-oriented-coloured as $\vec{C}-S$ was, and the vertices in $U_t$ are coloured with colours distinct from those in $\vec{C}-U_t$. Thus if this colouring is not proper there must be arcs $v_1 \to v_2$ and $v_3\to v_4$ such that $v_1$ and $v_4$ receive the same colour $c_1$ while $v_2$ and $v_3$ receive the same colour $c_2$. By our construction, this requires both pairs $(v_1,v_4)$ and $(v_2,v_3)$ occur in the same part of the partition of $\vec{C}$ into $U_t, N_O$, and $\vec{C}-U_t-N_O$. Both pairs cannot be in the same part of this partition as each part is properly oriented coloured. By symmetry, this leaves three cases to consider: $v_1,v_4 \in U_t$ and $v_2,v_3 \in N_O$, $v_1,v_4 \in U_t$ and $v_2,v_3 \in \vec{C}-U_t-N_O$, or $v_1,v_4 \in N_O$ and $v_2,v_3 \in \vec{C}-U_t-N_O$. In the first case, we see the arc between $v_1$ and $v_2$ is directed toward $v_1$, not $v_1\to v_2$, because every arc between $N_O$ and $U_t$ is oriented towards $U_t$. Similarly, in the second case the only arcs from $U_t$ to $\vec{C}-U_t-N_O$ originate in $U_t$ and are directed towards $N_I$, so we have $v_4 \to v_3$ instead of $v_3\to v_4$. In the last case we have $v_1,v_2,v_3,v_4 \in \vec{C}-S$, so as $\chio$ is a proper $k$-oriented-colouring, we cannot have $v_1\to v_2$ and $v_3 \to v_4$ if $v_1,v_4$ share a colour and $v_2,v_3$ also share a colour. We conclude no such arcs occur and thus we have properly $(2k+11)$-oriented-coloured $\vec{C}$.
\end{proof}

A version of Proposition~\ref{prop:gnd_window} holds for $\vec{\mathcal{M}}(n,m)$ holds for $m=cn, c > 0$ as well.

\begin{proposition} \label{prop:gnm_window}
For every $c > 0$ there exists an integer $k = k(c)$ such that a.a.s.~either $\vec{M} \sim \vec{\mathcal{M}}(n,m=cn)$ is not an orientation of a simple graph or $\chio(\vec{M}) \in [k,2k+11]$.
\end{proposition}

The proof of Proposition~\ref{prop:gnm_window} is nearly identical to that of Proposition~\ref{prop:gnd_window}, so rather than repeat each of the intermediate results, we summarize the small changes in the proofs. In Lemma~\ref{lem:gnd_subgraph_colourable}, the probability that a ``bad'' subset exists is $\binom{n}{s}\binom{m}{k}(\binom{s}{2}/\binom{n}{2})^k$. This probability is still less than $(de^2)/n^{\varepsilon(\frac{1}{2}-\varepsilon)}$, so the corresponding result holds. We show $\vec{M} \sim \vec{\mathcal{M}}(n,m=cn)$ is an orientation of a simple graph with positive probability in Lemma~\ref{lem:gnm_simple} in the next section, so Corollary~\ref{cor:gnd_subgraph_colourable} holds as well. Instead of appealing to Theorem~\ref{thm:original_martingale}, we can use Azuma's inequality directly by exposing the edges of the graph one at a time. This has the effect of changing the size of the uncoloured set in Lemma~\ref{lem:gnd_mostly_colourable} by a constant multiple. That difference disappears in the step of the proof of Proposition~\ref{prop:gnd_window} in which we set $s_0 = \sqrt{nd} \log n$.

\section{Proofs of Main Theorems} \label{sec:main}

\begin{lemma}\label{lem:gnm_simple}
For any constant $c > 0$ and $m \le cn + n^{2/3}$, the probability that $\vec{\mathcal{M}}(n,m)$ is an orientation of a simple graph (without loops or multiple edges) $\exp(-c(c+1) + o(1)) \ge \exp(-2c(c+1)) > 0$.
\end{lemma}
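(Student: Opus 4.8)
I need to show that $\vec{\mathcal{M}}(n,m)$ with $m \le cn + n^{2/3}$ is simple with probability bounded below by a positive constant. Recall that $\vec{\mathcal{M}}(n,m)$ is built by choosing $m$ ordered pairs of vertices (with replacement, from all $n^2$ ordered pairs including loops) and adding a directed edge for each. The graph fails to be simple if either: (a) some chosen pair is a loop $(v,v)$; (b) two chosen pairs coincide, i.e. both are $(u,v)$ for the same ordered pair; or (c) two chosen pairs are $(u,v)$ and $(v,u)$, creating a digon. So I want to lower bound the probability that none of these happen.

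The cleanest approach is to compute the probability directly as a product. Think of adding the $m$ arcs one at a time, conditioning on the graph so far being simple. When we add the $j$-th arc, we've already placed $j-1$ arcs forming a simple digraph; there are $j-1$ "forbidden" ordered pairs that would duplicate an existing arc, at most $j-1$ ordered pairs that would create a digon with an existing arc, and $n$ loop pairs. So the probability the $j$-th arc keeps things simple is at least $\frac{n^2 - n - 2(j-1)}{n^2} \ge 1 - \frac{n + 2(j-1)}{n^2} \ge 1 - \frac{n+2m}{n^2}$. Multiplying over $j = 1, \dots, m$,
\[ \Pr[\vec{\mathcal{M}}(n,m) \text{ simple}] \ge \left(1 - \frac{n+2m}{n^2}\right)^m \ge \exp\left(-\frac{2m(n+2m)}{n^2}\right) \]
using $1-x \ge e^{-2x}$ for $x$ small (valid once $n$ is large, since $\frac{n+2m}{n^2} \to 0$). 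Then with $m \le cn + n^{2/3}$, the exponent $\frac{2m(n+2m)}{n^2}$ tends to $2c(c+1)$... let me be careful: $2m(n+2m)/n^2 = 2(m/n)(1 + 2m/n)$, and $m/n \to c$, so the limit is $2c(1+2c)$. That doesn't quite match $5c(c+1) = 5c^2 + 5c$; but $2c(1+2c) = 4c^2 + 2c \le 5c^2 + 5c$ for all $c > 0$, so the stated bound holds, perhaps using cruder constants in the inequality $1 - x \ge e^{-2x}$ or in bounding $n + 2(j-1)$.

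**The main obstacle.**

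The only real subtlety is making the $1-x \ge e^{-cx}$ step honest: the inequality $1 - x \ge e^{-2x}$ holds only for $x \in [0, x_0]$ with $x_0 \approx 0.79$, so I should remark that for $n$ sufficiently large $\frac{n+2m}{n^2} \le \frac{n + 2cn + 2n^{2/3}}{n^2} = o(1)$ lies in the valid range, and that for the finitely many small $n$ the probability is trivially positive (or absorb everything into a generous constant). The exponent then satisfies $\frac{2m(n+2m)}{n^2} \le 5c(c+1)$ for $n$ large; I'd plug in $m \le cn + n^{2/3}$, expand, and note the lower-order terms are eventually dominated, giving $\Pr[\text{simple}] \ge e^{-5c(c+1)} > 0$ as claimed. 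The argument is routine; the statement is really just a bookkeeping lemma ensuring that conditioning on simplicity loses at most a constant factor, which is what will let us transfer results from $\vec{\mathcal{M}}(n,m)$ to $\vec{\mathcal{G}}(n,p)$ in Section~\ref{sec:main}.
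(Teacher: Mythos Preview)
Your proposal is correct and follows essentially the same approach as the paper: both write the simplicity probability as $\frac{1}{n^{2m}}\prod_{j=0}^{m-1}(n(n-1)-2j)$, lower-bound each factor by the worst one, apply $1-x \ge e^{-2x}$ for $n$ large, and substitute $m \le cn + n^{2/3}$. Your exponent $\tfrac{2m(n+2m)}{n^2}\to 4c^2+2c$ is in fact slightly sharper than the paper's $\tfrac{4m(m+n)}{n^2}\to 4c^2+4c$ (they replace $n$ by $2n$ in the forbidden count), which is why you noticed slack relative to the stated $5c(c+1)$; either way the bound holds.
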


\begin{proof}
Using the inequality $1-x \ge \exp(-x-x^2)$, which holds for $x$ small enough, the probability that $\vec{M} \sim \vec{\mathcal{M}}(n,m)$ is an orientation of a simple graph is

\begin{align*}
\frac{1}{n^{2m}}\prod_{j=0}^{m-1} n(n-1)-2j
	&=\prod_{j=0}^{m-1} \left(1 - \frac{(n+2j)}{n^2}\right)^m\\
	&\ge \prod_{j = 0}^{m-1} \exp\left(- \left[ \frac{(n+2j)}{n^2} + \left( \frac{(n+2j)}{n^2} \right)^2 \right] \right)\\
	&\ge \prod_{j = 0}^{m-1}\exp\left(-\left[\frac{n + 2j}{n^2} + \frac{(2c+3)^2}{n^2}\right] \right)	&\text{(since $n+2m \le (2c+3)n$)}\\
	&\ge \exp\left(-\left[-\frac{m}{n} + \frac{m^2}{n^2} + \frac{(2c+3)^2m}{n^2}\right] \right)\\
	&\ge \exp\left(-(c+c^2 + (2c+1)n^{-1/3} + (2c+3)^2(c+1)n^{-1})\right) \hspace{-10pt}\\
	&= \exp(-c(c+1) + o(1)) \ge \exp\left(-2c(c+1)\right).	&\text{(for $n$ large enough)}
\end{align*}
\end{proof}

\begin{corollary}\label{cor:gnm_mnm_equiv}
For any constant $c > 0$, $n$ sufficiently large, any $m \le cn + n^{2/3}$, and any event $A$ in both $\vec{\mathcal{G}}(n,p)$ and $\vec{\mathcal{M}}(n,m)$,
\[ \pr_{\vec{G} \sim \vec{\mathcal{G}}(n,p)}[A \mid e(\vec{G}) = m] \le e^{2c(c+1)}\pr_{\vec{M}\sim\vec{\mathcal{M}}(n,m)}[A]. \]
\end{corollary}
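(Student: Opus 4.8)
The plan is to condition on the event that $\vec{\mathcal{M}}(n,m)$ happens to be a simple oriented graph, and to observe that conditioned on this event, $\vec{\mathcal{M}}(n,m)$ is distributed exactly as $\vec{\mathcal{G}}(n,p=\tfrac{d}{n})$ conditioned on having exactly $m$ edges. First I would verify this distributional identity: in $\vec{\mathcal{M}}(n,m)$, each of the $n^{2m}$ ordered sequences of $m$ arc-insertions is equally likely; restricting to sequences that produce a simple oriented graph, each simple oriented graph with $m$ arcs arises from exactly $m!$ such sequences (one for each ordering of its arcs; note that since it is simple, no two insertions coincide and no insertion is a loop), so $\vec{\mathcal{M}}(n,m)$ conditioned on simplicity is uniform over simple oriented graphs with exactly $m$ arcs. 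On the other side, $\vec{\mathcal{G}}(n,p)$ is $\mathcal{G}(n,p)$ with each edge independently oriented; conditioned on $e(\vec{G}) = m$, the underlying graph is uniform over simple graphs with $m$ edges and the orientations are uniform, so $\vec{\mathcal{G}}(n,p)$ conditioned on $e(\vec{G})=m$ is also uniform over simple oriented graphs with exactly $m$ arcs. Hence the two conditional distributions coincide.

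Given this, for any event $A$ I would write
\[
\pr_{\vec{G} \sim \vec{\mathcal{G}}(n,p)}[A \mid e(\vec{G}) = m]
= \pr_{\vec{M}\sim\vec{\mathcal{M}}(n,m)}[A \mid \vec{M} \text{ simple}]
= \frac{\pr_{\vec{M}\sim\vec{\mathcal{M}}(n,m)}[A \text{ and } \vec{M} \text{ simple}]}{\pr_{\vec{M}\sim\vec{\mathcal{M}}(n,m)}[\vec{M} \text{ simple}]}
\le \frac{\pr_{\vec{M}\sim\vec{\mathcal{M}}(n,m)}[A]}{\pr_{\vec{M}\sim\vec{\mathcal{M}}(n,m)}[\vec{M} \text{ simple}]}.
\]
Then I would apply Lemma~\ref{lem:gnm_simple}, which for $m \le cn + n^{2/3}$ bounds the denominator below by $\exp(-5c(c+1))$, giving the claimed inequality
\[
\pr_{\vec{G} \sim \vec{\mathcal{G}}(n,p=\frac{d}{n})}[A \mid e(\vec{G}) = m] \le e^{5c(c+1)}\pr_{\vec{M}\sim\vec{\mathcal{M}}(n,m)}[A].
\]

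The main subtlety is the counting argument establishing the distributional identity — in particular, the claim that a simple oriented graph on $m$ arcs is produced by exactly $m!$ of the $n^{2m}$ insertion sequences, which relies on the fact that simplicity rules out repeated ordered pairs and loops, so the multiset of inserted arcs is genuinely a set of size $m$ and every permutation of it gives a valid distinct sequence. Once that is in hand, everything else is bookkeeping plus the already-proved Lemma~\ref{lem:gnm_simple}. I would also remark that the statement requires $A$ to be a well-defined event in both models, which is automatic since both are supported (after conditioning) on the same finite set of simple oriented graphs.
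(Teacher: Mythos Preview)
Your proposal is correct and follows essentially the same approach as the paper: both argue that $\vec{\mathcal{G}}(n,p)$ conditioned on $e(\vec{G})=m$ and $\vec{\mathcal{M}}(n,m)$ conditioned on simplicity are uniform over simple oriented graphs with $m$ arcs, then bound the conditional probability via Lemma~\ref{lem:gnm_simple}. The only difference is that you supply the explicit $m!$ counting justification for the distributional identity, whereas the paper simply asserts uniformity.
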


\begin{proof}
The key idea is that $\vec{G} \sim \vec{\mathcal{G}}(n,p)$ conditioned on having $m$ arcs and $\vec{M} \sim \vec{\mathcal{M}}(n,m)$ conditioned on being simple are both uniform over the collection of oriented graphs of order $n$ with $m$ arcs. Thus
\begin{align*}
\pr_{\vec{G} \sim \vec{\mathcal{G}}(n,p)}[A \mid e(\vec{G}) = m] &= \pr_{\vec{M} \sim \vec{\mathcal{M}}(n,m)}[A \mid \text{$\vec{M}$ is simple}]\\
	&= \frac{\pr_{\vec{M} \sim \vec{\mathcal{M}}(n,m)}[A \text{ and $\vec{M}$ is simple}]}{\pr_{\vec{M} \sim \vec{\mathcal{M}}(n,m)}[\text{$\vec{M}$ is simple}]}\\
	&\le e^{2c(c+1)}\pr_{\vec{M} \sim \vec{\mathcal{M}}(n,m)}[A] & \text{(by Lemma~\ref{lem:gnm_simple})}.
\end{align*}
\end{proof}

\begin{lemma}\label{lem:edge_conc}
For any constant $d > 0$, the number of edges of $\vec{G} \sim \vec{\mathcal{G}}(n,p=\frac{d}{n})$ satisfies
\[
\pr_{\vec{G} \sim \vec{\mathcal{G}}(n,p=\frac{d}{n})}[|e(\vec{G}) - dn/2| \ge n^{2/3}] \le 2\exp\left(-\frac{n^{1/3}}{4d}\right) = o(1).
\]
\end{lemma}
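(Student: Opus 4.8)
The plan is to observe that $e(\vec G)$ is simply a binomial random variable and invoke a standard Chernoff-type tail bound. Since $\vec{\mathcal G}(n,p)$ is obtained by orienting a graph drawn from $\mathcal G(n,p)$, the number of edges $e(\vec G)$ equals the number of edges of the underlying undirected graph, so $e(\vec G) \sim \mathrm{Bin}\!\left(\binom n2, p\right)$ with $p = d/n$. In particular $e(\vec G)$ is a sum of $\binom n2$ independent $\mathrm{Bernoulli}(p)$ indicators with mean $\mu := \binom n2 p = \frac{d(n-1)}{2} = \frac{dn}{2} - \frac d2$.

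First I would reduce a deviation from $dn/2$ to a deviation from the true mean $\mu$. Because $dn/2 = \mu + d/2$ and $d$ is a fixed constant, for all $n$ large enough (any $n$ with $n^{2/3} \ge 2d$ works) the triangle inequality gives that the event $|e(\vec G) - dn/2| \ge n^{2/3}$ is contained in the event $|e(\vec G) - \mu| \ge n^{2/3} - \tfrac d2 \ge \tfrac34 n^{2/3}$.

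Next I would apply the multiplicative Chernoff bound $\pr[|X - \mu| \ge \delta\mu] \le 2\exp(-\delta^2\mu/3)$, valid for $0 \le \delta \le 1$, taking $\delta = \tfrac34 n^{2/3}/\mu$, which is at most $1$ once $n$ is large. Using $\mu \le dn/2$ this yields
\[
\pr\!\left[|e(\vec G) - \mu| \ge \tfrac34 n^{2/3}\right] \le 2\exp\!\left(-\frac{\bigl(\tfrac34 n^{2/3}\bigr)^2}{3\mu}\right) \le 2\exp\!\left(-\frac{9n^{4/3}/16}{3 \cdot dn/2}\right) = 2\exp\!\left(-\frac{3n^{1/3}}{8d}\right).
\]
Since $3/8 > 1/4$, for $n$ sufficiently large the right-hand side is at most $2\exp(-n^{1/3}/(4d))$, which tends to $0$; combined with the containment above this gives the claimed bound for all large $n$, hence the $o(1)$ statement. (One could equally run the argument with Bernstein's inequality applied directly to the deviation $n^{2/3} - d/2$ from $\mu$, since the variance is at most $\mu \le dn/2$; the bookkeeping is identical.)

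There is no real obstacle here — the only point requiring a little care is the discrepancy between the heuristic mean $dn/2$ and the exact mean $\mu = dn/2 - d/2$, together with the fact that the deviation $n^{2/3}$ is $o(\mu)$: one must confirm that the gap between the exponent $3/8$ actually produced and the advertised $1/4$ comfortably absorbs both the $d/2$ correction in the mean and the $\delta \le 1$ requirement in the Chernoff bound, which it does for all sufficiently large $n$.
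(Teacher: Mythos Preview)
Your proof is correct and is essentially the same as the paper's: both recognise that $e(\vec G)$ is $\mathrm{Bin}\bigl(\binom{n}{2},d/n\bigr)$, shift from the nominal mean $dn/2$ to the true mean $\mu=d(n-1)/2$, and then invoke a standard exponential tail bound. The paper phrases the concentration step via a Bernstein-type inequality (the form $\exp\bigl(-t^2/(2(\sigma^2+t/3))\bigr)$), which is exactly the alternative you mention at the end; the resulting constants differ only in intermediate steps and both comfortably yield $2\exp(-n^{1/3}/(4d))$ for large $n$.
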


\begin{proof}
Set $c = d/2$.  By standard concentration results, for $\vec{G} \sim \vec{\mathcal{G}}(n,p=\frac{d}{n})$, since $\ex(e(\vec{G})) = \binom{n}{2}p = \frac{d(n-1)}{2} = c(n-1)$,
\begin{align*}
\pr_{\vec{G} \sim \vec{\mathcal{G}}(n,p=\frac{d}{n})}[|e(\vec{G}) - cn| \ge n^{2/3}]
	&\le \pr_{\vec{G} \sim \vec{\mathcal{G}}(n,p=\frac{d}{n})}[|e(\vec{G})) - \ex(e(\vec{G}))| \ge n^{2/3}-c]\\
	&\le 2 \exp\left(-\frac{(n^{2/3}-c)^2}{2(\binom{n}{2}\frac{d}{n} +(n^{2/3}-c)/3)}\right)\\
	&\le 2 \exp\left(- \frac{n^{4/3}/2}{d(n-1) + n^{2/3}}\right)\\
	&\le 2 \exp\left(-\frac{n^{1/3}}{4d}\right)	&&\text{(for $n$ large enough)}\\
	&=o(1).
\end{align*}
\end{proof}

\begin{proof}[Proof of Theorem~\ref{thm:gnp_main}]
Let $d > 1$ be given and let $k_1$ be an integer satisfying $d > u_{(k_1-1)}$. Set $c = \frac{d}{2}$. By Corolloary~\ref{cor:gnm_mnm_equiv}, letting $A(\vec{G})$ be the event that a $(k_1-1)$-oriented-colouring of $\vec{G}$ exists, for any $m \le cn+n^{2/3}$,
\[
\pr_{\vec{G} \sim \vec{\mathcal{G}}(n,p=\frac{d}{n})}[A(\vec{G}) \mid e(\vec{G}) = m] \le e^{2c(c+1)}\pr_{\vec{M} \sim \vec{\mathcal{M}}(n, m)}[A(\vec{M})].
\]
Thus, by Lemma~\ref{lem:edge_conc}, the law of total probability, and noting that the probability that $\vec{M} \sim \vec{\mathcal{M}}_{n, m}$ has a $(k_1-1)$-oriented-colouring as $m$ increases (coupling the models $\vec{\mathcal{M}}_{n, m}$ and $\vec{\mathcal{M}}_{n, m'}$ with $m<m'$ by inclusion), with $\vec{\mathcal{G}} = \vec{\mathcal{G}}(n,p=\frac{d}{n})$,
\begin{align*}
\pr_{\vec{G} \sim \vec{\mathcal{G}}}[A(\vec{G})] &= \pr_{\vec{G} \sim \vec{\mathcal{G}}}[A(\vec{G}) \text{ and } |e(\vec{G}) - cn| \le n^{2/3}] +\pr_{\vec{G} \sim \vec{\mathcal{G}}}[A(\vec{G}) \text{ and } |e(\vec{G}) - cn| > n^{2/3}]\\ 
	&\le \sum_{m=\lceil cn-n^{2/3}\rceil}^{\lfloor cn+n^{2/3}\rfloor}\pr_{\vec{G} \sim \vec{\mathcal{G}}}[A(\vec{G}) \mid e(\vec{G}) = m]\pr_{\vec{G} \sim \vec{\mathcal{G}}}[e(\vec{G}) = m] \quad +\\
	&\qquad\qquad \pr_{\vec{G} \sim \vec{\mathcal{G}}}[|e(\vec{G}) - cn| > n^{2/3}] \\
	&\le e^{2c(c+1)}\pr_{\vec{M} \sim \vec{\mathcal{M}}(n, cn-n^{2/3})}[A(\vec{M})]\sum_{m=\lceil cn-n^{2/3}\rceil}^{\lfloor cn+n^{2/3}\rfloor}\pr_{\vec{G} \sim \vec{\mathcal{G}}}[e(\vec{G}) = m] + o(1)\\
	&\le e^{2c(c+1)}\pr_{\vec{M} \sim \vec{\mathcal{M}}(n, cn-n^{2/3})}[A(\vec{M})] + o(1)
\end{align*}

Pick $c'$ satisfying $c > c' > \frac{1}{2}u_{(k_1-1)}$ and let $n$ be large enough that $cn-n^{2/3} \ge c'n$. Then by Proposition~\ref{prop:gnm_lb}, noting the probability that an oriented $(k_1-1)$-colouring exists is decreasing in the number of edges,
\[ e^{2c(c+1)}\pr_{\vec{M} \sim \vec{\mathcal{M}}(n, cn-n^{2/3})}[A(\vec{M})] \le e^{2c(c+1)}\pr_{\vec{M} \sim \vec{\mathcal{M}}(n,m=c'n)}A(\vec{M})] = o(1) \]
and therefore a.a.s.~$A(\vec{G})$ does not occur, implying $\chio(\vec{\mathcal{G}}(n,p=\frac{d}{n})) > k_1-1$.

Now let $k_2 \ge 3$ such that $d < \ell_{k_2}$ and there exists a doubly regular tournament of order $k_2$. Set $c = d/2$, let $c'$ be such that $c < c' < \frac{1}{2}\ell_{k_2}$ and let $n$ be large enough that $cn + n^{2/3} \le c' n$.  Using Corollary~\ref{cor:gnm_mnm_equiv} where $B(\vec{G})$ is the event that no $(2k_2+11)$-oriented colouring of $\vec{G}$ exists, for any $m \le c'n+n^{2/3}$,
\[
\pr_{\vec{G} \sim \vec{\mathcal{G}}(n,p=\frac{d}{n})}[B(\vec{G}) \mid e(\vec{G}) = m] \le e^{2c'(c'+1)}\pr_{\vec{M} \sim \vec{\mathcal{M}}_{n, m}}[B(\vec{M})].
\]
Thus, using Lemma~\ref{lem:edge_conc}, that the probability the oriented chromatic number is greater than $2k_2+11$ is increasing in the number of edges, and the law of total probability, with $\vec{\mathcal{G}} = \vec{\mathcal{G}}(n,p=\frac{d}{n})$,
\begin{align*}
\pr_{\vec{G} \sim \vec{\mathcal{G}}}[B(\vec{G})] &= \pr_{\vec{G} \sim \vec{\mathcal{G}}}[B(\vec{G}) \text{ and } e(\vec{G}) > c'n] +\sum_{m=0}^{\lfloor c'n\rfloor}\pr_{\vec{G} \sim \vec{\mathcal{G}}}[B(\vec{G}) \text{ and } e(\vec{G}) = m]\\
	&\le \pr_{\vec{G} \sim \vec{\mathcal{G}}}[e(\vec{G}) > c'n \ge cn + n^{2/3}] \quad +\\
	&\qquad \qquad \pr_{\vec{M} \sim \vec{\mathcal{M}}(n,m=c'n)}[B(\vec{M}) \mid \text{$\vec{M}$ is simple}]\sum_{m=0}^{\lfloor c'n\rfloor}\pr_{\vec{G} \sim \vec{\mathcal{G}}}[e(\vec{G}) = m]\\
	&\le o(1) + \pr_{\vec{M} \sim \vec{\mathcal{M}}(n,m=c'n)}[B(\vec{M}) \mid \text{$\vec{M}$ is simple}].
\end{align*}
Then by Proposition~\ref{prop:gnm_ub}, as $1 < c' < \frac{1}{2}\ell_{k_2}$, $\vec{M}\sim\vec{\mathcal{M}}(n,m=c'n)$ has a proper oriented $k_2$-colouring with positive probability. Proposition~\ref{prop:gnm_window} then guarantees $\pr_{\vec{M}\sim\vec{\mathcal{M}}(n,m=c'n)}[B(\vec{M})\mid \text{$\vec{M}$ is simple}] = o(1)$. We conclude a.a.s.
\[ \chio(\vec{\mathcal{G}}(n,p=\tfrac{d}{n})) \in [k_1,2k_2+11]. \]
\end{proof}

\begin{proof}[Proof of Theorem~\ref{thm:gnd_main}]
It is well-known (see, for example, \cite{Bo80}) that the (undirected) configuration model produces a simple graph with probability tending toward $\exp(-\frac{1}{2}(d-1)-\frac{1}{4}(d-1)^2) =: \kappa > 0$. Orienting an undirected simple graph produces a simple oriented graph, so with positive probability $\vec{C} \sim \vec{\mathcal{C}}(n,d)$ is a simple digraph. Furthermore, these simple digraphs are uniformly distributed as each simple graph corresponds to the same number of configurations. Therefore, if $A$ is any event in both $\vec{\mathcal{G}}(n,d)$ and $\vec{\mathcal{C}}(n,d)$, then
\begin{align*}
\pr_{\vec{G}\sim\vec{\mathcal{G}}(n,d)}[A] &= \pr_{\vec{C}\sim\vec{\mathcal{C}}(n,d)}[A \mid \text{$\vec{C}$ is simple}]\\
	&= \frac{\pr_{\vec{C}\sim\vec{\mathcal{C}}(n,d)}[A \text{ and $\vec{C}$ is simple}]}{\pr_{\vec{C}\sim\vec{\mathcal{C}}(n,d)}[\text{$\vec{C}$ is simple}]}\\
	&\le \kappa^{-1}\pr_{\vec{C}\sim\vec{\mathcal{C}}(n,d)}[A]
\end{align*}

For $k_1 \ge 2$ and $d \ge u_{(k_1-1)}$, with $A(\vec{G})$ the event that $G$ has a $(k_1-1)$-oriented-colouring, Proposition~\ref{prop:gnd_lb} assures $\pr_{\vec{C}\sim\vec{\mathcal{C}}(n,d)}[A(\vec{C})] = o(1)$ and therefore 
\[ \pr_{\vec{G}\sim\vec{\mathcal{G}}(n,d)}[A(\vec{G})] \le \kappa^{-1}\cdot o(1) = o(1),\]
or equivalently $\pr_{\vec{G}\sim\vec{\mathcal{G}}(n,d)}[\chio(\vec{G}) \le k_1-1] = o(1)$. On the other hand, if there is a $k_2 \ge 3$ such that $d < \ell_{k_2}$ and there exists a doubly regular tournament of order $k_2$, then Proposition~\ref{prop:gnd_ub} gives $\vec{C} \sim \vec{\mathcal{C}}(n,d))$ has a $k_2$-oriented-colouring with positive probability. Then with the event $B(\vec{G})$ being that $\vec{G}$ has no $(2k_2+11)$-oriented-colouring, Proposition~\ref{prop:gnd_window} guarantees $\pr_{\vec{C} \sim \vec{\mathcal{C}}(n,d)}[B(\vec{C})] = o(1)$, so
\[ \pr_{\vec{G}\sim\vec{\mathcal{G}}(n,d)}[\chio(\vec{G}) > 2k_2+11] = \pr_{\vec{G}\sim\vec{\mathcal{G}}(n,d)}[B(\vec{G})] \le \kappa^{-1} \cdot o(1) = o(1).\]
We conclude a.a.s.
\[ \chio(\vec{\mathcal{G}}(n,d)) \in [k_1,2k_2+11]. \]
\end{proof}

\begin{proof}[Proof of Corollary~\ref{cor:main_cor}]
Let $d > 1$ be given and set $k = 2^{d/2}$. Then
\[ u_{k} < \frac{2}{\log 2} \log(k) = d \]

Setting $k' = k^{1/\log 2}+\frac{2}{\log 2}\log k + 1 = e^{d/2}+d+1$, 
\[ d <  2\left(1-\frac{2}{k'+1}\right)\log(k'-1) < \ell_{k'}. \]
Thus we can apply Theorems~\ref{thm:gnp_main} and \ref{thm:gnd_main} for any $k'' \ge k'$ such that there exists a doubly regular tournament of order $k''$. Lemma~\ref{lem:bertrand_postulate_dr_tourn} guarantees the smallest $k''$ for which this holds is at most $2k'$, and thus
\[ \chio(\vec{\mathcal{G}}(n,p=\tfrac{d}{n})), \chio(\vec{\mathcal{G}}(n,d)) \in (k,2(2k')+11] = (2^{d/2}, 4e^{d/2}+4d+15]. \]
\end{proof}

\section{Further Discussion} \label{sec:further_disc}

\subsection{Very sparse or very dense graphs}\label{ssec:vsparse_vdense}

Throughout the majority of this paper, we have focused on the case of oriented graphs with edge density $\frac{d}{2}$, mainly for $d$ fixed and large. However, in the case that $d \le 2$ it is possible to use the properties of random graphs and constraints on oriented colourings with a small number of colours to say something more precise than what is given by the second moment argument.  What makes these cases more tractable for analysis is that there are, up to isomorphism, only two different tournaments on $3$ vertices ($C_3$ and $T_3$ in Figure~\ref{fig:small-tourn}) and there are characterizations of when an oriented unicyclic graph can be properly coloured by either one.

\begin{figure}[htb]
\begin{center}
\begin{tikzpicture}
	[decoration={markings, mark=at position 0.7 with {\arrow{>}}}] 
	\tikzstyle{vertex}=[circle, fill=black,  minimum size=5pt,inner sep=0pt]
	
	\foreach \x/\l in {1/right, 2/above, 3/left}
	{
		\node[vertex, label = \l:{$\x$}] at ({120*\x-150}:0.75cm) (\x) {};
	}
	\draw[postaction={decorate}] (1) -- (2);
	\draw[postaction={decorate}] (2) -- (3);
	\draw[postaction={decorate}] (3) -- (1);
	
	\node at (0, -1) {$C_3$};
	
\end{tikzpicture}
\hspace*{0.5in}
\begin{tikzpicture}
	[decoration={markings, mark=at position 0.7 with {\arrow{>}}}] 
	\tikzstyle{vertex}=[circle, fill=black,  minimum size=5pt,inner sep=0pt]
	
	\foreach \x/\l in {1/right, 2/above, 3/left}
	{
		\node[vertex, label = \l:{$\x$}] at ({120*\x-150}:0.75cm) (\x) {};
	}
	\draw[postaction={decorate}] (1) -- (2);
	\draw[postaction={decorate}] (2) -- (3);
	\draw[postaction={decorate}] (1) -- (3);
	
	\node at (0, -1) {$T_3$};
	
\end{tikzpicture}
\hspace*{0.5in}
\begin{tikzpicture}
	[decoration={markings, mark=at position 0.7 with {\arrow{>}}}] 
	\tikzstyle{vertex}=[circle, fill=black,  minimum size=5pt,inner sep=0pt]
	
	\node[vertex, label = above:{$1$}] at (1, 1) (1) {};
	\node[vertex, label = above:{$2$}] at (0, 1) (2) {};
	\node[vertex, label = below:{$3$}] at (0, 0) (3) {};
	\node[vertex, label = below:{$4$}] at (1, 0) (4) {};
	
	\draw[postaction={decorate}] (1) -- (2);
	\draw[postaction={decorate}] (2) -- (3);
	\draw[postaction={decorate}] (3) -- (4);
	\draw[postaction={decorate}] (4) -- (1);
	\draw[postaction={decorate}] (3) -- (1);
	\draw[postaction={decorate}] (2) -- (4);
	
	\node at (0.5, -1) {$T_4$};
\end{tikzpicture}
\hspace*{0.5in}
\begin{tikzpicture}
	[decoration={markings, mark=at position 0.7 with {\arrow{>}}}] 
	\tikzstyle{vertex}=[circle, fill=black,  minimum size=5pt,inner sep=0pt]
	
	\foreach \x/\l in {1/right, 2/above, 3/left, 4/below, 5/below}
	{
		\node[vertex, label = \l:{$\x$}] at ({72*\x-54}:1cm)  (\x) {};
	}
	\draw[postaction={decorate}] (1) -- (2);
	\draw[postaction={decorate}] (2) -- (3);
	\draw[postaction={decorate}] (3) -- (4);
	\draw[postaction={decorate}] (4) -- (5);
	\draw[postaction={decorate}] (5) -- (1);
	\draw[postaction={decorate}] (1) -- (3);
	\draw[postaction={decorate}] (3) -- (5);
	\draw[postaction={decorate}] (5) -- (2);
	\draw[postaction={decorate}] (2) -- (4);
	\draw[postaction={decorate}] (4) -- (1);
	
	\node at (0, -1.5) {$T_5$};
\end{tikzpicture}
\end{center}
\caption{Small tournaments for colouring oriented $2$-regular graphs}\label{fig:small-tourn}
\end{figure}
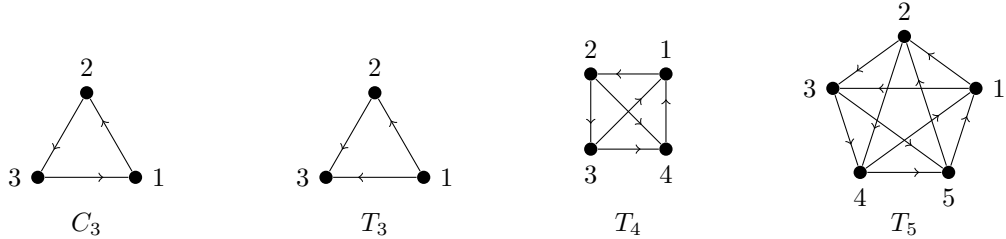

In his survey, Sopena~\cite{So16} gave a complete characterization of the
oriented chromatic number of oriented cycles, showing that for an oriented cycle
$\vec{C}$, the possible values of $\chi_o(\vec{C})$ are $2, 3, 4$, or $5$,
exactly characterizing each case. The proofs can be modified to give the
corresponding results for here for $2$-regular oriented graphs that are not
necessarily connected.  The main ideas of the proofs are that both of the
tournaments $T_4$ and $T_5$ in Figure~\ref{fig:small-tourn} contain directed
$3$-cycles, directed $4$-cycles, and the transitive tournament on $3$ vertices.
This can be used to show that every directed cycle can be properly coloured with
either one, except in the case of the directed $5$-cycle that can only be
properly $T_5$-coloured.  Since both tournaments $T_4$ and $T_5$ have the
property that every vertex has non-zero in-degree and out-degree, the rest
follows by induction on the number of source or sink vertices, with a few
special cases to check.  Since both tournaments $T_4$ and $T_5$ have the
property that every vertex has both in-neighbours and out-neighbours, any proper
colouring of the cycle in an orientation of a unicyclic graph can be extended to
a colouring of the remaining vertices.

\begin{proposition}[see Sopena~{\cite[Proposition 10]{So16}}]\label{prop:4or5-colouring}
If $\vec{G}$ is an orientation of a graph in which each component is a tree or unicyclic, then $\chi_o(\vec{G}) \le 5$ and $\vec{G}$ can be properly $5$-coloured by the tournament $T_5$ in Figure~\ref{fig:small-tourn}.  Moreover, if no component of $\vec{G}$ contains a directed $5$-cycle, then $\chi_o(\vec{G}) \le 4$ and $\vec{G}$ can be properly $4$-coloured by the tournament $T_4$ in Figure~\ref{fig:small-tourn}.
\end{proposition}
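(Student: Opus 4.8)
The plan is to recast the problem in terms of homomorphisms: a proper $T$-colouring of an oriented graph is precisely a homomorphism onto a subdigraph of the tournament $T$, so it suffices to construct a homomorphism $\vec{G}\to T_5$ in general, and a homomorphism $\vec{G}\to T_4$ when no component of $\vec{G}$ contains a directed $5$-cycle. Two elementary observations drive everything. First, $T_4$ is (isomorphic to) a subtournament of $T_5$: both have sorted score sequence $(1,1,2,2)$, and there is a unique tournament on four vertices with that score sequence, so any $T_4$-colouring is automatically a $T_5$-colouring. Hence for the first statement it is enough to colour each component that is a tree, or unicyclic with cycle $\neq \vec{C}_5$, by $T_4$, and each unicyclic component whose unique cycle is $\vec{C}_5$ by $T_5$, then take the disjoint union. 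Second, every vertex of $T_4$ and of $T_5$ has both an in-neighbour and an out-neighbour.

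From the second observation I would deduce an \emph{extension lemma}: if $\vec{H}$ is $T$-coloured for $T\in\{T_4,T_5\}$ and $v$ is a new vertex joined by a single arc to an already coloured vertex $u$, the colouring extends to $v$ (take an out-neighbour of the colour of $u$ in $T$ if $u\to v$, an in-neighbour if $v\to u$). Iterating, any $T$-colouring of a subgraph of a tree-or-unicyclic component extends over all of its pendant trees. So a tree component is handled by rooting it and extending a colour outward from the root, and a unicyclic component is handled the moment its unique cycle has been $T$-coloured. This reduces the whole proposition to the statement that every oriented cycle $\vec{C}$ admits a homomorphism into $T_5$, and into $T_4$ unless $\vec{C}=\vec{C}_5$.

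For that cycle statement I would adapt Sopena's analysis of $\chi_o$ of oriented cycles. Describe $\vec{C}$ by traversing it and recording its decomposition into maximal directed subpaths (\emph{blocks}); if $\vec{C}$ is not a directed cycle, consecutive blocks point in opposite directions, so the number of blocks is even. The relevant structural fact is that $T_4$ contains a directed triangle, a directed $4$-cycle, and a transitive triangle (and $T_5$ contains copies of all of these); since $T_4$ is strongly connected with directed cycles of the coprime lengths $3$ and $4$ through a common vertex, between any ordered pair of its vertices there is a directed path of every length at least some fixed small constant, the few remaining short lengths being realized directly using the transitive triangle. Using this, one assigns colours to the corner vertices of $\vec{C}$ (the local sources and sinks between consecutive blocks) and then fills in each block independently as a directed path of the prescribed length. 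A short finite check over the small cycles --- $\vec{C}_3$ and $\vec{C}_4$ are themselves subtournaments of $T_4$, and the cycles with very few short blocks --- shows this always succeeds, the sole obstruction being $\vec{C}_5$: one verifies that there is no closed directed walk of length $5$ in any tournament on four vertices, whereas $1\to2\to3\to4\to5\to1$ embeds into $T_5$. Combining this with the reductions above gives both statements.

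The step I expect to be the main obstacle is precisely this cycle analysis: making the ``directed path of every admissible length between prescribed endpoints'' claim sharp enough that the pieces wrap consistently around a closed cycle, and being certain one has enumerated the finitely many short configurations that still need to be checked by hand. This is exactly where the exceptional status of $\vec{C}_5$, and the reason $T_4$ suffices for every other oriented cycle, gets pinned down, mirroring the case work in Sopena's classification of the oriented chromatic number of oriented cycles.
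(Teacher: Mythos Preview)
Your approach is essentially the same as the paper's sketch: both reduce to colouring the unique cycle of each component by using that every vertex of $T_4$ and $T_5$ has an in- and an out-neighbour (your extension lemma), and then colour oriented cycles by exploiting that $T_4$ and $T_5$ each contain the directed $3$-cycle, the directed $4$-cycle, and the transitive triangle, with the directed $5$-cycle as the lone exception. The paper frames the cycle step as an induction on the number of source/sink vertices, while you phrase it via a block decomposition and filling in directed paths between corner vertices; these are two descriptions of the same case analysis, and your extra observation that $T_4$ embeds into $T_5$ is a harmless simplification the paper does not bother to make.
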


Within the proof, Sopena~\cite[Proposition 10]{So16} notes the following straightforward characterization of the $2$-regular oriented graphs that have oriented $3$-colourings.  Up to isomorphism, there are only two different tournaments on $3$ vertices: the directed $3$-cycle ($C_3$) and the transitive tournament on $3$ vertices ($T_3$).  Let $\vec{G}$ be an oriented cycle with vertices labelled around the cycle $u_1, u_2, \ldots, u_k$.  Call an arc a \emph{forward arc} if it is directed $u_i \to u_{i+1}$ (indices computed modulo $k$) and a \emph{backward arc} if it is directed $u_{i+1} \to u_i$.  Then $\vec{G}$ has a proper $C_3$-colouring if{f} the difference between the number of forward arcs and the number of backward arcs is divisible by $3$.  The graph $\vec{G}$ has a proper $T_3$-colouring if{f} there are no 3 consecutive arcs in the same direction.  Neither of these properties depends on the choice of starting vertex for the cycle's labelling, nor the direction of the labelling.  Using these properties it is possible to show that with high probability, some component of a large $2$-regular oriented graph does not satisfy either property. 

To see that a proper $C_3$-colouring is not likely, one can show, by induction, that for any $k \ge 3$, in a random orientation of a $k$-cycle, with probability at most $1/2$, the difference between the number of forward arcs and backward arcs is divisible by $3$.  One can check that in a random $2$-regular graph on $n$ vertices, with high probability, there are at least $\log n$ components.  Thus, the probability that a random orientation of a random $2$-regular graph is $C_3$-colourable is $o(1)$.

We now consider the property of being $T_3$-colourable, which is equivalent to not having three consecutive arcs in the same direction.  The number of orientations of a $k$-cycle with this property is the same as the number of cyclic sequences of $0$s and $1$s of length $k$ that start with a $1$ and have no substring $000$ or $111$.  This is bounded above by the number of binary strings (not cyclic) of length $k$ that start with a $1$ and have no three consecutive $0$s nor three consecutive $1$s.  By checking cases for how such a  string can begin, one can give a linear recurrence for this number and show that, if $\phi = \frac{1+ \sqrt{5}}{2}$, the number of such strings is bounded above by $\phi^k$.  Since the sum of all cycle lengths in a $2$-regular graph on $n$ vertices is $n$, this shows that the probability that $\vec{G} \sim \vec{\mathcal{G}}_{n, 2}$ is properly $T_3$-colourable is at most $\left(\frac{\phi}{2}\right)^n = o(1)$.

Thus, with high probability, for $\vec{G} \sim \vec{\mathcal{G}}(n, 2)$, $\chi_o(\vec{G}) > 3$.

Every oriented $2$-regular graph has a proper oriented $5$ colouring and Proposition~\ref{prop:4or5-colouring} shows that whether $5$ colours are needed depends on the presence of a directed $5$-cycle.  Since the number of $5$-cycles in the undirected graph $G_{n, 2}$ is asymptotically distributed as a Poisson random variable with mean $\frac{1}{2 \cdot 5}$, the number of directed $5$-cycles is a asymptotically distributed as a Poisson random variable with mean $\frac{1}{2 \cdot 5} \cdot \frac{1}{2^4}$.  This gives the following complete result for random $2$-regular oriented graphs.

\begin{theorem}
For $\vec{G} \sim \vec{\mathcal{G}}(n, 2)$, with high probability, $\chi_o(\vec{G}) \in \{4, 5\}$ with
\begin{align*}
\pr(\chi_o(\vec{G}) = 4) &= e^{-1/160} (1+o(1))\\
\pr(\chi_o(\vec{G}) = 5) 	&= \left(1 - e^{-1/160}\right)  (1+o(1)).
\end{align*}
\end{theorem}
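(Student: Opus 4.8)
The plan is to combine the facts already assembled in this subsection. First note that $\vec{\mathcal{G}}(n,2)$ is a uniformly random orientation of $G \sim \mathcal{G}(n,2)$ and that a $2$-regular simple graph is a vertex-disjoint union of cycles of length at least $3$; in particular every component is unicyclic, so Proposition~\ref{prop:4or5-colouring} applies and gives $\chi_o(\vec{G}) \le 5$ deterministically, with $\chi_o(\vec{G}) \le 4$ whenever no component of $\vec{G}$ contains a directed $5$-cycle. Together with the fact, shown earlier in this subsection, that a.a.s.\ $\chi_o(\vec{G}) > 3$, this already yields $\chi_o(\vec{G}) \in \{4,5\}$ a.a.s.

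Next I would pin down exactly when $5$ colours are forced. Since every vertex of $G$ has degree $2$, any cycle of $G$ uses all edges at each of its vertices and hence is a whole connected component; so a subgraph of $\vec{G}$ isomorphic to the directed $5$-cycle must be a $5$-cycle component of $G$ carrying one of its two cyclic orientations. By Sopena's characterisation of the oriented chromatic number of oriented cycles~\cite{eS16}, the directed $5$-cycle has oriented chromatic number $5$ (it admits no homomorphism to a tournament on at most four vertices), so if $\vec{G}$ has such a component then $\chi_o(\vec{G}) \ge 5$ and, with the previous bound, $\chi_o(\vec{G}) = 5$; conversely, if $\vec{G}$ has no directed $5$-cycle component then $\chi_o(\vec{G}) \le 4$ by Proposition~\ref{prop:4or5-colouring}. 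Thus $\chi_o(\vec{G}) = 5$ precisely when $\vec{G}$ contains a directed $5$-cycle component, and on the a.a.s.\ event $\{\chi_o(\vec{G}) > 3\}$ the value $\chi_o(\vec{G}) = 4$ occurs precisely when there is no such component.

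Let $X_n$ count the directed $5$-cycle components of $\vec{G}$. As recorded just above the statement, the number of $5$-cycles of $\mathcal{G}(n,2)$ is asymptotically Poisson with mean $\tfrac{1}{10}$, and since the orientations of distinct ($5$-cycle) components are independent and each is one of the two cyclic orientations with probability $2/2^{5} = 1/16$, Poisson thinning gives that $X_n$ is asymptotically Poisson with mean $\tfrac{1}{10}\cdot\tfrac{1}{16} = \tfrac{1}{160}$; in particular $\pr(X_n = 0) \to e^{-1/160}$.

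Combining the last two steps, $\pr(\chi_o(\vec{G}) = 5) = \pr(X_n \ge 1) \to 1 - e^{-1/160}$, and since $\pr(\chi_o(\vec{G}) \le 3) = o(1)$,
\[
\pr(\chi_o(\vec{G}) = 4) = \pr\bigl(\{X_n = 0\} \cap \{\chi_o(\vec{G}) > 3\}\bigr) = \pr(X_n = 0) - o(1) \to e^{-1/160},
\]
which is the stated conclusion. The step that needs the most care is the equivalence in the second paragraph — in particular invoking that the directed $5$-cycle genuinely requires five colours (no homomorphism to a four-vertex tournament), so that its presence as a component forces $\chi_o = 5$, together with the structural remark that in a $2$-regular graph such a configuration can only occur as a full component. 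A secondary point deserving attention is the thinned Poisson limit, which should be justified by the standard method-of-moments argument for counts of short cycles in random regular graphs, applied after exposing the random orientation; everything else is bookkeeping with the results already in hand.
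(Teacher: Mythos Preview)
Your proposal is correct and follows essentially the same route as the paper: use Proposition~\ref{prop:4or5-colouring} and Sopena's characterisation to reduce the question to the presence of a directed $5$-cycle component, combine with the already-established a.a.s.\ lower bound $\chi_o > 3$, and then compute the limiting probability via the Poisson limit for $5$-cycles in $\mathcal{G}(n,2)$ thinned by the $1/16$ chance of a cyclic orientation. Your write-up is in fact slightly more explicit than the paper's in justifying why a directed $5$-cycle must occur as a full component and why its presence forces $\chi_o = 5$.
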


A similar analysis can be applied to the random binomial graph $\vec{\mathcal{G}}(n, p)$ in the case that $p < 1/n$ to show that in that case, with high probability, the oriented chromatic number takes at one of $3$ possible values, each with positive probability.

\begin{theorem}\label{thm:gnd_small_d}
For $d$ constant with $0 < d < 1$ and $\vec{G} \sim \vec{\mathcal{G}}(n, p=\frac{d}{n})$, with high probability, $\chi_o(\vec{G}) \in \{3, 4, 5\}$ with 
\begin{align*}
	\pr(\chi_o(\vec{G})) = 3) &> (1-d)^{1/2} \exp\left( \frac{d}{2} + \frac{d^2}{4}\right)(1+o(1)),\\
	\pr(\chi_o(\vec{G})) = 4) & > \frac{d^5}{32}e^{-d^5/10},\\
	\pr(\chi_o(\vec{G})) = 5) & = (1-\exp(-\frac{d^5}{160}))(1+o(1))
\end{align*}
\end{theorem}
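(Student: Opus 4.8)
The plan is to exploit that for $0 < d < 1$ the graph $G \sim \mathcal{G}(n, p = \tfrac dn)$ is subcritical, so that $\chio(\vec G)$ is completely controlled by the short cycles of $G$ together with their random orientations. First I would recall the standard structural facts about such $G$ (see~\cite{Bo01, JLR00}): a.a.s.\ every component of $G$ is a tree or is unicyclic, and for each fixed $\ell \ge 3$ the number of copies of $C_\ell$ in $G$ converges in distribution to a Poisson$\left(\tfrac{d^\ell}{2\ell}\right)$ random variable, jointly independently over distinct lengths. In particular, a.a.s.\ every $C_5 \subseteq G$ is the unique cycle of a unicyclic component, and since $\sum_{\ell \ge 3}\tfrac{d^\ell}{2\ell} = -\tfrac12\log(1-d) - \tfrac d2 - \tfrac{d^2}{4}$, the probability that $G$ is a forest tends to $(1-d)^{1/2}\exp\!\left(\tfrac d2 + \tfrac{d^2}{4}\right)$.

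Next I would set up the deterministic dictionary between cycle orientations and $\chio$. By Proposition~\ref{prop:4or5-colouring}, every orientation of a graph whose components are trees or unicyclic has $\chio \le 5$, with $\chio \le 4$ unless some component's cycle is a directed $5$-cycle. Moreover every orientation of a forest admits a homomorphism to the directed triangle $C_3$ (colour each tree by breadth-first search; since $C_3$ has a unique in- and out-neighbour at each vertex, the colour of each new vertex is forced by its parent edge, and acyclicity prevents any conflict), so forests have $\chio \le 3$. Since $\chio$ is monotone under taking subgraphs, for lower bounds I would use the characterization quoted from Sopena~\cite{eS16}: an orientation of a cycle maps to a $3$-vertex tournament if and only if it maps to $C_3$ or $T_3$, i.e.\ if and only if either the number of forward arcs minus the number of backward arcs is divisible by $3$, or there is no run of three consecutive arcs in the same direction. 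A case check of the $2^5 = 32$ orientations of $C_5$ then gives exactly: $2$ directed cycles (forcing $\chio = 5$); the $10$ orientations with exactly one arc reversed and the $10$ whose two minority arcs are non-adjacent, which are $C_3$- or $T_3$-colourable and hence, being odd cycles, have $\chio = 3$; and the remaining $10$ (three arcs one way and two consecutive arcs the other way, or its reverse), which are neither, hence have $\chio = 4$. Finally, $\mathcal{G}(n,\tfrac dn)$ a.a.s.\ contains a vertex with two incident edges oriented as a directed path of length two (there are $\Theta(n)$ cherries, a positive fraction of which, being edge-disjoint when pendant, get this orientation), so a.a.s.\ $\chio(\vec G) \ge 3$; together with the previous paragraph this already shows a.a.s.\ $\chio(\vec G) \in \{3,4,5\}$.

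The three probability estimates now follow by combining these facts with the Poisson limits. The number of directed $5$-cycles in $\vec G$ is the $\tfrac{2}{32} = \tfrac1{16}$-thinning of the (asymptotically Poisson$\left(\tfrac{d^5}{10}\right)$) number of copies of $C_5$ in $G$, hence asymptotically Poisson$\left(\tfrac{d^5}{160}\right)$; since on the a.a.s.\ structural event $\chio(\vec G) = 5$ exactly when a directed $5$-cycle is present, $\pr(\chio(\vec G) = 5) = 1 - e^{-d^5/160} + o(1)$. For the value $3$, $\pr(\chio(\vec G) = 3) \ge \pr(G \text{ is a forest}) - \pr(\chio(\vec G) \le 2) = (1-d)^{1/2}\exp\!\left(\tfrac d2 + \tfrac{d^2}{4}\right)(1+o(1)) - o(1)$, and this is strict because unicyclic $G$ (e.g.\ with a single $4$-cycle oriented as two internally disjoint directed $u$--$v$ paths, which is $C_3$-colourable but not $2$-colourable) also contributes. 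For the value $4$, conditioning on $G$ having exactly one copy of $C_5$ (probability $\to \tfrac{d^5}{10}e^{-d^5/10}$) and that cycle receiving one of the $10$ ``bad'' orientations (conditional probability $\tfrac{10}{32}$, by independence of the orientations from the edge set) forces $\chio(\vec G) = 4$ on the a.a.s.\ structural event, so $\pr(\chio(\vec G) = 4) \ge \tfrac{d^5}{32}e^{-d^5/10}(1+o(1))$, again strictly, since directed $4$-cycles and configurations with several bad $C_5$'s also give $\chio(\vec G) = 4$.

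The main obstacle is the bookkeeping in the middle paragraph: matching the $32$ orientations of $C_5$ against Sopena's two conditions, and then carefully marrying the Poisson approximation for short-cycle counts to the deterministic colouring facts --- in particular verifying that on the a.a.s.\ event that all components are trees or unicyclic, the presence of a directed $C_5$ is equivalent to $\chio = 5$, and that a single bad $C_5$ with no directed $C_5$ forces $\chio = 4$. Extending colourings from the cycle to the attached trees, and the claim that forests have $\chio \le 3$, both rest on $C_3$ (and $T_4$, $T_5$) having every in- and out-degree positive, exactly as in Proposition~\ref{prop:4or5-colouring}; the remaining computations are routine.
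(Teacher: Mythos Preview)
Your proposal is correct and follows essentially the same route as the paper: the subcritical structure of $\mathcal{G}(n,d/n)$, Proposition~\ref{prop:4or5-colouring} together with Sopena's $3$-colouring characterization for cycles, the Poisson limit for $C_5$'s thinned by $\tfrac{1}{16}$ for the $\chi_o=5$ probability, the forest probability for the $\chi_o=3$ bound, and a single $C_5$ receiving a non-$3$-colourable, non-directed orientation for the $\chi_o=4$ bound. Your enumeration of the $32$ orientations of $C_5$ is more explicit than the paper's (which simply exhibits one orientation with $\chi_o=4$ and notes its isomorphism class has weight $\tfrac{5}{16}$), and the arithmetic agrees since $\tfrac{10}{32}=\tfrac{5}{16}$; the only minor caveat is that the forest probability should be cited directly (the paper invokes Tak\'acs~\cite{jT88} and Janson~\cite{SJ87}) rather than inferred from the infinite product of fixed-length cycle Poissons, since joint convergence for finitely many lengths does not by itself justify the limit.
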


\begin{proof}
For $p = d/n$, with $d < 1$, Erd\H{o}s and R\'{e}nyi~\cite{ER60} showed that with high probability, for $G \sim \mathcal{G}(n, p)$, every component of $G$ is either a tree or unicyclic.  By Proposition~\ref{prop:4or5-colouring}, then with high probability $\chi_o(\vec{G}) \le 5$ and $\chi_o(\vec{G}) = 5$ if and only if some component of $\vec{G}$ contains a directed $5$-cycle.  Again, due to results of Erd\H{o}s and R\'{e}nyi~\cite{ER60}, the number of $5$-cycles in $\vec{G}$ is asymptotically a Poisson random variable with mean $\frac{d^5}{10}$.  Thus, the probability that there is at least one directed $5$-cycle in $\vec{G}$ is asymptotically $(1-\exp(-\frac{d^5}{10}\cdot\frac{1}{16}))$.  This is precisely the probability that $\vec{G}$ has oriented chromatic number equal to $5$.

By a similar argument, with probability asymptotically, $\frac{d^5}{10}e^{-d^5/10} \cdot \frac{5}{16}$, there is exactly one $5$-cycle in $\vec{G}$ and it is oriented isomorphically to the cycle in Figure~\ref{fig:5cycle4cols}.  If this is the case, then $\chi_o(\vec{G}) = 4$.

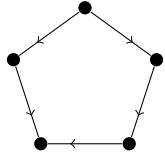
\begin{figure}[htb]
\begin{center}
\begin{tikzpicture}
	[decoration={markings, mark=at position 0.7 with {\arrow{>}}}] 
	\tikzstyle{vertex}=[circle, fill=black,  minimum size=5pt,inner sep=0pt]
	
	\foreach \x in {0, 1, 2, 3, 4}
	{
		\node[vertex] at ({18+72*\x}:1cm) (\x) {};
	}
	\draw[postaction={decorate}] (1) -- (0);
	\draw[postaction={decorate}] (1) -- (2);
	\draw[postaction={decorate}] (2) -- (3);
	\draw[postaction={decorate}] (4) -- (3);
	\draw[postaction={decorate}] (0) -- (4);
\end{tikzpicture}
\end{center}
\caption{Oriented $5$-cycle with $\chi_o(C) = 4$}\label{fig:5cycle4cols}
\end{figure}

On the other hand, based on results of Tak\'{a}cs~\cite{Ta88} or Janson~\cite{Ja87}, the probability that $G \sim \mathcal{G}(n, p=\frac{d}{n})$ is a forest is asymptotically $(1-d)^{1/2} e^{d/2+d^2/4}$.  If $G$ is a forest, then any orientation of $G$ can be properly $3$-coloured by the directed $3$-cycle.  On the other hand, to see that $\chi_o(\vec{G}) > 2$, note that by straightforward second moment arguments, with high probability, $\vec{G}$ contains a directed path of length $2$, which has no proper oriented $2$-colouring.
\end{proof}

It was noted by Bensmail, Duffy, and Sen~\cite{BDS17} that looking at a uniform random oriented graph on $n$ vertices, that is $\vec{G} \sim \vec{\mathcal{G}}(n, 1/2)$, with high probability, every pair of vertices are joined by a directed path of length $2$ (in one direction or the other) and hence none can be coloured with the same colour in any oriented colouring.  Klostermeyer and MacGillivray~\cite{KM04a} called such graphs \emph{oriented cliques} and noted that the condition of having weak diameter $2$ is equivalent to having oriented chromatic number equal to the number of vertices.  Using a standard first moment argument shows that in a large range of values for $p$, $\vec{G} \sim \vec{\mathcal{G}}(n, p)$ is an oriented clique.

\begin{proposition}\label{prop:gnp_large_p}
For any function $\omega(n) \to \infty$, if $p = p(n)$ is such that $p^2 n = 4\log n + \omega(n)$, then with high probability, for $\vec{G} \sim \vec{\mathcal{G}}(n, p)$, $\chi_o(\vec{G}) = n$.
\end{proposition}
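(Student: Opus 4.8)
The plan is a direct first‑moment argument. First I would recall, following Klostermeyer and MacGillivray~\cite{KMac04}, that $\chio(\vec{G}) = n$ exactly when $\vec{G}$ has weak diameter at most $2$; concretely, this means that for every pair of distinct vertices $u, v$, either $u$ and $v$ are adjacent or there is a third vertex $w$ with a directed path $u \to w \to v$ or $v \to w \to u$. (Each of these configurations forces $u$ and $v$ into distinct colour classes in any oriented colouring, while merely having a common in‑neighbour or common out‑neighbour does not.) Since $\chio(\vec{G}) \le n$ always — extend the arcs of $\vec{G}$ to any tournament on $V(\vec{G})$ and colour the vertices with $n$ distinct colours — it suffices to show that a.a.s.\ $\vec{G} \sim \vec{\mathcal{G}}(n, p)$ satisfies this condition, i.e.\ that a.a.s.\ no pair is \emph{bad}, where we call $\{u,v\}$ bad if $u \not\sim v$ and no third vertex witnesses a directed path of length $2$ between $u$ and $v$.

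Next I would estimate the probability a fixed pair $\{u,v\}$ is bad. We have $\pr[u \not\sim v] = 1-p$. For each $w \in V \setminus \{u,v\}$, a directed $2$‑path between $u$ and $v$ through $w$ requires both edges $uw$ and $wv$ to be present (probability $p^2$) and then one of the two orientation patterns $(u\to w,\ w\to v)$ or $(v\to w,\ w\to u)$, an extra factor $\tfrac12$; so $w$ is a witness with probability $p^2/2$. The event $u \not\sim v$ and the witness events for distinct $w$ involve disjoint sets of (potential) edges, hence are mutually independent, giving
\[ \pr[\{u,v\}\text{ is bad}] = (1-p)\left(1 - \tfrac{p^2}{2}\right)^{n-2} \le \exp\!\left(-\tfrac{p^2(n-2)}{2}\right). \]

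Then a union bound over all $\binom{n}{2} \le n^2/2$ pairs finishes the proof. Using $p \le 1$, so that $p^2(n-2) \ge p^2 n - 2 = 4\log n + \omega(n) - 2$, I would obtain
\[ \pr[\vec{G}\text{ has a bad pair}] \le \frac{n^2}{2}\exp\!\left(-\frac{4\log n + \omega(n) - 2}{2}\right) = \frac{n^2}{2}\cdot e\cdot n^{-2}\, e^{-\omega(n)/2} = \frac{e}{2}\, e^{-\omega(n)/2}, \]
which tends to $0$ since $\omega(n) \to \infty$. Hence a.a.s.\ $\vec{G}$ has weak diameter at most $2$, and therefore a.a.s.\ $\chio(\vec{G}) = n$.

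There is no genuine difficulty here; the only points requiring care are (i) the reduction, via~\cite{KMac04}, of the colouring condition to the weak‑diameter condition, together with the observation that it is directed $2$‑paths — not arbitrary paths of length two in the underlying graph — that obstruct a shared colour, and (ii) correctly accounting for the orientation factor $p^2/2$ and the independence of the witness events across different $w$. Note also that the hypothesis $p^2 n = 4\log n + \omega(n)$ already forces $p^2 \le 1$, and when $\omega(n)$ is large enough that $p$ is bounded away from $0$ the estimate above only becomes stronger.
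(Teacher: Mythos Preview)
Your proof is correct and follows essentially the same first-moment/union-bound argument as the paper: both compute that the probability a fixed pair has no directed $2$-path through a third vertex is at most $(1-p^2/2)^{n-2}$, then sum over $\binom{n}{2}$ pairs and use $p^2 n = 4\log n + \omega(n)$. Your version is slightly more explicit in invoking the weak-diameter-$2$ characterization and in including the extra $(1-p)$ factor for non-adjacency (which you then discard), but the core computation and bound are identical to the paper's.
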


\begin{proof}
Since there are two different directed paths of length $2$ between two given vertices, via a third.  Therefore, the expected number of pairs of vertices in $\vec{G}$ not joined by a directed path of length $2$ is
\[
\binom{n}{2}(1 - p^2/2)^{n-2} \le \frac{n^2}{2} \exp\left(-(n-2)\frac{p^2}{2}\right) \le \frac{e}{2}\exp\left(2 \log n - \frac{np^2}{2} \right)
\]
and the result follows.
\end{proof}

\subsection{Precise value of the second moment} \label{ssec:second_mom_const}

To improve the results of Achlioptas and Moore~\cite{AM04} from a collection of
three possible values for $\chi(\mathcal{G}(n,d))$ to two values, Kemkes at
al.~\cite{KPW10} used the small subgraph conditioning method of Robinson and
Wormald (see~\cite[Chapter 9]{JLR00} and \cite{Wo99}). The small subgraph
conditioning method explains that $\ex Y^2$ exceeds $(\ex Y)^2$ because the
presence or absence of small cycles artificially inflates the variance. In cases
where the small subgraph conditioning method is useful, conditioning on the
presence of these cycles ``luckily and yet mysteriously'' accounts for the
difference between $\ex Y^2$ and $(\ex Y)^2$. The method requires
accurate estimates for the first and second moments as well as certain joint
moments.

In Section~\ref{ssec:second_moment}, we were unable to calculate
$\det(-H|_{\mathbb{V}})$ exactly. With an exact value
for $\det(-H|_{\mathbb{V}})$, we could have gotten an exact value for $C_{d,k}$
in Equation~\eqref{eq:PZ_constant}. Though we have not done so carefully, it seems likely
that there is an extension of the small subgraph conditioning method to directed
graphs. With such an extension and an exact value for $C_{d,k}$,
one could use Proposition 2 of Kemkes et al.~\cite{KPW10} to calculate
the required joint factorials and replace ``with positive probability'' in
Proposition~\ref{prop:gnd_ub} with ``almost surely.'' This argument would supersede
Section~\ref{sec:window} and allow for the marginally stronger result
\[ \chio(\mathcal{G}(n,d)) \in (2^{d/2}, 2e^{d/2}+2d+2]. \]
Extending the small subgraph conditioning method to directed graphs would also
open up the possibility of improving the bounds on $\chio(\mathcal{G}(n,m))$.
In addition, one would need to calculate the joint factorial moments of small
cycles in $\mathcal{M}(n,m)$. We chose not to pursue this approach because the
it only offers a scalar improvement, a marginal difference compared to the
exponential gap between the upper and lower bounds.

\subsection{Upper bounds when $k$ does not divide $n$} \label{ssec:divisibility}

In Section~\ref{sec:ub} we calculated values of $k$ for $\vec{\mathcal{M}}(n,m=cn)$ and $\vec{\mathcal{C}}(n,d)$ have $k$-oriented-colourings with positive probability under the assumption that $n$ was divisible by $k$. In this section we discuss how to remove that condition using ideas of~\cite{NP21}.

Suppose $n = qk+r$ where $r \in [0,k-1]$. We extend the concept of equitable colourings by requiring every colour classes 1 through $r$ to contain $q+1$ vertices and colour classes $r+1$ through $k$ to contain $q$ vertices. (Note that for $r=0$ this coincides with our original definition of equitable.) The strategy to adapt Propositions~\ref{prop:gnm_ub} and \ref{prop:gnd_ub} for $r > 0$ is to show that fixing these $r = O(1)$ vertices does not affect the ratio $(\ex Y)^2/(\ex Y^2)$. To get a feeling for the argument, we prove the following proposition.

\begin{proposition} \label{prop:proportional}
Let $k$ be an integer such that there exists a doubly regular tournament of order $k$ and let $c > 0$. Fix a doubly regular tournament $T_k$. For $n = qk+r, r \in [0,k-1]$, let $Y_n$ count the number of equitable oriented $T_k$-colourings of $\vec{\mathcal{M}}(n,m=cn)$. Let $n' = n-r$. Then
\[ \ex Y_n^2 \sim \left(k\left(\frac{1}{2}\left(1-\frac{1}{k}\right)\right)^c\right)^{2r} \ex Y_{n'}^2. \]
\end{proposition}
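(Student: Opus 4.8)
The plan is to mimic the second‐moment computation of Lemma~\ref{lem:gnm_second_mom}, but keeping track of the $r$ extra vertices whose colours are pinned down by the modified equitability condition. First I would set up the overlap‐matrix sum exactly as in Equation~\eqref{eq:mnm_count}: for a pair of equitable $T_k$‑colourings $h_1,h_2$ of $\vec{\mathcal{M}}(n,m=cn)$, let $\mbf A = (a_{ij})$ be the overlap matrix, where now $na_{ij}$ counts vertices coloured $i$ by $h_1$ and $j$ by $h_2$. The difference from the divisible case is that the row and column sums of $\mbf A$ are no longer all $\tfrac1k$: rows (and columns) $1,\dots,r$ sum to $\tfrac{q+1}{n}$ and rows (and columns) $r+1,\dots,k$ sum to $\tfrac{q}{n}$. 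Writing $n=qk+r$ and $n'=qk$, the natural substitution is $a_{ij}=a'_{ij}+\varepsilon_{ij}/n$ where $\mbf A'$ ranges over matrices with all row/column sums $\tfrac1k$ (the $n'$ case) and the $\varepsilon_{ij}$ are bounded integer corrections supported on a pattern determined by $r$. Since $r=O(1)$, there are only $O(1)$ choices for the "defect pattern'', and each individual multinomial coefficient $\frac{n!}{\prod (na_v)!}$ differs from $\frac{n'!}{\prod(n'a'_v)!}$ by a factor that is $\mathrm{poly}(n)\cdot e^{o(n)}$ — in fact, by Stirling, exactly $\Theta(1)\cdot n^{O(1)}$ uniformly.

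Next I would isolate the exponential rate. The key point is that the exponent $f(\mbf A)= -\sum_v a_v\log a_v + c\log\!\bigl(\sum_{uv\in E}a_ua_v\bigr)$ and the constraint polytope depend on $n$ only through an $O(1/n)$ perturbation of the constraints, so Corollary~\ref{cor:opt_doubly_reg_tour} (via Proposition~\ref{prop:generalized_AN}) still tells us that $f$ is maximised, up to $O(1/n)$, at the balanced matrix, with the same maximal value $\log k + c\log\bigl(\tfrac12(1-\tfrac1k)\bigr)$ plus lower‑order corrections. The cleanest way to make this precise is to factor the count: choose the colours of the $r$ special vertices first (finitely many ways), remove them, and observe that what remains is, up to a $\mathrm{poly}(n)\,e^{o(n)}$ factor coming from the $\Theta(r\cdot d)=O(1)$ incident arcs, a count of equitable $T_k$‑colourings of $\vec{\mathcal{M}}(n',m')$ with $m'=m-O(1)$ arcs; and $\vec{\mathcal{M}}(n',m')$ versus $\vec{\mathcal{M}}(n',cn')$ differ by only $m-m'=cr=O(1)$ arcs, each contributing a factor $\tfrac12(1-\tfrac1k)+o(1)$. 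Tracking the dominant contribution, each of the $r$ removed vertices contributes a factor of $k$ (for its colour choice in each of the two colourings, but the two are coupled through the overlap matrix so the net effect is one factor of $k$ per vertex per colouring, i.e. $k^{2r}$ overall) and the removed arcs contribute $\bigl(\tfrac12(1-\tfrac1k)\bigr)^{2cr}$, giving precisely the claimed factor $\bigl(k(\tfrac12(1-\tfrac1k))^c\bigr)^{2r}$.

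The main obstacle is bookkeeping the sub‑exponential factors carefully enough to land on the stated \emph{asymptotic equality} (with $\sim$, not just $\Theta(1)$). In Lemma~\ref{lem:gnm_second_mom} the polynomial prefactors $k^k(2\pi n)^{-(k-1)}$ and the determinant term $\bigl(\tfrac{(k-1)^4}{((k-1)^2-2c)^2-4c^2k^2}\bigr)^{(k-1)^2/4}$ come out of a Laplace‑summation argument (Theorem~\ref{thm:laplace_summation_gamma}), so to prove $\ex Y_n^2\sim(\cdots)^{2r}\ex Y_{n'}^2$ I would re‑run that Laplace argument for the defect polytope and check that (i) the maximiser moves by $O(1/n)$ and the Hessian converges to the same matrix $H$, so $\det(-H|_{\mathbb V})$ is unchanged in the limit; (ii) the lattice $\Gamma$ and hence $\tau(\Gamma)$, $r=\dim\mathbb V$ are unchanged since only the right‑hand side $\mbf y$ of $D\mbf x=\mbf y$ shifts; and (iii) the product $\psi(\mbf{\hat x})=\prod_v a_v^{-1/2}$ picks up exactly the factor $k^{2r}$ relative to the $n'$ case because $r$ of the coordinates of $\mbf{\hat x}$ are $\tfrac{q+1}{k n}\sim\tfrac1{k^2}$ while the others are $\tfrac{q}{kn}\sim\tfrac1{k^2}$ — here one must be slightly careful, as the factor of $k^{2r}$ really comes from comparing $\ex Y_n$ and $\ex Y_{n'}$ at the first‑moment level (an equitable colouring of the larger graph fixes $r$ more vertices, each in $k$ ways, each arc incident to them being correct with probability $\tfrac12(1-\tfrac1k)$) and then squaring. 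I expect the bulk of the work to be verifying that these Laplace‑method inputs are continuous in the $O(1/n)$ perturbation, which is routine but tedious.
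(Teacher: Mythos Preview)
Your outline is viable, but the paper takes a cleaner and rather different route. Instead of re-running the Laplace summation (Theorem~\ref{thm:laplace_summation_gamma}) on the perturbed polytope and checking continuity of all its inputs, the paper couples the two sums term by term. For each overlap matrix $\mbf A \in \mathbb{Y}_n(1)$ (the set of $\mbf A$ within $\tfrac{\log n}{\sqrt n}$ of the maximiser in sup-norm), it defines $\mbf A^\ast$ by subtracting $1$ from each of the integer entries $na_{11},\dots,na_{rr}$ and renormalising by $n'$, so that $\mbf A^\ast$ is a legitimate overlap matrix on $n'$ vertices. A direct computation of the ratio $\ell(\mbf A)/\ell(\mbf A^\ast)$ of the corresponding summands shows it converges to $\bigl(k(\tfrac12(1-\tfrac1k))^c\bigr)^{2r}$ uniformly on $\mathbb{Y}_n(1)$. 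The paper then invokes a ``central sum'' lemma (Proposition~\ref{prop:central_sum}, from \cite{NP21}) stating that $\sum_{\mathbb X_n}\sim\sum_{\mathbb Y_n(\gamma)}$ for any fixed $\gamma>0$; combined with the inclusion $\mathbb Y_{n'}(0.4)\subseteq\{\mbf A^\ast:\mbf A\in\mathbb Y_n(1)\}\subseteq\mathbb X_{n'}$, this sandwiches $\ex Y_n^2$ between two quantities both asymptotic to the target. No Hessians, determinants, or lattice computations are redone.

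Your approach should also work, but your point~(iii) is confused: since $\psi(\mbf{\hat x})=\prod_v a_v^{-1/2}$ and in both problems the maximiser has all entries asymptotic to $1/k^2$, $\psi$ is asymptotically the \emph{same}, not off by $k^{2r}$. The entire factor $\bigl(k(\tfrac12(1-\tfrac1k))^c\bigr)^{2r}$ comes from the exponential term, since $e^{n\phi(\mbf{\hat x})}/e^{n'\phi(\mbf{\hat x})}=e^{r\phi(\mbf{\hat x})}$ and $\phi(\mbf{\hat x})=2\log k+2c\log\bigl(\tfrac12(1-\tfrac1k)\bigr)$. You would also need to verify that Theorem~\ref{thm:laplace_summation_gamma} tolerates an $n$-dependent right-hand side $\mbf y$ (or absorb the $O(1/n)$ shift into a change of variables), which is doable but is exactly the bookkeeping the paper's coupling argument sidesteps.
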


Before proving Proposition~\ref{prop:proportional}, we introduce a few ideas of P\'erez and the second author. Recall that in the proof of Lemma~\ref{lem:gnm_second_mom}, we defined the set $\mathbb{X}_n$ which contained, for each $n$, the valid overlap matrices on $n$ vertices. We proved that the exponential contribution to each term of the sum was maximized at $\mbf{\hat{x}}$, the vector with every entry $\frac{n}{k^2}$. As seen in Equation~\eqref{eq:exp_sum}, the second moment is a sum containing polynomially many exponential terms, and thus the terms close to this exponential maximum contribute all but a negligible portion of the sum. This fact is captured in the following result:

\begin{proposition}[see {\cite[Proposition 3.5]{NP21}}] \label{prop:central_sum}
Suppose the same set of conditions hold as in Theorem~\ref{thm:laplace_summation_gamma}. In addition, suppose there is $N > 0$ such that $\psi(x) \le n^N$ for each $\mbf x \in K_1$. For any $\gamma > 0$, define $\mathbb{Y}_n(\gamma) \subseteq \mathbb{X}_n$ as
\[ \mathbb{Y}_n(\gamma) = \left\{ \mbf x \in \mathbb{X}_n : ||\mbf{\hat{x}} - \mbf x||_{\infty} < \gamma \frac{\log n}{\sqrt{n}} \right\}. \]
Then
\[ \sum_{\mbf x \in \mathbb{Y}_n(\gamma)} T_n(\mbf x)  \sim \sum_{\mbf x \in \mathbb{X}_n} T_n(\mbf x). \]
\end{proposition}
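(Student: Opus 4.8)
The plan is to reduce the statement to a tail estimate. Theorem~\ref{thm:laplace_summation_gamma} already gives
\[
\sum_{\mbf x \in \mathbb X_n} T_n(\mbf x) \sim \frac{\psi(\mbf{\hat x})}{\tau(\Gamma)^{1/2}\det(-H|_{\mathbb V})^{1/2}}(2\pi n)^{r/2} b_n e^{n\phi(\mbf{\hat x})},
\]
a quantity of order exactly $n^{r/2} b_n e^{n\phi(\mbf{\hat x})}$ with a strictly positive constant, so it suffices to prove
\[
\sum_{\mbf x \in \mathbb X_n \setminus \mathbb Y_n(\gamma)} T_n(\mbf x) = o\!\left( n^{r/2} b_n e^{n\phi(\mbf{\hat x})} \right).
\]
Since $\mathbb X_n \subseteq K \cap \tfrac1n\mathbb Z^{|E_\Gamma|}$ with $K$ compact, there are only $O(n^{|E_\Gamma|})$ terms in this tail sum, so it is enough to bound each term and let the super-polynomial decay swallow the polynomial count.

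First I would extract two facts from the hypotheses of Theorem~\ref{thm:laplace_summation_gamma}. Because $\mbf{\hat x}$ is the unique maximizer of $\phi$ on the compact feasible set $\{\mbf x \in K : D\mbf x = \mbf y\}$, lies in $K^\circ$, is $C^2$ there, and $\det(-H|_{\mathbb V})\neq 0$, the restriction $H|_{\mathbb V}$ is negative definite; applying Taylor's theorem along $\mathbb V = \ker D$ (the gradient term vanishes since $\mbf{\hat x}$ is an interior critical point of $\phi$ restricted to $\mathbb W$) and absorbing the $o(\|\cdot\|^2)$ remainder into a slightly smaller quadratic constant on a fixed ball, there are $\varepsilon_0>0$ and $c_1>0$ with $\overline B := \{\mbf x : \|\mbf x - \mbf{\hat x}\|_\infty \le \varepsilon_0\} \subseteq K_1$ and
\[
\phi(\mbf x) \le \phi(\mbf{\hat x}) - c_1\,\|\mbf x - \mbf{\hat x}\|_\infty^2 \qquad\text{for all } \mbf x \in \overline B \text{ with } D\mbf x = \mbf y .
\]
Second, by continuity of $\phi$ and uniqueness of the maximizer on the compact feasible set, for this $\varepsilon_0$ there is $\delta>0$ with $\phi(\mbf x) \le \phi(\mbf{\hat x}) - \delta$ for every feasible $\mbf x$ with $\|\mbf x - \mbf{\hat x}\|_\infty \ge \varepsilon_0$.

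Then I would split $\mathbb X_n \setminus \mathbb Y_n(\gamma)$ into a near part $\mathcal N_n = \{\mbf x \in \mathbb X_n : \gamma \tfrac{\log n}{\sqrt n} \le \|\mbf x - \mbf{\hat x}\|_\infty \le \varepsilon_0\}$ and a far part $\mathcal F_n = \{\mbf x \in \mathbb X_n : \|\mbf x - \mbf{\hat x}\|_\infty > \varepsilon_0\}$. For $\mbf x \in \mathcal N_n$ we have $\mbf x \in K_1$, so the sharp estimate $T_n(\mbf x) = b_n(\psi(\mbf x)+o(1))e^{n\phi(\mbf x)}$ applies; together with the extra hypothesis $\psi(\mbf x)\le n^N$ and the quadratic bound (using $\|\mbf x - \mbf{\hat x}\|_\infty \ge \gamma \log n/\sqrt n$, hence $n\,\|\mbf x - \mbf{\hat x}\|_\infty^2 \ge \gamma^2(\log n)^2$), this gives for large $n$
\[
T_n(\mbf x) \le 2\,n^{N} b_n\, e^{n\phi(\mbf{\hat x})}\, e^{-c_1\gamma^2(\log n)^2} = b_n\, e^{n\phi(\mbf{\hat x})}\, n^{\,N - c_1\gamma^2\log n},
\]
so summing the $O(n^{|E_\Gamma|})$ such terms produces a contribution of order $b_n e^{n\phi(\mbf{\hat x})}\, n^{\,N+|E_\Gamma| - c_1\gamma^2\log n}$, which is $o\!\left(n^{r/2} b_n e^{n\phi(\mbf{\hat x})}\right)$ since $-c_1\gamma^2\log n \to -\infty$. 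For $\mbf x \in \mathcal F_n$ I would instead use the crude uniform bound $T_n(\mbf x) = O\!\left(b_n e^{n\phi(\mbf x)+o(n)}\right)$ together with $\phi(\mbf x) \le \phi(\mbf{\hat x}) - \delta$, giving $T_n(\mbf x) = O\!\left(b_n e^{n\phi(\mbf{\hat x})} e^{-n\delta + o(n)}\right) = O\!\left(b_n e^{n\phi(\mbf{\hat x})} e^{-n\delta/2}\right)$ for large $n$; summing over the $O(n^{|E_\Gamma|})$ far points again yields $o\!\left(n^{r/2} b_n e^{n\phi(\mbf{\hat x})}\right)$ because an exponential beats any power of $n$. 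Adding the two contributions establishes the tail estimate and hence the proposition.

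The one genuinely delicate step is the first fact above: passing from uniqueness of the maximizer and $\det(-H|_{\mathbb V})\neq 0$ to a uniform quadratic upper bound for $\phi$ along $\ker D$ on a fixed ball that simultaneously sits inside $K_1$; this requires observing that $H|_{\mathbb V}\prec 0$ and that the $C^2$ Taylor remainder can be absorbed into a fixed fraction of the quadratic on a small enough neighbourhood, after which one shrinks $K_1$ if necessary. Everything else is bookkeeping: the number of lattice points is only $O(n^{|E_\Gamma|})$, the Gaussian factor $e^{-c_1\gamma^2(\log n)^2} = n^{-c_1\gamma^2\log n}$ decays faster than any fixed power of $n$ and so swamps both this count and the $n^N$ coming from the bound on $\psi$, while in the far region the exponential gap $\delta$ makes the contribution negligible even against the crude $e^{o(n)}$ factor — leaving a loss of order $o(n^{r/2})$ against the known order $n^{r/2}$ of $\sum_{\mbf x\in\mathbb X_n}T_n(\mbf x)$.
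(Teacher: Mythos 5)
Your argument is correct and complete: negative definiteness of $H|_{\mathbb V}$ (from the interior constrained maximum plus $\det(-H|_{\mathbb V})\neq 0$) gives the quadratic drop of $\phi$ along $\ker(D)$ near $\mbf{\hat x}$, compactness and uniqueness give the fixed gap $\delta$ on the far region, the lattice count is only $O(n^{|E_\Gamma|})$, and you correctly identify that the extra hypothesis $\psi(\mbf x)\le n^N$ is precisely what allows the sharp estimate (rather than the crude $e^{o(n)}$ bound) to be used at distances of order $\log n/\sqrt n$. The paper itself supplies no proof, quoting the result from NP21 (Proposition 3.5); your tail-splitting argument is the standard route for such localization lemmas and is consistent with how the result is used here.
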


We now prove Proposition~\ref{prop:proportional}, following the proof of Proposition 7.1 in~\cite{NP21}.

\begin{proof}[Proof of Proposition~\ref{prop:proportional}]
Given $\mbf A \in \mathbb{Y}_n(1)$, define
\[ \mbf{A}^\ast = \left(\frac{1}{n'}(na_{ij}-\delta_{ij})\right)_{i,j \in [k]} \]
where
\[ \delta_{ij} = \begin{cases} 1 & i =j \le r\\ 0 & \text{else} \end{cases}. \]
Then $\mbf{A}^\ast$ defines the overlap matrix of a pair of equitable oriented $T_k$-colourings of $\vec{\mathcal{M}}(m,cn)$ as we have removed the $r$ vertices of prescribed colours 1 through $r$. As $\mbf A \in \mathbb{Y}_n(1)$, we know $\mbf{A}^\ast$ is non-negative.

Recall from Equation~\eqref{eq:mnm_count} that the number of oriented $T_k$-colourings respecting $\mbf A$ is
\[ \frac{n!}{\prod\limits_{v \in V(T^{\otimes 2})} (na_v)!}\left(\sum_{uv \in E(T^{\otimes 2})}a_ua_v \right)^m =: \ell(\mbf A). \]
We have
\[ \frac{\ell(\mbf A)}{\ell(\mbf{A}^\ast)} = \frac{n_{(r)}}{n^r} \cdot \frac{1}{\prod_{i=1}^r a_{ii}} \cdot \left(\frac{n-r}{n}\right)^{2cr}\cdot \frac{\left(\sum\limits_{uv \in E(T^{\otimes 2})}a_ua_v\right)^{cn}}{\left(\sum\limits_{uv \in E(T^{\otimes 2})}(a_ua_v - \frac{1}{n}(a_u\delta_v + a_v\delta_u) + \frac{1}{n^2}\delta_u\delta_v)\right)^{c(n-r)}}. \]
Note that $\delta_u\delta_v = 1$ if and only if $u = (i,i), v = (j,j)$, and $i\ne j \le r$. Therefore
\[ \sum_{uv \in E(T^{\otimes 2})} \delta_u \delta_v = \binom{r}{2}. \]
Furthermore,
\[ 0 \le \sum_{uv \in E(T^{\otimes 2})} (a_u\delta_v + a_v\delta_u) \le \sum_{uv \in E(T^{\otimes 2})} (a_u + a_v) = \frac{(k-1)^2}{2} \sum_{v \in V(T^{\otimes 2})} a_v = \frac{(k-1)^2}{2} \]
as each $v \in T^{\otimes 2}$ has degree $\frac{(k-1)^2}{2}$. Therefore, keeping in mind $\mbf A \in \mathbb{Y}_n(1)$, we have
\[ \frac{\ell(\mbf A)}{\ell(\mbf{A}^\ast)} \ge \frac{n_{(r)}}{n^r}  \cdot \left(\frac{n-r}{n}\right)^{2cr} \cdot \frac{\left(\sum\limits_{uv \in E(T^{\otimes 2})}(\frac{1}{k^2}-\frac{\log n}{\sqrt{n}})^2\right)^{cn}}{(\frac{1}{k^2}+\frac{\log n}{\sqrt{n}})^r\left(\sum\limits_{uv \in E(T^{\otimes 2})}(\frac{1}{k^2}+\frac{\log n}{\sqrt{n}})^2+\frac{1}{n^2}\binom{r}{2}\right)^{c(n-r)}} \to \left(k\left(\frac{1}{2}\left(1-\frac{1}{k}\right)\right)^c\right)^{2r} \]
and
\[ \frac{\ell(\mbf A)}{\ell(\mbf{A}^\ast)} \le \frac{n_{(r)}}{n^r} \cdot \left(\frac{n-r}{n}\right)^{2cr} \cdot \frac{\left(\sum\limits_{uv \in E(T^{\otimes 2})}(\frac{1}{k^2}+\frac{\log n}{\sqrt{n}})^2\right)^{cn}}{(\frac{1}{k^2}-\frac{\log n}{\sqrt{n}})^r\left(\sum\limits_{uv \in E(T^{\otimes 2})}(\frac{1}{k^2}-\frac{\log n}{\sqrt{n}})^2-\frac{(k-1)^2}{2n}\right)^{c(n-r)}} \to \left(k\left(\frac{1}{2}\left(1-\frac{1}{k}\right)\right)^c\right)^{2r} \]
from which we conclude
\[ \frac{\ell(\mbf A)}{\ell(\mbf{A}^\ast)} \to \left(k\left(\frac{1}{2}\left(1-\frac{1}{k}\right)\right)^c\right)^{2r}. \]
Let $\mathcal{A} = \{ \mbf{A}^\ast \mid \mbf A \in \mathbb{Y}_n(1) \}$. Then if there is $\gamma > 0$ such that $\mathbb{Y}_n(\gamma) \subseteq \mathcal{A}$, Proposition~\ref{prop:central_sum} gives
\begin{align*}
\left(k\left(\frac{1}{2}\left(1-\frac{1}{k}\right)\right)^c\right)^{2r} \ex Y_{n'}^2 &\sim \sum_{\mbf A \in \mathbb{Y}_{n'}(\gamma)} \left(k\left(\frac{1}{2}\left(1-\frac{1}{k}\right)\right)^c\right)^{2r} p(n,\mbf A) \exp(n f(\mbf A))\\
	& \le \sum_{\mbf A \in \mathcal{A}} \left(k\left(\frac{1}{2}\left(1-\frac{1}{k}\right)\right)^c\right)^{2r} p(n,\mbf A) \exp(n f(\mbf A)) \sim \ex Y_n^2\\
	&\le \sum_{\mbf A \in \mathbb{X}_{n'}} \left(k\left(\frac{1}{2}\left(1-\frac{1}{k}\right)\right)^c\right)^{2r} p(n,\mbf A) \exp(n f(\mbf A))\\
	&= \left(k\left(\frac{1}{2}\left(1-\frac{1}{k}\right)\right)^c\right)^{2r} \ex Y_{n'}^2,
\end{align*}
completing the proof of the proposition.

Thus it suffices to show for some $\gamma >0$ and all $\mbf{\tilde{A}} \in \mathbb{Y}_{n'}(\gamma)$ there is $\mbf A \in \mathbb{Y}_n(1)$ such that $\mbf{A}^\ast = \mbf{\tilde{A}}$. Each $\tilde{a}_v \in \mbf{\tilde{A}}$ satisfies
\[ \frac{1}{k^2} - \gamma\frac{\log n'}{\sqrt{n'}} \le \tilde{a}_v \le \frac{1}{k^2} + \gamma\frac{\log n'}{\sqrt{n'}} \]
and we want
\[ \frac{1}{n'}(na_v-\delta_v) = \tilde{a}_v \]
where $a_v$ must satisfy
\[ \frac{1}{k^2} - \frac{\log n}{\sqrt{n}} \le a_v \le \frac{1}{k^2} + \frac{\log n}{\sqrt{n}}. \]
Thus it suffices to pick $\gamma$ so that
\[ \frac{1}{k^2}-\frac{\log n}{\sqrt{n}} \le \frac{n'}{n}\left(\frac{1}{k^2}-\gamma\frac{\log n'}{\sqrt{n'}}\right) \]
and
\[ \frac{1}{n}\left(n'\left(\frac{1}{k^2}+\gamma \frac{\log n'}{\sqrt{n'}}\right)+1\right) \le \frac{1}{k^2}+\frac{\log n}{\sqrt{n}}. \]
These are identical conditions to those of~\cite{NP21}, where the authors show $\gamma=0.4$ suffices.
\end{proof}

To complete the argument that $\vec{M} \sim \vec{\mathcal{M}}(n,m=cn)$ has a $k$-oriented-colouring with positive probability, one should prove that
\[ \ex Y_n \sim \left(k\left(\frac{1}{2}\left(1-\frac{1}{k}\right)\right)^c\right)^{r} \ex Y_{n'} \]
in a similar (but easier) manner; then
\[ \pr[Y_n > 0] \ge \frac{(\ex Y_n)^2}{\ex(Y_n^2)} \sim \frac{(\ex Y_{n'})^2}{\ex(Y_{n'}^2)} \sim \frac{(\ex Y)^2}{\ex(Y^2)} \sim \left(\frac{((k-1)^2-2c)^2-4c^2k^2}{(k-1)^4}\right)^{(k-1)^2/4} > 0. \]

In proving the value of $k$ for which $\vec{C} \sim \vec{\mathcal{C}}(n,d)$ has a $k$-oriented-colouring with positive probability, we used the Laplace Summation technique in~\cite{GJR10}, rather than that in~\cite{NP21}; however, the latter is an extension of the former and thus a version of Proposition~\ref{prop:central_sum} holds under the conditions of Theorem~\ref{thm:laplacian_summation} as well. Then one should prove a version of Proposition~\ref{prop:proportional} for the first and second moments of $\vec{\mathcal{C}}(n,d)$. 

\subsection{Colouring with tournaments that are not doubly regular}\label{ssec:col_not_doubly_regular}

The exponential gap between the bounds in Theorem~\ref{thm:gnd_main} comes from the different constants in $u_k$ and $\ell_k$; $u_k$ behaves like $\frac{2}{\log 2} \log k$ while $\ell_k$ grows as $2\log k$. While the argument for the lower bound is straightforward, one might reasonably ask if a better constant for $\ell_k$ could be found by counting equitable colourings of a different tournament, hoping to improve Corollary~\ref{cor:opt_doubly_reg_tour}.  In the calculations of the second moment for the number of equitable oriented colourings in Lemma~\ref{lem:gnm_second_mom} and Section~\ref{ssec:counting}, the initial computation of the second moment would apply equally well to colouring by \emph{any} tournament on $k$ vertices, not just a doubly regular tournament.  Similarly, the function, $\gamma_d$, defined in Proposition~\ref{prop:generalized_AN} can be defined with the Lagrangian part given for any graph $G$.  In the case where a tournament $T$ is not regular, that is, not all vertices have the same in-degree or out-degree, the product graph $G = T^{\otimes 2}$ will not be regular.  

If $G$ is not regular, then subject to the condition that the matrix $\mbf A$ be doubly stochastic, the point at $\mbf A = \frac{1}{k}J_k$ is not a critical point for the function $-\frac{1}{k} \sum_{v \in V(G)} a_v \log a_v + \frac{d}{2}\log\left(\frac{2\sum_{uv \in E} a_u a_v}{k^2}\right)$ and so cannot be the location of the maximum value.  This would mean that the maximum value is strictly greater and so the exponential part of the second moment of the number of equitable $T$-colourings has a larger base that the square of the expected value and a standard second moment method argument would not yield a positive probability of finding an equitable $T$-colouring.     

Among the regular tournaments, one might seek a tournament whose Kronecker square has a smaller second-largest eigenvalue to improve the bound on $d$ obtained from Proposition~\ref{prop:generalized_AN}. However, one can show that the signed adjacency matrix of any regular tournament of order $k$ has an eigenvalue $\lambda$ with $|\lambda| \ge \sqrt{k}$ and use this fact to prove that the second-largest eigenvalue has size at least $\frac{1}{2}(1+\lambda^2) \ge \frac{k+1}{2}$, precisely the value of the second-largest eigenvalue of the Kronecker square of a doubly regular tournament.  The fact that the signed adjacency matrix of a regular tournament of order $k$ has an eigenvalue of modulus at least $\sqrt{k}$ follows from a result by Brauer and Gentry~\cite{BG72} on the unsigned adjacency matrices of regular tournaments, but one can also prove it directly by showing that if $M$ is the signed adjacency matrix of a regular tournament of order $k$ and $\mathbf{v}$ is any column of $M$, then $||M \mathbf{v}||_2/||\mathbf{v}||_2 \ge \sqrt{k}$.  From this, one can show that the largest eigenvalue of $T^{\otimes 2}$ is exactly $\frac{(k-1)^2}{2}$ and the second-largest eigenvalue is at least $\frac{k+1}{2}$.  Thus, for the approach of finding oriented colourings using the second-moment method, doubly regular tournaments will give the best results.

\section*{Acknowledgements}

The authors would like to thank Stephen Kirkland for help with references on the algebraic properties of tournaments and Siddarth Sankaran for assistance with reference for number theoretic results. They also thank Xavier P\'erez for offering his expertise on the history of problems on random graphs of bounded degree and Cris Moore for helpful clarifications about the results for undirected regular graphs.

\bibliographystyle{abbrv}
\nocite{*}
\bibliography{newbib2}

\begin{thebibliography}{10}

\bibitem{AF99}
D.~Achlioptas and E.~Friedgut.
\newblock A sharp threshold for {$k$}-colorability.
\newblock {\em Random Structures Algorithms}, 14(1):63--70, 1999.

\bibitem{AM04}
D.~Achlioptas and C.~Moore.
\newblock The chromatic number of random regular graphs.
\newblock In {\em Lecture notes in computer science}, pages 219--228, Berlin,
  2004. Springer.

\bibitem{AN05}
D.~Achlioptas and A.~Naor.
\newblock The two possible values of the chromatic number of a random graph.
\newblock {\em Ann. of Math. (2)}, 162(3):1335--1351, 2005.

\bibitem{AS16}
N.~Alon and J.~H. Spencer.
\newblock {\em The probabilistic method}.
\newblock Wiley Series in Discrete Mathematics and Optimization. John Wiley \&
  Sons, Inc., Hoboken, NJ, fourth edition, 2016.

\bibitem{BMOBR18}
M.~A. Bennett, G.~Martin, K.~O'Bryant, and A.~Rechnitzer.
\newblock Explicit bounds for primes in arithmetic progressions.
\newblock {\em Illinois J. Math.}, 62(1-4):427--532, 2018.

\bibitem{BDS17}
J.~Bensmail, C.~Duffy, and S.~Sen.
\newblock Analogues of cliques for {$(m,n)$}-colored mixed graphs.
\newblock {\em Graphs Combin.}, 33(4):735--750, 2017.

\bibitem{Bo80}
B.~Bollob\'as.
\newblock A probabilistic proof of an asymptotic formula for the number of
  labelled regular graphs.
\newblock {\em European J. Combin.}, 1(4):311--316, 1980.

\bibitem{Bo88}
B.~Bollob\'as.
\newblock The chromatic number of random graphs.
\newblock {\em Combinatorica}, 8(1):49--55, 1988.

\bibitem{Bo01}
B.~Bollob\'as.
\newblock {\em Random graphs}, volume~73 of {\em Cambridge Studies in Advanced
  Mathematics}.
\newblock Cambridge University Press, Cambridge, second edition, 2001.

\bibitem{BKNRS99}
O.~V. Borodin, A.~V. Kostochka, J.~Ne{\v s}et{\v r}il, A.~Raspaud, and
  E.~Sopena.
\newblock On the maximum average degree and the oriented chromatic number of a
  graph.
\newblock volume 206, pages 77--89. 1999.
\newblock Combinatorics and number theory (Tiruchirappalli, 1996).

\bibitem{BG72}
A.~Brauer and I.~C. Gentry.
\newblock Some remarks on tournament matrices.
\newblock {\em Linear Algebra Appl.}, 5:311--318, 1972.

\bibitem{CF08}
P.~Chebolu and A.~Frieze.
\newblock Hamilton cycles in random lifts of directed graphs.
\newblock {\em SIAM J. Discrete Math.}, 22(2):520--540, 2008.

\bibitem{CEH13}
A.~Coja-Oghlan, C.~Efthymiou, and S.~Hetterich.
\newblock On the chromatic number of random regular graphs.
\newblock {\em J. Combin. Theory Ser. B}, 116:367--439, 2016.

\bibitem{Co94}
B.~Courcelle.
\newblock The monadic second order logic of graphs. {VI}. {O}n several
  representations of graphs by relational structures.
\newblock volume~54, pages 117--149. 1994.
\newblock Efficient algorithms and partial $k$-trees.

\bibitem{DM87}
R.~Dawes and H.~Meijer.
\newblock Arc-minimal digraphs of specified diameter.
\newblock In {\em Proceedings of the first {C}arbondale combinatorics
  conference ({C}arbondale, {I}ll., 1986)}, volume~1, pages 85--96, 1987.

\bibitem{Du20}
C.~Duffy.
\newblock Colourings of oriented connected cubic graphs.
\newblock {\em Discrete Math.}, 343(10):112021, 6, 2020.

\bibitem{DMS19}
C.~Duffy, G.~MacGillivray, and E.~Sopena.
\newblock Oriented colourings of graphs with maximum degree three and four.
\newblock {\em Discrete Math.}, 342(4):959--974, 2019.

\bibitem{Er35}
P.~Erd{\"o}s.
\newblock \"uber die {P}rimzahlen gewisser arithmetischer {R}eihen.
\newblock {\em Math. Z.}, 39(1):473--491, 1935.

\bibitem{ER60}
P.~Erd{\H o}s and A.~R{\'e}nyi.
\newblock On the evolution of random graphs.
\newblock {\em Bull. Inst. Internat. Statist.}, 38:343--347, 1961.

\bibitem{ERS66}
P.~Erd{\H o}s, A.~R{\'e}nyi, and V.~T. S{\'o}s.
\newblock On a problem of graph theory.
\newblock {\em Studia Sci. Math. Hungar.}, 1:215--235, 1966.

\bibitem{FL92}
A.~M. Frieze and T.~{\L}uczak.
\newblock On the independence and chromatic numbers of random regular graphs.
\newblock {\em J. Combin. Theory Ser. B}, 54(1):123--132, 1992.

\bibitem{FHPZ98}
Z.~F{\"u}redi, P.~Horak, C.~M. Pareek, and X.~Zhu.
\newblock Minimal oriented graphs of diameter {$2$}.
\newblock {\em Graphs Combin.}, 14(4):345--350, 1998.

\bibitem{Gi59}
E.~N. Gilbert.
\newblock Random graphs.
\newblock {\em Ann. Math. Statist.}, 30:1141--1144, 1959.

\bibitem{GR01}
C.~Godsil and G.~Royle.
\newblock {\em Algebraic graph theory}, volume 207 of {\em Graduate Texts in
  Mathematics}.
\newblock Springer-Verlag, New York, 2001.

\bibitem{GJR10}
C.~Greenhill, S.~Janson, and A.~Ruci{\'n}ski.
\newblock On the number of perfect matchings in random lifts.
\newblock {\em Combin. Probab. Comput.}, 19(5-6):791--817, 2010.

\bibitem{GM75}
G.~R. Grimmett and C.~J.~H. McDiarmid.
\newblock On colouring random graphs.
\newblock {\em Math. Proc. Cambridge Philos. Soc.}, 77:313--324, 1975.

\bibitem{Ha86}
M.~Hall, Jr.
\newblock {\em Combinatorial theory}.
\newblock Wiley Classics Library. John Wiley \& Sons, Inc., New York, second
  edition, 1998.
\newblock A Wiley-Interscience Publication.

\bibitem{HJ13}
R.~A. Horn and C.~R. Johnson.
\newblock {\em Matrix analysis}.
\newblock Cambridge University Press, Cambridge, second edition, 2013.

\bibitem{Ja87}
S.~Janson.
\newblock Poisson convergence and {P}oisson processes with applications to
  random graphs.
\newblock {\em Stochastic Process. Appl.}, 26(1):1--30, 1987.

\bibitem{Ja02}
S.~Janson.
\newblock On concentration of probability.
\newblock In {\em Contemporary combinatorics}, volume~10 of {\em Bolyai Soc.
  Math. Stud.}, pages 289--301. J\'anos Bolyai Math. Soc., Budapest, 2002.

\bibitem{JLR00}
S.~Janson, T.~{\L}uczak, and A.~Ruci{\'n}ski.
\newblock {\em Random graphs}.
\newblock Wiley-Interscience Series in Discrete Mathematics and Optimization.
  Wiley-Interscience, New York, 2000.

\bibitem{KPW10}
G.~Kemkes, X.~P{\'e}rez-Gim{\'e}nez, and N.~Wormald.
\newblock On the chromatic number of random {$d$}-regular graphs.
\newblock {\em Adv. Math.}, 223(1):300--328, 2010.

\bibitem{kM04b}
W.~F. Klostermeyer and G.~MacGillivray.
\newblock Analogues of cliques for oriented coloring.
\newblock {\em Discuss. Math. Graph Theory}, 24(3):373--387, 2004.

\bibitem{KM04a}
W.~F. Klostermeyer and G.~MacGillivray.
\newblock Homomorphisms and oriented colorings of equivalence classes of
  oriented graphs.
\newblock {\em Discrete Math.}, 274(1-3):161--172, 2004.

\bibitem{KLSS99}
A.~V. Kostochka, T.~{\L}uczak, G.~Simonyi, and E.~Sopena.
\newblock On the minimum number of edges giving maximum oriented chromatic
  number.
\newblock In {\em Contemporary trends in discrete mathematics (\v Sti\v r\'in
  {C}astle, 1997)}, volume~49 of {\em DIMACS Ser. Discrete Math. Theoret.
  Comput. Sci.}, pages 179--182. Amer. Math. Soc., Providence, RI, 1999.

\bibitem{KSZ97}
A.~V. Kostochka, E.~Sopena, and X.~Zhu.
\newblock Acyclic and oriented chromatic numbers of graphs.
\newblock {\em J. Graph Theory}, 24(4):331--340, 1997.

\bibitem{KS08}
C.~Koukouvinos and S.~Stylianou.
\newblock On skew-{H}adamard matrices.
\newblock {\em Discrete Math.}, 308(13):2723--2731, 2008.

\bibitem{Lu91a}
T.~{\L}uczak.
\newblock The chromatic number of random graphs.
\newblock {\em Combinatorica}, 11(1):45--54, 1991.

\bibitem{Lu91b}
T.~{\L}uczak.
\newblock A note on the sharp concentration of the chromatic number of random
  graphs.
\newblock {\em Combinatorica}, 11(3):295--297, 1991.

\bibitem{Mo92}
M.~Molloy.
\newblock {\em The chromatic number of sparse random graphs}.
\newblock PhD thesis, University of Waterloo, 1992.

\bibitem{MS65}
T.~S. Motzkin and E.~G. Straus.
\newblock Maxima for graphs and a new proof of a theorem of {T}ur\'an.
\newblock {\em Canadian J. Math.}, 17:533--540, 1965.

\bibitem{NP21}
J.~Nir and X.~P{\'e}rez-Gim{\'e}nez.
\newblock The chromatic number of random lifts of complete graphs.
\newblock {\em arXiv e-prints}, page arXiv:2109.13347, 2021.

\bibitem{On73}
R.~Onodera.
\newblock On the number of trees in a complete {$n$}-partite graph.
\newblock {\em Matrix Tensor Quart.}, 23:142--146, 1972/73.

\bibitem{RR96}
O.~Ramar{\'e} and R.~Rumely.
\newblock Primes in arithmetic progressions.
\newblock {\em Math. Comp.}, 65(213):397--425, 1996.

\bibitem{RS94}
A.~Raspaud and E.~Sopena.
\newblock Good and semi-strong colorings of oriented planar graphs.
\newblock {\em Inform. Process. Lett.}, 51(4):171--174, 1994.

\bibitem{RB72}
K.~B. Reid and E.~Brown.
\newblock Doubly regular tournaments are equivalent to skew {H}adamard
  matrices.
\newblock {\em J. Combinatorial Theory Ser. A}, 12:332--338, 1972.

\bibitem{RW94}
R.~W. Robinson and N.~C. Wormald.
\newblock Almost all regular graphs are {H}amiltonian.
\newblock {\em Random Structures Algorithms}, 5(2):363--374, 1994.

\bibitem{sR08}
S.~Roman.
\newblock {\em Advanced linear algebra}, volume 135 of {\em Graduate Texts in
  Mathematics}.
\newblock Springer, New York, third edition, 2008.

\bibitem{Se12}
S.~Sen.
\newblock The maximum order of a planar o-clique is 15.
\newblock In {\em Proc. 23rd International Workshop on Combinatorial
  Algorithms, IWOCA'12}, volume 7643 of {\em Lecture Notes in Computer Science
  (LNCS)}, pages 130--142. 2021.

\bibitem{SS87}
E.~Shamir and J.~Spencer.
\newblock Sharp concentration of the chromatic number on random graphs
  {$G_{n,p}$}.
\newblock {\em Combinatorica}, 7(1):121--129, 1987.

\bibitem{SW07}
L.~Shi and N.~Wormald.
\newblock Colouring random regular graphs.
\newblock {\em Combin. Probab. Comput.}, 16(3):459--494, 2007.

\bibitem{SB65}
S.~S. Shrikhande and Bhagwandas.
\newblock Duals of incomplete block designs.
\newblock {\em J. Indian Statist. Assoc.}, 3:30--37, 1965.

\bibitem{So16}
{\'E}.~Sopena.
\newblock Homomorphisms and colourings of oriented graphs: an updated survey.
\newblock {\em Discrete Math.}, 339(7):1993--2005, 2016.

\bibitem{Ta88}
L.~Tak{\'a}cs.
\newblock On the limit distribution of the number of cycles in a random graph.
\newblock pages 359--376. 1988.
\newblock A celebration of applied probability.

\bibitem{Wa71}
J.~Wallis.
\newblock Some {$(1,\,-1)$} matrices.
\newblock {\em J. Combinatorial Theory Ser. B}, 10:1--11, 1971.

\bibitem{Wi44}
J.~Williamson.
\newblock Hadamard's determinant theorem and the sum of four squares.
\newblock {\em Duke Math. J.}, 11:65--81, 1944.

\bibitem{Wo07}
D.~R. Wood.
\newblock On the oriented chromatic number of dense graphs.
\newblock {\em Contrib. Discrete Math.}, 2(2):145--152, 2007.

\bibitem{Wo99}
N.~C. Wormald.
\newblock Models of random regular graphs.
\newblock In {\em Surveys in combinatorics, 1999 ({C}anterbury)}, volume 267 of
  {\em London Math. Soc. Lecture Note Ser.}, pages 239--298. Cambridge Univ.
  Press, Cambridge, 1999.

\bibitem{Zn70}
{\v S}.~Zn{\'a}m.
\newblock The minimal number of edges of a directed graph with given diameter.
\newblock {\em Acta Fac. Rerum Natur. Univ. Comenian. Math.}, 24:181--185,
  1970.

\end{thebibliography}

\end{document}